\documentclass[12 pt]{amsart}

\usepackage{hyperref}
\usepackage{etex}
\usepackage[shortlabels]{enumitem}
\usepackage{amsmath}
\usepackage{amsxtra}
\usepackage{amscd}
\usepackage{amsthm}
\usepackage{adjustbox}
\usepackage{amsfonts}
\usepackage{amssymb}
\usepackage{eucal}
\usepackage[all]{xy}
\usepackage{graphicx}
\usepackage{tikz-cd}
\usepackage{mathrsfs}
\usepackage{subfiles}
\usepackage{mathpazo}
\usepackage[colorinlistoftodos, textsize=tiny]{todonotes}
\setlength{\marginparwidth}{2cm}
\usepackage{morefloats}
\usepackage{pdfpages}
\usepackage{thm-restate}
\usepackage[percent]{overpic}
\usepackage[utf8]{inputenc}
\usepackage{epigraph}
\usepackage{csquotes}
\usepackage[margin=1in]{geometry}
\usepackage{adjustbox}
\usepackage{microtype}
\usepackage{stmaryrd}
\usepackage{tikz}
\usetikzlibrary{shapes,arrows.meta}
\usepackage{tikz-3dplot} 
\usetikzlibrary{fadings}
\usetikzlibrary{decorations.pathmorphing, decorations.markings}

\usepackage{verbatim}
\usepackage{stmaryrd}
\usepackage{scalerel}
\usepackage{stackengine}
\stackMath
\newcommand\reallywidehat[1]{%
\savestack{\tmpbox}{\stretchto{%
  \scaleto{%
    \scalerel*[\widthof{\ensuremath{#1}}]{\kern-.6pt\bigwedge\kern-.6pt}%
    {\rule[-\textheight/2]{1ex}{\textheight}}
  }{\textheight}%
}{0.5ex}}%
\stackon[1pt]{#1}{\tmpbox}%
}
\parskip 1ex

\usepackage{mathtools}

\graphicspath{ {images/} }

\RequirePackage{color}
\definecolor{myred}{rgb}{0.75,0,0}
\definecolor{mygreen}{rgb}{0,0.5,0}
\definecolor{myblue}{rgb}{0,0,0.65}

\usepackage{color}

\usepackage{hyperref}
\hypersetup{citecolor=blue}
\usepackage{tikz}
\usetikzlibrary{matrix,arrows,decorations.pathmorphing}


\theoremstyle{plain}
\newtheorem{theorem}[subsubsection]{Theorem}

\newtheorem{proposition}[subsubsection]{Proposition}
\newtheorem{proposition-definition}[subsubsection]{Proposition-Definition}

\newtheorem{corollary}[subsubsection]{Corollary}

\theoremstyle{definition}
\newtheorem{definition}[subsubsection]{Definition}
\newtheorem{remark}[subsubsection]{Remark}
\newtheorem{example}[subsubsection]{Example}

\newtheorem{question}[subsubsection]{Question}
\newtheorem{conjecture}[subsubsection]{Conjecture} 

\newtheorem{slogan}[subsubsection]{Slogan}

\newtheorem*{claim*}{Claim}
\theoremstyle{remark}

\numberwithin{equation}{section}
\newcommand\nc{\newcommand}
\nc\on{\operatorname}
\nc\renc{\renewcommand}

\nc\mf\mathfrak
\nc\mc\mathcal
\nc\mb\mathbb
\nc\msf\mathsf
\nc\mscr\mathscr




\title{Motives, mapping class groups, and monodromy}
\author{Daniel Litt}
\date{\today}

\begin{document}

\begin{abstract}
We survey some recent developments at the interface of algebraic geometry, surface topology, and the theory of ordinary differential equations. Motivated by ``non-abelian" analogues of standard conjectures on the cohomology of algebraic varieties, we study mapping class group actions on character varieties and their algebro-geometric avatar---isomonodromy differential equations---from the point of view of both complex and arithmetic geometry. We then collect some open questions and conjectures on these topics. These notes are an extended version of my talk at the  April 2024 Current Developments in Mathematics conference at Harvard.
\end{abstract}

\maketitle
\setcounter{tocdepth}{1}

\tableofcontents

\section{Introduction}\label{section:introduction}
The goal of these notes is to explain a number of relationships between the topology of algebraic varieties, representation theory and dynamics of mapping class groups, and the theory of algebraic differential equations. I have tried to survey a few different classical and modern perspectives on these subjects, and to put together some conjectures and questions that might give the reader a sense of some of the directions the area is heading.

The basic question motivating this work is: how is the geometry of an algebraic variety $X$ reflected in the structure  of its fundamental group, $\pi_1(X)$, and in the representation theory of $\pi_1(X)$? We will see shortly that this question is the modern descendent of some very classical questions about ordinary differential equations, braid groups and mapping class groups, hypergeometric functions, etc. As is traditional in algebraic geometry, we view it as a special case of a much more general question about \emph{families} of algebraic varieties: given (say) a smooth proper morphism $$f: X\to S,$$ and points $x\in X, s=f(x)\in S$, how is the geometry of $f$ reflected in the exact sequence $$\pi_1(X_s, x)\to \pi_1(X, x)\to \pi_1(S, s)\to 1,$$ in the induced outer action of $\pi_1(S, s)$ on $\pi_1(X_s)$, and in the induced action of $\pi_1(S,s)$ on conjugacy classes of representations of $\pi_1(X_s)$? 

There are a number of basic examples of morphisms $f$ as above that the reader would do well to keep in mind: namely the maps $$\mathscr{C}_{g,n}\to \mathscr{M}_{g,n}$$ from the universal $n$-pointed curve of genus $g$ to the Deligne-Mumford moduli space of smooth $n$-pointed curves of genus $g$. We will return to this fundamental example throughout this survey, as in this case the questions we consider are closely related to important classical questions in surface topology, through the natural identification of $\pi_1(\mathscr{M}_{g,n})$ with the (pure) mapping class group of an $n$-pointed surface of genus $g$, and to important classical questions in the theory of ordinary differential equations, when $g=0$.

\subsection{A reader's guide}
We hope the material covered here will appeal to mathematicians with interests in algebraic and arithmetic geometry, dynamics, or surface topology. We have tried to write \autoref{section:matrices}, which discusses a classical and elementary question about dynamics of $2\times 2$ matrices (which arises when one specializes the more general questions considered here to the case of the variety $X=\mathbb{CP}^1\setminus\{x_1, \cdots, x_n\}$), with a broad mathematical audience in mind. \autoref{section:canonical} discusses the generalization of this question to surfaces of arbitrary genus: namely, the analysis of finite orbits of the mapping class group action on the character variety of a $n$-punctured surface of genus $g$. While the methods of proof are somewhat technical---relying as they do on non-abelian Hodge theory and input from the Langlands program---we hope that the questions considered, and their answers, will still be of broad interest. This section also introduces the connection to certain algebraic differential equations, the so-called \emph{isomonodromy} differential equations, examples of which include the Painlev\'e VI equation and the Schlesinger system.

\autoref{section:p-curvature} and \autoref{section:non-abelian-conjectures} will primarily be of interest to algebraic and arithmetic geometers. In \autoref{section:p-curvature} we give a conjectural (arithmetic) answer to \emph{all} questions about finite orbits of the actions of fundamental groups of algebraic varieties on character varieties, and algebraic solutions to isomonodromy equations, and sketch a proof for ``Picard-Fuchs" initial conditions. In \autoref{section:non-abelian-conjectures}, we give some philosophical motivation for these questions by analogy to standard conjectures on algebraic cycles (the Hodge conjecture, Tate conjecture, and so on) and enumerate a number of questions on the arithmetic and algebraic geometry of character varieties, suggested by this analogy.

Finally, in \autoref{section:mcg}, we return to questions about mapping class groups and their representations, and explain the connection with a number of basic open questions about vector bundles on algebraic curves. This last section should be of interest to both complex algebraic geometers and surface topologists, and we have done our best to make it accessible to readers from either of these backgrounds.

The last two sections, \autoref{section:non-abelian-conjectures} and \autoref{section:mcg}, are filled with questions and conjectures. We hope that these will give the reader a sense of where the subject is heading.  
\subsection{Acknowledgments}
Everything new in this paper is joint work, with various subsets of $\{\text{Josh Lam, Aaron Landesman, Will Sawin}\}$. I am extremely grateful to them for the many, many ideas they have contributed to this work. I would also like to  acknowledge the enormous intellectual debt the work here owes to H\'el\`ene Esnault, Michael Groechenig, Nick Katz, Mark Kisin, and Carlos Simpson. I am also very grateful to Josh Lam, Aaron Landesman, and Salim Tayou for many useful comments. In particular many of the subjects covered here (particularly those in \autoref{section:non-abelian-conjectures}) are discussed from a somewhat different point of view in \cite{esnault-local-systems}, which we enthusiastically recommend. This work was supported by the NSERC Discovery Grant, ``Anabelian methods in arithmetic and algebraic geometry" and by a Sloan Research Fellowship.
\section{Some questions about $n$-tuples of matrices}\label{section:matrices} To bring things down to earth, we start with an example that will demonstrate many of the features of the general situation discussed in \autoref{section:introduction}. Take $X$ to be the simplest algebraic variety with interesting fundamental group, i.e.~ $$X=\mathbb{CP}^1\setminus \{x_1, \cdots, x_n\},$$ where $x_1, \cdots, x_n$ are distinct points. 

\begin{figure}
\begin{tikzpicture}[scale=3, decoration={
    markings,
    mark=at position 0.5 with {\arrow{>}}}
    ]
    \shade[ball color = blue!40, opacity = 0.4] (0,0) circle (1cm);
    
    \def\n{5}
    \def\loopsize{0.2} 
    \def\loopwidth{0.05} 
   
    \foreach \i in {1,...,\n}{
        \pgfmathsetmacro{\angle}{360/\n * \i}
        
        \coordinate (Puncture) at (\angle:.8cm);
        
        \fill[black] (Puncture) circle (0.5mm);
        \node at (\angle:.68cm) {$x_{\i}$};
        
        \pgfmathsetmacro{\loopI}{\angle+360/\n/3}
          \pgfmathsetmacro{\loopII}{\angle-360/\n/3}
                    \pgfmathsetmacro{\looplabel}{\angle-360/\n/2.5}

        
        \draw plot [smooth, tension=1] coordinates { (0,0) (\loopI:.6cm) (\angle:.9cm) (\loopII:.6cm) (0,0)};
        \node at (\looplabel:.7cm) {$\gamma_{\i}$};
    }
\end{tikzpicture}
\caption{Generators of the fundamental group of a $5$-times punctured curve of genus zero, satisfying the relations $\prod \gamma_i=\on{id}$.}\label{figure:pi1-genus-0}
\end{figure}
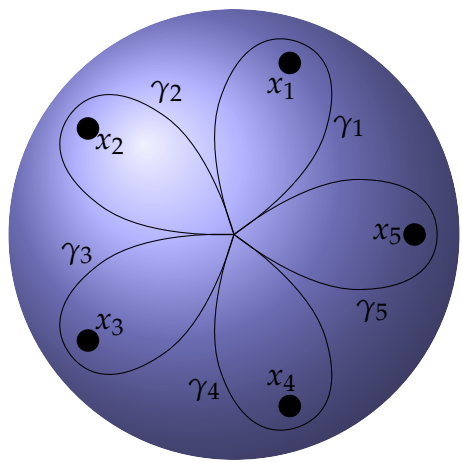

The fundamental group of $X$ has the presentation $$\pi_1(X)=\langle \gamma_1, \cdots, \gamma_n \mid \prod_i \gamma_i=1\rangle,$$ with $\gamma_i$ a loop around $x_i$ (as in \autoref{figure:pi1-genus-0}), and hence a representation $$\pi_1(X)\to \on{GL}_r(\mathbb{C})$$ is the same\footnote{Explicitly, set $A_i$ to be the image of $\gamma_i$ in $\on{GL}_r(\mathbb{C})$.} as an $n$-tuple of $r\times r$ invertible matrices $(A_1, \cdots, A_n)$ such that \begin{equation}
\label{equation:product}
 	\prod_{i=1}^n A_i=\text{id}.
\end{equation}

Set $$Y(0, n, r)=\left\{(A_1, \cdots, A_n)\in \on{GL}_r(\mathbb{C})^n \text{ such that } \prod_{i=1}^n A_i=\text{id}\right\}/\on{GL}_r(\mathbb{C})$$ where $\on{GL}_r(\mathbb{C})$ acts on an $n$-tuple $(A_1, \cdots, A_n)$ by simultaneous conjugation, i.e.~ $$B\cdot (A_1, \cdots, A_n)=(BA_1B^{-1}, \cdots, BA_nB^{-1}).$$ That is, $Y(0, n, r)$ is the set of conjugacy classes of $n$-tuples of $r\times r$ invertible complex matrices whose product is the identity matrix, or equivalently the set of conjugacy classes of $r$-dimensional representations of $\pi_1(X)$.\footnote{Here the index $0$ in the notation $Y(0,n,r)$ indicates that this is the genus $0$ version of a more general problem, which we will encounter later in the notes.}

Given conjugacy classes $C_1, \cdots, C_n\subset \on{GL}_r(\mathbb{C})$ we write $\underline{C}$ for the tuple $(C_1, \cdots, C_n)$, and set 
$$Y(\underline{C})=\left\{(A_1, \cdots, A_n) \middle| \prod_{i=1}^n A_i=\text{id}\text{ and } A_i\in C_i \text{ for all }i\right\}\Big/\text{simultaneous conjugation}$$
 That is, $Y(\underline{C})\subset Y(0,n,r)$ is the set of (simultaneous) conjugacy classes of $n$-tuples of matrices $(A_1, \cdots, A_n)$ satisfying the equation \eqref{equation:product},
 subject to the constraint that $A_i$ lies in $C_i$ for each $i$, or equivalently the set of conjugacy classes of representations $$\rho: \pi_1(X)\to \on{GL}_r(\mathbb{C})$$ such that $\rho(\gamma_i)\in C_i$ for each $i$.
 
 We denote by $Y(0,n,r)^{\text{irr}},$ (resp.~$Y(\underline{C})^{\text{irr}}$) the subsets of $Y(0,n,r)$ (resp.~$Y(\underline{C})$) corresponding to irreducible representations of $\pi_1(X)$. Note that these sets are naturally topological spaces (and indeed, they have a lot more structure, as we will see later). For example, $Y(\underline{C})$ inherits a natural (quotient) topology: it is a quotient of the subset of $C_1\times \cdots\times C_n$ consisting of tuples satisfying \eqref{equation:product}.

Three questions immediately present themselves:
\begin{enumerate}
	\item (Existence) For which $\underline{C}$ is $Y(\underline{C})^{\text{irr}}$ non-empty? This question is a form of the Deligne-Simpson problem, surveyed nicely in \cite{kostov-DS}, and has a number of variants; for example, one might ask that the $A_i$ all lie in the unitary group, or in some other subgroup of $\on{GL}_r(\mathbb{C})$.
	\item (Uniqueness) For which $\underline{C}$ is $Y(\underline{C})^{\text{irr}}$ a singleton? That is, when is a solution to \eqref{equation:product} determined uniquely (up to simultaneous conjugation) by the conjugacy classes $C_i$ of the matrices $A_i$? For reasons that will soon become clear, the question of classifying tuples $\underline{C}$ such that $Y(\underline{C})$ is a singleton is typically referred to as the classification of \emph{rigid local systems}, and was studied by Katz in his book of the same name, \cite{katz-rigid}. 
	\item (Geometry and dynamics) When $Y(\underline{C})$ is not a singleton, what does it look like? In particular, $Y(\underline{C})$ has a huge group of symmetries: given one solution $(A_1, \cdots, A_n)$ to \eqref{equation:product}, one may produce others via the  operations $$\sigma_{i}: (A_1, \cdots, A_n)\mapsto (A_1, \cdots, A_iA_{i+1}A_i^{-1}, A_i, \cdots, A_n),$$ where $i$ ranges from $1$ to $n-1$. The group $B_n:=\langle \sigma_1, \cdots, \sigma_{n-1}\rangle$ acts on $Y(0,n,r)$, permuting the $C_i$, hence an index $n!$ subgroup preserves each $Y(\underline{C})$. The study of the dynamics of this action goes back to work of Markoff \cite{markoff1879formes, markoff1880formes} and Fricke and Klein \cite{fricke-klein} in the 19th century; we will be interested in the most basic questions about this action, e.g.~classifying finite orbits, invariant subvarieties, and so on.

Note that the question of classifying finite orbits generalizes (2) above: if $Y(\underline{C})^{\text{irr}}$ is a singleton, it necessarily has finite orbit under the action of the group $B_n$.
\end{enumerate}
The existence question (1) and uniqueness question (2) above are reasonably well-understood, as we shortly explain. Question (3) has a long history and we are quite far from any kind of answer, even in the case of $2\times 2$ matrices. Our purpose in this section is to summarize some recent progress on this very special case.
\subsection{Existence and uniqueness}\label{subsection:existence-uniqueness}
We will not say much about the Deligne-Simpson problem---i.e.~the problem of determining whether or not $Y(\underline{C})^{\text{irr}}$ is non-empty---except to note that what we know about it (in particular, a complete solution in case the conjugacy classes satisfy a suitable genericity condition), due to work of Simpson \cite{simpson-products}, Kostov \cite{kostov-ds-soln}, Crawley-Boevey \cite{cb-matrices, cb-ds, cb-ds-icm}, and others, is closely tied to algebraic geometry. For example, the existence of points of $Y(\underline{C})$ corresponding to irreducible unitary representations of $\pi_1(X)$ is equivalent to the existence of stable parabolic bundles on $\mathbb{CP}^1$ with prescribed local data, and this point of view leads to a complete solution in this case, in terms of the quantum Schubert calculus of the Grassmannian; the case of rank $2$ was worked out by Biswas \cite{biswas-parabolic}, and the general case by Agnihotri-Woodward \cite{aw-unitary-quantum-schubert} and Belkale \cite{belkale-unitary}.

Regarding uniqueness, we briefly summarize Katz's classification \cite[\S6]{katz-rigid} of tuples of conjugacy classes $\underline{C}$ for which $Y(\underline{C})^{\text{irr}}$ is a singleton, as his method will have some relevance later. We say a tuple of conjugacy classes $(C_1, \cdots, C_n)\subset \on{GL}_r(\mathbb{C})^n$ is \emph{rigid} if $Y(\underline{C})^{\text{irr}}$ is a singleton; we will also refer to the corresponding conjugacy class of irreducible representations $$\rho: \pi_1(X)\to \on{GL}_r(\mathbb{C})$$ with $\rho(\gamma_i)\in C_i$ as a \emph{rigid representation}, and the corresponding local system on $X$ as a \emph{rigid local system}.

For each $\lambda\in \mathbb{C}^\times\setminus\{1\}$, Katz produces a functor $$\on{MC}_\lambda: \on{Rep}(\pi_1(X))\to \on{Rep}(\pi_1(X))$$ with the following properties: 
\begin{itemize}
	\item If $\rho$ is a rigid irreducible $\pi_1(X)$-representation, $\on{MC}_\lambda(\rho)$ is rigid and irreducible.
	\item The functors $\on{MC}_\lambda, \on{MC}_{\lambda^{-1}}$ are quasi-inverse.
	\item If $\rho$ is a rigid irreducible $\pi_1(X)$-representation of rank at least $2$, there exists a rank one representation $$\chi: \pi_1(X)\to \mathbb{C}^\times$$ and a scalar $\lambda \in \mathbb{C}^\times\setminus \{1\}$ such that $$\on{rk} \on{MC}_\lambda(\rho\otimes \chi)< \on{rk} \rho.$$
\end{itemize}
If a representation $\rho$ corresponds to a tuple of matrices $(A_1, \cdots, A_n)$, we will also write $\on{MC}_\lambda(A_1, \cdots, A_n)$ for a tuple of matrices corresponding to $\on{MC}_\lambda(\rho)$. Note that $\on{MC}_\lambda$ does not in general preserve the dimension of a representation.

The upshot of this construction is as follows: given a rigid irreducible $\pi_1(X)$-representation $\rho$, one may find a sequence of rank one representations $\chi_i: \pi_1(X)\to \mathbb{C}^\times$ and $\lambda_i\in \mathbb{C}^\times\setminus \{1\}$, $i=1,\cdots,$ such that, setting $\rho_0=\rho$ and $\rho_i=\on{MC}_{\lambda_i}(\rho_{i-1}\otimes\chi_i),$ we have $$1\leq \on{rk}(\rho_i)<\on{rk}(\rho_{i-1})$$ and hence eventually $\on{rk}(\rho_t)=1$ for some $t$. As the operations $\on{MC}_\lambda$ and $-\otimes \chi$ are invertible (and rank one representations are necessarily rigid, as a $1\times 1$ matrix is determined by its conjugacy class), this tells us that every rigid irreducible is ``generated" by rank one representations under these operations.

It turns out that the conjugacy class of $\on{MC}_\lambda(\rho)(\gamma_i)$ depends only on $\lambda$ and the conjugacy class of $\rho(\gamma_i)$. Thus given a tuple of conjugacy classes $C_1, \cdots, C_n \subset \on{SL}_r(\mathbb{C})$, there exists another (explicit) tuple $C_1', \cdots, C_n'\subset \on{SL}_{r'}(\mathbb{C})$ such that $\on{MC}_{\lambda}$ induces a map $$Y(\underline{C})\to Y(\underline{C'}).$$
Katz's description of the functor $\on{MC}_\lambda$ is algebro-geometric in nature; we explain a variant in \autoref{subsubsection:middle-convolution}. There are now a number of different expositions of Katz's classification from various more algebraic points of view, notably \cite{dettweiler-reiter-katz} and \cite{volklein}, which make the middle convolution operation completely explicit and computable.
\subsection{Dynamics}
We now turn to the dynamics of the $\langle \sigma_1, \cdots,\sigma_{n-1}\rangle$ action on $Y(0, n, r)$. How does this dynamics arise? A hint can be found in the observation that the two tuples $$\sigma_i\sigma_{i+1}\sigma_i\cdot (A_1, \cdots, A_n) \text{ and } \sigma_{i+1}\sigma_i\sigma_{i+1}\cdot (A_1, \cdots, A_n)$$ are conjugate for $1\leq i\leq n-1$, and $\sigma_i, \sigma_j$ commute for $|i-j|\geq 2$. That is, the action of $\langle \sigma_1, \cdots, \sigma_{n-1}\rangle$ on $Y(0,n, r)$ factors through the quotient $$\langle \sigma_1, \cdots, \sigma_{n-1}\mid \sigma_i\sigma_{i+1}\sigma_i=\sigma_{i+1}\sigma_i\sigma_{i+1} \text{ and } \sigma_i\sigma_j=\sigma_j\sigma_i \text{ for } |i-j|\geq 2\rangle,$$ which is the usual Artin presentation of the braid group, which is (up to quotienting by the center) the mapping class group $$\on{Mod}_{0,n}:=\pi_0(\on{Homeo}^+(\mathbb{CP}^1\setminus\{x_1, \cdots, x_n\}))$$ of orientation-preserving self-homeomorphisms of $\mathbb{CP}^1\setminus\{x_1, \cdots, x_n\}$, up to isotopy. This latter group has an obvious outer action on $\pi_1(\mathbb{CP}^1\setminus\{x_1, \cdots, x_n\})$,\footnote{Here $\on{Mod}_{0,n}$ only has an \emph{outer} action, as opposed to an honest action, as a self-homeomorphism of $\mathbb{CP}^1\setminus\{x_1, \cdots, x_n\}$ will typically not preserve a basepoint.} induced by the action of $\on{Homeo}^+(\mathbb{CP}^1\setminus\{x_1, \cdots, x_n\})$ on $\mathbb{CP}^1\setminus\{x_1, \cdots, x_n\}$ and hence acts on conjugacy classes of representations of $\pi_1(\mathbb{CP}^1\setminus\{x_1, \cdots, x_n\})$, i.e.~points of $Y(0,n,r)$.

There is an evident surjection $$\on{Mod}_{0,n}\to S_n$$ given by sending $\sigma_i$ to the transposition $(i, ~i+1)$ (or equivalently induced by the action of $\on{Homeo}^+(\mathbb{CP}^1\setminus\{x_1, \cdots, x_n\})$ on $\{x_1, \cdots, x_n\}$). Set $\on{PMod}_{0,n}$ to be the kernel of this surjection---the pure mapping class group. It turns out that $\on{PMod}_{0,n}$ is the fundamental group of the moduli space $\mathscr{M}_{0,n}$ of genus zero curves with $n$ marked points, and the outer action $$\on{PMod}_{0,n}\to \on{Out}(\pi_1(\mathbb{CP}^1\setminus\{x_1, \cdots, x_n\}))$$ is induced by the short exact sequence (a special case of the Birman exact sequence) $$1\to \pi_1(\mathbb{CP}^1\setminus\{x_1, \cdots, x_n\})\to \pi_1(\mathscr{M}_{0,n+1})\to \pi_1(\mathscr{M}_{0,n})\to 1$$ corresponding to the fibration $$\mathscr{M}_{0,n+1}\to \mathscr{M}_{0,n}$$ given by forgetting the $n+1$-st marked point. So in particular the analysis of the outer action of $\on{PMod}_{0,n}$ on $\pi_1(\mathbb{CP}^1\setminus\{x_1, \cdots, x_n\})$ fits into the paradigm with which we started these notes in \autoref{section:introduction}, namely that of analyzing the geometry of an algebraic map in terms of the induced structure of the map of fundamental groups. Note that $\on{PMod}_{0,n}$ acts on each $Y(\underline{C})$.

We begin with the most basic possible question about this action: what are the finite orbits of the action of $\on{Mod}_{0,n}$ (equivalently, of $\on{PMod}_{0,n}$), on $Y(0,n,r)$?
\subsubsection{The cases $n=0,1,2,3$} For $n=0,1$, there is almost nothing to say---the fundamental group of $\mathbb{CP}^1\setminus\{x_1, \cdots, x_n\}$ is trivial. For $n=2$ the fundamental group is $\mathbb{Z}$, and $\on{Mod}_{0,2}$ is finite. For $n=3$ the fundamental group of $\mathbb{CP}^1\setminus\{x_1, \cdots, x_3\}$ is the free group on two generators, and hence has many interesting representations. But in this case $\on{Mod}_{0,3}$ is still finite---isomorphic to the symmetric group $S_3$---and so all orbits are finite. (One may see this geometrically---the group $\on{PMod}_{0,3}$ is the fundamental group of $\mathscr{M}_{0,3}$, which is a point.)

In fact, in this last case all irreducible $2$-dimensional representations are \emph{rigid} in the sense of \autoref{subsection:existence-uniqueness}; while the dynamics are not interesting, this is the source of the (extremely rich) theory of hypergeometric functions, and the corresponding representations of $\pi_1(\mathbb{CP}^1\setminus\{x_1, x_2, x_3\})$ are precisely given by the monodromy of the hypergeometric functions ${}_2F_1(a,b,c | z)$ (see e.g.~ \cite{beukers-gauss}).
\subsubsection{The case $n=4$}\label{subsubsection:n-4} So the first interesting case of our dynamical question---that of classifying finite $\on{Mod}_{0,n}$-orbits on $Y(0,n,r)$---is the case when $n=4$, and this case is very interesting indeed. The dynamics of this situation were originally studied by Markoff in the 19th century \cite{markoff1879formes, markoff1880formes} in a different guise.

Markoff studied integer solutions to the cubic equation \begin{equation} \label{eqn:markoff} x^2+y^2+z^2-3xyz=0.\end{equation} Given one solution $(x,y,z)$ to this equation (for example $(x,y,z)=(1,1,1)$) one may produce more by ``Vieta jumping": fixing $y$ and $z$, \eqref{eqn:markoff} becomes quadratic in $x$, and hence admits another solution with the same $y, z$ coordinates, namely $$T_x(x,y,z)=(3yz-x, y,z).$$ Analogously we define $$T_y(x,y,z)=(x, 3xz-y, z), T_z(x,y,z)=(x, y, 3xy-z);$$ the group $\langle T_x, T_y, T_z\rangle$ acts on the affine cubic surface $S$ defined by \eqref{eqn:markoff}. It turns out that the complex points of the surface $S$ parametrize semisimple representations in $Y(\underline{C})$, where $$C_1=C_2=C_3=\left[\begin{pmatrix} 0 & -1 \\ 1 & 0 \end{pmatrix}\right], C_4=\left[\begin{pmatrix} 1 & 1\\ 0 & 1 \end{pmatrix}\right].$$ More generally, if $C_1,\cdots, C_4\subset \on{SL}_2(\mathbb{C}),$ one may parametrize points of $Y(\underline{C})$ corresponding to semisimple representations of $\pi_1(\mathbb{CP}^1\setminus\{x_1,\cdots, x_4\})$ by the complex points of $$X_{A,B,C,D}=\{(x,y,z\mid x^2+y^2+z^2+xyz=Ax+By+Cz+D\}$$ where $$A=\on{tr}(C_1)\on{tr}(C_2)+\on{tr}(C_3)\on{tr}(C_4),$$
$$B=\on{tr}(C_2)\on{tr}(C_3)+\on{tr}(C_1)\on{tr}(C_4),$$
$$C=\on{tr}(C_1)\on{tr}(C_3)+\on{tr}(C_2)\on{tr}(C_4), \text{ and}$$
$$D=4-\on{tr}(C_1)^2-\on{tr}(C_2)^2-\on{tr}(C_3)^2-\on{tr}(C_4)^2-\prod_{i=1}^4 \on{tr}(C_i).$$
One may similarly define an action of the group $\mathbb{Z}/2\mathbb{Z}*\mathbb{Z}/2\mathbb{Z}*\mathbb{Z}/2\mathbb{Z}$ on $X_{A,B,C,D}$ by Vieta jumping; it turns out that, up to finite index, this is the same as the natural action of $\on{Mod}_{0,4}$ on $Y(\underline{C})$ discussed above \cite[(1.5) and \S1.2]{cantat-loray}. 

Markoff was not interested in finite orbits---rather, he showed that the integral solutions to \eqref{eqn:markoff} form a single orbit under these dynamics. We will return to questions about integral points later in these notes, in \autoref{subsubsection:integral-points}; instead we now turn to the origin of our question about finite orbits.
\subsubsection{$n=4$ and the Painlev\'e VI equation} \label{subsubsection:painleve}
The Painlev\'e VI equation $\on{PVI}(\alpha, \beta,\gamma, \delta)$, discovered by Richard Fuchs \cite{fuchs-painleve}, is given by:
\begin{multline}\label{equation:PVI}
\frac{d^2y}{dt^2}=
\frac{1}{2}\left(\frac{1}{y}+\frac{1}{y-1}+\frac{1}{y-t}\right)\left( \frac{dy}{dt} \right)^2
-\left(\frac{1}{t}+\frac{1}{t-1}+\frac{1}{y-t}\right)\frac{dy}{dt} \\  +
\frac{y(y-1)(y-t)}{t^2(t-1)^2}
\left(\alpha+\beta\frac{t}{y^2}+\gamma\frac{t-1}{(y-1)^2}+\delta\frac{t(t-1)}{(y-t)^2}\right)
\end{multline}
where $y$ is a function of $t$ and $\alpha, \beta, \gamma, \delta$ are complex numbers.

Painlev\'e famously claimed, in his Stockholm lectures, \cite[Le\c{c}ons sur la th\'eorie analytique des \'equations diff\'erentielles profess\'ees \`a Stockholm]{painleve-oeuvres} that solutions to this equation are given by ``new transcendents": that is, functions that could not be expressed in terms of classical functions. While this is true for generic values of the parameters $(\alpha, \beta, \gamma, \delta)$, Painlev\'e's argument was not rigorous by modern standards, and correct proofs, largely due to Umemura and his school (see e.g.~\cite{umemura1, umemura2}), rely on a classification of the (rare) algebraic solutions (i.e.~algebraic functions satisfying \eqref{equation:PVI}) that do exist.

We shall see later in these notes (in \autoref{subsubsection:isomonodromic}) that each algebraic solution corresponds to a certain finite orbit of $\on{Mod}_{0,4}$ on $Y(0, 4, 2)$, and conversely, each finite orbit gives rise to a (countable) family of algebraic solutions to \eqref{equation:PVI}.

Algebraic solutions to the Painlev\'e VI equations (equivalently, finite orbits of $\on{Mod}_{0,4}$ on $Y(0,4,2)$) have been classified by Lisovyy and Tykhyy \cite{lisovyy-tykhyy}. Their classification builds on work of many people, including Andreev-Kitaev \cite{andreev2002transformations}, Boalch \cite{boalch-reflections, boalch-icosahedral, boalch-six, boalch2005klein, boalch-higher, boalch2007towards}, Doran \cite{doran}, Dubrovin-Mazzocco \cite{dubrovin-mazzocco}, Hitchin \cite{hitchen-poncelet, hitchin2003lecture}, and Kitaev \cite{kitaev-special}, and relies on an effective form of the Manin-Mumford conjecture for tori (originally due to Lang \cite{lang-gm}) and a somewhat involved computer computation. Up to a slightly complicated equivalence relation, which we will not summarize here, there are:
\begin{itemize}
\item four continuous families of finite orbits,
\item one countably infinite (discrete) family of finite orbits, of unbounded size, and
\item forty-five exceptional finite orbits.
\end{itemize}
\begin{example}
The countably infinite family of orbits mentioned above have representatives given by
\[
A_1= \begin{pmatrix} 1+x_2x_3/x_1 & -x_2^2/x_1 \\  x_3^2/x_1 & 1-x_2x_3/x_1 \end{pmatrix},
A_2=\begin{pmatrix} 1 & -x_1 \\ 0 & 1\end{pmatrix}, \ 
A_3=\begin{pmatrix} 1 & 0 \\ x_1 & 1\end{pmatrix}, \ 
A_4=(A_1A_2A_3)^{-1} \
\]
where 
\[
x_1=2\cos\bigg(\frac{\pi(\alpha+\beta)}{2}\bigg), \ x_2=2  \sin\bigg(\frac{\pi \alpha}{2}\bigg), \ x_3= 2 \sin\bigg(\frac{\pi \beta}{2}\bigg)
\]
for $\alpha, \beta \in \mb{Q}$. 	See \cite[Example 1.1.7]{lamlitt} for a discussion of this example, and the rest of that paper for an involved analysis of some related arithmetic questions. See also \autoref{subsection:cayley-solutions} for a brief further discussion of this example.
\end{example}

The upshot is that, at least at first glance, the classification is somewhat complicated!
\subsubsection{The case $n>4$} Not much was known about finite orbits of $\on{Mod}_{0,n}$ on $Y(0, n, 2)$ when $n>4$, with the exception of a very interesting paper of Tykhyy \cite{tykhyy-garnier} which gives a computer-aided classification when $n=5$ (though I have not yet understood the extent to which there is a rigorous proof that this classification is complete).  Aside from this there are a few sporadic constructions of finite orbits, e.g.~\cite{diarra, calligaris-mazzocco}. As before this question can be understood as asking for a classification of algebraic solutions to a certain non-linear ODE. 

In general finite orbits of the action of $\on{Mod}_{0,n}$ on $Y(0,n,r)$ correspond to algebraic solutions to the system
\begin{equation}\label{equation:schlesinger}
\begin{cases}\displaystyle
	\frac{\partial B_i}{\partial\lambda_j}=\frac{[B_i, B_j]}{\lambda_i-\lambda_j} & \text{ if } i\neq j\\
\displaystyle	\sum_i \frac{\partial B_i}{\partial \lambda_j}=0 &
\end{cases}
\end{equation}
where the $B_i$ are $\mathfrak{gl}_r(\mathbb{C})$-valued functions (see \autoref{example:schlesinger-derivation}). 
In this language the question was clearly of interest to Painlev\'e, Schlesinger, Gambier, Garnier, etc. (see for example Garnier's 1912 paper \cite{garnier}, where Garnier writes down, among other things, many classical solutions to \eqref{equation:schlesinger}), but to my knowledge the idea that a complete classification might be possible first appeared in print in work of Dubrovin-Mazzocco \cite{dubrovin-mazzocco-schlesinger}.

It turns out that one can give a quite clean classification of the finite orbits of the $\on{PMod}_{0,n}$-action on $Y(\underline{C})$, when at least one of the conjugacy classes $C_i$ of $\underline{C}$ has infinite order, using algebro-geometric techniques, as we now explain. We make the following convenient definition:
\begin{definition}\label{definition:interesting-rep}
	Say that a representation $$\rho: \pi_1(\mathbb{CP}^1\setminus\{x_1, \cdots, x_n\})\to \on{SL}_2(\mathbb{C})$$ (equivalently, an $n$-tuple of matrices $A_1, \cdots, A_n\in \on{SL}_2(\mathbb{C})$ such that $\prod_i A_i=\text{id}$) is \emph{interesting} if 
	\begin{enumerate}
		\item Its image is Zariski-dense, i.e.~it has infinite image and the image can't be conjugated into one of the subgroups $$\begin{pmatrix} * & * \\ 0 & * \end{pmatrix} \text{ or } \begin{pmatrix} * & 0 \\ 0 & * \end{pmatrix} \cup \begin{pmatrix} 0 & * \\ * & 0 \end{pmatrix},$$
		\item No $A_i$ is a scalar matrix, and
		\item The point of $Y(\underline{C})$ corresponding to $\rho$ has finite orbit under the action of $\on{PMod}_{0,n}$, and it is isolated as a finite orbit of $\on{PMod}_{0,n}$. That is, if $\Gamma\subset \on{PMod}_{0,n}$ is the stabilizer of $[\rho]\in Y(\underline{C})$, $[\rho]$ is an isolated point of $Y(\underline{C})^\Gamma$, with the natural topology inherited from $Y(\underline{C})$ via its presentation as a quotient of $C_1\times \cdots\times C_n$.
	\end{enumerate}
	We say a local system on $\mathbb{CP}^1\setminus\{x_1, \cdots, x_n\}$ is interesting if its monodromy representation is interesting.
\end{definition}
\begin{remark}
The careful reader may have noticed that we are now considering $\on{SL}_2(\mathbb{C})$-representations	rather than $\on{GL}_2(\mathbb{C})$-representations. There is no loss of generality here: given $(A_1, \cdots, A_n)\in Y(0,n,2)$ with finite $\on{Mod}_{0,n}$-orbit, one may always scale the matrices by appropriately chosen scalars so that they lie in $\on{SL}_2(\mathbb{C})$, while preserving the property that the tuple have finite $\on{Mod}_{0,n}$-orbit.
\end{remark}

The point of the conditions of \autoref{definition:interesting-rep} is that representations not satisfying these conditions have been classified classically. Let us say a word about each of the conditions before proceeding to our partial classification of interesting representations.
\begin{enumerate}
	\item The non-Zariski-dense subgroups of $\on{SL}_2(\mathbb{C})$ are either finite, can be conjugated into the Borel subgroup $$\begin{pmatrix} * & *\\ 0 & *\end{pmatrix},$$ or can be conjugated into the (infinite) dihedral group $$\begin{pmatrix} * & 0 \\ 0 & * \end{pmatrix} \cup \begin{pmatrix} 0 & * \\ * & 0 \end{pmatrix}.$$ The conjugacy class of a representation $$\rho: \pi_1(\mathbb{CP}^1\setminus \{x_1, \cdots, x_n\})\to \on{SL}_2(\mathbb{C})$$ with finite image always has finite orbit under $\on{Mod}_{0,n}$; the finite subgroups of $\on{SL}_2(\mathbb{C})$ were essentially classified by Euclid (in his classification of Platonic solids), and explicitly classified by Schwarz \cite{schwarz}.

	The representations that can be conjugated into the Borel subgroup are precisely the representations which are not irreducible. The finite orbits of such representations were classified by Cousin-Moussard \cite{cousinmoussard}, with an essentially (but perhaps not obviously) equivalent classification found by McMullen \cite[Theorem 8.1]{mcmullen-braid}. 
	
	Finally, the representations that can be conjugated into the infinite dihedral group were classified by Tykhyy \cite[p.~122, A]{tykhyy-garnier}; alternately one can deduce this classification from the main result of \cite{dacampo}.
	\item The assumption that no $A_i$ is a scalar matrix is harmless, as we now explain. A scalar matrix in $\on{SL}_2(\mathbb{C})$ is $\pm\on{id}$. If $A_i=\on{id}$, we may remove it from our tuple of matrices, replacing $n$ with $n-1$. If $A_i=-\on{id}$, let $j\neq i$ be such that $A_{j}$ is non-scalar, and consider the new tuple of matrices $A_k'$ where $A_k'=A_k$ for $k\neq i, j$, $A_i'=-A_i, A_{j}'=-A_{j}$. This new tuple has $A_{i'}=\on{id}$, and hence we may remove it as before. So it is easy to satisfy this assumption by multiplying by scalars and removing some of our matrices.
	\item It is much less obvious that the condition of \autoref{definition:interesting-rep}(3) is at all relevant to the problem. That said, results of Corlette-Simpson \cite{corlette-simpson} show that Zariski-dense representations $\rho$ with finite $\on{PMod}_{0,n}$-orbit, not satisfying this condition, have a very special form: they are so-called ``pullback solutions" studied by Doran \cite{doran}, Kitaev \cite{kitaev-special} and others and classified by Diarra \cite{diarra}. We will discuss these to some extent in \autoref{subsubsection:pullback}.
\end{enumerate}
By the discussion above, the only finite orbits that remain to be classified are the interesting ones. We can almost do so.
\begin{theorem}[Lam-Landesman-L--\, {\cite[Corollary 1.1.7]{lam2023finite}}]\label{theorem:sl2-classification}
	Suppose a conjugacy class of tuples of matrices $(A_1, \cdots, A_n)\in Y(0, n,2)$ is interesting. If some $A_i$ has infinite order, then there exists $\lambda \in \mathbb{C}^\times\setminus\{1\}$ and $\alpha_1, \cdots, \alpha_n\in \mathbb{C}^\times$ such that $$(\alpha_1A_1, \cdots, \alpha_n A_n)=\on{MC}_\lambda(B_1, \cdots, B_n),$$ where each $B_i\in GL_{n-2}(\mathbb{C})$, the group $\langle B_1, \cdots, B_n \rangle$ is an irreducible finite complex reflection group, and $B_1, \cdots, B_{n-1}$ are pseudoreflections.
\end{theorem}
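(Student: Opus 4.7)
The strategy is to apply Katz's middle convolution functor once, with a carefully chosen parameter $\lambda$, to transform the interesting rank-$2$ representation into a higher-rank representation whose local monodromies are pseudoreflections, and then to identify the resulting monodromy group as an \emph{irreducible finite} complex reflection group using non-abelian Hodge theory.

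First I would use the finite-orbit and Zariski-density hypotheses to promote $\rho$ to a Hodge-theoretic object. Since $[\rho]$ has finite $\on{PMod}_{0,n}$-orbit, its isomonodromic deformation descends, after pullback along a finite étale cover $\mathscr{M}' \to \mathscr{M}_{0,n}$, to an algebraic local system $\mathbb{V}$ on the universal punctured curve over $\mathscr{M}'$. Combined with Zariski-density (ruling out reducible and dihedral monodromy on the fiber) and the presence of an $A_i$ of infinite order (ruling out finite global monodromy on the fiber), one can appeal to non-abelian Hodge theory (Corlette--Simpson, together with Esnault--Groechenig-style integrality) to conclude that $\mathbb{V}$ underlies a polarized complex variation of Hodge structure on the universal punctured curve, possibly after a further finite cover.

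Next I would apply $\on{MC}_\lambda$ for a $\lambda \in \mathbb{C}^\times \setminus \{1\}$ chosen from the spectral data of an infinite-order $A_i$, simultaneously rescaling each $A_i$ by a scalar $\alpha_i$ so that the rescaled tuple still has product the identity and matches Katz's eigenvalue normalizations. By the explicit local description of middle convolution (see e.g.~\cite{dettweiler-reiter-katz}), the existence of an infinite-order eigenvalue forces the resulting matrices $B_i \in \on{GL}_{n-2}(\mathbb{C})$ to be pseudoreflections for $i = 1, \ldots, n-1$. Irreducibility of $\langle B_1, \ldots, B_n \rangle$ follows from the irreducibility of $\rho$ and the fact that $\on{MC}_\lambda$ preserves irreducibility.

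The hard part will be establishing \emph{finiteness} of the reflection group $\Gamma := \langle B_1, \ldots, B_n \rangle$. Middle convolution is compatible with isomonodromic deformation, so $\on{MC}_\lambda(\rho)$ also has finite $\on{PMod}_{0,n}$-orbit; but this constrains only the global monodromy over $\mathscr{M}_{0,n}$, not directly the monodromy group of the fiber. The bridge is the VHS structure established above: a pseudoreflection representation of $\pi_1(\mathbb{CP}^1 \setminus \{x_1, \ldots, x_n\})$ underlying a polarized VHS whose isomonodromic family has finite global monodromy is severely constrained, and matching the Hodge numbers together with the pseudoreflection exponents against the Shephard--Todd list---in the spirit of the Beukers--Heckman and Deligne--Mostow classifications of hypergeometric monodromy---should force $\Gamma$ to be a finite irreducible complex reflection group. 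Carrying this last matching step through carefully, while keeping track of the rescaling scalars $\alpha_i$ to obtain the exact statement, is the main technical obstacle.
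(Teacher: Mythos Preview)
Your overall architecture is right---promote the interesting representation to something of geometric origin via Corlette--Simpson (using the isolation hypothesis in ``interesting''), apply a middle convolution, and argue the output has finite monodromy generated by pseudoreflections. But your mechanism for the finiteness step is circular, and this is where the real content lies.

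You propose to establish finiteness of $\Gamma = \langle B_1, \ldots, B_n\rangle$ by ``matching the Hodge numbers together with the pseudoreflection exponents against the Shephard--Todd list.'' But Shephard--Todd is a classification of \emph{finite} complex reflection groups; you cannot match against it unless you already know $\Gamma$ is finite. What the paper actually does is the following. Since $\mathbb{V}$ is of geometric origin, so is $\mathbb{W} := \on{MC}_{\lambda^{-1}}(\mathbb{V}\otimes\mathbb{L}^\vee)$ for any rank-one $\mathbb{L}$ of finite order and any root of unity $\lambda$; hence $\mathbb{W}$ is defined over $\mathscr{O}_K$ for some number field $K$ and underlies a polarizable $\mathscr{O}_K$-VHS. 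The key step is a \emph{combinatorial} choice of $\lambda$ and $\mathbb{L}$ (this is where the $\alpha_i$ come from---they are not mere eigenvalue normalizations) so that the Hodge filtration on $\mathbb{W}$, and on all its Galois conjugates, has length one. A length-one Hodge filtration forces the polarization to be definite, so each conjugate $\mathbb{W}\otimes_{\iota,\mathscr{O}_K}\mathbb{C}$ is unitary. Now an $\mathscr{O}_K$-local system that is unitary under every complex embedding has finite monodromy. The Shephard--Todd classification plays no role in the proof; it is invoked only afterward to extract consequences such as the bound $n\le 6$.

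Two further points. First, the infinite-order hypothesis on some $A_i$ is not what forces the $B_i$ to be pseudoreflections; the pseudoreflection property is a separate local computation once $\lambda$ is fixed. Rather, the infinite-order condition enters upstream, in the analysis of the Hodge filtration on $\mathbb{V}$ that makes the length-one choice of $\lambda,\mathbb{L}$ possible. Second, your invocation of Esnault--Groechenig for integrality is unnecessary here: Corlette--Simpson already gives geometric origin for rigid Zariski-dense $\on{SL}_2$-local systems, and integrality over $\mathscr{O}_K$ follows directly from that.
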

Here $\on{MC}_{\lambda}$ is the middle convolution operation introduced by Katz in \cite{katz-rigid} and discussed earlier in \autoref{subsection:existence-uniqueness}. 

Note that if $n>4$, the $B_i$ are not $2\times 2$ matrices---they are $(n-2)\times(n-2)$ matrices. So we have classified finite $\on{Mod}_{0,n}$-orbits on $Y(0,n,2)$ in terms of certain finite subgroups of $\on{GL}_{n-2}(\mathbb{C})$. In what sense is this actually a classification? The point is that finite complex reflection groups were classified by Shephard and Todd \cite{shephard-todd} in 1954. We briefly recall their definition and classification; see \cite{lehrertaylor} for a modern exposition.
\begin{definition}
	A matrix $A\in \on{GL}_r(\mathbb{C})$ is a \emph{pseudoreflection} if it has finite order and $\on{rk}(A-\on{id})=1$. A subgroup $G\subset \on{GL}_r(\mathbb{C})$ is a \emph{finite complex reflection group} if $G$ is finite and generated by pseudoreflections. A finite complex reflection group $G\subset \on{GL}_r(\mathbb{C})$ is irreducible if the corresponding rank $r$ representation of $G$ is irreducible. 
\end{definition}
\begin{theorem}[Shephard-Todd\,\cite{shephard-todd}]
	There is one infinite class of finite complex reflection groups, denoted $G(m,p,n)\subset GL_n(\mathbb{C})$, where $p$ divides $m$. The group $G(m, 1, n)$ consists of all $n\times n$ matrices with exactly one non-zero entry in each row and column, where that non-zero entry is an $m$-th root of unity. The group $G(m, p, n)\subset G(m, 1,n)$ is the subgroup consisting of matrices whose non-zero entries multiply to an $m/p$-th root of unity.
	
	There are 34 exceptional irreducible finite complex reflection groups not conjugate to one of the $G(m, p, n)$, including the Weyl groups $W(E_6), W(E_7), W(E_8),$ the Valentiner group, the group $PSL_2(\mathbb{F}_7)$ with its natural 3-dimensional representation, the automorphism group of the icosahedron, and so on.
\end{theorem}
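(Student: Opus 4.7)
The plan is to follow Shephard and Todd's original strategy: first reduce to irreducible primitive reflection groups, then enumerate these. I begin by observing that any finite $G \subset \on{GL}_r(\mb{C})$ preserves a positive-definite Hermitian form (by averaging an arbitrary one over $G$), so $G$ lies in a unitary group. If $V = \mb{C}^r$ decomposes as a direct sum of proper $G$-invariant subspaces, then $G$ factors through a product of reflection groups acting on those subspaces, so we may assume the natural representation is irreducible.

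Next, I split into imprimitive and primitive cases. Say $G$ is imprimitive if there is a nontrivial decomposition $V = V_1 \oplus \cdots \oplus V_k$ with $k \geq 2$ permuted by $G$. The key structural step is to show that when $G$ is an irreducible imprimitive complex reflection group, one can take the $V_i$ to be lines: a pseudoreflection has a hyperplane of fixed vectors and interacts with any block decomposition in a very constrained way, so choosing a coarsest nontrivial block system produces a system of one-dimensional blocks permuted transitively by $G$. Given this, $G$ sits inside the wreath product of a group of roots of unity with $S_k$. Cataloguing which diagonal ``rescaling'' reflections and ``transposition-type'' reflections are needed to generate a given such subgroup yields exactly the family $G(m, p, n)$, with the divisibility $p \mid m$ forced by the requirement that the diagonal subgroup be stable under the permutation action.

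For the primitive case, the plan is to exploit invariant theory. If $G$ is a complex reflection group, the Chevalley-Shephard-Todd theorem (used here as input) gives that $\mb{C}[V]^G$ is a polynomial algebra on $r$ generators of degrees $d_1 \leq \cdots \leq d_r$, satisfying
\begin{equation*}
|G| = \prod_{i=1}^r d_i, \qquad N(G) = \sum_{i=1}^r (d_i - 1),
\end{equation*}
where $N(G)$ is the number of pseudoreflections. These identities, together with primitivity, restrict the possibilities severely in each rank. In rank $2$ one uses Klein's classification of finite subgroups of $\on{PGL}_2(\mb{C})$---cyclic, dihedral, tetrahedral, octahedral, icosahedral---and lifts each to a candidate subgroup of $\on{GL}_2(\mb{C})$, retaining only those generated by pseudoreflections. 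In ranks $\geq 3$ one invokes Blichfeldt-style bounds on orders of primitive subgroups of $\on{GL}_r(\mb{C})$, producing a finite list of candidates in each rank.

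The main obstacle is the explicit construction and verification of the $34$ exceptional primitive groups in ranks $2$ through $8$. Each must be realized by exhibiting a generating set of pseudoreflections---sometimes via the $E_6$, $E_7$, $E_8$ root lattices, sometimes via the icosahedral or Valentiner group acting on a suitable module, sometimes via $\on{PSL}_2(\mb{F}_7)$ on its $3$-dimensional representation---and for each one verifies the predicted group order (typically by identifying the fundamental invariants), confirms irreducibility and primitivity, and finally checks that no two groups on the list are conjugate in the ambient $\on{GL}_r(\mb{C})$. Once the classical Weyl groups are accounted for in higher rank, ruling out further sporadic examples requires combining the numerical identities above with combinatorial rigidity of the associated hyperplane arrangement; this enumeration is where the bulk of \cite{shephard-todd} is spent, and where a conceptually cleaner proof remains elusive.
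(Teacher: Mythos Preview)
The paper does not prove this theorem at all: it is stated as a classical result, attributed to Shephard--Todd \cite{shephard-todd}, with \cite{lehrertaylor} given as a modern reference. The paper uses the classification as a black box to interpret \autoref{theorem:sl2-classification}. Your sketch is a reasonable outline of the original Shephard--Todd strategy (reduce to irreducible, split imprimitive/primitive, enumerate the primitive cases via invariant degrees and Blichfeldt-type bounds), but there is nothing in the paper to compare it against.
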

In other words, the interesting finite orbits of $\on{Mod}_{0,n}$ on $Y(0, n, 2)$ are classified in terms of the symmetry groups of some interesting ``complex polytopes.'' It is worth noting that \autoref{theorem:sl2-classification} recovers and generalizes that of Dubrovin-Mazzocco \cite{dubrovin-mazzocco} when $n=4$; this is perhaps surprising given that their motivation comes from a completely different direction (namely, as I understand it, the theory of Frobenius manifolds).
\begin{example}
The five matrices
$$A_1=\begin{pmatrix} -1 & 1 \\ 0 & -1 \end{pmatrix}, A_2=	\begin{pmatrix} -1 & 0 \\ -1 & -1 \end{pmatrix}, A_3 = \begin{pmatrix} \frac{-1-\sqrt{5}}{2} & 1 \\ \frac{-3+\sqrt{5}}{2} & \frac{-3+\sqrt{5}}{2} \end{pmatrix}, A_4= \begin{pmatrix} \frac{1-\sqrt{5}}{2} & \frac{3-\sqrt{5}}{2} \\ \frac{-3+\sqrt{5}}{2} & \frac{-5+\sqrt{5}}{2} \end{pmatrix}$$
$$A_5=(A_1A_2A_3A_4)^{-1}$$ give rise to a finite $\on{Mod}_{0,5}$-orbit on $Y(0,5,2)$ \cite[(20) on p.~39]{tykhyy-garnier}. Under the correspondence of \autoref{theorem:sl2-classification}, they are related by middle convolution to the automorphism group of the icosahedron, viewed as a subgroup of $\on{GL}_3(\mathbb{C})$ (namely $W(H_3)$, in Shephard-Todd's notation).
\end{example}

The only condition that prevents \autoref{theorem:sl2-classification} from being a complete classification is the requirement that some $A_i$ has infinite order. As we will soon see, this condition is algebro-geometrically natural (see \autoref{remark:bad-reduction}), but it would be of great interest to find a classification  without it.

Here is a very concrete corollary, which we only know how to prove using \autoref{theorem:sl2-classification} and the Shephard-Todd classification:
\begin{corollary}[Lam-Landesman-L--\,{\cite[Corollary 1.1.8]{lam2023finite}}]
	Suppose $(A_1, \cdots, A_n)$ is an interesting tuple, and some $A_i$ has infinite order. Then $n\leq 6$.
\end{corollary}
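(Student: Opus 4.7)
The plan is to combine Theorem \ref{theorem:sl2-classification} with the Shephard-Todd classification of irreducible finite complex reflection groups, ruling out all $n\geq 7$. Applying the theorem to an interesting tuple $(A_1,\ldots,A_n)$ with some $A_i$ of infinite order, we obtain a tuple $(B_1,\ldots,B_n)$ with $B_i\in\on{GL}_{n-2}(\mathbb{C})$, generating an irreducible finite complex reflection group $G$ of rank $n-2$, where $B_1,\ldots,B_{n-1}$ are pseudoreflections and $(\alpha_1A_1,\ldots,\alpha_nA_n)=\on{MC}_\lambda(B_1,\ldots,B_n)$. If $n\geq 7$, then $G$ has rank $\geq 5$.

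The first step is to pin down the local structure of $B_n$ by using Katz's rank formula for middle convolution, which expresses the output rank in terms of $\sum_i \on{rk}(B_i-I)$ together with a global correction at $\lambda$. Since the output lives in $\on{GL}_2$, and since $\on{rk}(B_i-I)=1$ for $i=1,\ldots,n-1$ (as the $B_i$ are pseudoreflections), this forces $\on{rk}(B_n-I)$ to take a specific small value; combined with the fact that $B_n$ has finite order in $G$, one deduces that $B_n$ is itself a pseudoreflection. Taking determinants in the resulting product relation $\prod_i B_i = c\cdot I$ (where $c$ is an explicit scalar depending on $\lambda$ and the $\alpha_i$) then pins down $\lambda$ as a root of unity times $\prod\det(B_i)^{-1/(n-2)}$, and translates the infinite-order hypothesis on some $A_i$ into a nondegeneracy condition on the local eigenvalue data of the $B_i$ (ruling out, e.g., the case where $(B_1,\ldots,B_n)$ is a genuine pullback from a lower-rank configuration).

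Having reduced to $n$ pseudoreflections in an irreducible complex reflection group $G$ of rank $n-2\geq 5$ with a prescribed scalar product relation and the nondegeneracy condition, the argument splits along Shephard-Todd. The exceptional irreducible complex reflection groups of rank $\geq 5$ are finite in number—namely $G_{33}$, $G_{34}$, $W(E_6)$, $W(E_7)$, $W(E_8)$—and each can be ruled out by a direct finite check: for each such $G$, one enumerates conjugacy classes of pseudoreflections and verifies that no generating $(r+2)$-tuple has product a scalar matrix compatible with the nondegeneracy condition derived above. The main obstacle is the infinite family $G(m,p,r)$ with $r\geq 5$: here one must argue uniformly in $(m,p,r)$. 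Pseudoreflections in $G(m,p,r)$ are of two types—\emph{diagonal} reflections (acting nontrivially along a single coordinate axis) and \emph{transposition-like} reflections (tied to a $2$-cycle in the $S_r$-quotient)—and the plan is to exploit the homomorphism $G(m,p,r)\to S_r$ together with the determinant character. The product relation $\prod B_i =c\cdot I$ imposes a parity condition on the number of transposition-like factors and a monomial/root-of-unity condition from the determinant; combined with the requirement that the $B_i$ generate an \emph{irreducible} rank-$r$ representation—forcing enough of the reflection hyperplanes to be "spread out" across all $r$ coordinate axes—these constraints should be incompatible with having exactly $r+2$ pseudoreflections when $r\geq 5$. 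I expect this last uniform argument against the infinite family to be the most delicate step.
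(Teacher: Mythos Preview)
Your overall strategy matches the paper's: apply Theorem~\ref{theorem:sl2-classification} to obtain an irreducible finite complex reflection group of rank $n-2$ generated by the $B_i$, then run through the Shephard-Todd list to exclude ranks $\geq 5$. The survey does not spell out the case analysis beyond saying the corollary follows from the theorem together with the Shephard-Todd classification, so at the level of approach there is nothing to distinguish.

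However, what you have written is a plan, not a proof, and it contains at least two soft spots. First, your assertion that Katz's rank formula forces $B_n$ to be a pseudoreflection is stated without justification; the theorem only guarantees this for $B_1,\ldots,B_{n-1}$, and you would need to write down the rank formula explicitly and check that the output rank being $2$ really pins $\on{rk}(B_n-I)$ down to $1$. (Note also that since $\prod_i B_i=\on{id}$, the group is already generated by $B_1,\ldots,B_{n-1}$, so it is not clear your argument even needs this step.) Second, and more seriously, you explicitly leave the infinite family $G(m,p,r)$ unfinished, writing only that the constraints ``should be incompatible'' with $r\geq 5$. This is precisely where the work lies: you must translate the hypotheses of ``interesting'' (Zariski-dense image, no scalar $A_i$, isolated as a finite orbit) and the infinite-order condition on some $A_i$ into concrete constraints on the reflection data, and then actually verify these constraints fail for $G(m,p,r)$ with $r\geq 5$. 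Until that is done, the proposal does not establish the corollary.
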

The corollary above is sharp; there exist examples of tuples with $n=6$ (see e.g.~ \cite[(21) on p.~40]{tykhyy-garnier} or \cite[\S5.8.4]{lam2023finite}.
\subsection{An algebro-geometric interpretation}\label{subsection:an-algebro-geometric-interpretation}
As we prepare to explain the idea of the proof of \autoref{theorem:sl2-classification}, let us consider an a priori slightly different question to that of classifying finite $\on{Mod}_{0,n}$-orbits on $Y(0,n,r)$. 
\begin{definition}\label{definition:geometric-origin}
	Let $X$ be a smooth complex algebraic variety. A complex local system $\mathbb{V}$ on $X$ is \emph{of geometric origin} if there exists a dense open subset $U\subset X$, a smooth proper morphism $\pi: Y\to U$, and an integer $i\geq 0$ such that $\mathbb{V}$ is a summand of $R^i\pi_*\mathbb{C}$.\footnote{Here $R^i\pi_*\mathbb{C}$ is the local system on $U$ whose fiber at $x\in U$ is $H^i(Y_x, \mathbb{C}).$}
\end{definition}
One interpretation of our fundamental question from the introduction---how does the geometry of a variety $X$ influence the structure of its fundamental group---is: can one classify local systems of geometric origin on $X$? As we will see in \autoref{section:non-abelian-conjectures}, this is in some sense the \emph{non-abelian} analogue of the question of understanding algebraic cycles on $X$, and it has conjectural answers analogous to the Hodge conjecture, the Tate conjecture, and so on.

Let us return to our very special case:
\begin{question}\label{question:geom-origin-cp1}
	Let $x_1, \cdots, x_n\subset \mathbb{CP}^1$ be $n$ \emph{generic} points. Can one classify local systems of rank 2 on $\mathbb{CP}^1\setminus\{x_1, \cdots, x_n\}$ that are of geometric origin?
\end{question}
It turns out that this question is \emph{the same} as classifying finite $\on{Mod}_{0,n}$-orbits on $Y(0,n,2)$, as we now explain. 

The following is immediate from the proof of \cite[Corollary 2.3.5]{landesman2022canonical} (note that the condition that $g\geq 1$ is unnecessary in our setting, as we are working with $\on{SL}_2(\mathbb{C})$-representations):
\begin{proposition}\label{proposition:AG-interpretation}
	A representation $\rho: \pi_1(\mathbb{CP}^1\setminus\{x_1, \cdots, x_n\})\to \on{GL}_r(\mathbb{C})$ has conjugacy class with finite $\on{Mod}_{0,n}$-orbit if and only if there exists
	\begin{itemize}
	\item 	a family of $n$-punctured curves of genus $0$, $\pi: \mathscr{X}\to \mathscr{M}$, with the induced map $\mathscr{M}\to \mathscr{M}_{0,n}$ dominant, and
	\item a local system $\mathbb{V}$ on $\mathscr{X}$ whose restriction to a fiber of $\pi$ has monodromy conjugate to $\rho$.
	\end{itemize}
\end{proposition}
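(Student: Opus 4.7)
My approach is to view both directions through the Birman exact sequence
\[
1\to\pi_1(\mathbb{CP}^1\setminus\{x_1,\ldots,x_n\})\to\pi_1(\mathscr{M}_{0,n+1})\to\on{PMod}_{0,n}\to 1
\]
recalled in the text, together with its pullback to an arbitrary family $\pi\colon\mathscr{X}\to\mathscr{M}$ of $n$-punctured genus-$0$ curves. After restricting to the open locus where $\pi$ is a topological fibration, one has $1\to\pi_1(F)\to\pi_1(\mathscr{X})\to\pi_1(\mathscr{M})\to 1$ with $F\cong\mathbb{CP}^1\setminus\{x_1,\ldots,x_n\}$, and the outer $\pi_1(\mathscr{M})$-action on $\pi_1(F)$ is compatible with the $\on{PMod}_{0,n}$-action via $\mathscr{M}\to\mathscr{M}_{0,n}$.

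For the direction family $\Rightarrow$ finite orbit, the local system $\mathbb{V}$ is a representation $\widetilde\rho\colon\pi_1(\mathscr{X})\to\on{GL}_r(\mathbb{C})$ whose restriction to $\pi_1(F)$ is conjugate to $\rho$. For any $\gamma\in\pi_1(\mathscr{M})$ with a lift $\widetilde\gamma\in\pi_1(\mathscr{X})$, conjugation by $\widetilde\gamma$ implements the outer action of $\gamma$ on $\pi_1(F)$; applying $\widetilde\rho$, the outer action of $\gamma$ sends $[\rho]$ to $[\widetilde\rho(\widetilde\gamma)\,\rho\,\widetilde\rho(\widetilde\gamma)^{-1}]=[\rho]$ in $Y(0,n,r)$. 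Since $\mathscr{M}\to\mathscr{M}_{0,n}$ is dominant between smooth irreducible complex varieties, the image of $\pi_1(\mathscr{M})\to\on{PMod}_{0,n}$ has finite index (a standard consequence of Stein factorization, using that finite maps between smooth varieties induce finite-index images on $\pi_1$ after removing the branch locus), so $[\rho]$ has finite orbit.

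For finite orbit $\Rightarrow$ family, let $\Gamma\subset\on{PMod}_{0,n}$ be the finite-index stabilizer of $[\rho]$, let $\mathscr{M}\to\mathscr{M}_{0,n}$ be the corresponding connected finite étale cover (so $\pi_1(\mathscr{M})=\Gamma$), and pull back the universal family to get $\pi\colon\mathscr{X}\to\mathscr{M}$. The Birman sequence becomes $1\to\pi_1(F)\to\pi_1(\mathscr{X})\to\Gamma\to 1$, and the task is to construct a representation $\widetilde\rho\colon\pi_1(\mathscr{X})\to\on{GL}_r(\mathbb{C})$ extending $\rho$. For each $\gamma\in\Gamma$ the hypothesis supplies $g_\gamma\in\on{GL}_r(\mathbb{C})$ with $\gamma\cdot\rho=g_\gamma\,\rho\,g_\gamma^{-1}$, unique up to the center when $\rho$ is irreducible, and one would like to set $\widetilde\rho(\widetilde\gamma)=g_\gamma$ on any set-theoretic section of $\pi_1(\mathscr{X})\to\Gamma$.

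The main obstacle is that the $g_\gamma$ only define a projective representation $\Gamma\to\on{PGL}_r(\mathbb{C})$, and combining this with the extension class of $\pi_1(\mathscr{X})$ produces an obstruction class in $H^2(\Gamma,\mathbb{C}^\times)$ to the existence of $\widetilde\rho$. My plan to dispose of it is to pass to a further finite étale cover $\mathscr{M}'\to\mathscr{M}$ on which the class restricts to zero---equivalently, to trivialize the associated $\mathbb{G}_m$-gerbe on $\mathscr{M}$ after a finite cover, or to twist by an appropriate character of a finite quotient of $\Gamma$. Since the proposition only demands the existence of \emph{some} such $(\pi,\mathbb{V})$ with $\mathscr{M}\to\mathscr{M}_{0,n}$ dominant, replacing $\mathscr{M}$ by such a cover is harmless. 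The reducible case is handled by running the argument on each isotypic component of $\rho$, whose multiplicities are permuted by $\Gamma$, and reassembling.
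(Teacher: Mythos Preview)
Your overall architecture is the standard one, and it is the same route the paper (via its reference to \cite[Corollary 2.3.5]{landesman2022canonical}) follows: the Birman sequence, finite index of the image of $\pi_1(\mathscr{M})$ under a dominant map, and extension of $\rho$ along $1\to\pi_1(F)\to\pi_1(\mathscr{X})\to\Gamma\to 1$. The direction ``family $\Rightarrow$ finite orbit'' is fine as you wrote it.

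The gap is in the other direction, precisely at the step you flag as ``the main obstacle.'' You correctly identify that one obtains only a projective extension and that the obstruction to an honest $\on{GL}_r$-lift is a class in $H^2$ with $\mathbb{C}^\times$ coefficients. But your proposed fix---pass to a further finite \'etale cover of $\mathscr{M}$ to kill this class---is not justified and is in general false. A class in $H^2(\Gamma,\mathbb{C}^\times)$ need not be torsion, and a $\mathbb{G}_m$-gerbe need not trivialize on any finite \'etale cover; ``twist by a character of a finite quotient'' only moves the class by a torsion element. So as written the argument does not close.

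This is not a minor point: the paper's own parenthetical remark before the proposition says exactly this. The cited result in \cite{landesman2022canonical} carries the hypothesis $g\geq 1$, and the paper drops it here only because in its applications it is ``working with $\on{SL}_2(\mathbb{C})$-representations.'' The reason is that for $\on{SL}_r$ the centralizer of an irreducible representation is the finite group $\mu_r$, so the obstruction lands in $H^2(\,\cdot\,,\mu_r)$, which is torsion; one can then kill it by passing to the finite cover corresponding to the central $\mu_r$-extension of $\pi_1(\mathscr{X})$ (the restriction of this extension to $\pi_1(F)$ is split by $\rho$ itself, so the fiber is unchanged). For $g\geq 1$ there is instead enough room in $H^1(\pi_1(\Sigma_g),\mathbb{C}^\times)$ to absorb the cocycle by twisting. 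Your argument needs one of these two mechanisms; neither is supplied by ``pass to a finite cover.''

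Your handling of the reducible case is also too quick: the isotypic decomposition of $\rho$ is not $\Gamma$-equivariant in general, and even when the summands are permuted, reassembling introduces its own extension problems. The paper sidesteps this entirely (note that the higher-genus version, \autoref{proposition:AG-interpretation-higher-genus}, is stated only for irreducible $\rho$).
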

Thus classifying finite $\on{Mod}_{0,n}$-orbits on $Y(0,n,r)$ is the same as classifying local systems on families of curves $\mathscr{X}$ as in \autoref{proposition:AG-interpretation}. Now a result of Corlette-Simpson \cite[Theorem 2]{corlette-simpson} and Loray-Pereira-Touzet \cite[Theorem A]{loray-etc} (see also \autoref{theorem:corlette-simpson} in these notes) tells us that Zariski-dense $\on{SL}_2$-local systems on smooth quasi-projective varieties come in two (not necessarily disjoint) flavors:
\begin{enumerate}
	\item rigid\footnote{Here the notion of rigidity is a generalization of what was discussed earlier, in section \autoref{subsection:existence-uniqueness}. Namely, these are local systems on $\mathscr{X}$ with no non-trivial deformations. We will discuss these later, in \autoref{subsection:rigid-local-systems}. These local systems correspond in our setting to those satisfying \autoref{definition:interesting-rep}(3).} local systems, which are of geometric origin, and
	\item local systems pulled back from Deligne-Mumford curves (equivalently, the corresponding $\on{PGL}_2$-local system is pulled back from an orbifold curve).
\end{enumerate}
It turns out that the local systems on $\mathscr{X}$ not of geometric origin can be classified by elementary means, as we now explain. So (after the next section) only local systems of geometric origin will remain to be classified.
\subsubsection{Pullback families}\label{subsubsection:pullback} Let us first turn to the local systems $\mathbb{V}$ of type (2) above, i.e.~those pulled back from curves. We will refer to these as being \emph{of pullback type}; we learned this construction from the paper \cite{doran}. In this case, there exists an orbifold curve $C$, a projective local system $\mathbb{W}$ on $C$, and a map $f:\mathscr{X}\to C$ so that the restriction of $f^*\mathbb{W}$ to a fiber of $\pi$ is isomorphic to $\mathbb{P}V$. In particular, this monodromy is non-trivial, so $C$ is dominated by any fiber of $\pi$, and hence has genus zero. 

Put another way, there exists a divisor $D\subset \mathbb{CP}^1$ and a projective local system $\mathbb{W}$ with Zariski-dense monodromy on $\mathbb{CP}^1\setminus D$, such that for general $\{x_1, \cdots, x_n\}\subset \mathbb{CP}^1$, there exists $g: \mathbb{CP}^1\to \mathbb{CP}^1$ so that $g^*\mathbb{W}$ is unramified away from $\{x_1, \cdots, x_n\}$. We must have $g(\{x_1, \cdots, x_n\})\subset D$. Moreover $g$ must be ramified over at least $n-3$ points away from $D$, by counting parameters (i.e., the dimension of $\mathscr{M}_{0,n}$ is $n-3$), and in particular, assuming $n>3$, we have $d=\on{deg}(g)>1$.

That $g^*\mathbb{W}$ is unramified away from $\{x_1, \cdots, x_n\}$ tells us that for $y\in D$, any point of $g^{-1}(y)$ must be either among the $x_i$, or must be ramified (of order divisible by the order of the local monodromy of $\mathbb{W}$ about $y$). Now Riemann-Hurwitz gives $$2\leq 2d-n+3-(d\deg D-n)/2=2d-n/2-(d\deg D)/2+3$$
and hence (as $n\geq 4$), $$1\leq d(2-\deg D/2).$$ So $\deg D=3$, and we may without loss of generality assume $D=\{0,1,\infty\}$.

In summary, classifying Zariski-dense local systems of pullback type on $\mathbb{CP}^1\setminus\{x_1, \cdots, x_n\}$ is the same as classifying projective local systems $\mathbb{W}$ on $\mathbb{CP}^1\setminus\{0,1,\infty\}$, and covers $g:\mathbb{CP}^1\to \mathbb{CP}^1$ branched over $\{0,1,\infty\}$ and $n-3$ auxiliary points, so that at most $n$ points $x$ with $g(x)\in \{0,1,\infty\}$ have ramification order not divisible by the local monodromy of $\mathbb{W}$. This was done by Diarra \cite{diarra}, again by a Riemann-Hurwitz computation.
\subsubsection{Local systems of geometric origin} Having handled local systems of pullback type, all that remains in our classification of interesting finite $\on{Mod}_{0,n}$-orbits on $Y(0,n,2)$ is to classify those local systems on $\mathscr{X}$ as in \autoref{proposition:AG-interpretation} of geometric origin. Before discussing this classification, let's observe that this is really the same as \autoref{question:geom-origin-cp1}. Indeed, given a family $\pi: \mathscr{X}\to \mathscr{M}$ as in \autoref{proposition:AG-interpretation} and a local system of geometric origin on $\mathscr{X}$, the restriction to a general fiber will give a local system of geometric origin on $\mathbb{CP}^1\setminus\{x_1, \cdots, x_n\}$ with the $x_i$ general. Conversely, by ``spreading out",\footnote{That is, if our local system appears in the cohomology of a family $\mathscr{Y}\to \mathbb{CP}^1\setminus\{x_1, \cdots, x_n\}$, with the $x_i$ general, then $\mathscr{Y}$ extends over a family of $n$-punctured curves of genus $0$ whose base dominates $\mathscr{M}_{0,n}$.} a local system of geometric origin on $\mathbb{CP}^1\setminus\{x_1, \cdots, x_n\}$ (with the $x_i$ general) can be extended to a family as in \autoref{proposition:AG-interpretation}. 

Here is one form of the classification:
\begin{theorem}[Lam-Landesman-L--]\label{theorem:cp1-geometric-origin}
	Let $x_1, \cdots, x_n\in \mathbb{CP}^1$ be general points. Then any non-isotrivial rank two local system $\mathbb{V}$ on $\mathbb{CP}^1\setminus \{x_1, \cdots, x_n\}$ of geometric origin, with infinite monodromy about one of the $x_i$ and non-scalar monodromy at each $x_i$, has the form $$\mathbb{V}=\on{MC}_\lambda(\mathbb{W})\otimes \mathbb{L},$$ where 
	\begin{itemize}
	\item $\lambda\in\mathbb{C}^\times\setminus\{1\}$,
	\item 	$\mathbb{W}$ has rank $n-2$ and monodromy given by a finite complex reflection group,
	\item  and $\mathbb{L}$ is a rank one local system of finite order on $\mathbb{CP}^1\setminus\{x_1, \cdots, x_n\}$. 
	\end{itemize}
\end{theorem}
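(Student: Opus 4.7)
My plan is to deduce \autoref{theorem:cp1-geometric-origin} from \autoref{theorem:sl2-classification} together with \autoref{proposition:AG-interpretation} and the rigidity properties of geometric-origin local systems. First, given such a local system $\mathbb{V}$, I would spread it out to a local system on a family $\pi:\mathscr{X}\to\mathscr{M}$ of $n$-punctured genus-zero curves with $\mathscr{M}\to\mathscr{M}_{0,n}$ dominant. By \autoref{proposition:AG-interpretation}, the monodromy conjugacy class $[\rho]\in Y(0,n,2)$ has finite $\on{PMod}_{0,n}$-orbit.

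Second, I would verify that $[\rho]$ is \emph{interesting} in the sense of \autoref{definition:interesting-rep}. Non-scalarity of each $A_i$ is given. Zariski-density follows by ruling out the three non-dense alternatives: finite image would force every $A_i$ to have finite order, contradicting the infinite-monodromy hypothesis; reducible and dihedral geometric-origin representations, after passing to semisimplifications (resp.\ a double cover), reduce to rank-one local systems on a punctured sphere which must be of finite order because geometric-origin rank-one $\mathbb{C}^\times$-local systems on $\mathbb{P}^1$ minus points have roots-of-unity monodromies, so $\mathbb{V}$ would then be isotrivial or have every $A_i$ of finite order---violating the hypotheses. For the isolation condition \autoref{definition:interesting-rep}(3) I would combine non-isotriviality with the rigidity half of Corlette-Simpson: rank-two polarized $\mathbb{C}$-VHS on a punctured sphere with prescribed quasi-unipotent local monodromies admit no continuous deformations preserving the local conjugacy classes, unless they arise from pullback via an orbifold cover of $\mathbb{P}^1\setminus\{0,1,\infty\}$, in which case the analysis of \autoref{subsubsection:pullback} (together with a direct verification that such pullbacks also admit a middle-convolution form) handles the overlap.

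Third, I would invoke \autoref{theorem:sl2-classification} to obtain scalars $\alpha_i\in\mathbb{C}^\times$, a value $\lambda\in\mathbb{C}^\times\setminus\{1\}$, and matrices $B_i\in\on{GL}_{n-2}(\mathbb{C})$ generating an irreducible finite complex reflection group with $B_1,\ldots,B_{n-1}$ pseudoreflections, such that $(\alpha_iA_i)=\on{MC}_\lambda(B_1,\ldots,B_n)$. Letting $\mathbb{W}$ be the local system with monodromies $(B_i)$ and $\mathbb{L}$ the rank-one local system on $\mathbb{CP}^1\setminus\{x_1,\ldots,x_n\}$ with monodromy $\alpha_i^{-1}$ at $x_i$, we get the desired factorization $\mathbb{V}=\on{MC}_\lambda(\mathbb{W})\otimes\mathbb{L}$. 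To see that $\mathbb{L}$ has finite order, I would compare eigenvalues at each puncture: the eigenvalues of every $B_i$ are roots of unity since $\langle B_1,\ldots,B_n\rangle$ is finite, so by the explicit description of middle convolution (as in \cite{katz-rigid, dettweiler-reiter-katz}), the eigenvalues of $\on{MC}_\lambda(\mathbb{W})(\gamma_i)$ are products of $\lambda$ with roots of unity; on the other hand the eigenvalues of each $A_i$ are roots of unity because $\mathbb{V}$ is of geometric origin (local monodromy theorem). Matching these eigenvalues in the relation $\alpha_iA_i=\on{MC}_\lambda(\mathbb{W})(\gamma_i)$ forces each $\alpha_i$ to be a root of unity, possibly after absorbing a single root of unity into $\lambda$.

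The main obstacle I anticipate is the isolation condition in Step 2: geometric origin does not tautologically imply rigidity, and one must carefully handle the pullback-type local systems of Corlette-Simpson, showing that in our setting of generic $x_i$ they either fail the ``infinite monodromy'' hypothesis or can themselves be recast as middle convolutions of finite-complex-reflection-group local systems. Once this rigidity input is secured, the reduction to \autoref{theorem:sl2-classification} is mechanical, and the extraction of $\mathbb{L}$ and verification of its finite order amount to bookkeeping with the eigenvalue formulas for middle convolution.
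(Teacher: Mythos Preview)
Your reduction to \autoref{theorem:sl2-classification} runs the logic in the wrong direction. In the paper, \autoref{theorem:sl2-classification} and \autoref{theorem:cp1-geometric-origin} are proven by the \emph{same} Hodge-theoretic argument: one takes $\mathbb{V}$, chooses $\lambda^{-1}$ and $\mathbb{L}$ so that $\mathbb{W}:=\on{MC}_{\lambda^{-1}}(\mathbb{V}\otimes\mathbb{L}^\vee)$ has Hodge filtration of length one (this is the ``combinatorial miracle'' of \cite[Theorem 4.3.2]{lam2023finite}), and then uses that $\mathbb{W}$, being of geometric origin, underlies a polarizable $\mathscr{O}_K$-VHS, whence unitarity under every embedding forces finite monodromy. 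The only role of ``interesting'' is that for \autoref{theorem:sl2-classification} one must first invoke Corlette--Simpson to get geometric origin from condition~(3). For \autoref{theorem:cp1-geometric-origin} the hypothesis \emph{is} geometric origin, so the Hodge-theoretic step applies directly---no detour through ``interesting'' is needed.

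Your route requires the converse implication, namely that the hypotheses of \autoref{theorem:cp1-geometric-origin} force condition~(3) of \autoref{definition:interesting-rep}, and this is where the real gap lies. Geometric origin does \emph{not} by itself imply isolation: a local system pulled back from $\mathbb{P}^1\setminus\{0,1,\infty\}$ via a varying cover can be of geometric origin, Zariski-dense, with infinite local monodromy, yet sit in a positive-dimensional $\Gamma$-fixed family. You acknowledge this as ``the main obstacle'' but your proposed resolution---that such pullbacks ``can themselves be recast as middle convolutions of finite-complex-reflection-group local systems''---is an assertion without argument, and in fact the classification of pullback solutions in \cite{diarra} does not have this form. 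The hypothesis ``non-isotrivial'' is what excludes this case (cf.\ \autoref{remark:bad-reduction} and \cite[Theorem 9.3, Proposition 10.5]{corlette-simpson}), but you neither identify this nor use it.

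A secondary issue: your eigenvalue-matching argument for the finite order of $\mathbb{L}$ is incomplete. \autoref{theorem:sl2-classification} does not assert that $\lambda$ is a root of unity, so the eigenvalues of $\on{MC}_\lambda(\mathbb{W})(\gamma_i)$ need not be roots of unity, and comparing with the quasi-unipotent $A_i$ does not immediately pin down the $\alpha_i$. In the paper's approach this issue does not arise, because $\lambda$ and $\mathbb{L}$ are \emph{constructed} (not merely asserted to exist) with the required finiteness built in.
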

\begin{remark}\label{remark:bad-reduction}
In fact Corlette-Simpson show \cite[Theorem 9.3 and Proposition 10.5]{corlette-simpson} that any non-isotrivial rank $2$ local system of geometric origin on a smooth variety $X$ arises in the cohomology of an \emph{abelian scheme of $\on{GL}_2$-type} over $X$. In this way \autoref{theorem:cp1-geometric-origin} classifies abelian schemes of $\on{GL}_2$-type over $\mathbb{CP}^1\setminus \{x_1, \cdots, x_n\}$, with the $x_i$ generic, which do not have potentially good reduction everywhere. This last condition is a geometric interpretation of the condition that the local monodromy about some $x_i$ have infinite order, and (at least in my view) makes this condition more natural.
\end{remark}
\subsubsection{Middle convolution}\label{subsubsection:middle-convolution}
For completeness (and before sketching the proof), we give a definition and discussion of middle convolution. The non-algebro-geometrically inclined reader may wish to skip this section.

\begin{definition}
	Let $\{x_1, \cdots, x_n\}\subset \mathbb{CP}^1$ be a finite set of points, with $\infty$ among the $x_i$, and set $X=\mathbb{CP}^1\setminus\{x_1, \cdots, x_n\}$. Consider the diagram
	$$\xymatrix{
	& X\times X\setminus \Delta \ar@{^(->}[rr]^j  \ar[ld]^{\pi_1} \ar[rd]^\alpha  & & \mathbb{CP}^1 \times X  \ar[rd]^{\pi_2} & \\
	X & & \mathbb{C}^\times & & X 
	}$$
	where $\Delta$ is the diagonal, $j$ is the natural inclusion, $\pi_1, \pi_2$ are projections onto the first and second factor respectively, and $\alpha$ is the map $(a,b)\mapsto a-b$. For $\lambda$ a non-zero complex number, let $\chi_\lambda$ be the rank one local system on $\mathbb{C}^\times$ with local monodromy about $0$ given by $\lambda$. Then given a local system $\mathbb{V}$ on $X$, we define $$\on{MC}_\lambda(\mathbb{V}):=R^1\pi_{2*}j_*(\pi_1^*\mathbb{V}\otimes \alpha^*\chi_\lambda).$$
\end{definition}
This definition may be appear intimidating at first glance, but it is in fact quite computable; see e.g. \cite{dettweiler-reiter-katz, dettweiler2003middle, dettweiler-reiter-painleve} for a beautiful explanation of how to perform these computations. In the case of interest, namely when $\mathbb{V}$ has finite monodromy, it even has a fairly simple geometric interpretation. Indeed, if $G$ is the monodromy group of $\mathbb{V}$, and $\lambda$ is a $c$-th root of unity, then $$\on{MC}_\lambda(\mathbb{V})$$ appears in the cohomology of a family of curves $$\mathscr{Y}\to \mathbb{CP}^1\setminus\{x_1, \cdots, x_n\},$$ where for $s\in \mathbb{CP}^1\setminus\{x_1, \cdots, x_n\}$, the curve $\mathscr{Y}_s$ is a $G\times \mathbb{Z}/c\mathbb{Z}$-cover of $\mathbb{CP}^1$ branched at $s, x_1, \cdots, x_n$.

In fact one may be completely explicit; Amal Vayalinkal \cite{vayalinkal2024enumeratingfinitebraidgroup} has taught a computer how to perform the middle convolution and used it to explicitly enumerate the possible finite complex reflection groups that may appear in the classifications of \autoref{theorem:sl2-classification} and \autoref{theorem:cp1-geometric-origin}.
\subsubsection{Completing the argument} We now briefly sketch the proof of \autoref{theorem:sl2-classification} and \autoref{theorem:cp1-geometric-origin}.
\begin{proof}[Sketch of proof]
 Suppose $\mathbb{V}$ is an interesting local system on $X=\mathbb{CP}^1\setminus\{x_1, \cdots, x_n\}$; in particular, by Corlette-Simpson \cite[Theorem 9.3 and Proposition 10.5]{corlette-simpson} it is of geometric origin. We wish to find $\mathbb{W}$ with finite monodromy, $\mathbb{L}$ of rank one and finite order, and $\lambda\in \mathbb{C}^\times$ a root of unity, such that $$\mathbb{V}=\on{MC}_\lambda(\mathbb{W})\otimes \mathbb{L}.$$ As $\on{MC}_\lambda$ has (quasi)-inverse $\on{MC}_{\lambda^{-1}}$, it suffices to find $\lambda^{-1}\in \mathbb{C}^\times$ a root of unity and $\mathbb{L}$ of rank one and finite order such that $$\mathbb{W}:=\on{MC}_{\lambda^{-1}}(\mathbb{V}\otimes \mathbb{L}^\vee)$$ has finite monodromy.

Here $\mathbb{V}$ is assumed to be of geometric origin, and the operations $-\otimes\mathbb{L}^\vee$ and $\on{MC}_{\lambda^{-1}}$ preserve this property. Thus whatever $\mathbb{W}$ we produce is of geometric origin itself, and hence has all the attendant structures of local systems of geometric origin. Crucially for us, the local system $\mathbb{W}$ necessarily:
\begin{itemize}
	\item is defined over the ring of integers $\mathscr{O}_K$ of a number field $K$, and 
	\item underlies a polarizable $\mathscr{O}_K$-variation of Hodge structure.
\end{itemize}
By \cite[Lemma 7.2.1]{landesman2022geometric}, a local system $\mathbb{U}$ which is defined over the ring of integers $\mathscr{O}_K$ of a number field $K$, and such that for each embedding $\mathscr{O}_K\hookrightarrow \mathbb{C}$ the associated complex local system $\mathbb{U} \otimes_{\mathscr{O}_K}\mathbb{C}$ is unitary, necessarily has finite monodromy. Thus it suffices to choose $\lambda^{-1}, \mathbb{L}$, so that $\mathbb{W}$ is unitary under all such complex embeddings of $\mathscr{O}_K$.

The main idea here is to use the polarization on the Hodge structure carried by $\mathbb{W}$ --- if the Hodge filtration on $\mathbb{W}$ has length $1$, the polarization is necessarily definite, and hence $\mathbb{W}$ is unitary. That a choice of $\lambda^{-1}, \mathbb{L}$ with this property exist is something of a combinatorial miracle \cite[Theorem 4.3.2]{lam2023finite}, and depends on an analysis of the structure of the Hodge filtration on $\mathbb{V}$ \cite[Proposition 2.3.1]{lam2023finite}, ultimately relying on the fact that the points $x_i$ are generic.

Having chosen $\lambda^{-1}, \mathbb{L}$ appropriately so that $\mathbb{W}$ has finite monodromy, one checks the monodromy is in fact given by a finite complex reflection group just by observing that the local monodromy about each $x_i\neq \infty$ is a pseudoreflection, which is a local calculation.
\end{proof}
\subsection{Some questions}
These results leave a number of questions unresolved; we briefly record two such questions here for the reader who will depart prematurely---there are many, many more such questions later in these notes.
\begin{question}
Can one classify conjugacy classes of finite tuples of matrices $(A_1, \cdots, A_n)\in Y(0,n,2)$ with $\on{Mod}_{0,n}$-orbit, without the condition that some $A_i$ have infinite order?
\end{question}
\begin{question}
	Can one say anything about finite $\on{Mod}_{0,n}$-orbits in $Y(0,n,r)$ with $r>2$? 
\end{question}
We will give a number of conjectural answers and partial results towards this latter question, and its generalizations, in the coming sections. It is worth noting that a few of the ideas here---in particular the intervention of some hidden rigidity, through the use of Corlette-Simpson's work \cite{corlette-simpson}, and the properties of local systems of geometric origin---will appear again and again throughout these notes.
\section{Geometry and dynamics of character varieties in higher genus} \label{section:canonical}
We now consider (Riemann) surfaces not necessarily of genus zero. Our goal in this setting will be to discuss analogues of the questions considered in \autoref{section:matrices}. We will take this opportunity to introduce some of the players that will take center stage in the second half of these notes---in particular, we will explain the relationship between the questions we are studying here and the theory of ordinary differential equations.
\subsection{A more general setup}
Let $\Sigma_{g,n}$ be an orientable (topological) surface of genus $g$, with $n$ punctures, and let $$\on{Mod}_{g,n}=\pi_0(\on{Homeo}^+(\Sigma_{g,n}))$$ be the component group of the group of orientation-preserving self-homeomorphisms of $\Sigma_{g,n}$. Again, $\on{Mod}_{g,n}$ has an an algebro-geometric interpretation if $2-2g-n<0$: the index $n!$ subgroup $\on{PMod}_{g,n}$ preserving the punctures is the (orbifold) fundamental group of the moduli space $\mathscr{M}_{g,n}$ of Riemann surfaces of genus $g$ with $n$ marked points. The group $\on{Mod}_{g,0}=\pi_1(\mathscr{M}_g)$ has a simple group-theoretic interpretation: it is of index $2$ in $\on{Out}(\pi_1(\Sigma_g))$ (in particular, it is the subgroup acting on $H_1(\Sigma_g, \mathbb{Z})$ with determinant $1$).

Generalizing our previous definitions, we set $$Y(g, n, r)=\on{Hom}(\pi_1(\Sigma_{g,n}), \on{GL}_r(\mathbb{C}))/\text{conjugation}.$$
As $\pi_1(\Sigma_{g,n})$ has the standard presentation \begin{equation}\label{equation:surface-pi1} \pi_1(\Sigma_{g,n})=\langle a_1, \cdots, a_g, b_1, \cdots, b_g, c_1, \cdots, c_n \mid \prod_{i=1}^g [a_i, b_i]\prod_{j=1}^n c_j\rangle,\end{equation} we may think of $Y(g,n,r)$ as the set of (simultaneous conjugacy classes of) tuples of $r\times r$ matrices $(A_1, \cdots, A_g, B_1, \cdots, B_g, C_1, \cdots, C_n)$ such that \begin{equation} \label{eqn:surface-identity} \prod_{i=1}^g [A_i, B_i]\prod_{j=1}^n C_j=\text{id}.\end{equation} Again the natural outer action of $\on{Mod}_{g,n}$ on $\pi_1(\Sigma_{g,n})$ induces an action $$\on{Mod}_{g,n}\curvearrowright Y(g,n,r).$$
The most basic question one can ask about this action (whose answer we are quite far from understanding in general, though we will discuss some conjectural answers in the next sections) is:
\begin{question}\label{question:higher-genus-orbits}
	What are the finite orbits of this action?
\end{question}
One can again make the action of $\on{Mod}_{g,n}$ on $Y(g,n,r)$ completely explicit \cite[Tables 3 and 4]{cousin2021algebraic} but the formulas are perhaps less illuminating than those in the genus zero setting.

To our knowledge, prior to the work we explain in this section, there were two basic conjectural (partial) answers to \autoref{question:higher-genus-orbits}.
\begin{conjecture}[Consequence of a conjecture of Esnault-Kerz {\cite[Question 9.1(1)]{esnault2020arithmetic}}, {\cite[Conjecture 1.1]{esnault2023local}} and Budur-Wang {\cite[Conjecture 10.3.1]{budur2020absolute}}]\footnote{We will discuss their actual conjecture and some variants later in these notes, in \autoref{subsection:special-points-and-dense-subloci}.}\label{conjecture:ekbw}
	The finite orbits of the $\on{Mod}_{g,n}$-action on $Y(g,n,r)$ are Zariski-dense in $Y(g,n,r)$.\footnote{To make sense of this one should view $Y(g,n,r)$ as having the topology induced by the Zariski topology on $2g+n$-tuples of matrices satisfying \eqref{eqn:surface-identity}.}
\end{conjecture}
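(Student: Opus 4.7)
The plan is to exhibit two canonical sources of representations whose $\on{Mod}_{g,n}$-orbit is finite and to prove that their union is Zariski dense in $Y(g,n,r)$. The first source is representations with finite image: for any finite subgroup $H \subset \on{GL}_r(\mathbb{C})$, there are only finitely many $H$-conjugacy classes of homomorphisms $\pi_1(\Sigma_{g,n}) \to H$, and since $\on{Mod}_{g,n}$ acts through $\on{Out}(\pi_1(\Sigma_{g,n}))$, each such class has finite orbit. The second source is representations of geometric origin, which should have finite $\on{PMod}_{g,n}$-orbit by the algebro-geometric interpretation used in genus zero: existence of a family over a dominant cover of $\mathscr{M}_{g,n}$ on which the local system extends as a relative local system.

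First I would reduce to the closed case $n = 0$ by intersecting $Y(g,n,r)$ with the $\on{PMod}_{g,n}$-invariant subvarieties cut out by the conditions that each local monodromy around a puncture have prescribed finite order; as the orders vary, these subvarieties sweep out a Zariski-dense subset of $Y(g,n,r)$. In the closed case, for every finite Galois cover $q: \tilde{\Sigma}_g \to \Sigma_g$ with deck group $G$ and every $r$-dimensional representation $\tau: G \to \on{GL}_r(\mathbb{C})$, the composition $\tau \circ q_*$ is a finite-image representation of $\pi_1(\Sigma_g)$. The supply of such representations is governed by Mednykh's formula, $|\on{Hom}(\pi_1(\Sigma_g), G)| = |G|^{2g-1}\sum_\chi \chi(1)^{2-2g}$, which grows rapidly with $|G|$.

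The hard part will be passing from numerical abundance to Zariski density. One route I would try goes through Simpson's non-abelian Hodge theory: on the Dolbeault moduli space $\mathscr{M}_{\mathrm{Dol}}$ of rank-$r$ Higgs bundles on $\Sigma_g$, the $\mathbb{C}^\times$-scaling action has fixed locus parametrizing complex variations of Hodge structure, and these are known to be Zariski dense. Transferring via the non-abelian Hodge correspondence yields Zariski density of CVHS inside $Y(g,0,r)$. Combined with the conjectural motivicity of CVHS defined over a ring of integers and the conjectural $p$-curvature characterization of finite orbits developed in the sequel of the paper, this would give the desired density. An alternative attack would replace the CVHS input with a direct deformation argument: at a smooth point of $Y(g,0,r)$ parametrizing a finite-image representation $\rho$, the tangent space $H^1(\pi_1(\Sigma_g), \on{Ad} \rho)$ should be filled out by variations that are themselves realized as finite-image representations of larger finite quotients of $\pi_1(\Sigma_g)$.

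The main obstacle is twofold. First, one must prove that local systems satisfying an appropriate arithmetic integrality condition (defined over a number ring, with integral or vanishing $p$-curvatures for almost all $p$, say) actually have finite $\on{Mod}_{g,n}$-orbit; this is essentially a relative version of the Grothendieck-Katz conjecture and is out of reach in general. Second, one must show that such local systems form a Zariski-dense subset of $Y(g,n,r)$. A purely group-theoretic route via Zariski density of finite-image representations would sidestep the motivic input, but it appears to require either a genuinely new deformation-theoretic idea (small perturbations of a finite-image representation are generally not themselves finite-image) or a Chebotarev-style equidistribution statement for character varieties of surface groups, neither of which is currently available.
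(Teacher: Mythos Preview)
There is a fundamental misreading here: this statement is a \emph{conjecture}, not a theorem, and the paper does not prove it---the paper \emph{disproves} it. Immediately after stating \autoref{conjecture:ekbw} and \autoref{conjecture:kw}, the paper observes that the two conjectures contradict one another for $r\geq 2$ (via Jordan's theorem), and then establishes \autoref{theorem:canonical-reps}: for $g\geq r^2$, every representation with finite $\on{Mod}_{g,n}$-orbit has finite image. Combined with the polynomial $\on{tr}([\rho(a_1)^{n(r)!}, \rho(b_1)^{n(r)!}])-r$, which vanishes on all finite-image representations but not identically on $Y(g,n,r)$, this shows that finite orbits are \emph{not} Zariski-dense when $g\geq r^2$. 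So \autoref{conjecture:ekbw} is false.

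Consequently your proposal cannot succeed, because you are attempting to prove a false statement. Several of the steps you flag as ``hard'' are in fact impossible: finite-image representations of $\pi_1(\Sigma_g)$ are constrained by Jordan's theorem to lie in a proper Zariski-closed subset of $Y(g,0,r)$, so no deformation-theoretic or equidistribution argument can ever establish their density. Likewise, the route through $\mathbb{C}$-VHS and conjectural motivicity would, if it worked, contradict \autoref{theorem:canonical-reps}; indeed, the paper explains (\autoref{corollary:unitary-c-vhs} and the surrounding discussion) that for $g\geq r^2/4$ any $\mathbb{C}$-VHS extending over a dominant family is forced to be unitary, which is precisely what prevents the density you are hoping for.
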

\begin{conjecture}[Kisin {\cite[p.~3]{biswas-whang}, \cite[p.~1]{sinz:thesis}}, Whang {\cite[Question 1.5.3]{lawrence2022representations}}]\label{conjecture:kw}
	For $g\gg r$, the finite $\on{Mod}_{g,n}$-orbits in $Y(g,n,r)$ correspond exactly to those representations $$\pi_1(\Sigma_{g,n})\to \on{GL}_r(\mathbb{C})$$ with finite image.\footnote{Note that representations with finite image necessarily have finite $\on{Mod}_{g,n}$-orbit.}
\end{conjecture}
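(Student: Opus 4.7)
The plan is to adapt the algebro-geometric strategy used to prove \autoref{theorem:sl2-classification} and \autoref{theorem:cp1-geometric-origin} to the higher-genus setting, exploiting the fact that variations of Hodge structure of bounded rank are severely constrained when the genus of the base is large. The implication ``finite image $\Rightarrow$ finite orbit'' is immediate, so the content is to show that finite $\on{Mod}_{g,n}$-orbit forces finite image once $g\gg r$.

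First, I would establish a higher-genus analog of \autoref{proposition:AG-interpretation}: a representation $\rho:\pi_1(\Sigma_{g,n})\to \on{GL}_r(\mathbb{C})$ whose conjugacy class has finite $\on{Mod}_{g,n}$-orbit extends, after passage to a finite \'etale cover $\mathscr{M}\to \mathscr{M}_{g,n}$, to a local system $\mathbb{V}$ on the pulled-back universal $n$-pointed curve $\mathscr{X}\to \mathscr{M}$ whose restriction to a generic fiber is conjugate to $\rho$. Invoking the Corlette-Simpson and Loray-Pereira-Touzet dichotomy together with its higher-rank non-abelian Hodge-theoretic refinements, the semisimplification of $\mathbb{V}$ is, up to a rank-one twist, either pulled back from a lower-dimensional Deligne-Mumford stack or rigid, and in the rigid case is of geometric origin.

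Second, I would rule out the nontrivial pullback case for $g\gg r$. Such a pullback produces a factorization $\mathscr{X}\to Z$ compatible with the $\on{Mod}_{g,n}$-action; restricting to a fiber yields an MCG-equivariant quotient of $\pi_1(\Sigma_{g,n})$ through which $\rho$ factors. For fixed $r$ and large $g$, the only such quotients with infinite image that are MCG-stable should be quotients of $H_1(\Sigma_{g,n};\mathbb{Z})$ on which $\on{Sp}_{2g}(\mathbb{Z})$ has a finite orbit; by the representation theory of $\on{Sp}_{2g}(\mathbb{Z})$, this forces $\rho$ to have finite image after a finite-order twist. The rank-one summands appearing in the semisimplification are disposed of by the same abelian argument.

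Third, in the geometric origin case, $\mathbb{V}$ underlies a polarizable $\mathbb{Z}$-variation of Hodge structure on $\mathscr{X}$, and restricting to a fiber gives a polystable Higgs bundle $(E,\theta)$ of rank $r$ with vanishing rational Chern classes. The heart of the argument---and the main obstacle---is to show that when $g\gg r$ the Higgs field $\theta$ must vanish. I would attempt this via Arakelov-Viehweg-Zuo-type slope inequalities for the Hodge bundles on $\mathscr{X}\to \mathscr{M}$: a nonzero $\theta$ forces a graded piece of the Hodge filtration to vary nontrivially in degree along $\mathscr{M}$, while the finite-orbit hypothesis rigidifies these Hodge filtrations up to the mapping class group; balancing the resulting inequalities on a generic genus-$g$ fiber, one should obtain an incompatibility once $g$ is sufficiently large compared to $r$. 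Once $\theta=0$, $\mathbb{V}$ is a variation of unitary local systems defined over a ring of integers of a number field, so by the integrality criterion \cite[Lemma 7.2.1]{landesman2022geometric} it has finite monodromy, and hence $\rho$ has finite image, as desired.
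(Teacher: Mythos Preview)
Your proposal has the right endpoint (unitary under all complex embeddings plus $\mathscr{O}_K$-integrality forces finite monodromy, via \cite[Lemma 7.2.1]{landesman2022geometric}), and you correctly begin by spreading $\rho$ to a local system $\mathbb{V}$ on a family $\mathscr{X}\to\mathscr{M}$ dominating $\mathscr{M}_{g,n}$ (this is exactly \autoref{proposition:AG-interpretation-higher-genus}). But the middle of your argument has a genuine gap and diverges substantially from the paper's route.

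The gap is your appeal to a ``higher-rank Corlette--Simpson/Loray--Pereira--Touzet dichotomy.'' That dichotomy (rigid and of geometric origin, or pulled back from a curve) is a theorem only for Zariski-dense $\on{SL}_2$-representations; no analogue is known for $\on{GL}_r$ with $r>2$, and in particular there is no available statement saying that a rigid local system of arbitrary rank on $\mathscr{X}$ is of geometric origin, or underlies a $\mathbb{Z}$-VHS. Your subsequent steps (ruling out pullbacks, and then extracting a $\mathbb{Z}$-VHS and arguing $\theta=0$ via Arakelov--Viehweg--Zuo inequalities) rest on this unavailable input, and the slope-inequality argument for $\theta=0$ is itself only a sketch.

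The paper's proof (the sketch in \S3.4) avoids the dichotomy entirely. It first deforms $\mathbb{V}$ to a polarizable $\mathbb{C}$-VHS $\mathbb{V}'$ via Mochizuki's theorem. The key new ingredient replacing your ``$\theta=0$'' step is \autoref{theorem:stability}: for $g\geq r^2/4$, a generic isomonodromic deformation of an irreducible flat bundle has (parabolically) semistable underlying bundle; since a nontrivial Hodge filtration would produce a destabilizing subbundle on \emph{every} fiber, the filtration must have length one, so $\mathbb{V}'$ is unitary (\autoref{corollary:unitary-c-vhs}). Then the vanishing result \autoref{theorem:vanishing}, applied to $\on{ad}(\mathbb{V}')$, shows $\mathbb{V}'$ is \emph{cohomologically rigid}; hence $\mathbb{V}\simeq\mathbb{V}'$, and Esnault--Groechenig's integrality theorem for cohomologically rigid local systems yields $\mathscr{O}_K$-integrality directly---without ever establishing geometric origin. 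Finally one reruns the unitarity argument for every complex embedding of $\mathscr{O}_K$ and concludes. So rigidity and integrality in the paper come from the pair (isomonodromic semistability $+$ cohomological vanishing $+$ Esnault--Groechenig), not from a structural dichotomy; this is the missing idea in your outline.
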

These two conjectures are in tension with one another; the first should be viewed as saying there are many finite orbits, and the second as saying that there are not too many. And indeed, they contradict one another when $r\geq 2$. To see this, we use the following theorem of Jordan:
\begin{theorem}[Jordan\, {\cite[p.~91]{jordan1878memoire}}]
	There exists a constant $n(r)>0$ such that for any finite subgroup $G$ of $\on{GL}_r(\mathbb{C})$, $G$ contains an abelian subgroup of index at most $n(r)$. 
\end{theorem}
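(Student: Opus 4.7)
The plan is to carry out the classical proof (going back to Jordan and streamlined by many later authors) via contraction of commutators near the identity in the unitary group. The first step is to reduce to the case $G \subseteq U(r)$: averaging any Hermitian inner product on $\mathbb{C}^r$ over the finite group $G$ produces a $G$-invariant inner product, and in an orthonormal basis for this form $G$ lies inside $U(r)$. So I may assume $G$ is a finite subgroup of the compact group $U(r)$.

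The heart of the argument is a commutator contraction estimate. Equipping $M_r(\mathbb{C})$ with the operator norm, a short Taylor computation gives a constant $C = C(r)$ such that $\|[g,h] - I\| \leq C\,\|g - I\|\,\|h - I\|$ for $g, h \in U(r)$ sufficiently close to $I$. Choose $\epsilon = \epsilon(r) > 0$ with $C\epsilon < 1/2$ and set $V = \{u \in U(r) : \|u - I\| < \epsilon\}$. Then for any $g, h \in V$ the commutator $[g,h]$ again lies in $V$ and is strictly closer to $I$ than either input.

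Let $H = \langle G \cap V \rangle$. I would argue $H$ contains a normal abelian subgroup $A$ of index bounded by a function of $r$. The key observation: if $g \in G \cap V$ is a non-identity element of minimal distance to $I$, then for every $h \in G \cap V$ the element $[g, h]$ lies in $G \cap V$ and is strictly closer to $I$ than $g$, so by minimality $[g, h] = I$; thus $g$ is central in $H$. Iterating this minimality argument on a suitable sequence of generators shows $H$ is nilpotent of class bounded in terms of $r$, and a standard argument about finite nilpotent subgroups of $\on{GL}_r(\mathbb{C})$ (they are, up to bounded index, contained in a maximal torus) then extracts an abelian subgroup $A \leq H$ of index bounded purely in terms of $r$.

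Finally, the index $[G : H]$ is controlled by compactness: $U(r)$ can be covered by some number $N(r)$ of translates of $V$, and any two elements of $G$ lying in a common translate differ by an element of $VV^{-1}$, which—after an initial shrinking of $\epsilon$—lies in $H$. Combining $[G : H] \leq N(r)$ with the previous bound on $[H : A]$ yields the theorem with $n(r)$ an explicit expression in the covering number of $U(r)$ and the structure constants for $H$. The main obstacle is the middle step: converting the soft commutator-contraction estimate into an actual abelian subgroup of bounded index, rather than merely a nilpotent one. This is where the compactness of $U(r)$ really enters, and where most modern treatments invoke either a Zassenhaus-neighborhood argument or direct structural results about nilpotent subgroups of unitary groups.
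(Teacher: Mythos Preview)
The paper does not prove this theorem at all --- it is quoted as a classical result of Jordan (1878) and used as a black box to derive the consequence $[A^{n(r)!},B^{n(r)!}]=\mathrm{id}$ for $A,B$ generating a finite subgroup. So there is no ``paper's proof'' to compare against.

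Your sketch follows one of the standard modern approaches (commutator contraction in $U(r)$), and the reduction to $U(r)$, the estimate $\|[g,h]-I\|\leq C\|g-I\|\|h-I\|$, and the covering bound on $[G:H]$ are all fine. The concern --- which you yourself flag --- is the middle step, and there are two genuine issues there. First, the iterated minimality argument does show $H$ is nilpotent, but it does not obviously bound the nilpotency class in terms of $r$ alone: the number of contractions needed before reaching the identity depends on the minimum nonzero distance in $G\cap V$, which varies with $G$. Second, your appeal to ``a standard argument about finite nilpotent subgroups of $\on{GL}_r(\mathbb{C})$'' having abelian subgroups of $r$-bounded index is essentially a special case of the theorem you are proving, so as written this risks circularity.

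The usual repair avoids the nilpotent detour entirely. Your minimality argument already shows that the norm-minimal nontrivial element $g_0\in G\cap V$ is central in $H$. If $g_0$ is not a scalar, its eigenspace decomposition of $\mathbb{C}^r$ is $H$-stable with each block of dimension strictly less than $r$, and one inducts on $r$. The scalar case needs a short additional argument (pass to $\on{PU}(r)$, or take the minimal non-scalar element and note that its commutators with $G\cap V$ are scalar, hence it has abelian image in $\on{PGL}_r$). With this induction in place the bound $n(r)$ emerges from the covering number of $U(r)$ and the recursion in $r$, and no separate ``nilpotent $\Rightarrow$ virtually abelian with $r$-bounded index'' lemma is required.
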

In particular, if $A, B\in \on{GL}_r(\mathbb{C})$ generate a finite subgroup, then $[A^{n(r)!}, B^{n(r)!}]=\on{id}$. So in particular the polynomial $$\on{tr}([\rho(a_1)^{n(r)!}, \rho(b_1)^{n(r)!}])-r$$ (with $a_1, b_1$ as in \autoref{equation:surface-pi1}) vanishes on the locus of $\rho$ with finite image; but it is not hard to see that it is not identically zero on all of $Y(g, n,r)$.

It turns out that \autoref{conjecture:ekbw} is false and \autoref{conjecture:kw} is true.
\begin{theorem}[Landesman-L--\,{\cite[Theorem 1.2.1]{landesman2022canonical}}]\label{theorem:canonical-reps}
	If $g\geq r^2$ and $$\rho: \pi_1(\Sigma_{g,n})\to \on{GL}_r(\mathbb{C})$$ is a representation whose conjugacy class has finite orbit under $\on{Mod}_{g,n}$, then $\rho$ has finite image.
\end{theorem}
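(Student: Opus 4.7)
The plan is to mirror the strategy of \autoref{section:matrices}, leveraging both the algebro-geometric content of the finite-orbit hypothesis and non-abelian Hodge theory. First I would upgrade the condition that $[\rho]\in Y(g,n,r)$ has finite $\on{Mod}_{g,n}$-orbit to an algebro-geometric statement: there exist a finite étale cover $\widetilde{\mathscr{M}}\to \mathscr{M}_{g,n}$ and a local system $\mathbb{V}$ on the universal curve $\pi:\widetilde{\mathscr{C}}\to \widetilde{\mathscr{M}}$ whose restriction to each fiber of $\pi$ has monodromy conjugate to $\rho$. This is the exact analog of \autoref{proposition:AG-interpretation}, and follows from the Birman exact sequence together with the identification of $\on{PMod}_{g,n}$ with $\pi_1(\mathscr{M}_{g,n})$.

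Next I would pass to the Higgs side via non-abelian Hodge theory. After semisimplifying $\mathbb{V}$ (harmless, since $\rho$ has finite image if and only if each Jordan--H\"older constituent does) and applying Corlette--Simpson on the quasi-projective variety $\widetilde{\mathscr{C}}$, the resulting harmonic bundle may be deformed through Simpson's $\mathbb{C}^\times$-action on the moduli of Higgs bundles---while preserving the fiberwise conjugacy class, which is already a mapping-class-group-fixed point---to a local system $\mathbb{V}_0$ underlying a polarizable complex variation of Hodge structure $(E,\theta)$ on $\widetilde{\mathscr{C}}$ whose fiberwise monodromy still recovers $\rho$.

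The heart of the argument is then to show that for $g\ge r^2$ the fiberwise Higgs field $\theta|_{\widetilde{\mathscr{C}}_s}\in H^0(\widetilde{\mathscr{C}}_s,\on{End}(E|_{\widetilde{\mathscr{C}}_s})\otimes \Omega^1_{\widetilde{\mathscr{C}}_s})$ must vanish. The total-space Higgs field satisfies Griffiths transversality, which couples $\theta|_{\widetilde{\mathscr{C}}_s}$ to the horizontal component of $\theta$ via the Kodaira--Spencer class of $\pi$. Because $(E,\theta)$ is \emph{canonically} extended from a single fiber across the entire base $\widetilde{\mathscr{M}}$, the Hodge filtration---and hence the horizontal Higgs data---is forced to be equivariant for a finite-index subgroup of $\pi_1(\widetilde{\mathscr{M}})$; this mapping-class-group equivariance cuts out the admissible $\theta|_{\widetilde{\mathscr{C}}_s}$ by a system of linear conditions whose number grows with $r^2$ but not with $g$, while the ambient space has dimension growing linearly in $g$. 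The inequality $g\ge r^2$ is just what is needed to force $\theta|_{\widetilde{\mathscr{C}}_s}=0$, i.e.~the fiberwise representation is unitary.

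Finally I would combine unitarity with integrality: since $\widetilde{\mathscr{M}}$ and its universal curve are defined over a number field and $(E,\theta)$ is canonical, arithmetic descent together with the rigidity inherent in the canonical extension (à la Simpson's integrality theorem for rigid local systems) produces a model of $\rho$ in $\on{GL}_r(\mathscr{O}_K)$ for some number field $K$. The Higgs-vanishing argument applies verbatim to every complex embedding of $K$, so every Archimedean place yields a unitary representation, and \cite[Lemma 7.2.1]{landesman2022geometric}---already used in the genus-zero case---implies $\rho$ has finite image. The principal obstacle is the third step: extracting the bound $g\ge r^2$ from Griffiths transversality together with the canonicity of the extension, which requires careful infinitesimal analysis of the Hodge filtration on the total space and a sharp parameter count pitting the dimension of fiberwise Higgs data against the moduli-direction constraints.
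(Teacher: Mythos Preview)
Your outline has the right overall shape---spread out to a family, deform to a $\mathbb{C}$-VHS, show unitarity on fibers, obtain integrality, conclude via \cite[Lemma 7.2.1]{landesman2022geometric}---but the two middle steps contain genuine gaps that are precisely where the work lies.

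First, your mechanism for forcing the fiberwise Higgs field to vanish does not hold up. You describe a parameter count in which ``the number of linear conditions grows with $r^2$'' while ``the ambient space has dimension growing linearly in $g$,'' and conclude that $g\geq r^2$ forces vanishing. But this inequality goes the wrong way: an underdetermined linear system has \emph{more} nonzero solutions, not fewer. The paper's argument is entirely different and does not proceed by dimension count. It proves (\autoref{theorem:stability}) that for $g\geq r^2/4$ one can perturb the complex structure so that the bundle underlying the isomonodromic deformation becomes (parabolically) semistable. If the Hodge filtration on the $\mathbb{C}$-VHS were nontrivial, its top piece would be a destabilizing subbundle on \emph{every} fiber, contradicting this. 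Hence the filtration has length one and the polarization is definite; this is \autoref{corollary:unitary-c-vhs}. Griffiths transversality and Kodaira--Spencer do enter, but in the proof of \autoref{theorem:stability}, not via the heuristic you sketch.

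Second, your integrality step is missing its key input. You invoke ``rigidity inherent in the canonical extension (\`a la Simpson's integrality theorem),'' but Simpson's result produces a $K$-structure, not an $\mathscr{O}_K$-structure. Integrality requires \emph{cohomological rigidity} together with the theorem of Esnault--Groechenig \cite{esnault2018cohomologically}, which in turn rests on the Langlands program over function fields. In the paper, cohomological rigidity is obtained from the vanishing theorem \autoref{theorem:vanishing} applied to $\on{ad}(\mathbb{V}')$; you never invoke this. This same rigidity is also what closes the gap in your second paragraph: the deformation $\mathbb{V}\rightsquigarrow\mathbb{V}'$ to a $\mathbb{C}$-VHS does \emph{not} a priori preserve the fiberwise conjugacy class (the $\mathbb{C}^\times$-flow on the Higgs moduli of the total space has no reason to fix the restriction to a fiber). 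The paper recovers $\mathbb{V}\simeq\mathbb{V}'$ only \emph{after} proving $\mathbb{V}'$ is rigid, so that the deformation must have been constant.
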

\begin{remark}
As we will see, the proof of \autoref{theorem:canonical-reps} relies on the full force of non-abelian Hodge theory and the Langlands program for function fields. The theorem was proven earlier in rank $r=2$ in the beautiful paper \cite{biswas-whang}, by completely different (and much more elementary and explicit) methods. It would be of great interest to find a proof of \autoref{theorem:canonical-reps} of a similarly explicit nature.
\end{remark}
One may again, as in \autoref{proposition:AG-interpretation}, give an algebro-geometric interpretation of the condition that a representation has finite $\on{Mod}_{g,n}$-orbit. 

\begin{proposition}[see {\cite[Corollary 2.3.5]{landesman2022canonical}}]\label{proposition:AG-interpretation-higher-genus}
	An irreducible representation $\rho: \pi_1(\Sigma_{g,n})\to \on{GL}_r(\mathbb{C})$ has conjugacy class with finite $\on{Mod}_{g,n}$-orbit if and only if there exists 
	\begin{itemize}
	\item 	a family of $n$-punctured curves of genus $g$, $\pi: \mathscr{X}\to \mathscr{M}$, with the induced map $\mathscr{M}\to \mathscr{M}_{g,n}$ dominant, and
	\item a local system $\mathbb{V}$ on $\mathscr{X}$ whose restriction to a fiber of $\pi$ has monodromy conjugate to $\rho$.
	\end{itemize}
\end{proposition}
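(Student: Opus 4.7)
The plan is to deduce both directions of the equivalence from the Birman exact sequence attached to the universal punctured curve $\mathscr{C}_{g,n} \to \mathscr{M}_{g,n}$. For any finite étale cover $\mathscr{M} \to \mathscr{M}_{g,n}$ with pulled-back punctured curve $\mathscr{X} \to \mathscr{M}$, one has
$$1 \to \pi_1(\Sigma_{g,n}) \to \pi_1(\mathscr{X}) \to \pi_1(\mathscr{M}) \to 1,$$
and both conditions in the proposition translate into statements about extending $\rho$ along this sequence, up to possibly enlarging $\mathscr{M}$.

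For the forward direction, assume $[\rho]$ has finite $\on{PMod}_{g,n}$-orbit, and let $\Gamma \subset \on{PMod}_{g,n}$ be its stabilizer, a finite-index subgroup. Let $\mathscr{M} \to \mathscr{M}_{g,n}$ be the corresponding connected finite étale cover, so $\pi_1(\mathscr{M}) \cong \Gamma$, and pull back the universal curve to obtain $\mathscr{X} \to \mathscr{M}$. By construction, the outer action of $\Gamma$ on $\pi_1(\Sigma_{g,n})$ preserves the conjugacy class of $\rho$; hence $\rho$ composed with $\on{GL}_r(\mathbb{C}) \to \on{PGL}_r(\mathbb{C})$ extends to a homomorphism $\widetilde{\rho} : \pi_1(\mathscr{X}) \to \on{PGL}_r(\mathbb{C})$, giving a $\on{PGL}_r$-local system. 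By Schur's lemma (this is where irreducibility enters), the obstruction to lifting $\widetilde{\rho}$ to an actual $\on{GL}_r(\mathbb{C})$-representation is a cohomology class in $H^2(\Gamma, \mathbb{C}^\times)$. Since the image of $\rho$ is a finitely generated subgroup of $\on{GL}_r(\mathbb{C})$, one can kill this class by passing to a further finite étale cover of $\mathscr{M}$ (trivializing the associated $\mathbb{G}_m$-gerbe, or alternatively twisting by a sufficiently divisible root of a rank-one local system on $\mathscr{M}$). The resulting extension defines a local system $\mathbb{V}$ on $\mathscr{X}$ whose restriction to a fiber recovers $\rho$.

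For the reverse direction, assume $\pi: \mathscr{X} \to \mathscr{M}$ and $\mathbb{V}$ are given with $\mathscr{M} \to \mathscr{M}_{g,n}$ dominant. By restricting $\mathbb{V}$ to a multisection if necessary, we may assume $\mathscr{M} \to \mathscr{M}_{g,n}$ factors through a quasi-finite cover, so the image of $\pi_1(\mathscr{M}) \to \pi_1(\mathscr{M}_{g,n}) = \on{PMod}_{g,n}$ is of finite index. Now loops in $\mathscr{M}$ act on the conjugacy class of the monodromy of $\mathbb{V}$ restricted to a fiber in two compatible ways: via parallel transport in $\mathbb{V}$ (which keeps the class constant) and via the outer action of the image of the loop in $\on{PMod}_{g,n}$ on $\pi_1(\Sigma_{g,n})$. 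Comparing these two shows the image of $\pi_1(\mathscr{M})$ in $\on{PMod}_{g,n}$ stabilizes $[\rho]$, so the $\on{PMod}_{g,n}$-orbit of $[\rho]$ is finite, and hence so is its $\on{Mod}_{g,n}$-orbit.

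The main obstacle is the projective-to-linear lifting in the forward direction: the obstruction lives in $H^2(\Gamma, \mathbb{C}^\times)$, which is generally nonzero, and one must argue that it can always be killed after replacing $\mathscr{M}$ by a further finite étale cover, using that the gerbe in question is banded by roots of unity (thanks to $\rho$ having finitely generated image). A secondary subtlety is ensuring dominance of $\mathscr{M} \to \mathscr{M}_{g,n}$ produces a finite-index image on fundamental groups, which requires restricting to a quasi-finite multisection rather than working with $\mathscr{M}$ itself.
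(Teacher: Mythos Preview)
The paper does not give its own proof of this proposition; it simply directs the reader to \cite[Corollary~2.3.5]{landesman2022canonical}. Your overall strategy---use the Birman exact sequence, extend $\rho$ first to a projective representation of $\pi_1(\mathscr{X})$, then lift to $\on{GL}_r$---is the standard one and is presumably what the cited reference does.

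That said, your forward direction has a real gap at the lifting step. You correctly produce $\widetilde{\rho}:\pi_1(\mathscr{X})\to\on{PGL}_r(\mathbb{C})$, with obstruction to lifting lying in $H^2(\pi_1(\mathscr{X}),\mathbb{C}^\times)$. But the justification you give---``the image of $\rho$ is finitely generated, so the gerbe is banded by roots of unity and dies on a finite cover''---is not correct. The obstruction \emph{is} torsion (it is the image of the $\mu_r$-valued obstruction to lifting to $\on{SL}_r$), but this has nothing to do with finite generation of $\on{im}(\rho)$; and a torsion class in $H^2$ of a group need not vanish on any finite-index subgroup. Twisting by a rank-one local system pulled back from $\mathscr{M}$ does not change the obstruction class either, so your alternative suggestion also fails as stated. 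What one actually uses is that the obstruction restricts trivially to the fiber and hence comes from a class on $\pi_1(\mathscr{M})$; since the proposition only demands \emph{some} dominant $\mathscr{M}\to\mathscr{M}_{g,n}$ (not a finite \'etale cover), one has the freedom to replace $\mathscr{M}$ by a variety that kills this class---but this requires an argument you have not supplied. (Indeed, the paper elsewhere invokes \cite[Lemma~2.2.2]{landesman2022canonical} only to produce a \emph{projective} extension, suggesting the passage to a linear one is handled separately in the reference.)

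Your reverse direction is correct, though the ``restrict to a multisection'' step is unnecessary: for any dominant morphism of connected smooth complex varieties $\mathscr{M}\to\mathscr{M}_{g,n}$, the image of $\pi_1(\mathscr{M})$ in $\on{PMod}_{g,n}$ already has finite index, by Stein factorization and generic smoothness.
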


The following is immediate:
\begin{corollary}\label{corollary:isotrivial-local-systems-generic-curve}
Let $C$ be a very general Riemann surface of genus $g\geq r^2$ with $n\geq 0$ very general marked points $\{x_1, \cdots, x_n\}$. Then any local system of geometric origin on $C\setminus\{x_1, \cdots, x_n\}$ of rank $r$ has finite monodromy.
\end{corollary}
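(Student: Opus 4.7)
The plan is to deduce this as a direct consequence of \autoref{theorem:canonical-reps} together with \autoref{proposition:AG-interpretation-higher-genus}, via a standard spreading out argument. First I would reduce to the case that $\mathbb{V}$ is irreducible: any local system of geometric origin is semisimple (by Deligne's theorem), and each summand of a local system of geometric origin is again of geometric origin, so it suffices to show each irreducible summand of $\mathbb{V}$ has finite monodromy.

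Next, so assume $\mathbb{V}$ is irreducible of geometric origin on $C \setminus \{x_1,\ldots, x_n\}$, arising as a summand of $R^i\pi_*\mathbb{C}$ for some smooth proper family $\pi \colon Y \to U$ with $U \subset C\setminus\{x_1,\ldots,x_n\}$ dense open. The data of $(C, x_1,\ldots,x_n)$ and of $\pi$ is defined over some subfield $k \subset \mathbb{C}$ of finite transcendence degree over $\overline{\mathbb{Q}}$; spreading out, the family $\pi$ extends over a family of $n$-punctured curves $\mathscr{X} \to \mathscr{M}$ whose fiber over a point $m \in \mathscr{M}$ is $(C, x_1, \ldots, x_n)$. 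Here "very general" should be interpreted as: $(C,x_1,\ldots,x_n)$ avoids the countable union of proper closed subvarieties of $\mathscr{M}_{g,n}$ arising from moduli of such spreading-outs (for every possible family $\pi$ defined over $k$, for every $k$ of finite transcendence degree); in particular we may arrange that the induced map $\mathscr{M} \to \mathscr{M}_{g,n}$ is dominant. Applying \autoref{proposition:AG-interpretation-higher-genus} in reverse then shows that the monodromy representation $\rho$ of $\mathbb{V}$ has finite $\on{Mod}_{g,n}$-orbit. Since $g \geq r^2$, \autoref{theorem:canonical-reps} then gives that $\rho$ has finite image.

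The only subtle point in this plan is articulating the spreading out step cleanly, i.e.\ that for very general $(C,x_1,\ldots,x_n)$ any family $\pi$ on an open of $C$ giving rise to $\mathbb{V}$ does extend to a family over a base dominating $\mathscr{M}_{g,n}$. This is of a purely formal/foundational nature---there are only countably many such families up to isomorphism (since each is defined over $\overline{\mathbb{Q}}$ after spreading out and then choosing a generic fiber, and one can bound the relevant moduli), so one avoids a countable union of conditions---but it is the one place where the hypothesis "very general" does real work. Everything else is a plug-and-play application of the two stated results.
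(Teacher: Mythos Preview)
Your proposal is correct and follows essentially the same approach as the paper: spread out the geometric-origin local system over a family dominating $\mathscr{M}_{g,n}$, then apply \autoref{proposition:AG-interpretation-higher-genus} and \autoref{theorem:canonical-reps}. The paper's own proof is a single sentence to this effect; you have simply fleshed out the spreading-out step and added the (harmless) preliminary reduction to the irreducible case via semisimplicity.
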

Indeed, any local system of geometric origin (defined in \autoref{definition:geometric-origin}) on a very general $n$-times punctured curve would spread out over the total space of a family as in \autoref{proposition:AG-interpretation-higher-genus}. 
\begin{remark}
One may improve the bound in \autoref{corollary:isotrivial-local-systems-generic-curve} to $g\geq r^2/4$; see \cite[Corollary 1.2.7]{landesman2022geometric}. We do not know how to similarly improve  \autoref{theorem:canonical-reps}.
\end{remark}

It is not at all clear to us to what extent the bound in \autoref{theorem:canonical-reps} is sharp, but some bound is indeed necessary---for all $g$ and $n$ with $2-2g-n<0$ there exist many fascinating representations of $\pi_1(\Sigma_{g,n})$ with finite $\on{Mod}_{g,n}$-orbit. It would be very interesting to find some kind of classification of these; we make some conjectures in this direction later in these notes, in \autoref{section:p-curvature} and \autoref{section:mcg}.
\begin{example}[The Kodaira-Parshin trick, see {\cite{kodaira1967certain}, \cite[Proposition 7]{parshin:algebraic-curves-over-function-fields-i}, \cite{atiyah1969signature}, \cite[Proposition 5.1.1]{landesman2022geometric}}]\label{example:kp-trick}
  By \autoref{proposition:AG-interpretation-higher-genus}, one way to construct representations of $\pi_1(\Sigma_{g,n})$ with finite $\on{Mod}_{g,n}$-orbit is to construct local systems on the total space of a family of $n$-punctured curves of genus $g$, $\mathscr{X}\to \mathscr{M}$, so that the induced map $\mathscr{M}\to \mathscr{M}_{g,n}$ is dominant. There is a famous way to do so: the Kodaira-Parshin trick. 
	
 Let $\mathscr{C}_{g,n}$ be the universal curve over $\mathscr{M}_{g,n}$. Loosely speaking, the idea of the Kodaira-Parshin trick is to construct a family of curve $q: \mathscr{Y}\to \mathscr{C}_{g,n}$ whose fiber over a point $[C, x_1, \cdots, x_{n+1}]\in \mathscr{C}_{g,n}$ is a disjoint union of $G$-covers of $C$ branched over $x_1, \cdots, x_{n+1}$, where $G$ is some finite group. The local system $R^1q_*\mathbb{C}$ then provides a (typically very interesting) local system on $\mathscr{C}_{g,n}$, and hence, by \autoref{proposition:AG-interpretation-higher-genus}, a $\pi_1(\Sigma_{g,n})$-representation with finite $\on{Mod}_{g,n}$-orbit.
 
 There are other ways to construct such representations, using e.g.~TQFT methods; see \cite[Example 10.2.4]{landesman2022canonical} for a discussion.
\end{example}

We will shortly sketch the proof of \autoref{theorem:canonical-reps}---but before doing so we will take the opportunity to introduce some of the ideas and themes that will motivate the rest of these notes. The fundamental idea is that $Y(g,n,r)$ has several different avatars, whose structure is the non-abelian analogue of the structures on the cohomology of an algebraic variety (e.g.~Hodge structures on Betti/de Rham cohomology, the Gauss-Manin connection on the cohomology of a family, the Galois action on $\ell$-adic cohomology, and so on). As in \autoref{section:matrices}, we will see that rigid local systems play a special role in the argument; this observation will also motivate some conjectures in \autoref{section:non-abelian-conjectures} and \autoref{section:mcg}.
\subsection{Isomonodromy}
Before diving into the proof of \autoref{theorem:canonical-reps}, we will need yet another interpretation of the $\on{Mod}_{g,n}$-action on $Y(g,n,r)$, closely related to our earlier discussion of the Painlev\'e VI equation \eqref{equation:PVI} and the Schlesinger system \eqref{equation:schlesinger}. The crucial idea is to pass through the Riemann-Hilbert correspondence, between local systems and ordinary differential equations.
\subsubsection{The Riemann-Hilbert correspondence}
\begin{definition}
	Let $X$ be a complex manifold and $D\subset X$ a smooth divisor. A flat bundle $(\mathscr{E}, \nabla)$ on $X$ with regular singularities along $D$ is a holomorphic vector bundle $\mathscr{E}$ on $X$, equipped with a $\mathbb{C}$-linear map $\nabla: \mathscr{E}\to \mathscr{E}\otimes\Omega^1_X(\log D)$ satisfying the Leibniz rule, i.e. $$\nabla(f\cdot s)=f\nabla(s)+s\otimes df$$ for  $U\subset X$ an open subset, $f\in \mathscr{O}_X(U)$, and $s\in \mathscr{E}(U)$, and such that $$\nabla\circ \nabla: \mathscr{E}\to \mathscr{E}\otimes \Omega^2_X(\log D)$$ is identically zero.\footnote{\label{mixed partials} If one views $\nabla$ as a rule for differentiating sections to $\mathscr{E}$ along vector fields, the expression $\nabla\circ\nabla=0$ is a fancy way of saying that {mixed partials commute}.} (Here $\Omega^i_X(\log D)$ is the sheaf of holomorphic $i$-forms on $X$ with logarithmic poles along $D$, see e.g.~\cite[\S4.1]{peters2008mixed}.)
	
	The fiber of the sheaf $\Omega^1_X(\log D)$ at a point $x\in D$ is canonically trivialized by the $1$-form $dz/z$, where $z$ is any local equation for $D$ at $x$. A local computation shows that the composite map $$\mathscr{E}\overset{\nabla}{\longrightarrow} \mathscr{E}\otimes \Omega^1_X(\log D)\to \mathscr{E}\otimes (\Omega^1_X(\log D)/\Omega^1_X)\simeq \mathscr{E}|_D$$ is $\mathscr{O}_X$-linear and hence factors through $\mathscr{E}|_D$. We denote fiber of the induced map $\mathscr{E}|_D\to \mathscr{E}|_D$ at $x\in X$ by $\on{Res}_x(\nabla)$ and refer to it as the \emph{residue} of the connection $\nabla$. 
	
	We denote by $\on{MIC}(X, D)$ the category of flat vector bundles on $X$ with regular singularities along $D$ (where morphisms are given by the evident commutative squares).\footnote{Here MIC stands for \emph{modules with integrable connection}. Many of the notions discussed here can be generalized, e.g. to the case where $D$ has normal crossings, but we choose not to do so for simplicity.} 
\end{definition}
There is an evident functor from $\on{MIC}(X,D)$ to the category of local systems on $X\setminus D$, given by $$(\mathscr{E}, \nabla)\mapsto \ker(\nabla|_{X\setminus D}),$$ often referred to as the \emph{Riemann-Hilbert correspondence}. If $D=\emptyset$, this functor is an equivalence of categories. In general it is faithful and essentially surjective but not full. Indeed, given a local system on $X\setminus D$ there are always many ways to extend it to an object of $\on{MIC}(X,D)$ if $D$ is non-empty.

\begin{example}[Fuchsian ODEs]\label{example:mic-p1}
To bring things down to earth, let's consider the case where $X=\mathbb{CP}^1$ and $D=x_1+\cdots+ x_n$, say with $\infty$ not among the $x_i$. Take $\mathscr{E}=\mathscr{O}_X^r$. Then a connection on $\mathscr{E}$ with regular singularities along $D$ has the form $$\nabla=d-\sum_{i=1}^n \frac{A_i}{z-x_i}dz$$ where the $A_i\in \mathfrak{gl}_r(\mathbb{C})$ are $r\times r$ matrices, and $$\sum_{i=1}^n A_i=0$$ (this last condition ensures there is no singularity at infinity). The condition that $\nabla s=0$ is precisely the linear ordinary differential equation $$\frac{\partial s}{\partial z}=\sum_{i=1}^n \frac{A_i}{z-x_i}s.$$ The matrix $-A_i$ is the residue of $\nabla$ at $x_i$.

To obtain a local system from this ODE, choose some basepoint $x\not\in D$, and consider a basis $s_1, \cdots, s_r$ of local solutions to the ODE at $x$. Given a loop $\gamma: S^1\to X\setminus D$ based at $x$, one may analytically continue these solutions around the loop $\gamma$; the output only depends on the homotopy type of $\gamma$. This yields a new basis of solutions $\gamma^*(s_1, \cdots, s_r)$. But we have just defined an action of homotopy classes of based loops in $X\setminus D$ on bases of local solutions to our ODE. Equivalently, this is a representation of $\pi_1(X\setminus D, x)$ on the space of local solutions to our ODE, or equivalently a local system of rank $r$ whose fiber at $x$ is the space of local solutions to our ODE in a small neighborhood of $x$.
\end{example}
\subsubsection{Isomonodromic deformation}\label{subsubsection:isomonodromic}
We now consider the following question: given an object $(\mathscr{E},\nabla)$ of $\on{MIC}(X,D)$, what happens as we deform the complex structure on $(X,D)$? The topology of $X\setminus D$ does not change, and objects of $\on{MIC}(X,D)$, loosely speaking, correspond to topological objects, namely local systems. So perhaps it is plausible that there is a canonical deformation of $(\mathscr{E},\nabla)$ along any deformation of $(X,D)$. Indeed this is the case, as we now explain.

Let $U$ be a contractible complex manifold and $\pi: \mathscr{X}\to U$ a family of compact Riemann surfaces (i.e.~a proper holomorphic submersion of relative dimension one, with connected fibers). Let $\mathscr{D}\subset \mathscr{X}$ be a divisor in $\mathscr{X}$, finite \'etale over $U$. Let $0\in U$ be a point and set $(X,D)=(\mathscr{X}_0, \mathscr{D}_0)$ to be the fiber over $0$. Let $(\mathscr{E}_0, \nabla_0)$ be a flat bundle on $X$ with regular singularities along $D$.
\begin{definition}\label{definition:isomonodromy}
	An isomonodromic deformation $(\widetilde{\mathscr{E}}, \widetilde{\nabla}, \varphi)$ is a vector bundle $\widetilde{\mathscr{E}}$ on $\mathscr{X}$ equipped with a \emph{relative} connection $$\widetilde{\nabla}:\widetilde{\mathscr{E}}\to \widetilde{\mathscr{E}}\otimes \Omega^1_{\mathscr{X}/U}(\log \mathscr{D}),$$ and equipped with an isomorphism $\varphi: \widetilde{\mathscr{E}}|_X\overset{\sim}{\to} \mathscr{E}_0$ such that 
	\begin{enumerate}
		\item $\widetilde{\nabla}$ admits an extension to a global flat connection $ \widetilde{\mathscr{E}}\to  \widetilde{\mathscr{E}}\otimes \Omega^1_{\mathscr{X}}(\log \mathscr{D})$, and
		\item $\varphi$ induces an isomorphism $( \widetilde{\mathscr{E}}, \widetilde{\nabla})|_X \overset{\sim}{\to} (\mathscr{E}_0, \nabla_0)$.
\end{enumerate}  
\end{definition}
It is not hard to see that isomonodromic deformations exist and are unique up to canonical isomorphism. (See e.g.~\cite[\S0.16]{sabbah2007isomonodromic}.) What do they mean? Condition (2) of \autoref{definition:isomonodromy} simply says that $(\widetilde{\mathscr{E}},\widetilde{\nabla})$ is in fact a deformation of $(\mathscr{E}_0,\nabla_0)$. We claim that condition (1) means that the monodromy of this deformation is constant. Indeed, as $U$ is contractible, the inclusion of $(X, D)$ into $(\mathscr{X}, \mathscr{D})$ induces an isomorphism of fundamental group $\pi_1(X\setminus D)\overset{\sim}{\to}\pi_1(\mathscr{X}\setminus\mathscr{D})$, and hence the restriction map induces a bijection between (isomorphism classes of) local systems on $\mathscr{X}\setminus\mathscr{D}$ and those on $X\setminus D$. Moreover (and we leave this to the reader), the conjugacy classes of the residues of $\widetilde{\nabla}$ are locally constant along $\mathscr{D}$. Thus the isomonodromic deformation $(\widetilde{\mathscr{E}}, \widetilde{\nabla})$ is the deformation of $(\mathscr{E}_0, \nabla_0)$ with constant monodromy and residues (hence the name).

\begin{example}\label{example:schlesinger-derivation}
	We return to the situation of \autoref{example:mic-p1}, and consider the behavior of the connection $$\nabla=d-\sum_{i=1}^n \frac{A_i}{z-x_i}dz$$ under isomonodromic deformation, i.e.~after perturbing the $x_i$. So we view the $A_i$ as functions of the parameters $\underline{x}=(x_1, \cdots, x_n)$, and consider a connection of the form $$\widetilde{\nabla}= d-\sum_{i=1}^n \frac{A_i(\underline{x})}{z-x_i}d(z-x_i)+\sum C_i dx_i,$$ i.e.~a connection with regular singularities along the evident divisors where $z=x_i$. Note that $\sum A_i(\underline{x})=0$ by our assumption that there is no pole at $\infty$.
	
	 The condition that $\widetilde{\nabla}$ be isomonodromic is simply the condition that $\widetilde{\nabla}\circ \widetilde{\nabla}=0$. What condition does this impose on the $A_i$? For $i\neq j$, considering the coefficient of $d(z-x_i)\wedge d(z-x_j)$ gives precisely the first line of \eqref{equation:schlesinger}; differentiating the identity $\sum A_i(\underline{x})=0$ gives the second. In other words, the Schlesinger equations precisely control isomonodromic deformations of ODEs as in \autoref{example:mic-p1} --- so-called Fuchsian ODEs.
	 
	 The Painlev\'e VI equation may be obtained from the Schlesinger equation when the $A_i\in \mathfrak{sl}_2(\mathbb{C})$ and $n=4$ by a change of coordinates; see 
	 \cite[\S3]{malgrange1983deformations} for a complete derivation of the Schlesinger equations, and \cite{jimbo1981monodromy, jimbo1981monodromy2} for details of the connection to Painlev\'e VI.
\end{example}

In general, isomonodromic deformations of flat vector bundles on Riemann surfaces are controlled by certain non-linear algebraic ODE; we have just made this explicit in genus zero, and one can do so in higher genus as well \cite{krichever2002isomonodromy, hurtubise2008geometry}. The question of analyzing finite mapping class group orbits on $Y(g,n,r)$ is precisely the question of understanding algebraic solutions to these non-linear ODE. We will return to this from an arithmetic point of view in the next section.

The first key fact that goes into the proof of \autoref{theorem:canonical-reps} is an analysis of the properties of flat bundles under isomonodromic deformation; we will return to questions along these lines in \autoref{subsubsection:Riemann-Hilbert}.
\begin{theorem}[Landesman-L--\,{\cite[Corollary 6.1.2]{landesman2022geometric}}]\label{theorem:stability}
	Let $X$ be a compact Riemann surface and $D\subset X$ a reduced effective divisor. Let $(\mathscr{E},\nabla)$ be a flat vector bundle on $X$ with regular singularities along $D$ and irreducible monodromy. If $g\geq \on{rk}(\mathscr{E})^2/4$, there exists a small perturbation $(X',D')$ of the complex structure on $(X,D)$ such that the isomonodromic deformation $(\mathscr{E}', \nabla')$ of $(\mathscr{E}, \nabla)$ to $(X', D')$ has $\mathscr{E}'$ (parabolically) semistable.
\end{theorem}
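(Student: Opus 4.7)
The plan is to work locally over a Teichmüller-type base $U$ around $[X,D]$, extend to the isomonodromic family $(\widetilde{\mathscr{E}},\widetilde{\nabla})$ over the universal curve $\pi:\mathscr{X}\to U$, and show that the parabolic semistable locus $U^{\text{ss}}\subset U$ is non-empty by analyzing how the Harder-Narasimhan (HN) filtration of the underlying bundle evolves. Arguing by contradiction, suppose $U^{\text{ss}}=\emptyset$. Then by upper semicontinuity of parabolic HN polygons in flat families, on a dense open $V\subset U$ the HN polygon of $\widetilde{\mathscr{E}}|_{X_u}$ is constant and strictly above the semistable one, producing a relative maximal destabilizing parabolic subbundle $\widetilde{\mathscr{F}}\subset\widetilde{\mathscr{E}}|_{\pi^{-1}(V)}$ of some rank $k$ with $1\leq k\leq r-1$.

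The key structural observation is that the global flat connection restricts on each fiber $X_u$, $u\in V$, to an $\mathscr{O}_{X_u}$-linear second fundamental form
\[
\sigma_u:\widetilde{\mathscr{F}}|_{X_u}\longrightarrow\bigl(\widetilde{\mathscr{E}}/\widetilde{\mathscr{F}}\bigr)|_{X_u}\otimes\Omega^1_{X_u}(\log D_u),
\]
which cannot vanish identically: were it zero, $\widetilde{\mathscr{F}}|_{X_u}$ would be a flat subbundle of $(\mathscr{E}|_{X_u},\nabla|_{X_u})$, contradicting irreducibility of the monodromy. The fact that the relative subbundle $\widetilde{\mathscr{F}}$ persists along every direction of $V$ is equivalent, at first order, to the vanishing of the cup product of $[\sigma_u]\in H^0(X_u,\on{Hom}(\mathscr{F}_u,\mathscr{E}_u/\mathscr{F}_u)\otimes\Omega^1(\log D))$ against every Kodaira-Spencer class $\kappa\in H^1(X_u,T_{X_u}(-D_u))$ inside $H^1(X_u,\on{Hom}(\mathscr{F}_u,\mathscr{E}_u/\mathscr{F}_u))$.

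The main obstacle — and where the hypothesis $g\geq r^2/4$ intervenes — is ruling out this vanishing by a numerical argument. Via Serre duality, the cup-product pairing with $\sigma_u$ is adjoint to a multiplication map on global sections on $X_u$, in which the relevant ``hom'' sheaf has generic rank at most $k(r-k)\leq r^2/4$. A Clifford- or base-point-free-pencil estimate for multiplication maps on curves of genus $g$ then shows that, once $g\geq r^2/4$, no non-zero $\sigma_u$ can pair trivially against the entire Kodaira-Spencer space $H^1(X_u,T_{X_u}(-D_u))$; an appropriate parabolic refinement at points of $D$ handles $n>0$ with the same threshold. This contradicts the assumed persistence of $\widetilde{\mathscr{F}}$ on $V$, producing some $u\in V$ at which the HN polygon of $\widetilde{\mathscr{E}}|_{X_u}$ strictly drops.

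Finally, since HN polygons with fixed rank and degree form a well-ordered set, iterating the above argument finitely many times reaches the semistable polygon and produces a nearby point $u\in U$ for which $(\widetilde{\mathscr{E}}|_{X_u},\widetilde{\nabla}|_{X_u})$ is parabolically semistable; this is the desired $(X',D')$. The hardest step is the numerical one: producing the exact bound $g \geq r^2/4$ requires a careful accounting of parabolic weights forced by the maximality of $\widetilde{\mathscr{F}}$ together with the Clifford-type lower bound on the multiplication pairing, and it is here that the geometry of the curve (rather than just its topology) truly enters.
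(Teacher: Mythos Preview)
The survey paper does not actually prove this theorem; it merely cites \cite[Corollary 6.1.2]{landesman2022geometric} and then applies the result in \autoref{corollary:unitary-c-vhs}. So there is no ``paper's own proof'' to compare against here. That said, your outline is broadly aligned with the approach of the cited paper: stratify by Harder--Narasimhan polygon, identify the first-order obstruction to the maximal destabilizing subbundle persisting under isomonodromic deformation with the cup product of the second fundamental form $\sigma$ against Kodaira--Spencer classes, and iterate. The identification of the obstruction with $\sigma\cup\kappa$ is correct but is itself one of the nontrivial lemmas in \cite{landesman2022geometric}; it deserves more than a parenthetical ``is equivalent, at first order, to.''

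The genuine gap is in your numerical step. You assert that ``a Clifford- or base-point-free-pencil estimate'' forces the cup product $\sigma\cup-:H^1(T_{X_u}(-D_u))\to H^1(\on{Hom}(\mathscr{F}_u,\mathscr{E}_u/\mathscr{F}_u))$ to be nonzero once $g\ge r^2/4$, but you do not say what estimate this is, and Clifford's theorem is not what is used. Note that the Serre-dual map sends $H^0(\on{Hom}(\mathscr{E}_u/\mathscr{F}_u,\mathscr{F}_u)\otimes\omega)$ to $H^0(\omega^{\otimes 2}(D))$; if the source has no sections, the dual map is vacuously zero regardless of $\sigma$, so one must first control $h^0$ of this twisted $\on{Hom}$-sheaf using the destabilizing hypothesis and the parabolic degree constraints. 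The actual argument in \cite{landesman2022geometric} proceeds by analyzing the image subsheaf of $\sigma$ inside $(\mathscr{E}/\mathscr{F})\otimes\omega(D)$ and combining the slope inequalities coming from maximality of $\mathscr{F}$ in the HN filtration with Riemann--Roch; the bound $k(r-k)\le r^2/4$ enters, but not via a multiplication-map estimate of Clifford type. As written, your sketch asserts the conclusion of the hard step rather than indicating why it holds.

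A minor structural point: your ``argue by contradiction, suppose $U^{\text{ss}}=\emptyset$'' is never actually contradicted in the paragraph that follows---what you contradict is the constancy of the HN polygon on $V$. The iteration in your last paragraph is what actually produces semistability, so the argument is really a descent on HN polygons rather than a single contradiction; it would be cleaner to frame it that way from the start.
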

\begin{remark}
Here we say a vector bundle $\mathscr{E}$ is \emph{semistable} if for all nonzero sub-bundles $\mathscr{F}\subset\mathscr{E}$, we have $$\frac{\deg(\mathscr{F})}{\on{rk}(\mathscr{F})}\leq \frac{\deg(\mathscr{E})}{\on{rk}(\mathscr{E})}.$$
The word ``parabolically" appearing in \autoref{theorem:stability} refers to the natural parabolic structure on a bundle with connection---see \cite[\S3]{landesman2022geometric} for details. Vector bundles with parabolic structure (henceforth \emph{parabolic bundles}) admit another notion of degree and hence another notion of stability; we elide this issue for the rest of these notes.
\end{remark}
\begin{remark}
	It is natural to ask if one can achieve stability in \autoref{theorem:stability}, rather than just semistability; this is ongoing work of Andy Ramirez-Cote \cite{andy-rc}.
\end{remark}
\autoref{theorem:stability} is technical indeed but it has a crucial implication:
\begin{corollary}\label{corollary:unitary-c-vhs}
	Let $\pi: \mathscr{X}\to \mathscr{M}$ be a family of $n$-punctured curves of genus $g$ as in \autoref{proposition:AG-interpretation-higher-genus}, with $\mathscr{M}$ connected, and let $X$ be any fiber of $\pi$. Any local system of rank $r$ on $\mathscr{X}$, with $g\geq r^2/4$, which underlies a polarizable complex variation of Hodge structure, restricts to a unitary local system on $X$.
\end{corollary}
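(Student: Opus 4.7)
The plan is to argue by contradiction: assume $\mathbb{V}|_X$ is non-unitary, and derive a contradiction to \autoref{theorem:stability}. Since polarizable C-VHS are semisimple I first decompose and reduce to the case that $\mathbb{V}|_X$ is irreducible, so unitarity is equivalent to the Hodge filtration on the corresponding de Rham bundle $\mathscr{E}^{\text{dR}}_X$ having length one (equivalently, to the Higgs field vanishing). So I assume the filtration has length at least two, making $F^{\text{top}} \subsetneq \mathscr{E}^{\text{dR}}_X$ a proper non-zero holomorphic sub-bundle.

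The principal step is to show that some proper Hodge sub-bundle parabolically destabilizes $\mathscr{E}^{\text{dR}}_X$. For this I would invoke the parabolic Simpson correspondence: $\mathbb{V}|_X$ corresponds to a parabolically stable Higgs bundle $\left(\bigoplus_{p+q=w} E^{p,q},\, \theta\right)$ with $\theta(E^{p,q}) \subset E^{p-1,q+1}\otimes \Omega^1_X(\log D)$. For each $p$ the truncation $\bigoplus_{p'\leq p} E^{p',q'}$ is $\theta$-invariant in the Higgs picture, and by parabolic stability has strictly smaller parabolic slope than the total Higgs bundle whenever it is proper and non-zero. Passing to $C^\infty$ complements --- which is legitimate because the parabolic degree of a sub-bundle is determined by its underlying $C^\infty$ bundle together with its residue data, both invariant under the holomorphic reshuffling between the Higgs and de Rham pictures --- shows that $F^{p+1} \mathscr{E}^{\text{dR}}_X$ has strictly \emph{greater} parabolic slope than $\mathscr{E}^{\text{dR}}_X$. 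Since the filtration has length at least two, such a $p$ exists, and the corresponding $F^{p+1}$ parabolically destabilizes $\mathscr{E}^{\text{dR}}_X$.

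Finally I would globalize over $\mathscr{M}$. The restriction of $\mathbb{V}$ (together with its VHS structure) to every fiber $X_m$ of $\pi$ is a polarizable C-VHS whose Hodge filtration varies holomorphically in $m$, and the resulting family $\{(\mathscr{E}^{\text{dR}}_{X_m},\nabla_m)\}_{m\in\mathscr{M}}$ is, up to the action of $\pi_1(\mathscr{M})$, precisely the isomonodromic deformation of $(\mathscr{E}^{\text{dR}}_X,\nabla_X)$ in the sense of \autoref{definition:isomonodromy}, since $\mathbb{V}$ is fixed as a topological local system on $\mathscr{X}$. The previous step then applies on every fiber, so $\mathscr{E}^{\text{dR}}_{X_m}$ is parabolically unstable for every $m \in \mathscr{M}$. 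This contradicts \autoref{theorem:stability}, which, because $g \geq r^2/4$, would produce a nearby $m' \in \mathscr{M}$ at which the isomonodromic deformation is parabolically semistable. I expect the hard step to be the positivity assertion: turning non-vanishing of the Higgs field into \emph{strict} parabolic instability of the de Rham bundle uses the polarization in an essential way, and is really the parabolic incarnation of the classical Griffiths/Fujita/Kawamata positivity of Hodge bundles. The globalization is comparatively routine once one checks that the variation of fibers of $\pi$ matches isomonodromic deformation, which is immediate from $\mathbb{V}$ being a topological local system on all of $\mathscr{X}$.
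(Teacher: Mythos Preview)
Your proposal is correct and follows essentially the same strategy as the paper: assume the Hodge filtration on a fiber has length at least two, use positivity of a Hodge sub-bundle to parabolically destabilize the de Rham bundle, observe this persists on every fiber because the filtration is global on $\mathscr{X}$, and contradict \autoref{theorem:stability}. The paper's sketch makes the same irreducibility simplification you do and invokes \cite[Corollary 4.1.8]{landesman2022geometric} for the positivity of $F^i|_X$; your argument via the parabolic Simpson correspondence and stability of the associated Higgs bundle (plus the $C^\infty$-complement/Chern--Weil observation relating Higgs truncations to Hodge steps) is exactly the content behind that citation, so you have unpacked what the paper black-boxes rather than taken a different route.
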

\begin{proof}[Proof sketch]
	Let $\mathbb{V}$ be a local system on $\mathscr{X}$ as in the statement, with $(\mathscr{E}, \nabla, F^\bullet)$ the associated flat bundle with (decreasing) Hodge filtration $F^\bullet$. Then $\mathscr{E}|_X$ has (parabolic) degree zero. For simplicity we assume that the monodromy of $\mathbb{V}|_X$ is irreducible; in this case we show that $\mathbb{V}$ itself is unitary. Let $i$ be maximal such that $F^i$ is non-zero; suppose that $F^i\neq \mathscr{E}$. Then by \cite[Corollary 4.1.8]{landesman2022geometric}, $F^i|_{X}$ has positive (parabolic) degree, so (as $X$ was an arbitrary fiber of $\pi$) $\mathscr{E}|_{X'}$ is not semistable for any fiber $X'$ of $\pi$.  But this contradicts \autoref{theorem:stability} --- there is no way to perturb $X$ to make $\mathscr{E}|_X$ semistable.
	
	So the Hodge filtration has length one, whence the polarization on $\mathbb{V}$ is a definite Hermitian form. But this form is preserved by the monodromy, which is hence unitary.
\end{proof}
 We will return to questions about the behavior of flat bundles under isomonodromic deformation later in these notes (in \autoref{subsubsection:Riemann-Hilbert}), but before doing so we will introduce the other main ingredient of the proof of \autoref{theorem:canonical-reps}. 
 \subsection{Big monodromy, rigidity, and vanishing theorems}
 Thus far we have largely studied the action of $\on{Mod}_{g,n}$ on $Y(g,n,r)$, the space of rank $r$ representations of $\pi_1(\Sigma_{g,n})$. In \autoref{subsection:an-algebro-geometric-interpretation}, I suggested that this study is analogous to classical questions in algebraic geometry (the Hodge conjecture, the Tate conjecture, and so forth); we will discuss this further in \autoref{section:non-abelian-conjectures}. Part of this analogy is the idea that $Y(g,n,r)$ ought to be viewed as a non-abelian analogue of first cohomology. Indeed, $$Y(g,n,1)=H^1(\Sigma_{g,n}, \mathbb{C}^\times).$$ So perhaps it is unsurprising that classical questions about monodromy actions on cohomology will intervene in our study of the mapping class group action on $Y(g,n,r)$.
 
 Let $$\rho: \pi_1(\Sigma_{g,n})\to \on{GL}_r(\mathbb{C})$$ be an irreducible representation whose conjugacy class has finite orbit under $\on{Mod}_{g,n}$, say with stabilizer $\Gamma\subset \on{Mod}_{g,n}$ a subgroup of finite index. There is a natural $\Gamma$-representation associated to $\rho$, namely the action of $\Gamma$ on the tangent space $$T_{[\rho]}Y(g,n,r)=H^1(\pi_1(\Sigma_{g,n}), \text{ad}(\rho)).$$  (See e.g.~\cite{sikora2012character} for a discussion of this cohomological interpretation of tangent spaces.)
 
 The main result on monodromy actions on cohomology that we will use here, which follows from an analysis of derivatives of period maps---discussed further in \autoref{subsection:big-monodromy-RH}---is:
 \begin{theorem}[Landesman-L-- {\cite[Consequence of Theorem 1.7.1]{landesman2022canonical}}]\label{theorem:vanishing}
 	Let $\pi: \mathscr{X}\to \mathscr{M}$ be a family of $n$-punctured curves of genus $g$, so that the induced map $\mathscr{M}\to\mathscr{M}_{g,n}$ is dominant. Let $\mathbb{V}$ be a unitary local system on $\mathscr{X}$ of rank less than $g$. Then $$H^0(\mathscr{M}, R^1\pi_*\mathbb{V})=0.$$
 \end{theorem}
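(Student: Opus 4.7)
The plan is to suppose that a non-zero global section of $R^1\pi_*\mathbb{V}$ exists and to derive a contradiction by analyzing the associated Higgs field, with the rank hypothesis $r<g$ entering through a Riemann--Roch dimension count on a fibre. First, since $\mathbb{V}$ is unitary it underlies a polarizable complex VHS of type $(0,0)$ on $\mathscr{X}$; hence $R^1\pi_*\mathbb{V}$ is a polarizable VHS of weight one on $\mathscr{M}$. A non-zero global section spans a trivial sub-local-system, which by Deligne's theorem on the semisimplicity of polarizable VHSs extends to a flat sub-VHS $\mathbb{W}\subset R^1\pi_*\mathbb{V}$. Being flat and of trivial monodromy, $\mathbb{W}=W\otimes\underline{\mathbb{C}}_\mathscr{M}$ for a polarized weight-one Hodge structure $W$; since the polarization is positive-definite on $W^{1,0}$, the piece $W^{1,0}$ is non-zero, yielding a non-zero \emph{constant} sub-bundle $W^{1,0}\otimes\mathscr{O}_\mathscr{M}\subset F^1R^1\pi_*\mathbb{V}$.

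Next, the flatness of this constant sub-bundle forces the second fundamental form
\[
\sigma:F^1R^1\pi_*\mathbb{V}\;\longrightarrow\;(R^1\pi_*\mathbb{V}/F^1)\otimes\Omega^1_\mathscr{M}
\]
(the $\mathscr{O}$-linear Griffiths residue of Gauss--Manin) to vanish on it. Fibrewise at a generic $m\in\mathscr{M}$, with $X=\mathscr{X}_m$, $D\subset X$ the punctures, and $\mathcal{V}$ the flat bundle associated to $\mathbb{V}|_X$, $\sigma$ factors through Kodaira--Spencer as cup product; dominance of $\mathscr{M}\to\mathscr{M}_{g,n}$ makes the Kodaira--Spencer map $T_m\mathscr{M}\to H^1(X,T_X(-D))$ surjective. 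So for every non-zero $s_0\in W^{1,0}\hookrightarrow H^0(X,\omega_X(D)\otimes\mathcal{V})$ the cup-product map $\cdot\,s_0:H^1(X,T_X(-D))\to H^1(X,\mathcal{V})$ is identically zero. Serre duality rephrases this as the vanishing of the multiplication map
\[
m_{s_0}:H^0(X,\omega_X\otimes\mathcal{V}^\vee)\;\longrightarrow\;H^0(X,\omega_X^{\otimes 2}(D)),\qquad \alpha\mapsto s_0\cdot\alpha,
\]
using the trace pairing $\mathcal{V}\otimes\mathcal{V}^\vee\to\mathscr{O}_X$. Viewing $s_0$ as a non-zero sheaf map $\mathcal{V}^\vee\to\omega_X(D)$ with saturated kernel $K\subset\mathcal{V}^\vee$ of rank $r-1$, vanishing of $m_{s_0}$ is equivalent to the equality $H^0(\omega_X\otimes\mathcal{V}^\vee)=H^0(\omega_X\otimes K)$.

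The final task is to rule out this equality when $r<g$. Riemann--Roch together with Serre duality yields
\[
h^0(\omega_X\otimes\mathcal{V}^\vee)-h^0(\omega_X\otimes K)=(g-1)-\deg K+h^0(\mathcal{V})-h^0(K^\vee),
\]
and unitarity forces $\mathcal{V}$ to be polystable of slope zero, so $\deg K\le 0$ while a Clifford-type bound applied to the semistable quotient $K^\vee$ of $\mathcal{V}$ controls $h^0(K^\vee)$. The gap $g-r$ is precisely the Clifford slack that keeps the right-hand side strictly positive, contradicting the supposed equality. \emph{This last step---the sharp infinitesimal-Torelli/Clifford bound in the presence of parabolic structure at $D$---is the main obstacle;} steps 1--2 are standard Hodge-theoretic reductions, but the bound $r<g$ is sharp and tracks exactly the Clifford estimate for semistable bundles on a curve of genus $g$, and the parabolic bookkeeping at $D$ (suppressed above) must be handled uniformly, e.g.~via parabolic Higgs bundles and their dictionary with unitary local systems.
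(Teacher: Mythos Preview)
Your Hodge-theoretic reduction (the first two steps) matches the paper's approach, which is the discussion leading to \autoref{proposition:ggg-vanishing}: a nonzero global section forces a nonzero $\eta\in H^0(\omega_X\otimes\mathcal V)$ on which the multiplication map to quadratic differentials vanishes, equivalently $\mathcal V^\vee\otimes\omega_X$ fails to be generically globally generated on the fiber. One minor gap: for a \emph{complex} polarized VHS there is no reason the invariant piece satisfies $W^{1,0}\neq 0$ (positive-definiteness of the polarization on $W^{1,0}$ does not make it nonzero); the paper handles this by passing to the complex conjugate $\overline{\mathbb V}$ if necessary.

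The final step, however, does not work as written and is genuinely different from the paper's route. Your Riemann--Roch identity is correct, but the inequality you need from it is circular. Applying the long exact sequence of $0\to Q^\vee\to\mathcal V\to K^\vee\to 0$ and Serre duality gives only
\[
h^0(K^\vee)\;\le\; h^0(\mathcal V)+h^1(Q^\vee)\;=\;h^0(\mathcal V)+(g-1)-\deg K,
\]
which yields exactly $(g-1)-\deg K+h^0(\mathcal V)-h^0(K^\vee)\ge 0$, with equality precisely in the case you are trying to exclude. The hypothesis $r<g$ never enters this estimate. A sharper Clifford bound on $K^\vee$ is obstructed by the fact that $K^\vee$, being merely a quotient of a polystable bundle, need not be semistable. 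More fundamentally, your argument at a single fiber uses only polystability of $\mathcal V$, so if it worked it would prove generic global generation of $\mathcal V^\vee\otimes\omega$ for \emph{every} curve and \emph{every} unitary bundle of rank $<g$; this is strictly stronger than what is needed, and stronger than what the paper claims or proves.

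The paper's proof of this step (in the cited reference) instead exploits the family structure: one shows via a deformation-theoretic argument that after isomonodromic deformation to a general nearby curve, $\mathcal V^\vee\otimes\omega$ becomes generically globally generated once $\on{rk}\mathcal V<g$. This is where dominance of $\mathscr M\to\mathscr M_{g,n}$ is used beyond mere surjectivity of Kodaira--Spencer at a single point, and where the rank bound actually enters.
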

Put another way, let $X$ be a fiber of $\pi$; then $\pi_1(\mathscr{M})$ admits a natural action on $H^1(X, \mathbb{V}|_X)$. The theorem says that this action has no nonzero invariants (and moreover, since the statement remains the same on passing to covers of $\mathscr{M}$, no nonzero finite orbits). As we will see later in \autoref{section:mcg}, this sort of statement is closely connected to major open questions in surface topology.

Let us explain the relevance to us. Consider the case where $\mathbb{V}=\on{ad}(\mathbb{W})$, with $\mathbb{W}$ unitary and irreducible and $\on{rk}(\on{ad}(\mathbb{W}))< g$. Letting $\rho$ be the monodromy of $\mathbb{W}|_X$, we see that the conjugacy class of $\rho$ has finite orbit under $\on{Mod}_{g,n}$, say with stabilizer $\Gamma\subset \on{Mod}_{g,n}$. And the action of $\pi_1(\mathscr{M})$ on $H^1(X, \on{ad}(\mathbb{W})|_X)$ factors through the $\Gamma$-action on $T_{[\rho]}Y(g,n,r)=H^1(\pi_1(\Sigma_{g,n}), \text{ad}(\rho)),$ so we may study it by analyzing $H^0(\mathscr{M}, R^1\pi_*\on{ad}\mathbb{W})$, which vanishes, by \autoref{theorem:vanishing}.

That this space is zero says that $\rho$ is isolated as a $\Gamma$-fixed point in $Y(g,n,r)$. Indeed, if $\Gamma$ fixed a positive-dimensional subvariety $Z$ of $Y(g,n,r)$ passing through $[\rho]$, then $T_{[\rho]}Z\subset T_{[\rho]}Y(g,n,r)$ would be $\Gamma$-fixed as well. (Compare to \autoref{definition:interesting-rep}(3).)

One can ask finer questions about the monodromy of local systems of the form $R^1\pi_*\mathbb{V}$ as in \autoref{theorem:vanishing} (for example, what is their precise monodromy group?), and we will do so later, in \autoref{subsection:big-monodromy-RH}.
\subsection{Idea of the proof}
We now turn to the idea of the proof of \autoref{theorem:canonical-reps}. We will make several simplifying assumptions over the course of the sketch, which hopefully the reader will forgive.
\begin{proof}[Sketch of proof of \autoref{theorem:canonical-reps}]
	Let $$\rho: \pi_1(\Sigma_{g,n})\to \on{GL}_r(\mathbb{C})$$ be a representation whose conjugacy class has finite $\on{Mod}_{g,n}$-orbit. We would like to show that if $g\geq r^2$, then $\rho$ has finite image. We assume for simplicity that $\rho$ is irreducible. Then by \autoref{proposition:AG-interpretation-higher-genus}, there exists a family of $n$-punctured curves of genus $g$, $$\pi: \mathscr{X}\to \mathscr{M},$$ with the associated map $\mathscr{M}\to \mathscr{M}_{g,n}$ dominant, and a local system $\mathbb{V}$ on $\mathscr{X}$, such that the monodromy of the restriction of $\mathbb{V}$ to a fiber of $\pi$ is given by $\rho$.
	
	By work of Mochizuki \cite[Th.~10.5]{mochizuki2006kobayashi}, $\mathbb{V}$ can be deformed to a polarizable complex variation of Hodge structure $\mathbb{V}'$ (see \autoref{theorem:deformation-to-vhs} for a variant of this result and a sketch of the proof). We assume for simplicity that $\mathbb{V}'$ is irreducible when restricted to a fiber of $\pi$.\footnote{This assumption is extremely strong, and in fact to give a correct proof one must circumvent it. Doing so is the source of many of the technical considerations in \cite{landesman2022canonical}.}  By \autoref{corollary:unitary-c-vhs}, $\mathbb{V}'$ is in fact unitary. Now by \autoref{theorem:vanishing}, applied to $\on{ad}(\mathbb{V}')$, $\mathbb{V}'$ is \emph{cohomologically rigid}, i.e.~it admits no non-trivial infinitesimal deformations.
	
	This observation has two consequences:
	\begin{enumerate}
		\item as we have deformed $\mathbb{V}$ to a representation with \emph{no non-trivial deformations}, we must have that $\mathbb{V}$ and $\mathbb{V}'$ are isomorphic to one another, and
		\item $\mathbb{V}'$ (and hence $\mathbb{V}$) are defined over $\mathscr{O}_K$, the ring of integers of some number field $K$, by work of Esnault-Groechenig \cite[Theorem 1.1]{esnault2018cohomologically} (which ultimately relies on the Langlands program over function fields).\footnote{We discuss this work further in \autoref{subsection:rigid-local-systems}.}
	\end{enumerate} 
	We may now apply a similar argument to that used in the proof of \autoref{theorem:sl2-classification} and \autoref{theorem:cp1-geometric-origin}. Again by \cite[Lemma 7.2.1]{landesman2022geometric}, it suffices to show that for each embedding $$\iota: \mathscr{O}_K\hookrightarrow \mathbb{C},$$ the local system $\mathbb{V}\otimes_{\iota, \mathscr{O}_K} \mathbb{C}$ is unitary. But we know $\mathbb{V}$ is rigid (as this is an algebraic property, independent of a choice of complex embedding), hence by Mochizuki \cite[Th.~10.5]{mochizuki2006kobayashi}, $\mathbb{V}\otimes_{\iota, \mathscr{O}_K} \mathbb{C}$ underlies a complex variation of Hodge structure. Now by \autoref{corollary:unitary-c-vhs}, $\mathbb{V}\otimes_{\iota, \mathscr{O}_K} \mathbb{C}$ is unitary, completing the proof.
\end{proof}
Our simplifying assumptions (that the various local systems appearing in the argument are irreducible) allow us to avoid substantial complications in the argument, but we elide them here. See \cite{landesman2022canonical} for details.

We have in this section classified all finite $\on{Mod}_{g,n}$-orbits on $Y(g,n,r)$, when $g\geq r^2$, and in \autoref{section:matrices} we classified finite $\on{PMod}_{0,n}$-orbits on $Y(\underline{C})$, where $\underline{C}=(C_1, \cdots, C_n)$ contains some conjugacy class $C_i$ of infinite order. Much remains to do---we understand little of the dynamics of the $\on{Mod}_{g,n}$-action on $Y(g,n,r)$ when $r\gg g$. In the next section, we give a conjectural arithmetic characterization of finite orbits of this action and its generalizations, and prove some important special cases of this conjecture; in the following two sections we discuss other conjectural characterizations from the points of view of algebraic geometry and more classical surface topology.
\section{Algebraic differential equations and dynamics on character varieties}\label{section:p-curvature}
We turn now to a much more general setting. Let $S$ be smooth and let $$f: \mathscr{X}\to S$$ be a smooth proper morphism with connected fibers\footnote{Essentially all of the results in this section apply when $\mathscr{X}$ is equipped with a simple normal crossings divisor over $S$, but we omit this here to avoid notational complication, and some mild complications around the Riemann-Hilbert correspondence in the presence of a simple normal crossings divisor. The reader will lose almost nothing by assuming $f$ has relative dimension one, i.e. that it is a family of curves.} over the complex numbers. Let $X_o$ be a general fiber of $f$, say over some point $o\in S(\mathbb{C})$. We let $\mathscr{M}_B(X_o, r)$ be the (stack) quotient $$\on{Hom}(\pi_1(X_o), \on{GL}_r(\mathbb{C}))/\text{conjugation}$$ and $M_B(X_o, r)$ the quotient in the sense of GIT (the latter being an affine complex variety). As before we are interested in the natural action of $\pi_1(S,o)$ on $M_B(X_o, r)$, and in its action on the set of isomorphism classes of objects of $\mathscr{M}_B(X_o, r)$ (i.e. conjugacy classes representations of $\pi_1(X_o)$). The goal of this section is to give a conjectural classification of \emph{all} finite orbits of this action, motivated by the Grothendieck-Katz $p$-curvature conjecture, in terms of arithmetic invariants of the corresponding flat vector bundles, and to sketch a proof of this conjecture in important cases of interest.

As we have previously discussed in special cases (e.g.~in \autoref{subsubsection:painleve} and \autoref{example:schlesinger-derivation}), these finite orbits are related to algebraic solutions to certain non-linear differential equations. We now explain this in general.

\subsection{Dynamics and foliations}
Let $\widetilde{S}$ be the universal cover of $S$ (which is typically only a complex manifold, and does not have the structure of an algebraic variety). We may consider the projection $$\pi: M_B(\mathscr{X}/S,r):=(\widetilde{S}\times M_B(X_o, r))/\pi_1(S, o)\to S,$$ where here $\pi_1(S, o)$ acts on $\widetilde{S}$ freely by deck transformations and on $M_B(X_o, r)$ through its outer action on $\pi_1(X_o)$. This quotient naturally has the structure of a \emph{local system of schemes} in the sense of \cite[p.~12]{simpson1995moduli}, which loosely speaking means that one has a notion of \emph{flat sections} to $\pi$. Explicitly, a section to $\pi$ is flat if and only if, locally on $S$, it lifts to a constant section to the projection $\widetilde{S}\times M_B(X_o,r)\to \widetilde{S}$, i.e.~it corresponds to a family of representations whose conjugacy class is constant. One may similarly construct a moduli stack $\mathscr{M}_B(\mathscr{X}/S, r)$, see \cite{simpson1995moduli} for details.

On the smooth locus\footnote{For example, if $f$ has relative dimension one, this locus contains the open subset of $M_B(\mathscr{X}/S)$ corresponding to irreducible representations of $\pi_1(X_o)$.} $\pi^{\text{sm}}$ of $\pi$ this structure has a simple interpretation; it is nothing more than a horizontal foliation, i.e.~a splitting of the short exact sequence $$0\to T_{\pi^{\text{sm}}}\to T_{M_B(\mathscr{X}/S)}|_{\pi^{sm}}\to \pi^*T_S|_{\pi^{sm}}\to 0,$$ compatible with the Lie algebra structure on these sheaves.  A representation $\rho$ of $\pi_1(X_o)$ has finite $\pi_1(S, o)$-orbit if and only if the leaf of this foliation through $[\rho]$ is finite over $S$, by definition.

The above construction is highly transcendental in nature, relying as it does on the universal cover of $S$ and on the topological fundamental groups $\pi_1(X_o), \pi_1(S, o)$. Nonetheless the construction has an algebraic avatar.

There is a moduli stack $\mathscr{M}_{dR}(\mathscr{X}/S, r)$ over $S$, which loosely speaking represents the functor that sends an $S$-scheme $T$ to the groupoid of rank $r$ flat bundles on $\mathscr{X}_T/T$. The Riemann-Hilbert correspondence gives an analytic isomorphism between $\mathscr{M}_{dR}(\mathscr{X}/S, r)^{\text{an}}$ and $\mathscr{M}_B(\mathscr{X}/S, r)$ \cite[Proposition 7.8]{simpson1995moduli}. It turns out that the structure of a local system of schemes on $M_B(\mathscr{X}/S, r)$ may be interpreted algebraically on $\mathscr{M}_{dR}(\mathscr{X}/S, r)$; the latter stack is a \emph{crystal} over $S$. This is the non-abelian analogue of the algebraic nature of the Gauss-Manin connection on the de Rham cohomology of a family of varieties \cite{katz1968differentiation}. For details see \cite[Theorem 8.6]{simpson1995moduli}; we now give a brief explanation of how this works at the level of horizontal foliations. The idea is that flat sections to this morphism correspond to isomonodromic deformations in the sense of \autoref{subsubsection:isomonodromic}; we make the corresponding foliation explicit at smooth points corresponding to irreducible representations. In order to do so we need to introduce some notation to give a cohomological description of the tangent spaces to $\mathscr{M}_{dR}(\mathscr{X}/S, r)$.
\subsubsection{The Atiyah-de Rham complex}
The deformation theory of a pair $(X, \mathscr{E})$, with $X$ a smooth variety and $\mathscr{E}$ a vector bundle on $X$, is controlled by a vector bundle called the Atiyah bundle.
\begin{definition}
	The sheaf of first-order differential operators on $\mathscr{E}$, $\on{Diff}^1(\mathscr{E}, \mathscr{E})$, is the sheaf of $\mathbb{C}$-linear maps $\delta: \mathscr{E}\to \mathscr{E}$ such that the map $$s\mapsto \delta_f(s):=\delta(f  s)-f\delta(s)$$ is $\mathscr{O}_X$-linear for all local sections $s$ of $\mathscr{E}$ and $f$ of $\mathscr{O}_X$. The Atiyah bundle $\on{At}(\mathscr{E})\subset \on{Diff}^1(\mathscr{E}, \mathscr{E})$ is the sheaf of first-order differential operators $\delta$ such that $\delta_f$ is given by multiplication by a section of $\mathscr{O}_X$, for all $f$.
\end{definition}
Direct computation shows that for a local section $\delta$ to $\on{At}(\mathscr{E})$, the assignment $\tau_\delta: f\mapsto \delta_f$ is in fact an $\mathscr{O}_X$-valued derivation; let $$\tau: \on{At}(\mathscr{E})\to T_X$$ $$\delta\mapsto \tau_\delta$$ be the corresponding map.

By construction there is a short exact sequence $$0\to \on{End}(\mathscr{E})\to \on{At}(\mathscr{E})\overset{\tau}{\to} T_X\to 0,$$ called the \emph{Atiyah exact sequence}. The data of a flat connection $\nabla$ on $\mathscr{E}$ is the same as the data of an $\mathscr{O}$-linear splitting $q^\nabla$ of this sequence, where $q^\nabla(v)(s)$ is given by contracting $v$ with $\nabla(s)$.

Now suppose we are given a flat connection $\nabla$ on $\mathscr{E}$, and consider the complex $$\on{At}(\mathscr{E})^\bullet_{dR}: \on{At}(\mathscr{E})\to \on{End}(\mathscr{E})\otimes \Omega^1_X\overset{\nabla}{\to} \on{End}(\mathscr{E})\otimes \Omega^2_X\overset{\nabla}{\to} \cdots$$
where the first differential is given by taking the commutator of a differential operator with $\nabla$ (see \cite[Lemma-Definition 4.3]{katzarkov1999non} for a precise formula) and the rest are given by the connection on $\on{End}(\mathscr{E})$ induced by $\nabla$. We refer to this complex as the \emph{Atiyah-de Rham complex} of $(\mathscr{E}, \nabla)$. There is a short exact sequence of complexes \begin{equation}\label{equation:atiyah-de-rham-sequence} 0\to \on{End}(\mathscr{E})_{dR}^\bullet\to \on{At}(\mathscr{E})^\bullet_{dR}\to T_X\to 0,\end{equation} where $\on{End}(\mathscr{E})_{dR}^\bullet$ is the de Rham complex of $\on{End}(\mathscr{E})$ with its induced connection.
\begin{proposition}[see e.g.~{\cite[Proposition 4.4]{chen2012associated}} for the case of curves]\label{proposition:deformation}
	There is a natural identification between $$\mathbb{H}^1(\on{At}_{dR}^\bullet(\mathscr{E}))$$ and the space of first-order deformations of $(X, \mathscr{E}, \nabla)$. Under this identification, the map $$\mathbb{H}^1(\on{At}_{dR}^\bullet(\mathscr{E}))\to H^1(T_X)$$ induced by \eqref{equation:atiyah-de-rham-sequence} sends a deformation of $(X, \mathscr{E},\nabla)$ to the corresponding deformation of $X$ (under the natural Kodaira-Spencer identification of first-order deformations of $X$ with $H^1(T_X)$).
\end{proposition}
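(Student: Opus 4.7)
The plan is to use a Čech-theoretic description of hypercohomology to make the identification entirely explicit, then verify both the bijectivity and the compatibility with $H^1(T_X)$ by tracing through the construction. First, I would fix an affine open cover $\{U_i\}$ of $X$ sufficiently fine that $\mathscr{E}|_{U_i}$ is trivial, $T_X|_{U_i}$ is trivial, and the restriction $(\mathscr{E}|_{U_i},\nabla|_{U_i})$ admits a flat trivialization. A first-order deformation of $(X,\mathscr{E},\nabla)$ is the data of a flat $\Spec\mathbb{C}[\epsilon]/(\epsilon^2)$-scheme $\mathscr{X}_\epsilon$ with central fiber $X$, together with $(\mathscr{E}_\epsilon,\nabla_\epsilon)$ reducing to $(\mathscr{E},\nabla)$ modulo $\epsilon$.

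Next I would produce the Čech cocycle. Since each $U_i$ is affine with trivial $\mathscr{E}$, we may choose compatible identifications $\mathscr{X}_\epsilon|_{U_i}\simeq U_i\times \Spec\mathbb{C}[\epsilon]/(\epsilon^2)$ and $\mathscr{E}_\epsilon|_{U_i}\simeq \mathscr{E}|_{U_i}\otimes_{\mathbb{C}}\mathbb{C}[\epsilon]/(\epsilon^2)$. The gluing data on $U_{ij}$ is an automorphism of $(U_{ij}[\epsilon],\mathscr{E}|_{U_{ij}}[\epsilon])$ equal to the identity modulo $\epsilon$; its first-order term is by definition a section $a_{ij}$ of $\on{At}(\mathscr{E})$ over $U_{ij}$, since $\on{At}(\mathscr{E})$ is precisely the sheaf of such infinitesimal automorphisms covering a derivation of $\mathscr{O}_X$. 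The difference between the trivialized connection $\nabla_\epsilon|_{U_i}$ and $\nabla|_{U_i}\otimes \on{id}$ is an $\epsilon$-term $b_i\in \on{End}(\mathscr{E})\otimes\Omega^1_X(U_i)$. The conditions that the $\phi_{ij}$ satisfy a cocycle identity, intertwine the local connections, and make $\nabla_\epsilon$ flat translate respectively into the cocycle condition for $(a_{ij})$, the condition $b_j-b_i=[\nabla,a_{ij}]$ on overlaps, and the vanishing of the second differential applied to $b_i$. These are exactly the cocycle conditions for a class in $\mathbb{H}^1(\on{At}^\bullet_{dR}(\mathscr{E}))$ in the total complex of the Čech bicomplex.

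I would then verify that changing the local trivializations modifies $(a_{ij},b_i)$ by a Čech coboundary (a local change of gauge by $s_i\in\on{End}(\mathscr{E})(U_i)$ shifts $b_i$ by $\nabla(s_i)$, while a local change of coordinate by a vector field in $T_X(U_i)$ lifts canonically to a section of $\on{At}(\mathscr{E})$ after choosing a local splitting, which is precisely the degree-$0$ differential entering the Čech bicomplex). Conversely, any cocycle gives rise to a deformation by gluing local trivial deformations. The projection $\on{At}(\mathscr{E})\to T_X$ sends $a_{ij}$ to its symbol $\tau_{a_{ij}}$, which by construction recovers the Kodaira--Spencer cocycle of $\mathscr{X}_\epsilon\to\Spec\mathbb{C}[\epsilon]/(\epsilon^2)$, giving the second claim.

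The main technical obstacle is bookkeeping the interaction between the connection and the gluing/gauge data: in particular, that the first differential $\on{At}(\mathscr{E})\to\on{End}(\mathscr{E})\otimes\Omega^1_X$ is $\delta\mapsto[\nabla,\delta]$ (rather than just $\nabla$) is exactly what makes the deformation theory of the \emph{triple} $(X,\mathscr{E},\nabla)$ fit into a single hypercohomology group, rather than fragmenting into separate contributions for $X$, for $\mathscr{E}$, and for $\nabla$. Getting the signs right in the associated total complex and checking that flatness of $\nabla_\epsilon$ corresponds precisely to the $\on{End}(\mathscr{E})\otimes\Omega^2_X$ part of the differential is the most delicate step, but it is a direct (and standard) computation with local formulas for $\nabla_\epsilon$ in a trivialization.
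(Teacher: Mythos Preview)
The paper does not actually supply a proof of this proposition; it merely states the result and refers the reader to \cite[Proposition~4.4]{chen2012associated} for the case of curves. Your \v{C}ech-theoretic argument is the standard one (and is essentially what the cited reference carries out): trivialize locally, read off the gluing cocycle $(a_{ij})$ in $\on{At}(\mathscr{E})$ and the connection perturbation $(b_i)$ in $\on{End}(\mathscr{E})\otimes\Omega^1_X$, and check that the compatibility and flatness conditions are exactly the cocycle conditions in the total complex. The only place your write-up is slightly loose is the description of coboundaries: rather than splitting into ``gauge changes in $\on{End}(\mathscr{E})$'' plus ``coordinate changes lifted via a local splitting,'' it is cleaner to say that a change of local trivialization of $(U_i[\epsilon],\mathscr{E}|_{U_i}[\epsilon])$ is given by $\on{id}+\epsilon c_i$ for an arbitrary $c_i\in\on{At}(\mathscr{E})(U_i)$, which shifts $(a_{ij},b_i)$ by the \v{C}ech coboundary $(c_j-c_i,[\nabla,c_i])$. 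With that adjustment the argument is complete and matches the intended proof.
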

Note that the natural map $$\on{At}(\mathscr{E})^\bullet_{dR}\to T_X$$ has a splitting, given by the splitting $q^\nabla$ of the Atiyah exact sequence discussed above; that this is a map of complexes follows from the flatness of $\nabla$. The induced splitting of the map $$\mathbb{H}^1(\on{At}_{dR}^\bullet(\mathscr{E}))\to H^1(T_X)$$ is the source of the foliation on $\mathscr{M}_{dR}(\mathscr{X}/S, r)/S$, as we now explain. Let $s\in S$ be a point and set $X=\mathscr{X}_s$. Then the Kodaira-Spencer map yields a map $T_sS\to H^1(X, T_X)$. Let $(\mathscr{E},\nabla)\in \on{MIC}(X)$ be an irreducible flat bundle corresponding to a smooth point of $\mathscr{M}_{dR}(\mathscr{X}/S, r)$. Then the tangent space to $\mathscr{M}_{dR}(\mathscr{X}/S, r)$ fits into a pullback square
$$\xymatrix{
T_{[(X, \mathscr{E}, \nabla)]}\mathscr{M}_{dR}(\mathscr{X}/S, r)\ar[r] \ar[d] & \mathbb{H}^1(\on{At}_{dR}^\bullet(\mathscr{E})) \ar[d] \\
T_sS \ar[r] & H^1(T_X)
}$$
where the left vertical map is the differential of the natural map $\mathscr{M}_{dR}(\mathscr{X}/S, r)\to S$.

Thus the natural splitting $q^\nabla$ to the right vertical map induces a section to the left vertical map, i.e.~ a horizontal foliation on $\mathscr{M}_{dR}(\mathscr{X}/S, r)$ over $S$. This is the \emph{isomonodromy foliation} --- its leaves correspond precisely to isomonodromic deformations as before (see \cite[Proposition 5.1]{chen2012associated} for the case were $\mathscr{X}/S$ is the universal curve over $\mathscr{M}_g$; the general case follows identically). By comparison to the analytically isomorphic space $\mathscr{M}_B(\mathscr{X}/S,r)$, the leaves of this foliation that are finite over $S$ correspond precisely to the representations with finite $\pi_1(S,s)$-orbit, under the Riemann-Hilbert correspondence. These are, by e.g. \cite[Theorem A]{cousin2021algebraic} or \cite[Remark 2.3.6]{landesman2022canonical}, precisely the \emph{algebraic leaves} of this foliation.
\subsection{Algebraic solutions to differential equations}
We are thus lead to consider the following question:
\begin{question}
How can one characterize the algebraic solutions to an algebraic differential equation?	
\end{question}
In the case of linear ODEs, this question has a classical (conjectural) answer, due to Grothendieck and Katz: the Grothendieck-Katz $p$-curvature conjecture. We recall their conjecture and then discuss its analogue for the isomonodromy foliation discussed above.
\subsubsection{The $p$-curvature conjecture}
Let $A\in \on{Mat}_{r\times r}(\overline{\mathbb{Q}}(z))$ be a matrix of rational functions with algebraic coefficients, and consider the linear ODE $$\left(\frac{d}{dz}-A\right)\vec f(z)=0.$$
We are interested in understanding when this ODE admits a basis of algebraic solutions, i.e.~when, if one takes the formal power series expansion of the solutions to this ODE at a point where $A$ has no poles, the resulting power series are all algebraic over $\overline{\mathbb{Q}}(z)$. Equivalently---when $A$ has only simple poles along a divisor $D\subset\mathbb{CP}^1$---we are, under the Riemann-Hilbert correspondence, interested in understanding when the corresponding representation of $\pi_1(\mathbb{CP}^1\setminus D)$ has finite image.
\begin{conjecture}[Grothendieck-Katz {\cite{katz-p-curvature}}]\label{conjecture:p-curvature-1}
	The above linear ODE has a basis of algebraic solutions if and only if $$\left(\frac{d}{dz}-A\right)^p\equiv 0\bmod p$$ for almost all primes $p$.
\end{conjecture}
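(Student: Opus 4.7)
The ``only if'' direction is classical and I would take it as given. If the ODE admits a full basis of algebraic solutions, these define a finite \'etale cover of the complement of the poles of $A$ over which the connection becomes trivial. Reducing modulo all but finitely many primes $p$, the $r$-dimensional space of solutions survives and is annihilated by $\psi_p := (d/dz - A)^p$; since $\psi_p$ is $\mathcal{O}$-linear in characteristic $p$, vanishing on a rank-$r$ submodule forces $\psi_p \equiv 0$.

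For the hard direction, my plan is a three-step arithmetic argument. First, spread out: choose a number field $K$ and an open $U \subset \on{Spec}(\mathcal{O}_K)$ over which $A$ is defined. The hypothesis that $\psi_p \equiv 0 \bmod p$ for almost all $p$ then says that for almost every prime $\mathfrak{p}$, the horizontal sections of the characteristic-$p$ reduction form a locally free $\kappa(\mathfrak{p})(z^p)$-module of rank $r$; in particular, the mod-$\mathfrak{p}$ equation acquires a full basis of algebraic solutions in each characteristic. Second, fix a regular point $z_0 \in K$ and let $\vec f(z) = \sum a_n (z-z_0)^n$ be the unique formal solution with prescribed initial conditions. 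Using the Frobenius structure on the $p$-adic completion implied by vanishing $p$-curvature (a Dwork-style argument), the aim is to show that $\vec f$ has globally bounded denominators, i.e.~is a $G$-function in the sense of Siegel. Third, apply an arithmetic algebraization theorem---of the Andr\'e-Bombieri-Chudnovsky type or in Bost's slope inequality formulation---to conclude from the $G$-function property together with local algebraicity of the reductions that $\vec f$ is itself algebraic over $\overline{\mathbb{Q}}(z)$.

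The main obstacle, and the reason this conjecture has resisted proof since the 1960s, is the passage from vanishing $p$-curvature \emph{individually} at almost every prime to a \emph{uniform} $p$-adic estimate on the denominators of a single formal solution. The known cases each bypass this by exploiting special structure: for the Gauss-Manin connection of a smooth proper family, Katz identifies $\psi_p$ with the Cartier-Hasse-Witt operator via the crystalline-de Rham comparison, so vanishing at almost all $\mathfrak{p}$ forces isotriviality up to $p$-isogeny; the rank-one case is handled by class field theory; solvable monodromy by Chudnovsky via induction on the derived series. In the general case one essentially needs to prove that a flat bundle with vanishing $p$-curvature at almost all $p$ is automatically of geometric origin, so that Katz's theorem applies---a converse whose difficulty is comparable to that of the Fontaine-Mazur conjecture. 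I would attempt this by assembling the Frobenius lifts modulo $p^n$ into a compatible system of $\ell$-adic sheaves and then invoking a motivic recognition principle, but it is precisely at this last step that a genuinely new idea would be required.
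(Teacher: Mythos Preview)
This statement is a \emph{conjecture}, not a theorem: the paper does not prove it, and indeed it remains open in general. The paper only sketches Katz's proof of the special case where $(\mathscr{E},\nabla)$ is a Picard-Fuchs equation (\autoref{theorem:katz-p-curvature}), via the comparison of $\on{gr}^i\nabla$ with the associated graded of $\psi_p$ for the conjugate filtration. There is therefore no paper proof to compare your proposal against.

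Your outline is a fair summary of the folklore strategy and of why it stalls. The ``only if'' direction you sketch is indeed classical. For the converse, your step two is where the argument breaks down, and you say so yourself: vanishing $p$-curvature at each prime does \emph{not} by any known mechanism yield the uniform denominator bounds needed to make the formal solution a $G$-function, and the arithmetic algebraization theorems you cite (Andr\'e, Bost, Chudnovsky) require exactly such global estimates as input rather than producing them. Your final paragraph's suggestion---assemble Frobenius lifts into a compatible system and invoke a motivic recognition principle---is essentially the content of \autoref{conjecture:andre-p-curvature} in the paper, which is also wide open. So the proposal is an honest survey of the landscape but not a proof; the ``genuinely new idea'' you flag at the end is the whole difficulty.
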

Here the condition of the conjecture makes sense because the entries of $A$ have only finitely many denominators, so the given expression can be reduced mod $p$ for almost all primes $p$.
\begin{example}
	Let us consider the differential equation $$\left(\frac{d}{dz}-\frac{a}{z}\right)f(z)=0,$$ for $a\in \overline{\mathbb{Q}}$. The local solutions to this ODE have the form $cz^a$; this is an algebraic function if and only if $a\in \mathbb{Q}$. On the other hand, $$\left(\frac{d}{dz}-\frac{a}{z}\right)^pz^n=(n-a)(n-a-1)\cdots (n-a-p+1)z^{n-p}$$ is identically zero mod $p$ for almost all $p$ iff $p$ splits completely in $\mathbb{Q}(a)$ for almost all $p$; this happens if and only if $a\in \mathbb{Q}$, by the Chebotarev density theorem.
\end{example}
There is a more intrinsic formulation of \autoref{conjecture:p-curvature-1}, which makes sense on general smooth bases. To formulate it, we recall the notion of $p$-curvature.
\begin{definition}
	Let $k$ be a field of characteristic $p>0$ and $X/k$ a smooth $k$-scheme. Let $(\mathscr{E}, \nabla)$ be a flat bundle on $X/k$, where we view $\nabla$ as a $k$-linear map $T_X\to \on{End}_k(\mathscr{E})$. The \emph{$p$-curvature morphism} $$\psi_p: F_{\text{abs}}^*T_X\to \on{End}_{\mathscr{O}_X}(\mathscr{E})$$ is the map induced by $$v\mapsto \nabla(v)^p- \nabla(v^p).$$
\end{definition} 
Here $v^p$ is a section to $T_X$ --- in characteristic $p>0$, the $p$-th power of a derivation is itself a derivation. If $X$ is an open subset of $\mathbb{P}^1$ and $v=\frac{d}{dz}$, then $v^p=0$, and so the vanishing of $p$-curvature is simply the condition $\nabla(\frac{d}{dz})^p=0$, which we saw before in \autoref{conjecture:p-curvature-1}. One may think of $\psi_p$ as a measure of the failure of the map $\nabla: T_X\to \on{End}_k(\mathscr{E})$ to commute with taking $p$-th powers (just as the curvature $\nabla\circ\nabla$ is a measure of the failure of a connection to commute with the natural Lie algebra structures on $T_X, \on{End}_k(\mathscr{E})$; see \autoref{mixed partials}).

Now let $R$ be a finitely-generated integral $\mathbb{Z}$-algebra with fraction field $K$, $X/R$ a smooth scheme, and $(\mathscr{E}, \nabla)$ a flat vector bundle on $X/R$.

\begin{conjecture}[Grothendieck-Katz]\label{conjecture:gk-p-curvature}
	The flat vector bundle $(\mathscr{E}, \nabla)_K\in \on{MIC}(X_K)$ admits a basis of flat algebraic sections\footnote{It follows that for some (equivalently all) embedding(s) $K\hookrightarrow \mathbb{C},$ the corresponding flat bundle $(\mathscr{E}, \nabla)_\mathbb{C}$ has finite monodromy. In fact the arithmetic condition of \autoref{conjecture:gk-p-curvature} (on vanishing of $p$-curvature) implies (non-trivially!) that given a simple normal crossings compacitification $\overline{X}$ of $X$, $(\mathscr{E},\nabla)$ extends to a flat connection on $\overline{X}$ with regular singularities along the boundary. It follows that---to prove the $p$-curvature conjecture---it suffices to show that the arithmetic hypotheses of the conjecture imply that $(\mathscr{E}, \nabla)_\mathbb{C}$ has finite monodromy.} if and only if there exists a dense open subset $U\subset \on{Spec}(R)$ such that for all closed points $\mathfrak{p}\in U$, the $p$-curvature of $(\mathscr{E},\nabla)_\mathfrak{p}$ is identically zero.
\end{conjecture}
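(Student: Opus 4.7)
The ``only if'' direction is standard and I would dispatch it first: if $(\mathscr{E},\nabla)_K$ admits a full basis of flat algebraic sections $s_1,\dots,s_r$, then after inverting finitely many primes in $R$ we may take the $s_i$ to be defined on an \'etale cover of $X$ over $R$. Reduction modulo a closed point $\mathfrak{p}$ commutes with $\nabla$, so the $\bar s_i$ remain flat mod $\mathfrak{p}$; this gives a full local basis of flat sections, which forces the $p$-curvature $\psi_p$ to vanish for almost all $\mathfrak{p}$ (since $\psi_p$ is a $\mathscr{O}$-linear obstruction to local flat sections lifting through $p$-th powers of derivations).

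The ``if'' direction is the substantial content, and my plan is to mirror the non-abelian Hodge-theoretic strategy developed earlier in the excerpt. Fixing an embedding $K\hookrightarrow\mathbb{C}$, first apply Mochizuki's theorem to deform $(\mathscr{E},\nabla)_\mathbb{C}$ along an algebraic path in $\mathscr{M}_{dR}$ to a flat connection $(\mathscr{E}',\nabla')$ underlying a polarizable complex variation of Hodge structure $\mathbb{V}'$. A key preliminary point is that the ``$p$-curvature vanishes at almost all primes'' hypothesis should propagate along this algebraic deformation: one would argue that the locus in an arithmetic model of $\mathscr{M}_{dR}$ where $\psi_p\equiv 0$ for a density-one set of $\mathfrak{p}$ is a union of algebraic leaves of the isomonodromy foliation, and then apply this to the leaf containing $(\mathscr{E},\nabla)$. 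Next, use the $p$-curvature hypothesis to force the Hodge filtration on $\mathbb{V}'$ to have length one: for almost all $\mathfrak{p}$, vanishing of $\psi_p$ gives a Cartier/Frobenius descent of $(\mathscr{E}',\nabla')_\mathfrak{p}$, and compatibility of this canonical descent with the reduction of the Hodge filtration, combined with Griffiths transversality, should be incompatible with a nontrivial filtration. Once $\mathbb{V}'$ is unitary, I would conclude exactly as in the sketches of \autoref{theorem:sl2-classification} and \autoref{theorem:canonical-reps}: the Frobenius descent rigidifies $(\mathscr{E}',\nabla')$ against first-order deformations, so the Esnault--Groechenig integrality theorem applies and gives an $\mathscr{O}_L$-model for some number field $L$; unitarity at every complex embedding of $\mathscr{O}_L$, via \cite[Lemma 7.2.1]{landesman2022geometric}, then forces finite monodromy of $\mathbb{V}'$, hence of $\mathbb{V} = \ker\nabla|_{X\setminus D}$ since the deformation preserves the conjugacy class of the monodromy.

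The principal obstacle is the middle step: extracting from ``$\psi_p \equiv 0$ for almost all $\mathfrak{p}$'' a Hodge-theoretic constraint that collapses the Hodge filtration to length one. In the \emph{Picard--Fuchs} case---where $(\mathscr{E},\nabla)$ is the Gauss--Manin connection on a smooth proper family---this is essentially Katz's theorem and rests on Mazur's comparison of the Newton polygon of Frobenius with the Hodge polygon of crystalline cohomology, giving a direct arithmetic-to-Hodge bridge. For a general $(\mathscr{E},\nabla)$ not a priori of geometric origin, no such intrinsic bridge is available, and supplying it is the heart of the open Grothendieck--Katz conjecture. My realistic expectation is therefore to execute the outlined plan unconditionally only in the Picard--Fuchs setting (where moreover one can use the work of Andr\'e and Bost on algebraicity of formal leaves to sidestep some of the Hodge-theoretic input), and to leave the fully general case---where the VHS produced by Mochizuki cannot be certified to be of geometric origin---as the genuine open problem that this conjecture remains.
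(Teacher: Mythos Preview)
The statement is a \emph{conjecture}; the paper does not prove it. What the paper does prove (sketch) is Katz's theorem, the Picard--Fuchs special case, and that is the only thing your proposal can sensibly be compared against. You correctly identify at the end that the general case is open, so let me focus on your proposed route in the Picard--Fuchs setting versus Katz's actual argument.

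Your plan is over-engineered and mislocates the key step. In the Picard--Fuchs case the Mochizuki deformation is a no-op: a Gauss--Manin connection already underlies a polarizable $\mathbb{Z}$-VHS, so there is nothing to deform to, and no need to worry about whether ``$\psi_p\equiv 0$ for almost all $\mathfrak{p}$'' propagates along a deformation (a claim which, in the general setting you gesture at, is unjustified---Mochizuki's limit is a transcendental Higgs-bundle operation with no evident arithmetic meaning). Likewise the appeal to Esnault--Groechenig for integrality is unnecessary: Picard--Fuchs monodromy is already defined over $\mathbb{Z}$, and you have not established the cohomological rigidity their theorem requires in any case.

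More importantly, the Hodge-theoretic conclusion you aim for is the wrong one. Katz does \emph{not} show that the Hodge filtration collapses to length one. He shows that the Kodaira--Spencer maps $\on{gr}^i\nabla:\on{gr}^i_{F^\bullet}\mathscr{E}\to\on{gr}^{i-1}_{F^\bullet}\mathscr{E}\otimes\Omega^1_X$ vanish, i.e.\ that $\nabla$ \emph{preserves} the (possibly long) Hodge filtration. Then monodromy preserves the Hodge decomposition, the polarization is definite on each graded piece, so monodromy is unitary; combined with $\mathbb{Z}$-integrality this gives finiteness. The mechanism for $\on{gr}^i\nabla=0$ is not Mazur's Newton--Hodge comparison but Katz's comparison of $\on{gr}^i\nabla$ (mod $p$) with the associated graded of $\psi_p$ with respect to the \emph{conjugate} filtration---which vanishes by hypothesis. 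Your ``Frobenius descent incompatible with nontrivial filtration'' sketch does not capture this, and the André--Bost remark is a different circle of ideas (D-group/algebraicity criteria) that does not substitute for Katz's computation here.
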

This conjecture is more or less completely open; the primary evidence we have for it, due to Katz, is that it is true when $(\mathscr{E},\nabla)$ is a \emph{Picard-Fuchs equation}. 
\begin{definition}\label{definition:picard-fuchs-equation}
	Let $X$ be a smooth variety. A flat vector bundle $(\mathscr{E}, \nabla)$ is a \emph{Picard-Fuchs equation} if there exists a dense open subset $U\subset X$, a smooth proper morphism $\pi: Y\to U$, and an integer $i\geq 0$ such that $$(\mathscr{E}, \nabla)|_U\simeq (R^i\pi_*(\Omega^\bullet_{dR, Y/U}), \nabla_{GM}),$$ where $\nabla_{GM}$ is the Gauss-Manin connection.
\end{definition}
\begin{theorem}[{\cite[Theorem 5.1]{katz-p-curvature}}]\label{theorem:katz-p-curvature}
	With notation as in \autoref{conjecture:gk-p-curvature}, suppose $(\mathscr{E}, \nabla)_K$ is a Picard-Fuchs equation. Then the $p$-curvature conjecture is true for $(\mathscr{E},\nabla)$.
\end{theorem}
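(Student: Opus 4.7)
The plan is to exploit the Hodge-theoretic nature of Picard-Fuchs equations and to use a formula of Katz relating the $p$-curvature of the Gauss-Manin connection to the Kodaira-Spencer map of the underlying family. After shrinking $\Spec(R)$ if necessary, assume we have a smooth proper morphism $\pi: \mathcal{Y}\to \mathcal{U}$ of $R$-schemes realizing the Picard-Fuchs structure, so that $(\mathscr{E},\nabla)\simeq(R^i\pi_*\Omega^\bullet_{\mathcal{Y}/\mathcal{U}},\nabla_{\on{GM}})$. This bundle carries a Hodge filtration $F^\bullet$, and Griffiths transversality packages the Kodaira-Spencer map $\kappa$ as the associated graded of $\nabla_{\on{GM}}$.

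The key input is a formula of Katz: in characteristic $p$, the $p$-curvature $\psi_p$ of $\nabla_{\on{GM}}$ on $R^i\pi_*\Omega^\bullet_{\mathcal{Y}/\mathcal{U}}$ is computed from $\kappa$ via the Cartier isomorphism on relative de Rham cohomology. Concretely, the associated graded of $\psi_p$ with respect to $F^\bullet$ recovers (a Frobenius twist of) $\kappa$. Thus if $\psi_p\equiv 0$ mod $\mathfrak{p}$ for almost all closed primes $\mathfrak{p}\subset \Spec(R)$, then the mod-$\mathfrak{p}$ reduction of $\kappa$ vanishes for almost all $\mathfrak{p}$. Since $R$ is of finite type over $\mathbb{Z}$ and $\kappa$ is a morphism of coherent sheaves on $\mathcal{U}$, this forces $\kappa_K=0$, and hence $\kappa_{\mathbb{C}}=0$ under any embedding $K\hookrightarrow\mathbb{C}$.

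Vanishing of Kodaira-Spencer means the Hodge filtration is $\nabla_{\on{GM}}$-parallel, so the polarizable $\mathbb{Z}$-VHS $R^i\pi_*\mathbb{Z}$ on $\mathcal{U}_\mathbb{C}$ is locally constant \emph{as a VHS}, i.e.~the period map to the classifying space is constant. The monodromy of the underlying local system then preserves a single polarized $\mathbb{Z}$-Hodge structure on a fiber. By positivity of the polarization (restricted to each Hodge piece via the Weil operator), the group of automorphisms of such a structure is an arithmetic subgroup of a compact real Lie group, hence finite. Thus $(\mathscr{E},\nabla)_\mathbb{C}$ has finite monodromy, which (combined with the regular-singularities property noted in the footnote to \autoref{conjecture:gk-p-curvature}) is equivalent to admitting a basis of flat algebraic sections.

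The principal obstacle is establishing Katz's formula relating $\psi_p$ to $\kappa$: this requires a careful unwinding of the Gauss-Manin connection in characteristic $p$ via the Cartier isomorphism and the conjugate spectral sequence, together with tracking Frobenius twists on the base $\mathcal{U}$. Secondary obstacles are essentially bookkeeping: verifying that the spread-out is valid at the level of relative de Rham cohomology commuting with base change, passing from ``vanishing mod almost every $\mathfrak{p}$'' to ``vanishing at the generic point'' (which is standard, since the locus where a map of coherent sheaves on a scheme of finite type over $\mathbb{Z}$ vanishes is closed and meets almost every closed fiber), and handling that $\psi_p$ and $\kappa$ naturally live on different (Frobenius-twisted) bundles, so the comparison is made via Cartier descent.
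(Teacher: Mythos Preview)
Your proposal is correct and follows essentially the same approach as the paper's sketch: show the Kodaira--Spencer maps $\on{gr}^i\nabla$ vanish by comparing them mod $p$ to the (vanishing) $p$-curvature, then use that a $\nabla$-parallel Hodge filtration plus polarization plus integrality forces finite monodromy. One small imprecision worth flagging: Katz's comparison is between $\on{gr}^i\nabla$ and the associated graded of $\psi_p$ with respect to the \emph{conjugate} filtration, not the Hodge filtration $F^\bullet$ itself---you gesture at this correctly in your ``obstacles'' paragraph (conjugate spectral sequence, Cartier isomorphism), but the main body of your argument states it as ``associated graded of $\psi_p$ with respect to $F^\bullet$,'' which is not how the comparison runs.
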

\begin{remark}
Katz's theorem is in fact a bit more general than the result stated above, but it has some fairly strong restrictions. For example, we do not even know that the $p$-curvature conjecture holds true for direct summands of Picard-Fuchs equations, except under restrictive hypotheses.	 But see \cite[\S16]{andre2004conjecture} for some results in this direction.
\end{remark}
Katz's proof of \autoref{theorem:katz-p-curvature} inspires many of the arguments in these notes, including those in \autoref{section:canonical}; we briefly recall it as we will shortly see a ``non-abelian" analogue of his approach. See \cite[\S2.3]{esnault-local-systems} for another brief exposition.
\begin{proof}[Proof sketch]
	The flat bundle $(\mathscr{E}, \nabla)$, arising as it does as a Picard-Fuchs equation, carries a decreasing Griffiths-transverse Hodge filtration $F^\bullet$; here Griffiths transversality means that $$\nabla(F^i)\subset F^{i-1}\otimes \Omega^1_X$$ for all $i$. The failure of $\nabla$ to preserve this filtration is thus measured by a collection of ($\mathscr{O}$-linear) maps $$\on{gr}^i\nabla: \on{gr}^i_{F^\bullet} \mathscr{E}\to \on{gr}^{i-1}_{F^\bullet} \mathscr{E}\otimes\Omega^1_X.$$ It suffices to show that these maps are zero; indeed, in this case the monodromy of $(\mathscr{E}, \nabla)$ preserves the Hodge filtration $F^\bullet$ and hence the Hodge decomposition. But then the monodromy is unitary, as the monodromy also preserves the polarization on the local system $\ker \nabla$, which is definite on each graded piece of the Hodge decomposition. As $(\mathscr{E}, \nabla)$ is a Picard-Fuchs equation, its monodromy is defined over $\mathbb{Z}$; combined with unitarity, this implies the monodromy is finite.
	
	To show that the maps $\on{gr}^i\nabla$ are identically zero, Katz compares their reductions mod $p$ to the associated graded of the $p$-curvature maps $\psi_p$ with respect to the conjugate filtration, which are zero by assumption. This comparison is a lengthy computation about which we will say nothing.
\end{proof}

There are a few other cases in which the $p$-curvature conjecture is known. The case where the corresponding monodromy representation is solvable was resolved by Chudnovsky-Chudnovsky \cite{chudnovsky2006applications}, Bost \cite{bost2001algebraic}, and Andr\'e \cite{andre2004conjecture}, and the case where it underlies a rigid $\mathbb{Z}$-local system, by Esnault-Groechenig \cite{esnault2020rigid}. There are a number of other interesting related works and special cases known, e.g.~\cite{farb2009rigidity, esnault2018d, shankar2018p, patel2021rank, tang2018algebraic}; in fact the two papers by Shankar and Patel-Shankar-Whang cited here are what originally interested the author in this subject.
\subsection{Finite orbits}
We now formulate a variant of the $p$-curvature conjecture for the isomonodromy foliation on $\mathscr{M}_{dR}(\mathscr{X}/S)\to S$; the upshot of this will be a (conjectural) complete classification of finite $\pi_1(S,s)$ orbits on conjugacy classes of representations of $\pi_1(X_s)$. 

\begin{conjecture}[Imprecise conjecture]\label{conjecture:imprecise}
	Let $R$ be a finitely-generated integral $\mathbb{Z}$-algebra with fraction field $K$ and $\mathscr{X}\to S$ a smooth projective morphism of $R$-schemes. Let $s\in S(R)$ be an $R$-point, $X=\mathscr{X}_s$, and $(\mathscr{E}, \nabla)$ a flat bundle on $X/R$. Then the leaf of the isomonodromy foliation through $[(X, \mathscr{E},\nabla)]\in \mathscr{M}_{dR}(\mathscr{X}/S)$ is algebraic if and only if it is integral. This occurs if and only if it is $p$-integral to order $\omega(p)$, for almost all primes $p$.
\end{conjecture}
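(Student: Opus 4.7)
The plan is to adapt the Hodge-theoretic strategy behind \autoref{theorem:katz-p-curvature} to the isomonodromy foliation on $\mathscr{M}_{dR}(\mathscr{X}/S)$. The implications ``algebraic $\Rightarrow$ integral $\Rightarrow$ $p$-integral to any order'' are essentially formal: if the leaf $L \subset \mathscr{M}_{dR}(\mathscr{X}/S)_K$ is algebraic, its schematic closure in the $R$-model is itself a leaf of the relative isomonodromy foliation after inverting finitely many bad primes, giving integrality; integrality trivially implies $p$-integrality to arbitrary order at almost all primes. So all content lies in the reverse direction: $p$-integrality to an order $\omega(p)$ that grows sufficiently fast in $p$ must force algebraicity of the leaf.

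For Picard-Fuchs initial conditions, I would assume $(\mathscr{E}, \nabla) \simeq (R^i\pi_*\Omega^\bullet_{Y/X}, \nabla_{GM})$ for some smooth proper $\pi\colon Y \to X$, so that $(\mathscr{E}, \nabla)$ carries a Griffiths-transverse Hodge filtration $F^\bullet$ over $R$. The first step is to pin down a precise formulation of ``$p$-integral to order $\omega(p)$'' which, when linearized at $[(X,\mathscr{E},\nabla)]$, translates into vanishing mod $p$ of the $p$-curvature of the Gauss--Manin connection on the universal flat bundle over the formal leaf. The point of this translation is that on the formal leaf the isomonodromic family of flat bundles is again (formally) Gauss--Manin, so Katz's comparison of the Hodge and conjugate filtrations mod $p$ applies: mod $p$, the isomonodromic deformation preserves the Hodge filtration, i.e.\ the graded pieces $\on{gr}^i\nabla$ of the connection along the leaf vanish mod $p$.

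Having secured this, the rest of the argument would follow the template of \autoref{theorem:canonical-reps}: (i) use Mochizuki \cite{mochizuki2006kobayashi} to deform along the leaf to a $\mathbb{C}$-variation of Hodge structure $\mathbb{V}'$; (ii) invoke \autoref{theorem:stability} together with the mechanism of \autoref{corollary:unitary-c-vhs} to force the Hodge filtration of $\mathbb{V}'$ to have length one, hence $\mathbb{V}'$ unitary; (iii) use the vanishing theorem \autoref{theorem:vanishing} to conclude the deformed local system is cohomologically rigid, hence by Esnault--Groechenig \cite{esnault2018cohomologically} defined over the ring of integers $\mathscr{O}_K$ of a number field; and (iv) apply the unitary-at-all-embeddings plus integrality criterion \cite[Lemma 7.2.1]{landesman2022geometric} to deduce finite monodromy orbit, equivalent via \autoref{proposition:AG-interpretation-higher-genus} to algebraicity of the leaf.

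The main obstacle is the first step: formulating ``$p$-integrality to order $\omega(p)$'' of a leaf in a way that both is satisfied by algebraic leaves (so the easy direction is not broken) and is strong enough to implement the Katz-style Hodge-versus-conjugate comparison mod $p$ on the universal family over the formal leaf. In effect, one needs a non-abelian avatar of ``$(\tfrac{d}{dz} - A)^p \equiv 0 \bmod p$'' for the non-linear system \eqref{equation:schlesinger} controlling isomonodromy --- a $p$-curvature invariant for the isomonodromy crystal on $\mathscr{M}_{dR}(\mathscr{X}/S)$ itself. Once this infrastructure is in place, specializing to Picard-Fuchs initial conditions should allow one to cite Katz's theorem directly in lieu of reproving mod $p$ Hodge-preservation, and the Hodge-theoretic machinery of \cite{landesman2022canonical, lam2023finite} closes out the argument.
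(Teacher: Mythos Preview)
First, a framing remark: the statement you are ``proving'' is labeled a \emph{conjecture} in the paper and is not proved there in full. What the paper does prove is the Picard--Fuchs special case (\autoref{theorem:non-abelian-p-curvature}), and your proposal is evidently aimed at that case, so I compare against the paper's proof of \autoref{theorem:non-abelian-p-curvature}.

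Your first paragraph is fine: the easy implications are indeed formal. Your instinct that the first substantive step is a Katz-style mod-$p$ comparison (Hodge versus conjugate filtration) is also correct, and is exactly how the paper handles the first-order extension of the Hodge filtration along the isomonodromic deformation.

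The pivot to the template of \autoref{theorem:canonical-reps} is where the proposal breaks down. That template takes finite orbit as \emph{input} and deduces finite image; here you need finite orbit as \emph{output}. Moreover, the ingredients you invoke (\autoref{theorem:stability}, \autoref{corollary:unitary-c-vhs}, \autoref{theorem:vanishing}) all carry genus-versus-rank hypotheses of the form $g \geq r^2/4$ or $\on{rk}<g$, which are simply absent from the setting of \autoref{conjecture:precisification} and \autoref{theorem:non-abelian-p-curvature}. If your argument worked without such hypotheses, it would show that every Picard--Fuchs equation with integral isomonodromic deformation has \emph{finite monodromy}, which is false (take any non-isotrivial family of abelian varieties). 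So steps (i)--(iv) cannot close out the argument as written.

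The paper's route is entirely different and does not pass through rigidity, Esnault--Groechenig, or unitarity. The key Hodge-theoretic input is Deligne's finiteness theorem for polarizable $\mathbb{Z}$-VHS of bounded rank, which yields an infinitesimal criterion (\autoref{corollary:infinitesimal-criterion}): if the Hodge filtration on $(\mathscr{E},\nabla)$ extends Griffiths-transversely to the formal isomonodromic deformation, then the $\pi_1(S,s)$-orbit is finite. The whole proof then reduces to verifying this extension. The first-order case is, as you guessed, Katz's comparison combined with the explicit formula for the $p$-curvature of the isomonodromy foliation (\autoref{proposition-definition:p-curvature}). Going beyond first order is the genuinely new content: one must iteratively extend the conjugate filtration along a mod-$p$ isomonodromic deformation and re-run the comparison, which the paper does using Ogus--Vologodsky's non-abelian Hodge theory in characteristic $p$ together with (an extension of) the Higgs--de Rham flow of Lan--Sheng--Zuo, following Esnault--Groechenig. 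None of this machinery appears in your proposal, and it is not replaceable by the large-genus results of \cite{landesman2022canonical, lam2023finite}.
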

Here $\omega: \text{Primes}\to \mathbb{Z}$ is any function growing faster than any $\epsilon p$ for all $\epsilon>0$, i.e.~$$\lim_{p\to\infty} \frac{\omega(p)}{p}=\infty.$$

\begin{remark}
The last sentence (about $p$-integrality to order $p$) is a bit technical, and we largely ignore it for the rest of this note. However, see \autoref{prop:lawrence-litt}, which is our primary motivation for including this condition; the point is to give a variant of the $p$-curvature conjecture for $\mathscr{M}_{dR}(\mathscr{X}/S)$ which implies the classical $p$-curvature conjecture, \autoref{conjecture:p-curvature-1}.
\end{remark}

There are a number of imprecisions in this statement---notably $\mathscr{M}_{dR}(\mathscr{X}/S,r)$ is not in general smooth, or even a scheme. One can make it precise, again using the language of crystals. We have also not explained what it means for the leaf to be integral, (resp.~$p$-integral to order $n$). Loosely speaking, this latter simply means that the Taylor coefficients of the power series defining the formal leaf of the foliation are integral (resp.~the coefficients of monomials of degree at most $n$ are $p$-integral). 

Let us make friends with this conjecture by first formulating a variant in the spirit of \autoref{conjecture:p-curvature-1}, for general (possibly non-linear) ODE, and then specialize to the case of the isomonodromy foliation.
\begin{conjecture}
	Let $$f^{(n)}(z)=F(z, f(z), \cdots, f^{(n-1)}(z))$$ be an ordinary differential equation, with $F\in \mathbb{Q}(z_0, \cdots, z_n).$ Then the Taylor series expansion of a solution $$f(z)\in \mathbb{Q}[[t]], f(z)=\sum_{n\geq 0} a_nz^n$$ to this ODE with $(0, f(0), \cdots f^{(n-1)}(z)(0))$ a non-singular point of $F$ is algebraic if and only if there exists $N>0$ such that $a_n\in \mathbb{Z}[1/N]$ for all $n$. This occurs if and only if $a_1, \cdots, a_{\omega(p)}$ are $p$-integral for almost all $p$, where $\omega$ is some function such that $$\lim_{p\to\infty} \frac{\omega(p)}{p}=\infty.$$
\end{conjecture}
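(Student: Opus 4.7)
My plan is to split the two biconditionals into their four constituent implications and focus on the genuinely difficult one. The implication algebraic $\Rightarrow$ $a_n \in \mathbb{Z}[1/N]$ is Eisenstein's classical theorem on Taylor coefficients of algebraic power series over $\mathbb{Q}(z)$. The implication $\mathbb{Z}[1/N]$-integrality $\Rightarrow$ $p$-integrality of $a_1,\ldots,a_{\omega(p)}$ for almost all $p$ is immediate (it holds with $\omega(p)=\infty$). So what remains is (a) passing from $p$-integrality of $a_1,\ldots,a_{\omega(p)}$ for almost all $p$ to global $\mathbb{Z}[1/N]$-integrality, and (b) showing that global $\mathbb{Z}[1/N]$-integrality of the Taylor series of a solution to an algebraic ODE forces algebraicity.

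For (a), the strategy is pigeonhole. The recursion expressing $a_n$ in terms of $a_0,\ldots,a_{n-1}$ via clearing denominators in $F$ introduces only primes from a fixed finite set $T$ depending on $F$, together with factorials (the usual $n!$ coming from differentiation). Using the hypothesis $\omega(p)/p \to \infty$ together with the rate at which $p$-adic valuations can decrease along the recursion, one should be able to argue that for each prime $p \notin T$, once $p$-integrality is known through index $\omega(p)$ it propagates to all indices; I expect this to be a direct but finicky estimate on the recursion.

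For (b), the crucial direction, I would reinterpret the ODE as the vector field
\[
  \xi \;=\; \partial_z \,+\, z_1 \partial_{z_0} \,+\, \cdots \,+\, z_{n-1}\partial_{z_{n-2}} \,+\, F(z,z_0,\ldots,z_{n-1})\,\partial_{z_{n-1}}
\]
on the smooth locus of $F$ in $\mathbb{A}^{n+1}$. A solution corresponds to the formal leaf of the foliation $\langle\xi\rangle$ through the initial-condition point, and algebraicity of the solution is exactly algebraicity of that leaf. Thus (b) is precisely the foliation version of Conjecture \ref{conjecture:imprecise} (in the trivially flat case $S = \mathbb{A}^{n+1}$, replacing $\mathscr{M}_{dR}(\mathscr{X}/S)$ by the phase space and the isomonodromy foliation by $\langle\xi\rangle$). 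The hope is to apply a nonlinear analogue of André's theorem on $G$-functions: an integrable foliation whose formal leaf through a point has globally bounded denominators must have an algebraic leaf through that point. For linear $\xi$ (equivalently $F$ linear in $z_0,\ldots,z_{n-1}$), this is the fiber of the classical $G$-function formalism.

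The main obstacle is that the linear specialization already implies the full Grothendieck--Katz $p$-curvature conjecture, which is open. Any honest plan must therefore begin with Katz's Hodge-theoretic proof for Picard-Fuchs equations (Theorem \ref{theorem:katz-p-curvature}) and extend from there. Realistically I would first try to prove the conjecture in the subcase where $\xi$ arises as the isomonodromy vector field of a flat bundle underlying a polarizable variation of Hodge structure, using the Katz-style comparison of the Hodge filtration with the conjugate filtration (mod $p$) to force $\on{gr}^\bullet\nabla = 0$, then deduce unitarity, and finally combine unitarity with $\mathbb{Z}[1/N]$-integrality via the rigidity/integrality argument sketched after Theorem \ref{theorem:cp1-geometric-origin} and in the proof of Theorem \ref{theorem:canonical-reps}. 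The step from this special case to the full nonlinear conjecture, without new ideas on either $G$-foliations or a nonabelian Hodge-theoretic lift of Katz's comparison, is where I expect genuinely new input to be required.
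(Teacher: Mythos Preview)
The statement you are addressing is presented in the paper as a \emph{conjecture}, not a theorem; the paper offers no proof and does not claim one. It is introduced explicitly as a way to ``make friends'' with the imprecise \autoref{conjecture:imprecise} by recasting it in the concrete setting of a single nonlinear ODE over $\mathbb{Q}$, before passing to the precisification in \autoref{conjecture:precisification}. So there is no proof in the paper to compare your proposal against.

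You have in fact already located the reason no proof exists: as you observe, the linear case of direction (b) already contains the full Grothendieck--Katz $p$-curvature conjecture, which is open. Your plan to retreat to the Picard--Fuchs case and run a Katz-style argument is exactly what the paper does in the isomonodromy setting (\autoref{theorem:non-abelian-p-curvature}), and the paper is equally candid that this does not touch the general conjecture.

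One further gap worth flagging: your argument for (a) does not work as stated. Knowing that $a_1,\ldots,a_{\omega(p)}$ are $p$-integral for almost all $p$ tells you, for each fixed $n$, that $a_n$ is $p$-integral for all $p$ with $\omega(p)\geq n$ outside a fixed finite set; but the set of primes $p$ with $\omega(p)<n$ grows with $n$, so you do not get a uniform $N$ this way. The recursion itself does not propagate $p$-integrality forward (you divide by factorials), so ``once $p$-integrality is known through index $\omega(p)$ it propagates to all indices'' is exactly the nontrivial content, not a bookkeeping step. This direction is also genuinely part of the conjecture.
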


Let us make \autoref{conjecture:imprecise} precise now:
\begin{conjecture}[Precisification]\label{conjecture:precisification}
	Let $R$ be a finitely-generated integral $\mathbb{Z}$-algebra with fraction field $K\subset \mathbb{C}$ and $\mathscr{X}\to S$ a smooth projective morphism of $R$-schemes. Let $s\in S(R)$ be an $R$-point, $X=\mathscr{X}_s$, and $(\mathscr{E}, \nabla)$ a flat bundle on $X/R$. Let $s_K$ be the $K$-point of $S$ associated to $s$, $\widehat{S_K}$ the formal scheme obtained by completing $S_K$ at $s_K$, and $\widehat{S_R}$ the formal scheme obtained by completing $S$ at $s$. Then the following are equivalent:
	\begin{enumerate}
		\item 	there exists a element $N\in R$ such that the isomonodromic deformation of $(\mathscr{E}, \nabla)$ over $\widehat{S_K}$ descends to $\widehat{S_R}[1/N]$
		\item For each embedding (equivalently, some embedding) $K\hookrightarrow \mathbb{C}$, the conjugacy class of the monodromy representation $$\rho: \pi_1(X(\mathbb{C})^{\text{an}})\to \on{GL}_r(\mathbb{C})$$ associated to $(\mathscr{E}, \nabla)_{\mathbb{C}}$ has finite orbit under $\pi_1(S(\mathbb{C})^{\text{an}}, s)$.
	\end{enumerate}
\end{conjecture}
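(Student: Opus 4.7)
The plan is to split the equivalence into the two implications and handle them separately; (2) $\Rightarrow$ (1) reduces essentially to a spreading-out argument, while (1) $\Rightarrow$ (2) is the deep assertion, a non-linear, non-abelian analogue of the Grothendieck-Katz $p$-curvature conjecture (\autoref{conjecture:gk-p-curvature}).

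For (2) $\Rightarrow$ (1), I would argue as follows. Finiteness of the $\pi_1(S(\mathbb{C})^{\on{an}}, s)$-orbit of $[\rho]$ means, in the Betti description of the isomonodromy foliation, that the leaf through $[\rho]$ is finite over $S(\mathbb{C})^{\on{an}}$. Under the analytic isomorphism $\mathscr{M}_{dR}(\mathscr{X}/S,r)^{\on{an}} \simeq \mathscr{M}_B(\mathscr{X}/S,r)$, this leaf corresponds to an analytic closed subset of $\mathscr{M}_{dR}(\mathscr{X}/S,r)_{\mathbb{C}}$ which is finite, hence by GAGA algebraic and finite, over $S_{\mathbb{C}}$. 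Because the basepoint $[(X, \mathscr{E}, \nabla)]$ is defined over $K$ and the leaf is a finite cover of $S$ passing through it, a Galois-descent argument shows the leaf is already defined over $K$, and then a standard spreading-out yields a finite subscheme of $\mathscr{M}_{dR}(\mathscr{X}/S,r)_{R[1/N]}$ over $\mathrm{Spec}\, R[1/N]$ for some $N \in R$. The formal completion of this finite cover at our $R[1/N]$-basepoint is precisely a descent of the formal isomonodromic leaf to $\widehat{S_R}[1/N]$.

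For (1) $\Rightarrow$ (2), the natural strategy is first to algebraize the formal leaf, and then to translate the resulting algebraic leaf back across the Riemann-Hilbert correspondence. Algebraization: assuming the formal leaf descends to $\widehat{S_R}[1/N]$, one wants to show it is the formal completion at $s$ of an algebraic closed subscheme of $\mathscr{M}_{dR}(\mathscr{X}/S,r)_K$. This would be a non-linear, non-abelian analogue of Bost's algebraicity criterion --- in the spirit of the proofs of the linear $p$-curvature conjecture in the solvable case by Chudnovsky-Chudnovsky, Bost, and Andr\'e \cite{chudnovsky2006applications, bost2001algebraic, andre2004conjecture}: a formal subvariety whose formal tangent directions are cut out by an algebraic foliation, and whose defining power series are integral at almost all finite places, should already be algebraic. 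Once algebraicity is in hand, the algebraic leaf is necessarily finite over $S_K$ (its relative dimension matches that of $S$, by the nature of the isomonodromy foliation), and the analytification of this finite cover corresponds under Riemann-Hilbert to a finite cover of $S(\mathbb{C})^{\on{an}}$ through $[\rho]$, yielding the finite $\pi_1(S(\mathbb{C})^{\on{an}}, s)$-orbit of (2).

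The main obstacle is the arithmetic algebraization step. For ``Picard-Fuchs initial conditions'' --- when $(\mathscr{E},\nabla)$ is the Gauss-Manin connection of a smooth family (\autoref{definition:picard-fuchs-equation}) --- one has access to a Griffiths-transverse Hodge filtration, and, following the pattern of Katz's proof of \autoref{theorem:katz-p-curvature}, one can hope to promote the arithmetic input (integrality of the formal leaf, controlling a non-abelian analogue of $p$-curvature) to Hodge-theoretic constraints on the isomonodromic deformation. These can then be combined with the non-abelian Hodge-theoretic and rigidity tools of \autoref{section:canonical} --- Mochizuki's deformation to a complex variation of Hodge structure, the semistability result \autoref{theorem:stability}, the unitarity consequence \autoref{corollary:unitary-c-vhs}, and Esnault-Groechenig integrality for rigid local systems --- to conclude that the monodromy orbit is finite. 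Beyond the Picard-Fuchs case the conjecture is expected to remain open, precisely because no general non-abelian Bost-Andr\'e-type algebraization tool adapted to the isomonodromy foliation is currently available.
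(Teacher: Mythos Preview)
The statement you are addressing is a \emph{conjecture}; the paper does not claim to prove it in general, so there is no ``paper's own proof'' to match. The paper states \autoref{conjecture:precisification} as open, and then proves it only under the additional hypothesis that $(\mathscr{E},\nabla)_K$ is a Picard-Fuchs equation (\autoref{theorem:non-abelian-p-curvature}). Your proposal correctly identifies that $(2)\Rightarrow(1)$ should follow from spreading out the algebraic leaf, and that $(1)\Rightarrow(2)$ is the hard direction and is open in general; on this structural level you agree with the paper.

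Where you diverge is in the Picard-Fuchs case. You propose to attack it via arithmetic algebraization of the formal leaf (a non-abelian Bost--Andr\'e criterion) followed by the rigidity machinery of \autoref{section:canonical}---Mochizuki's deformation to a $\mathbb{C}$-VHS, \autoref{theorem:stability}, \autoref{corollary:unitary-c-vhs}, and Esnault--Groechenig integrality. This is not the paper's route, and it would not work as stated: those tools are specific to families of curves with $g$ large relative to the rank (e.g.\ $g\geq r^2/4$), whereas \autoref{conjecture:precisification} and \autoref{theorem:non-abelian-p-curvature} are formulated for arbitrary smooth projective $\mathscr{X}\to S$ with no such constraint. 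The paper's actual argument for \autoref{theorem:non-abelian-p-curvature} is Hodge-theoretic but of a quite different flavor: one uses Deligne's finiteness of polarizable $\mathbb{Z}$-VHS (via \autoref{corollary:deligne-finite-orbit} and \autoref{corollary:infinitesimal-criterion}) to reduce to showing that the Hodge filtration on $(\mathscr{E},\nabla)$ extends Griffiths-transversally over the formal isomonodromic deformation. That extension is then established by a mod-$p$ argument comparing the Kodaira--Spencer map with the associated graded of the $p$-curvature (Katz's comparison), together with Ogus--Vologodsky's non-abelian Hodge theory in characteristic $p$ and the Higgs--de Rham flow of Lan--Sheng--Zuo and Esnault--Groechenig. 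No Bost-type algebraicity criterion enters.
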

Note that in this precisification we have elided the $\omega(p)$-integrality condition; we leave formulating this to the reader, or see \cite{lamlitt-painleve}.

This conjecture is an arithmetic answer to the general question that has concerned us in these notes. And it is closely related to the classical $p$-curvature conjecture:
\begin{proposition}[Lawrence-L--\,{\cite{lawrence2022representations}}]\label{prop:lawrence-litt}
	Suppose \autoref{conjecture:imprecise} is true for $\mathscr{X}/S=\mathscr{C}_g/\mathscr{M}_g$ (the universal curve of genus $g$ over the moduli space of genus $g$ curves) for all $g\gg 0$. Then \autoref{conjecture:gk-p-curvature} is true in general.
\end{proposition}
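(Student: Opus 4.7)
The plan is to deduce the classical $p$-curvature conjecture from the isomonodromy version by embedding any flat bundle as a point of $\mathscr{M}_{dR}(\mathscr{C}_g/\mathscr{M}_g)$ and then converting the arithmetic input ($p$-curvature vanishing) into the arithmetic input required by \autoref{conjecture:imprecise} ($p$-integrality of the formal leaf to order $\omega(p)$). Once the conjecture produces an algebraic leaf, the theorem will say the monodromy has finite $\on{Mod}_g$-orbit, and \autoref{theorem:canonical-reps} upgrades this to finite image provided we have arranged $g\geq r^2$.

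First I would perform a Lefschetz-type reduction. Given $(X/R,\mathscr{E},\nabla)$ satisfying the hypotheses of \autoref{conjecture:gk-p-curvature}, the goal is to produce a smooth projective curve $C\hookrightarrow X$ of genus $g\geq r^2$ with $\pi_1(C)\twoheadrightarrow \pi_1(X)$ surjective (up to finite index sufficient to conclude finiteness of monodromy), such that the pullback of $(\mathscr{E},\nabla)$ still has vanishing $p$-curvature for almost all $p$. Vanishing of $p$-curvature is preserved under pullback, and surjectivity on $\pi_1$ reduces finiteness of monodromy on $X$ to that on $C$; such curves are cut out by high-degree complete intersections of hyperplane sections in some projective embedding of $X$, and the genus can be forced to be $\geq r^2$ by taking the degree large enough. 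Thus we may replace $X$ by a large-genus curve $C$, and consider the point $[(C,\mathscr{E},\nabla)]\in \mathscr{M}_{dR}(\mathscr{C}_g/\mathscr{M}_g)$ above the moduli point $[C]\in \mathscr{M}_g$.

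The main obstacle is the middle step: showing that vanishing of $p$-curvature on $(\mathscr{E},\nabla)$ forces the formal leaf of the isomonodromy foliation through $[(C,\mathscr{E},\nabla)]$ to be $p$-integral to order $\omega(p)$ for almost all $p$. The relevant mechanism is Cartier descent: for almost all $p$, $p$-curvature vanishing says that $\nabla\bmod p$ endows $\mathscr{E}\bmod p$ with the structure of a Frobenius descent datum, so $(\mathscr{E},\nabla)\bmod p$ is $F^*$ of a bundle on the Frobenius twist $C^{(p)}$. Isomonodromic deformation commutes with this descent operation fiberwise in families: deforming $C$ in $\mathscr{M}_g$, the isomonodromic extension of $(\mathscr{E},\nabla)$ also has vanishing $p$-curvature relative to the family, and hence the formal solution to the isomonodromy equations (written explicitly in terms of the Atiyah-de Rham complex and the horizontal splitting $q^\nabla$) has its Taylor coefficients controlled by universal polynomials in the structure constants of $(\mathscr{E},\nabla)\bmod p$. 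Because the first $\approx p$ of these constants live on the Frobenius twist and therefore lift canonically, one obtains $p$-integrality of the Taylor expansion of the leaf up to order comparable to $p$; the exact bookkeeping needed to reach any $\omega(p)$ with $\omega(p)/p\to\infty$ is the technical heart of the argument.

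Granted this $p$-integrality statement, the isomonodromy conjecture \autoref{conjecture:imprecise} applied to $\mathscr{C}_g/\mathscr{M}_g$ yields that the leaf through $[(C,\mathscr{E},\nabla)]$ is algebraic. By the dictionary between algebraic leaves of the isomonodromy foliation and finite orbits on the Betti side, the conjugacy class of the monodromy representation $\rho:\pi_1(C(\mathbb{C}))\to \on{GL}_r(\mathbb{C})$ has finite $\on{Mod}_g$-orbit. Since we arranged $g\geq r^2$, \autoref{theorem:canonical-reps} forces $\rho$ to have finite image, which by our Lefschetz reduction implies finite monodromy of the original flat bundle on $X$. Combined with Katz's theorem that vanishing of $p$-curvature implies regular singularities at infinity, this gives the full conclusion of \autoref{conjecture:gk-p-curvature}.
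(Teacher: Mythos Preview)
Your proof follows essentially the same strategy as the paper's: reduce to a curve of large genus, show that vanishing of $p$-curvature forces $p$-integrality of the formal isomonodromy leaf to order $\omega(p)$, apply the conjecture to obtain a finite $\on{Mod}_g$-orbit, and conclude finite monodromy via \autoref{theorem:canonical-reps}. The one substantive difference is in how you arrange $g\geq r^2$. You propose cutting $X$ by high-degree complete intersections of hyperplane sections; the paper instead first invokes the standard (well-known) reduction of the $p$-curvature conjecture to flat bundles on smooth proper curves, and then passes to a finite \'etale cover $Y\to X$ to push the genus above $r^2$. Your Lefschetz-type reduction does not literally apply when $X$ is already a curve of small genus, so a cover step is needed in that case anyway; the paper's two-step route handles all cases uniformly and makes transparent that the only property used is stability of the $p$-curvature hypothesis under pullback. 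The paper also remarks that the original reference gives a more elementary endgame that avoids \autoref{theorem:canonical-reps} entirely.
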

\begin{proof}[Proof sketch]
	It is well-known that the $p$-curvature conjecture can be reduced to the case of flat bundles on smooth proper curves. Now let $(\mathscr{E}, \nabla)$ be a flat bundle of rank $r$ on a smooth proper curve $X$ with vanishing $p$-curvature for almost all $p$, and choose a finite \'etale cover $Y\to X$ so that the genus $g$ of $Y$ is large enough that \autoref{conjecture:imprecise} holds true for $\mathscr{C}_g/\mathscr{M}_g$, and in particular at least $r^2$. By a direct computation with Taylor series, the leaf of the isomonodromy foliation through $[(\mathscr{E},\nabla)]\in \mathscr{M}_{dR}(\mathscr{X}/\mathscr{S})$ is $p$-integral to order $\omega(p)$.  Then \autoref{conjecture:precisification} implies that the monodromy of $(\mathscr{E}, \nabla)|_Y$ has finite orbit under $$\on{Mod}_g=\pi_1(\mathscr{M}_g);$$ hence by \autoref{theorem:canonical-reps}, $(\mathscr{E}, \nabla)$ has finite monodromy.
\end{proof}
\begin{remark}
	Here the key input was that the hypothesis of \autoref{conjecture:gk-p-curvature} is stable under pullback. In the sketch above we deduced the statement from \autoref{theorem:canonical-reps}, but in \cite{lawrence2022representations} we give a much more elementary argument.
\end{remark}
Our main evidence for \autoref{conjecture:precisification} is the following analogue of \autoref{theorem:katz-p-curvature}:
\begin{theorem}[Lam-L--\,\cite{lamlitt-painleve}]\label{theorem:non-abelian-p-curvature}
	\autoref{conjecture:precisification} is true if $(\mathscr{E},\nabla)_K$ is a Picard-Fuchs equation in the sense of \autoref{definition:picard-fuchs-equation}.
\end{theorem}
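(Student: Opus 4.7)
The plan is to adapt Katz's proof of \autoref{theorem:katz-p-curvature} to the isomonodromy foliation on $\mathscr{M}_{dR}(\mathscr{X}/S, r)$. The direction from (2) to (1) is the easier one: if the $\pi_1(S^{\an},s)$-orbit of the monodromy class of $(\mathscr{E},\nabla)_\mathbb{C}$ is finite, then the leaf of the isomonodromy foliation through $[(\mathscr{E},\nabla)]$ is a finite \'etale cover of $S^{\an}$, hence algebraic over $K$ (by the algebraicity of finite analytic leaves, \cite[Theorem A]{cousin2021algebraic}); spreading out this algebraic leaf over $R$ yields the required descent to $\widehat{S_R}[1/N]$ for a suitable $N \in R$.

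For the converse, the Picard-Fuchs hypothesis provides, after shrinking to a dense open $U \subset X$, an isomorphism $(\mathscr{E},\nabla)|_U \simeq (R^i\pi_*\Omega_{Y/U}^\bullet, \nabla_{GM})$ for some smooth proper $\pi\colon Y \to U$. By a standard spreading-out argument, I extend $\pi$ (after enlarging $N$) to a smooth proper morphism $\widetilde{\pi}\colon \widetilde{Y}\to \widetilde{U}$ over a dense open $\widetilde{U}\subset \mathscr{X}$. The relative $i$-th de Rham cohomology of $\widetilde{\pi}$ provides a canonical extension $(\widetilde{\mathscr{E}},\widetilde{\nabla})$ of $(\mathscr{E},\nabla)|_U$ to $\widetilde{U}$, and taking its relative version in the $S$-direction yields a canonical \emph{Gauss-Manin section} $\sigma_{GM}$ of $\mathscr{M}_{dR}(\mathscr{X}/S,r) \to S$ passing through $[(\mathscr{E},\nabla)]$, defined over $R[1/N']$ in a formal neighborhood of $s$. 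The key geometric observation is that $\sigma_{GM}$ is tangent to the isomonodromy foliation precisely when the monodromy conjugacy class of $(\mathscr{E},\nabla)_\mathbb{C}$ is $\pi_1(S^{\an},s)$-fixed; thus, up to replacing $S$ by a finite \'etale cover, proving (2) is equivalent to showing that $\sigma_{GM}$ coincides with the isomonodromic leaf through $[(\mathscr{E},\nabla)]$.

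To match $\sigma_{GM}$ with the isomonodromic leaf I would exploit the Griffiths-transverse Hodge filtration $F^\bullet$ on $(\widetilde{\mathscr{E}},\widetilde{\nabla})$. Working modulo $p$ on $\widetilde{U}$, Katz's Hodge-versus-conjugate-filtration comparison---the same mechanism used to prove \autoref{theorem:katz-p-curvature}---relates the $p$-curvature of $\widetilde{\nabla}$ in the $S$-directions to the graded of $\widetilde{\nabla}$ with respect to $F^\bullet$. The integrality hypothesis (1) is translated, via the splitting $q^{\widetilde{\nabla}}$ of the Atiyah exact sequence in \eqref{equation:atiyah-de-rham-sequence}, into the vanishing mod $p$ of this ``transverse $p$-curvature'' for almost all $p$. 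Katz's comparison then forces $\widetilde{\nabla}$ to preserve $F^\bullet$ in the $S$-direction; the underlying local system restricts to each fiber of $\widetilde{U}\to S$ as a polarizable variation of Hodge structure of length one, with definite polarization, hence unitary. Combined with the $\overline{\mathbb{Q}}$-structure coming from the Picard-Fuchs origin and \cite[Lemma 7.2.1]{landesman2022geometric}, the monodromy on each fiber is finite, so $\sigma_{GM}$ has constant monodromy conjugacy class and agrees with the isomonodromic leaf, yielding (2).

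\textbf{Main obstacle.} The technical heart of the argument is making precise the dictionary between ``integrality of the isomonodromic deformation at $[(\mathscr{E},\nabla)]$'' and ``vanishing mod $p$ of the transverse $p$-curvature of $(\widetilde{\mathscr{E}},\widetilde{\nabla})$''. The first condition is encoded in the isomonodromy splitting $q^{\widetilde{\nabla}}$ and its $p$-adic behaviour on the Atiyah-de Rham complex, while the second is phrased in terms of relative Frobenius and Cartier descent; the $\omega(p)$-order integrality condition of \autoref{conjecture:imprecise} is meant precisely to make this translation possible, but reconciling the two pictures---and ensuring that the Hodge-theoretic structure on the Picard-Fuchs fiber propagates in a usable way to the formal neighborhood, given that isomonodromic deformations of Picard-Fuchs equations are generically not themselves Picard-Fuchs---requires a delicate arithmetic analysis of the interplay among the Atiyah-de Rham complex, the Hodge filtration, and the $p$-adic Taylor expansion of the isomonodromic leaf.
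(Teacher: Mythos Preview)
Your proposal contains a fundamental misconception about what needs to be proven and how. You try to adapt Katz's argument to conclude that the Hodge filtration has \emph{length one} on each fiber, hence that the monodromy is \emph{finite}. But a generic Picard-Fuchs equation has infinite monodromy and a Hodge filtration of length greater than one; the desired conclusion (2) is only that the monodromy has finite \emph{orbit} under $\pi_1(S,s)$, which is much weaker. Your chain ``$\widetilde\nabla$ preserves $F^\bullet$ in the $S$-direction $\Rightarrow$ length-one VHS on each fiber $\Rightarrow$ unitary $\Rightarrow$ finite'' conflates the base direction with the fiber direction and would, if it worked, prove a false statement.

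The paper's approach is genuinely different. Rather than collapsing the Hodge filtration, the goal is to \emph{extend} the given (arbitrary-length) Hodge filtration $F^\bullet$ Griffiths-transversally from $X$ to the formal isomonodromic deformation $(\widehat{\mathscr{E}},\widehat\nabla)$ over $\widehat{S}_K$. This is exactly the hypothesis of \autoref{corollary:infinitesimal-criterion}, which then yields finite orbit via Deligne's finiteness theorem for polarizable $\mathbb{Z}$-VHS---not via any unitarity or finiteness of the monodromy itself. The integrality hypothesis (1) enters by allowing one to work modulo $p$: Katz's comparison between the Kodaira-Spencer map and the associated graded of $p$-curvature (with respect to the \emph{conjugate} filtration), combined with the explicit formula for the $p$-curvature of the isomonodromy foliation (\autoref{proposition-definition:p-curvature}), gives the first-order extension of $F^\bullet$. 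Going beyond first order is the hard part and requires iteratively extending the conjugate filtration along a mod-$p$ isomonodromic deformation, using Ogus--Vologodsky's non-abelian Hodge theory in characteristic $p$ and the Higgs--de Rham flow; none of this appears in your sketch.

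A secondary gap: your ``spreading out'' of the family $Y\to U$ from the single fiber $X=\mathscr{X}_s$ to a dense open of $\mathscr{X}$ is not justified. The Picard-Fuchs hypothesis is only on the fiber over $s$; there is no reason the geometric family extends over $S$, and the paper does not attempt this. The ``Gauss-Manin section'' $\sigma_{GM}$ you construct therefore has no clear meaning, and in any case the isomonodromic deformation of a Picard-Fuchs equation need not remain Picard-Fuchs (as you yourself note in the obstacle paragraph).
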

For example, when we consider $\mathscr{C}_g/\mathscr{M}_g$, \autoref{conjecture:precisification} completely characterizes finite $\on{Mod}_g$-orbits in $Y(g,0,r)$, as long as they are Picard-Fuchs equation for a single complex structure on the surface $\Sigma_g$.
\begin{remark}
We also prove a version of \autoref{theorem:non-abelian-p-curvature} in the non-proper setting in \cite{lamlitt-painleve}---e.g.~that of the Painlev\'e VI and Schlesinger equations---but because we have not described the isomonodromy foliation in this setting, we do not discuss it further here.
\end{remark}
In examples, the hypothesis that the flat bundle in question be a Picard-Fuchs equation does not seem unduly restrictive. For example, the following is a consequence of our discussion in \autoref{subsection:an-algebro-geometric-interpretation}:
\begin{theorem}
Let $(C_1, \cdots, C_n)$ be an $n$-tuple of quasi-unipotent conjugacy classes in $\on{SL}_2(\mathbb{C})$. Any finite orbit of the $\on{PMod}_{0,n}$-action on $Y(\underline{C})^{\text{irr}}$ corresponds to a local system of geometric origin.
\end{theorem}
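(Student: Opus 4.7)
The plan is to spread $\rho$ out via \autoref{proposition:AG-interpretation} and invoke the Corlette--Simpson / Loray--Pereira--Touzet dichotomy recalled in \autoref{subsection:an-algebro-geometric-interpretation}. Concretely, $\rho$ extends to an irreducible rank two local system $\mathbb{V}$ on the total space $\mathscr{X}$ of a family $\pi \colon \mathscr{X} \to \mathscr{M}$ of $n$-punctured genus zero curves with $\mathscr{M} \to \mathscr{M}_{0,n}$ dominant; the argument then splits according to whether $\mathbb{V}$ is Zariski-dense, with the quasi-unipotent hypothesis here substituting for the ``some $A_i$ has infinite order'' condition of \autoref{theorem:cp1-geometric-origin}.

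In the Zariski-dense case, by Corlette--Simpson and Loray--Pereira--Touzet, $\mathbb{V}$ on $\mathscr{X}$ is either rigid---in which case $\mathbb{V}$ is of geometric origin by \cite[Theorem 9.3]{corlette-simpson}, hence so is $\rho$ by restriction---or of pullback type. In the pullback case, the Riemann--Hurwitz argument of \autoref{subsubsection:pullback} forces the projectivization $\mathbb{P}\mathbb{V}$ to factor as $f^*\mathbb{W}$ through a map $f \colon \mathscr{X} \to C$ with $C$ essentially $\mathbb{CP}^1 \setminus \{0, 1, \infty\}$ equipped with some orbifold structure. Since $\pi_1(\mathbb{CP}^1 \setminus \{0,1,\infty\})$ is free on two generators, every irreducible two-dimensional representation of it is rigid; lifting $\mathbb{W}$ to an $\on{SL}_2$-local system $\widetilde{\mathbb{W}}$ (possible since the relevant $H^2$ obstruction vanishes on an open curve) produces a rigid $\widetilde{\mathbb{W}}$, which is of geometric origin by \cite[Theorem 9.3]{corlette-simpson} or by \cite{katz-rigid}. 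Then $f^*\widetilde{\mathbb{W}}$ is of geometric origin, and comparing determinants in $\on{SL}_2$ shows $\mathbb{V} \cong f^*\widetilde{\mathbb{W}} \otimes \mathbb{L}$ for some rank one local system $\mathbb{L}$ with $\mathbb{L}^{\otimes 2}$ trivial; tensoring with such an $\mathbb{L}$ preserves geometric origin, so $\mathbb{V}$ is of geometric origin. This pullback subcase is the most delicate step, the key tension being that the $\on{SL}_2$-structure can be twisted by a two-torsion character when lifting from $\on{PGL}_2$, and one must verify this twist is harmless.

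In the non-Zariski-dense case, irreducibility of $\rho$ forces its image into the normalizer of the diagonal torus, so $\rho = \on{Ind}_H^G \chi$ for a double cover $\widetilde{X} \to X$ and a rank one character $\chi$ of $\pi_1(\widetilde{X})$. The quasi-unipotent hypothesis forces $\chi$ to take roots-of-unity values on loops around the punctures of $\widetilde{X}$; combined with finite $\on{PMod}_{0,n}$-orbit and the standard fact that finite orbits of $\on{Sp}_{2g}(\mathbb{Z})$ on $H^1(\widetilde{X}, \mathbb{C}^\times)$ are torsion (or alternatively the explicit classifications in \cite{tykhyy-garnier} and \cite{dacampo}), this forces $\chi$ itself to have finite order. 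Hence $\rho$ has finite image and appears as a summand of the direct image of the constant sheaf along a finite \'etale cover, which is plainly of geometric origin.
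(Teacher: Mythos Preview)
Your argument is correct and follows the same route the paper has in mind: spread out via \autoref{proposition:AG-interpretation}, apply the Corlette--Simpson/Loray--Pereira--Touzet dichotomy to the Zariski-dense case, and handle the remaining cases directly. Two small points of care: in the non-Zariski-dense case, irreducibility does not force the image into the normalizer of the torus---it could also be a finite primitive subgroup (e.g.\ binary icosahedral)---though this case is of course immediate; and your ``standard fact'' about $\on{Sp}_{2g}(\mathbb{Z})$-orbits tacitly uses that the relevant subgroup of $\on{PMod}_{0,n}$ acts on $H_1(\widetilde{X}_{\text{compact}},\mathbb{Z})$ with finite-index image in the symplectic group, which is A'Campo's theorem on hyperelliptic monodromy rather than something trivial (your hedge via the explicit classifications is a safe fallback).
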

\subsection{The idea of the proof}
The proof of \autoref{theorem:non-abelian-p-curvature} is too technical for us to do much more than gesture at it here (and we imagine many readers feel the same way about the statement). That said, we will try to sketch some of the key ideas. The key Hodge-theoretic input is the following result of Deligne:
\begin{theorem}[{\cite[Th\'eor\`eme 0.5]{deligne1987theoreme}}]
	Let $X$ be a smooth complex algebraic variety and $r\geq 0$ a positive integer. The set of isomorphism classes of rank $r$ complex local systems on $X$ which underly a polarizable $\mathbb{Z}$-variation of Hodge structure is finite.
\end{theorem}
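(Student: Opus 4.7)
The plan is to split the problem into a bounded amount of discrete Hodge-theoretic data plus a finiteness statement about integral monodromy representations, and then combine Griffiths-Schmid distance-decreasing properties with Deligne's semisimplicity and the local monodromy theorem to get the latter.

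First I would observe that since $r$ is fixed, only finitely many tuples of Hodge numbers $(h^{p,q})$ satisfying $\sum h^{p,q}=r$ can occur, and for each such tuple there is, up to isomorphism, only a bounded amount of polarized integral lattice data of rank $r$; hence there is a corresponding period domain $D=G_{\mathbb{R}}/V$ (with $G$ the $\mathbb{Q}$-group preserving a reference polarized lattice of the appropriate type, and $V$ the compact isotropy of a reference Hodge structure). The complex local system underlying a polarizable $\mathbb{Z}$-VHS is determined up to isomorphism by the $G(\mathbb{Z})$-conjugacy class of its monodromy representation $\rho:\pi_1(X,x)\to G(\mathbb{Z})$. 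So it suffices, for each fixed discrete type, to prove that only finitely many $G(\mathbb{Z})$-conjugacy classes of such $\rho$ arise.

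Next I would collect the standard constraints on any such $\rho$. First, $\rho$ is semisimple by Deligne's theorem on semisimplicity of polarizable VHS, so it defines a point of the affine Betti moduli scheme $M_B(X,G)$ of finite type. Second, after replacing $X$ by a smooth quasi-projective model with smooth compactification $\bar X\supset X$ whose boundary is a simple normal crossings divisor, Deligne's local monodromy theorem forces the monodromies of $\rho$ around the boundary components to be quasi-unipotent, with index of unipotency and orders of the semisimple parts bounded purely in terms of $r$. These two constraints cut out a closed subvariety $Z\subset M_B(X,G)$ on which $\rho$ must lie, and they restrict the relevant integer points of $Z$ to those with ``bounded ramification" at infinity. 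Additionally, the corresponding VHS gives a $\rho$-equivariant horizontal holomorphic period map $\phi:\tilde X\to D$; by Griffiths-Schmid, $\phi$ is distance-decreasing for the Hodge metric on horizontal tangent directions, and by Schmid's nilpotent orbit theorem the Hodge bundles $F^p\mathcal{V}$ extend to $\bar X$ as coherent sheaves with Chern classes bounded in terms of $r$, $\bar X$, and the polarization.

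The main obstacle is the final arithmetic step: turning the Hodge-theoretic bounds above into actual finiteness of $\rho$ modulo $G(\mathbb{Z})$-conjugation. Integer points on a complex affine variety are not a priori finite, so the key leverage must come from the fact that the period map lands in an arithmetic quotient $\Gamma\backslash D$ (with $\Gamma=\rho(\pi_1(X))$) of finite covolume behavior along horizontal directions, combined with reduction theory for the arithmetic group $G(\mathbb{Z})$ inside $G(\mathbb{R})$. I expect the hardest part to be a Selberg/Borel-type finiteness assertion: that semisimple representations $\pi_1(X)\to G(\mathbb{Z})$ with prescribed quasi-unipotent monodromies of bounded index at infinity, and which underlie some polarized Hodge filtration with bounded-degree graded pieces, fall into only finitely many $G(\mathbb{Z})$-conjugacy classes. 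This is where the integer structure is essential — the analogous statement for complex or even $\mathbb{Q}$-coefficients fails — and where I would expect Deligne to invoke the characteristic class bounds on the Hodge bundles together with a compactness argument in the arithmetic quotient to close the argument.
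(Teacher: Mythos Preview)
The paper itself does not prove this theorem; it is cited as a black box from Deligne's 1987 paper and used only as input to the corollaries that follow, so there is no in-paper argument to compare against.

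Measured against Deligne's actual argument, your outline assembles the right ingredients---finitely many Hodge types, semisimplicity, quasi-unipotent local monodromy with bounded exponents, Griffiths--Schmid curvature bounds, and degree bounds on the extended Hodge bundles---but the final paragraph, which you yourself flag as the hard step, is not an argument, and your guess at its shape is off. Reduction theory for $G(\mathbb{Z})$ inside $G(\mathbb{R})$ is not the mechanism. The point is that the degree bounds you already have on the Hodge bundles force the systems of Hodge bundles $(\bigoplus_p \gr^p_F\mathcal{V},\theta)$ of a fixed type to vary in a \emph{bounded family}, so the locus of semisimple local systems in $M_B(X,r)(\mathbb{C})$ underlying a polarizable $\mathbb{C}$-VHS of that type is \emph{compact} in the analytic topology. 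Since $M_B(X,r)$ is affine of finite type over $\mathbb{Z}$, its $\mathbb{Z}$-points---where every polarizable $\mathbb{Z}$-VHS lands---form a discrete subset of $M_B(X,r)(\mathbb{C})$. A discrete subset of a compact set is finite. Your sketch contains everything needed to obtain this compactness but never isolates it as the bridge between the Hodge-theoretic bounds and the arithmetic conclusion; once you do, no further arithmetic input (Siegel sets, fundamental domains, covolume estimates) is required. A minor side remark: the claim that there is only a bounded amount of polarized integral lattice data of rank $r$ is false in even weight---there are infinitely many $\GL_r(\mathbb{Z})$-classes of nondegenerate integral symmetric forms already in rank $2$---but this is harmless, since the complex local system does not remember the polarization and you only need the monodromy to be integral.
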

We immediately deduce:
\begin{corollary}\label{corollary:deligne-finite-orbit}
	Let $X$ be a smooth complex variety, and $\Gamma\subset \on{Out}(\pi_1(X(\mathbb{C})^{\text{an}})$ a subgroup. Let $\mathbb{V}$ be a complex local system on $X$, and suppose that for each $\gamma\in \Gamma$, the local system $\mathbb{V}^\gamma$ underlies a $\mathbb{Z}$-variation of Hodge structure. Then the orbit of the isomorphism class of $\mathbb{V}$ under $\Gamma$ is finite.
\end{corollary}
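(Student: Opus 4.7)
The proposal is essentially to read off the corollary from Deligne's finiteness theorem, so I will lay out the bookkeeping carefully. First I would fix the rank: twisting a local system $\mathbb{V}$ by the action of an element $\gamma \in \Gamma \subset \on{Out}(\pi_1(X(\mathbb{C})^{\text{an}}))$ replaces the monodromy representation $\rho\colon \pi_1(X)\to \on{GL}_r(\mathbb{C})$ by $\rho \circ \widetilde\gamma$ for some lift $\widetilde\gamma$ of $\gamma$ to $\on{Aut}(\pi_1(X))$, up to inner conjugation. In particular every $\mathbb{V}^\gamma$ has rank equal to $r = \on{rk}(\mathbb{V})$.

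Next I would invoke the hypothesis: by assumption, for each $\gamma\in\Gamma$ the local system $\mathbb{V}^\gamma$ underlies a polarizable $\mathbb{Z}$-variation of Hodge structure on $X$. So the whole $\Gamma$-orbit of the isomorphism class $[\mathbb{V}]$ lies inside the set
\[
\mathcal{S}_r \;=\; \{\,[\mathbb{W}] : \mathbb{W} \text{ is a rank } r \text{ complex local system on } X \text{ underlying a polarizable } \mathbb{Z}\text{-VHS}\,\}.
\]
By the Deligne finiteness theorem cited immediately above, $\mathcal{S}_r$ is a finite set. Therefore the $\Gamma$-orbit of $[\mathbb{V}]$, being a subset of a finite set, is finite.

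I do not foresee any genuine obstacle here; the only conceptual point worth being careful about is that the hypothesis is imposed on $\mathbb{V}^\gamma$ as an abstract local system on $X$ (for every $\gamma$), not just on $\mathbb{V}$ itself, which is exactly what is needed for the whole orbit to sit inside $\mathcal{S}_r$. One minor subtlety worth flagging in a final write-up is that $\gamma$ is only an \emph{outer} automorphism, so the monodromy representation $\rho^\gamma$ is defined only up to conjugation; however this ambiguity affects neither the isomorphism class of the underlying local system nor its rank, so the argument is unaffected.
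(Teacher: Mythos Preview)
Your proposal is correct and is exactly the immediate deduction the paper intends: the paper simply writes ``We immediately deduce'' after Deligne's finiteness theorem, and your argument spells out precisely that deduction. The only small point is that the corollary as stated omits the word ``polarizable'' while Deligne's theorem requires it; you tacitly inserted it, which is the right reading of the paper's intent.
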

In particular, in the setting and notation of \autoref{conjecture:precisification} this gives us a \emph{local} condition for a Picard-Fuchs equation on $X$ to have monodromy with finite orbit under $\pi_1(S(\mathbb{C})^{\text{an}}, s)$, as we now explain.
\begin{corollary}\label{corollary:infinitesimal-criterion}
	With notation as in \autoref{conjecture:precisification}, suppose that $(\mathscr{E}, \nabla)$ underlies a $\mathbb{Z}$-variation of Hodge structure on $X$, with Hodge filtration $F^\bullet$. Let $\widehat{X}_K$ be the formal scheme obtained by completing $\mathscr{X}_K$ along $X_K$, and $\widehat{S}_K$ the formal completion of $S_K$ at $s_K$. Let $$(\widehat{\mathscr{E}}, \widehat\nabla: \widehat{\mathscr{E}}\to \widehat{\mathscr{E}}\otimes \Omega^1_{\widehat{X}/\widehat{S}})$$ be the isomonodromic deformation of $(\mathscr{E},\nabla)$ to $\widehat{X}_K$. (Here $\mathscr{X}_K, X_K$ are the schemes $\mathscr{X}\otimes_R K, X\otimes_R K$.)
	
	If the Hodge filtration $F^\bullet$ extends to a Griffiths-transverse filtration on $(\widehat{\mathscr{E}}, \widehat\nabla)$, then the conjugacy class of the monodromy representation of $(\mathscr{E}, \nabla)$ has finite orbit under $\pi_1(S,s)$.
\end{corollary}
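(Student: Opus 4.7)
The plan is to apply Corollary~\ref{corollary:deligne-finite-orbit} with $\Gamma$ the image of $\pi_1(S(\mathbb{C})^{\mathrm{an}}, s_K)$ in $\on{Out}(\pi_1(X(\mathbb{C})^{\mathrm{an}}))$ and $\mathbb{V}$ the local system of flat sections of $(\mathscr{E},\nabla)_{\mathbb{C}}$. It suffices to show that for every $\gamma \in \pi_1(S(\mathbb{C})^{\mathrm{an}}, s_K)$ the twisted local system $\mathbb{V}^\gamma$ underlies a polarizable $\mathbb{Z}$-variation of Hodge structure on $X(\mathbb{C})^{\mathrm{an}}$.

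First I would upgrade the formal Griffiths-transverse extension of $F^\bullet$ to an analytic one over an open neighborhood of $s_K$. Choose a contractible analytic disk $U \subset S(\mathbb{C})^{\mathrm{an}}$ containing $s_K$; over $U$ the analytic isomonodromic deformation $(\mathscr{E}^{\mathrm{an}}, \nabla^{\mathrm{an}})$ of $(\mathscr{E}, \nabla)_{\mathbb{C}}$ exists as a genuine analytic flat bundle on $\mathscr{X}|_U$, and its formal completion along the central fibre is the isomonodromic deformation $(\widehat{\mathscr{E}}, \widehat{\nabla})$ appearing in the hypothesis. The formal Hodge bundles $\widehat{F}^i$ are coherent formal subsheaves of $\widehat{\mathscr{E}}$ of constant rank, and Griffiths transversality in the base directions of $U$, together with the initial condition $F^\bullet$ at $s_K$, expresses $\widehat{F}^\bullet$ as the unique formal solution of a first-order holomorphic system whose coefficients are analytic on $U$. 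By a Cauchy–Kovalevskaya-type convergence statement this formal solution is the Taylor expansion of an honest analytic Griffiths-transverse filtration $F^\bullet_U$ on $(\mathscr{E}^{\mathrm{an}}, \nabla^{\mathrm{an}})$, after possibly shrinking $U$.

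Next I would promote $F^\bullet_U$ to a polarizable $\mathbb{Z}$-VHS on $\mathscr{X}|_U$ and analytically continue it along loops. The polarization and integral lattice on $\mathbb{V}$ extend tautologically to the analytic isomonodromic deformation, as they are purely topological data on the underlying local system of flat sections, which is by definition constant under isomonodromic deformation. The Hodge–Riemann bilinear relations are open conditions on the triple (filtration, polarization, lattice), and they hold at the central fibre, so after shrinking $U$ they hold over all of $U$. Given $\gamma \in \pi_1(S(\mathbb{C})^{\mathrm{an}}, s_K)$, represent $\gamma$ by a continuous path and cover its image by finitely many analytic disks $U = U_0, U_1, \ldots, U_n$. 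At any point $t$ in one of these disks the analytic VHS already constructed tautologically satisfies the formal-extension hypothesis of the corollary, so Steps 1 and 2 apply verbatim at $t$ and let us glue the VHS across $U_i \cap U_{i+1}$. Iterating, the VHS extends analytically along $\gamma$, and one arrives back at $s_K$ with a polarizable $\mathbb{Z}$-VHS on $(\mathscr{E},\nabla)_{\mathbb{C}}$ whose underlying local system is $\mathbb{V}$, but whose identification with a representation of $\pi_1(X, x)$ is twisted by $\gamma_* \in \on{Out}(\pi_1(X))$ — that is, a polarizable $\mathbb{Z}$-VHS structure on $\mathbb{V}^\gamma$. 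Corollary~\ref{corollary:deligne-finite-orbit} then finishes the proof.

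The main obstacle is Step~1, the formal-to-analytic upgrade of the Hodge filtration. A priori nothing forces the coefficients of $\widehat{F}^i$ to converge, and making the Cauchy problem rigorous requires care — for instance by reformulating the extension as a formal map $\widehat{U} \to \bigsqcup_i \on{Gr}(r_i, \on{rk}\mathscr{E})$ and checking that Griffiths transversality plus flatness of $\widehat{\nabla}$ pins it down as the formal germ of an analytic map. A secondary subtlety in the propagation step is the tracking of why the analytic continuation along $\gamma$ yields $\mathbb{V}^\gamma$ rather than $\mathbb{V}$ itself; this is a consequence of the twist inherent in the Birman-style exact sequence relating $\pi_1(X)$, $\pi_1(\mathscr{X})$, and $\pi_1(S)$, and must be kept in mind when identifying the endpoint of the continuation with a $\pi_1(X,x)$-representation.
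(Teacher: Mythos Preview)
Your overall strategy---reduce to Corollary~\ref{corollary:deligne-finite-orbit} by showing every $\pi_1(S,s)$-conjugate of $\mathbb{V}$ underlies a polarizable $\mathbb{Z}$-VHS---is exactly the paper's, and your analytic-continuation picture along loops in $S$ is morally the same as the paper's passage to the universal cover $\widetilde{S}$. The real divergence, and the real gap, is in Step~1.

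Griffiths transversality is a differential \emph{inclusion} $\nabla F^i \subset F^{i-1}\otimes\Omega^1$, not a determined first-order system: it does not pin down $\widehat{F}^\bullet$ uniquely from the initial data $F^\bullet$ at $s_K$, so there is no Cauchy problem to which a Cauchy--Kovalevskaya convergence theorem could apply. The hypothesis hands you \emph{some} formal Griffiths-transverse extension, but nothing forces its Taylor coefficients to converge, and the Grassmannian reformulation you sketch at the end has the same defect. Your Step~2 then inherits this problem, since the propagation argument needs Step~1 to fire at each new basepoint.

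The paper closes this gap by quoting a theorem of Simpson \cite[\S12]{simpson1997hodge}: the ``non-abelian Noether--Lefschetz locus'' $\on{NL}(\mathscr{E},\nabla)\subset\widetilde{S}$, consisting of those $s'\in\widetilde{S}$ for which the restriction of the isomonodromic local system to $\mathscr{X}_{s'}$ underlies a polarizable $\mathbb{Z}$-VHS, is a \emph{closed analytic} subset. Your formal hypothesis says the defining ideal of $\on{NL}$ vanishes to infinite order at $\widetilde{s}$; since that ideal is analytic, $\on{NL}$ contains an open neighbourhood of $\widetilde{s}$, and a closed analytic subset of a connected manifold with nonempty interior is everything. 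This replaces both your Step~1 and your path-covering propagation in one stroke, and it is genuinely the missing ingredient: you are in effect trying to reprove Simpson's closedness statement by hand, and that requires substantially more than a convergence-of-power-series argument.
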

\begin{proof}
Let $\widetilde{S}$ be the universal cover of $S(\mathbb{C})^{\text{an}}$, and let $\mathscr{X}_{\widetilde{S}}$ be the base-change of $\mathscr{X}(\mathbb{C})^{\text{an}}$ to $\widetilde{S}$. Choosing a lift $\widetilde s\in \widetilde S$ of $s$, the inclusion $X\to \mathscr{X}_{\widetilde{S}}$ as the fiber over $\widetilde s$ induces an isomorphism of fundamental groups. Hence we have a local system $\mathbb{V}$ on $\mathscr{X}_{\widetilde{S}}$ with monodromy the same as that of $(\mathscr{E},\nabla)$. Let $\on{NL}(\mathscr{E}, \nabla)\subset \widetilde{S}$ be the set of $s'\in S$ such that the restriction of the fiber $\mathbb{V}$ to the fiber of $\mathscr{X}_{\widetilde{S}}\to \widetilde{S}$ at $s'$ underlies a polarizable $\mathbb{Z}$-variation of Hodge structure. By \cite[\S12]{simpson1997hodge} this is a closed analytic subset of $\widetilde{S}$. 

But the hypothesis that $F^\bullet$ extend to a formal neighborhood of $X$ in $\mathscr{X}$ implies that $\on{NL}(\mathscr{E}, \nabla)$ in fact contains an open subset of $\widetilde{S}$; hence it is all of $\widetilde{S}$. Translating this statement through the language of deck transformations, this implies that every conjugate of the monodromy representation of $(\mathscr{E}, \nabla)$ under $\pi_1(S,s)$ underlies a polarizable $\mathbb{Z}$-variation of Hodge structure; hence there are finitely many such conjugates by \autoref{corollary:deligne-finite-orbit}.
\end{proof}
\begin{remark}
	The notation $\on{NL}(\mathscr{E},\nabla)$ stands for ``Noether-Lefschetz" --- these loci are more or less what Simpson terms ``non-Abelian Noether-Lefschetz loci." They are the non-abelian analogue of Hodge loci, and Simpson asks \cite[consequence of Conjecture 12.3]{simpson1997hodge} if their images in $S$ are disjoint unions of algebraic subvarieties; this question is the non-abelian analogue of the famous theorem of Cattani-Kaplan-Deligne on algebraicity of Hodge loci. See \cite{engel2023non} for some partial results in this direction.
\end{remark}
Before explaining the idea of the proof, we need to briefly discuss the $p$-curvature of the isomonodromy foliation.

We define the $p$-curvature of a foliation:
\begin{definition}\label{definition:p-curvature-foliation}
	Let $k$ be a field of characteristic $p>0$, $X/k$ a smooth variety, and $\mathscr{F}\subset T_X$ a foliation, i.e.~ a subbundle closed under the Lie bracket. The $p$-curvature of $\mathscr{F}$ is the morphism $$\psi_p: F_{\text{abs}}^*\mathscr{F}\to T_X/\mathscr{F}$$ induced by $v\mapsto v^p$.
\end{definition}

Let $\pi:\mathscr{M}_{dR}(\mathscr{X}/S, r)\to S$ be the structure morphism. The isomonodromy foliation (at least over the smooth locus) is a splitting of the tangent exact sequence for $\pi$, and hence its $p$-curvature should be a morphism $$F_{\text{abs}}^*\pi^*T_S\to T_{\mathscr{M}_{dR}(\mathscr{X}/S, r)/S}.$$ At a characteristic $p>0$ point $[(X_s, \mathscr{E}, \nabla)]$ of $\mathscr{M}_{dR}(\mathscr{X}/S, r)$, this is a map $$\psi_p(X_s, \mathscr{E}, \nabla): F^*_{\text{abs}}T_sS\to \mathbb{H}^1(\on{End}(\mathscr{E})^\bullet_{dR}).$$
\begin{proposition-definition}\label{proposition-definition:p-curvature}
	The map $\psi_p(X_s, \mathscr{E}, \nabla)$ is given by the composition $$F_{\text{abs}}^*T_sS\to H^1(X'_s, T_{X'_s})\to \mathbb{H}^1((F_{\text{abs}}^*T_{X_s})^\bullet_{dR})\overset{\psi_p}{\to} \mathbb{H}^1(\on{End}(\mathscr{E})^\bullet_{dR})$$ where $X'_s$ is the Frobenius twist of $X_s$, the first map is the Frobenius pullback of the Kodaira-Spencer map, the second is induced by the natural inclusion $T_{X'_s}\hookrightarrow F_{\text{abs}}^*T_{X_s}$, and the third is induced by the $p$-curvature map for $\mathscr{E}$. Here $(F_{\text{abs}}^*T_{X_s})^\bullet_{dR}$ is the de Rham complex of $F_{\text{abs}}^*T_{X_s}$ with its canonical (Frobenius-pullback) connection, for which the flat sections are precisely $T_{X'_s}$.
\end{proposition-definition}

The above can be taken as a \emph{definition} of the $p$-curvature of the isomonodromy foliation for those who prefer not to work with crystals; that said, one may make sense of $p$-curvature for a crystal in positive characteristic \cite[\S2.3]{osserman2004connections}, and if one does so, the above is a computation of the $p$-curvature of $\mathscr{M}_{dR}(\mathscr{X}/S, r)/S$. The formula above is likely not very illuminating, but it is at least explicit.

\begin{proof}[Idea of the proof of \autoref{theorem:non-abelian-p-curvature}]
	It will be no surprise, now, that the idea of the proof of \autoref{theorem:non-abelian-p-curvature} is to verify the hypothesis of \autoref{corollary:infinitesimal-criterion}. That the Hodge filtration extends to first order is more or less an immediate consequence of Katz's comparison \cite[Theorem 3.2]{katz-p-curvature} of the Kodaira-Spencer map $\on{gr}^i(\nabla)$  with the associated graded of the $p$-curvature with respect to the conjugate filtration, discussed in our sketch of the proof of \autoref{theorem:katz-p-curvature}, combined with our formula for the $p$-curvature of the isomonodromy filtration, \autoref{proposition-definition:p-curvature}, and some elementary deformation theory. 
	
	To go beyond first order, some new ideas are needed---in particular, one needs to \begin{enumerate}
	 		\item (iteratively)	extend the conjugate filtration to a well-chosen mod $p$ isomonodromic deformation of $(\mathscr{E},\nabla)$, 
	 		\item compare the Kodaira-Spencer map to the associated graded of the $p$-curvature of this deformation of the conjugate filtration, and
	 		\item check that the $p$-curvature of the isomonodromy foliation vanishes along the chosen mod $p$ isomonodromic deformation of $(\mathscr{E},\nabla)$.
 		\end{enumerate}
 	Items (1) and (2) may be handled by judicious use of Ogus-Vologodsky's non-abelian Hodge theory \cite{ogus-vologodsky}. Item (3) relies on this work in combination with a slight extension of the theory of the Higgs-de Rham flow of Lan-Sheng-Zuo \cite{lan2019semistable}, as explained by Esnault-Groechenig \cite{esnault2023cristallinity}. Both of these ideas are beyond the scope of these notes.
\end{proof}
\begin{remark}\label{remark:esbt}
There is an analogue of the $p$-curvature conjecture for foliations, due to Ekedahl-Shephard--Barron-Taylor 	\cite{ekedahl99conjecture}, which gives a condition under which \emph{every} leaf of a foliation ought to be algebraic. This conjecture has been considered before in the context of the isomonodromy foliation in the PhD theses of Papaioannou and Menzies \cite{papaioannou2013algebraic, menzies2019p}. A variant of our \autoref{conjecture:precisification} has been considered for arbitrary rank one foliations in the preprint \cite[Conjecture 21]{movasatileaf}.
\end{remark}
\begin{remark}
	The approach to the proof of \autoref{theorem:non-abelian-p-curvature} is inspired in part by ideas of Katzarkov and Pantev's paper \cite{katzarkov1999non}, which proves a non-abelian analogue of the Theorem of the Fixed Part in the setting of a smooth projective morphism (the quasi-projective case was proven later by \cite{jost2001harmonic}). In fact one can deduce the main theorems of those papers from \autoref{theorem:non-abelian-p-curvature}, as will be explained in \cite{lamlitt-painleve}.
\end{remark}
\section{Analogues of conjectures on algebraic cycles}\label{section:non-abelian-conjectures}
The rest of these notes aim to collect a number of conjectures on the arithmetic and geometry of local systems on algebraic varieties, and some related questions in algebraic geometry and surface topology. We hope that they will be a fruitful starting point for young people interested in these subjects.

 We begin, however, with some philosophy.
\subsection{Non-abelian cohomology}\label{subsection:non-abelian-cohomology}
Let $\pi: \mathscr{X}\to S$ be a smooth proper morphism. The guiding principle here, due, we believe, to Simpson (see e.g.~\cite{simpson1997hodge}), is that the spaces $\mathscr{M}_{dR}(\mathscr{X}/S, r), \mathscr{M}_B(\mathscr{X}/S, r)$, parametrizing local systems on the fibers of $\pi$, are the non-abelian analogues of the cohomology sheaves $$R^i\pi_*\Omega_{\mathscr{X}/S, dR}^\bullet, R^i\pi_*\mathbb{C},$$ the de Rham and singular cohomology of the fibers of $\pi$. Thus any structure that exists on classical cohomology groups, and any conjecture as to their behavior, should have a non-abelian analogue. For example, the Hodge and Tate conjectures, characterizing cohomology classes corresponding to algebraic cycles, have analogues characterizing local systems of geometric origin. 

The characterization of local systems of geometric origin (defined as in \autoref{definition:geometric-origin}) can be thought of as some way of making precise our goal from \autoref{section:introduction}, of understanding something about the topology of algebraic maps in terms of representations of fundamental groups of algebraic varieties.
\begin{example}[The Hodge conjecture]\label{example:non-abelian-hodge}
	Let $X$ be a smooth projective variety over the complex numbers. The Hodge conjecture predicts that the image of the cycle class map $$\on{cl}^i: CH^i(X)\otimes \mathbb{Q}\to H^{2i}_{\text{sing}}(X(\mathbb{C})^{\text{an}}, \mathbb{C}(i))$$ is the $\mathbb{Q}$-span of the intersection $$H^{2i}_{\text{sing}}(X(\mathbb{C})^{\text{an}}, \mathbb{Z}(i))\cap H^{0,0}(X(\mathbb{C})^{\text{an}}, \mathbb{C}(i)).\footnote{Here the Tate twist $\mathbb{C}(i)$ has the effect of lowering the weight by $2i$, so the target has index $(0,0)$ instead of index $(i,i)$, as in some common statements of the Hodge conjecture.}$$
	We rewrite this as follows. There is a natural action of $\mathbb{C}^\times$ on $H^{2i}_{\text{sing}}(X(\mathbb{C})^{\text{an}}, \mathbb{C}(i))$, where for $v\in H^{p,q}(X(\mathbb{C})^{\text{an}}, \mathbb{C}(i))$, $\lambda\cdot v=\lambda^pv$. Then the Hodge conjecture says that the image of the cycle class map is spanned by $$H^{2i}_{\text{sing}}(X(\mathbb{C})^{\text{an}}, \mathbb{Z}(i))\cap H^{2i}(X(\mathbb{C})^{\text{an}}, \mathbb{C}(i))^{\mathbb{C}^\times}.$$
	The non-abelian analogue of this statement is \cite[Conjecture 12.4]{simpson1997hodge}: 
	\begin{conjecture}\label{conjecture:non-abelian-hodge}
		$\mathbb{Z}$-local systems on $X$ underlying (polarizable) complex variations of Hodge structure are of geometric origin.
	\end{conjecture}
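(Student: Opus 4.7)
The plan is to reduce the problem to a question about cohomologically rigid local systems, and to attack that question by combining non-abelian Hodge theory with the Langlands correspondence over function fields. First I would pass to an irreducible summand $\mathbb{V}$ of the given $\mathbb{Z}$-VHS; since polarizations and geometric origin are both preserved under direct summands, this loses nothing. Now the key observation is that Deligne's theorem (cited just after \autoref{conjecture:non-abelian-hodge}'s sibling results in \autoref{section:p-curvature}) gives finiteness of the set of isomorphism classes of $\mathbb{Z}$-CVHS of each rank on $X$. This is a very strong form of rigidity: although $\mathbb{V}$ need not be cohomologically rigid as a complex local system, it is rigid among $\mathbb{Z}$-CVHS, and one hopes to upgrade this observation to genuine cohomological rigidity (possibly after tensoring by a rank one local system of finite order).

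Second, assuming rigidity, the plan is to apply the Simpson/Esnault--Groechenig program. By \cite{esnault2018cohomologically}, a cohomologically rigid local system on $X$ is $\mathscr{O}_K$-integral for some number field $K$. The next step -- upgrading ``integrality'' to ``geometric origin'' -- is Simpson's conjecture in its strongest form, and is the heart of the matter. I would attack it by spreading $X$ out over a finitely generated integral $\mathbb{Z}$-algebra $R \subset K$, reducing modulo primes $\mathfrak{p}$ of $R$ to obtain an $\ell$-adic local system on $X_{\mathfrak{p}}$, and then invoking L. Lafforgue's theorem together with de Jong's alterations to exhibit each such local system as a subquotient of the cohomology of a family of varieties over $X_{\mathfrak{p}}$. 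The CVHS structure at the generic fiber would then be used to match these motivic structures across primes via a crystalline comparison, yielding a global family whose cohomology contains $\mathbb{V}$.

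Third, in the non-rigid case I would use the (conjectural) theory of non-abelian Noether-Lefschetz loci, alluded to after \autoref{corollary:infinitesimal-criterion}. A $\mathbb{Z}$-CVHS $\mathbb{V}$ that admits nontrivial deformations should deform inside such a locus, which (by work of Simpson and partial results of \cite{engel2023non}) is expected to be algebraic. One would then hope that the deformation family can be shown to be of geometric origin by an inductive argument on the dimension of the Noether-Lefschetz locus, and that $\mathbb{V}$ inherits geometric origin from any single fiber containing it. A secondary line of attack, closer in spirit to \autoref{subsection:an-algebro-geometric-interpretation}, is to use middle convolution and tensor operations to reduce the rank of $\mathbb{V}$ until one arrives at a rank one $\mathbb{Z}$-CVHS, where the conclusion is trivial.

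The main obstacle, I expect, is the passage from ``integral'' to ``motivic'': even for rigid local systems, Simpson's conjecture is wide open, and the only techniques currently available for producing an actual family of varieties from an $\ell$-adic local system work in very special situations (most notably the rank two case via Shimura curves, and certain situations with $\on{GL}_n$-type structure). Bridging this gap in full generality likely requires a theory of motives over $X$ that does not yet exist, or a substantial new input from $p$-adic Hodge theory controlling the full compatible system simultaneously. A more tractable intermediate goal would be to establish \autoref{conjecture:non-abelian-hodge} under the additional hypothesis that $X$ is a Shimura variety and $\mathbb{V}$ is a standard automorphic local system, where one has access to both the moduli interpretation and the Langlands parameters.
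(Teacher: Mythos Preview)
This statement is an open \emph{conjecture}, not a theorem; the paper does not prove it. Immediately after stating it, the paper says: ``There is little evidence for this conjecture (though I believe it!), aside from the important case of polarizable $\mathbb{Z}$-variations of Hodge structure of weight $1$, which come from Abelian schemes, and rigid local systems, where much is known by work of Katz, Esnault--Groechenig and others.'' So there is nothing to compare your proposal against.

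Your proposal is a research program rather than a proof, and you appear to recognize this. But it is worth naming the central circularity: in your second step, the passage from ``cohomologically rigid and integral'' to ``of geometric origin'' is exactly \autoref{conjecture:rigid-to-motivic}, Simpson's conjecture that rigid local systems are motivic. That conjecture is itself a \emph{special case} of \autoref{conjecture:non-abelian-hodge} (a rigid local system automatically underlies a $K$-VHS by Simpson, and is integral by Esnault--Groechenig, so the rigid case of \autoref{conjecture:non-abelian-hodge} is precisely \autoref{conjecture:rigid-to-motivic}). Your reduction therefore assumes what it sets out to prove. The Lafforgue-plus-spreading-out sketch you give does not close this gap: it produces, for each closed point $\mathfrak{p}$, a motivic realization over $X_{\mathfrak{p}}$, but there is no known mechanism for gluing these into a single family over the generic fiber---this is exactly the missing theory you flag in your final paragraph.

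There is also a gap in the first step. Deligne's finiteness theorem gives finitely many $\mathbb{Z}$-PVHS of a given rank, but this does \emph{not} imply that an irreducible such $\mathbb{V}$ is rigid (let alone cohomologically rigid) as a complex local system: a positive-dimensional component of $M_B(X,r)$ may perfectly well contain only finitely many $\mathbb{Z}$-VHS points. The ``upgrade'' you hope for is itself wide open, and tensoring by a finite-order rank one local system does not help in general. Your third paragraph, handling the non-rigid case via non-abelian Noether--Lefschetz loci, stacks further conjectures (algebraicity of those loci, an unspecified induction) on top of this.
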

 We write this in a form analogous to that of the usual Hodge conjecture, above. Namely, we let $M_{\text{Dol}}(X, r)$ be the (coarse) moduli space of semistable Higgs bundles of degree zero. Here a Higgs bundle is a pair $(\mathscr{E}, \theta: \mathscr{E}\to \mathscr{E}\otimes \Omega^1_X)$ with $\theta$ an $\mathscr{O}_X$-linear map such that the natural composition $$\theta\circ \theta: \mathscr{E}\to\mathscr{E}\otimes\Omega^2_X$$ is identically zero (see \cite[\S6]{simpson1995moduli} for details). The map $\theta$ is referred to as a \emph{Higgs field}.
	
	There is a natural $\mathbb{C}^\times$-action on $M_{\text{Dol}}(X,r)$, given by scaling the Higgs field: $$\lambda\cdot(\mathscr{E}, \theta)=(\mathscr{E}, \lambda\theta).$$ Moreover there is a natural (real-analytic!) homeomorphism $M_{\text{Dol}}(X, r)\simeq M_B(X,r)$ \cite[Theorem 7.18]{simpson1995moduli}. Now Simpson's \cite[Conjecture 12.4]{simpson1997hodge} may be rephrased as saying that the points of $$M_B(X,r)(\mathbb{Z})\cap M_{\text{Dol}}(X,r)^{\mathbb{C}^\times}$$ correspond to local systems of geometric origin, where we make sense of the intersection here using the homeomorphism of the previous sentence.
	
	There is little evidence for this conjecture (though I believe it!), aside from the important case of polarizable $\mathbb{Z}$-variations of Hodge structure of weight $1$, which come from Abelian schemes, and rigid local systems, where much is known by work of Katz \cite{katz-rigid}, Esnault-Groechenig \cite{esnault2018cohomologically, esnault2020rigid} and others; see \autoref{subsection:rigid-local-systems} for further discussion.
\end{example}
\begin{example}[The Tate conjecture]\label{example:non-abelian-Tate}
	Let $X$ be a smooth projective variety over a finitely generated field $K$, $K^s$ a separable closure of $K$, and $\ell$ a prime different from the characteristic of $K$. One form of the Tate conjecture is that the natural cycle class map
	$$\on{cl}^i: \on{CH}^i(X_{K^s})\otimes \mathbb{Q}_\ell\to H^{2i}(X_{K^s}, \mathbb{Q}_\ell(i))$$ 
	has image precisely $$\varinjlim_{K'/K}H^{2i}(X_{K^s}, \mathbb{Q}_\ell(i))^{\on{Gal}(K^s/K')},$$
	where the limit is taken over all finite extensions of $K$ in $K^s$. That is, the image of the cycle class map in $\ell$-adic cohomology is precisely the set of elements of $H^{2i}(X_{K^s}, \mathbb{Q}_\ell(i))$ with finite Galois orbit. Again this conjecture has a non-abelian analogue:
	\begin{conjecture}[Fontaine-Mazur-Petrov\,{\cite[Conjecture 1 bis]{petrov2023geometrically}}]\label{conjecture:fontaine-mazur}
		Let $\mathbb{V}$ be an irreducible $\ell$-adic local system on $X_{K^s}$. Then $\mathbb{V}$ is of geometric origin if and only if it its isomorphism class has finite orbit under $\on{Gal}(K^s/K)$.
	\end{conjecture}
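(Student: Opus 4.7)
The plan is to address the two implications separately, noting at the outset that this is one of the central open problems of arithmetic geometry---a non-abelian generalization of the Fontaine-Mazur conjecture---so the proposal below is programmatic, outlining a strategy rather than a realistic roadmap to a complete proof.

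The \emph{only if} direction is the easy one and proceeds by spreading out. Suppose $\mathbb{V}$ is a summand of $R^i\pi_*\mathbb{Q}_\ell$ for some smooth proper $\pi: Y \to U$ with $U \subset X_{K^s}$ dense open. By standard finiteness, the tuple $(U,Y,\pi)$ is defined over a finitely generated $K$-subalgebra of $K^s$, hence over a \emph{finite} extension $K'/K$. Proper base change then shows that $\mathbb{V}$, together with its identification as a summand of $R^i\pi_*\mathbb{Q}_\ell$, is defined over $K'$, and the $\on{Gal}(K^s/K)$-action on the isomorphism class of $\mathbb{V}$ factors through the finite quotient $\on{Gal}(K'/K)$. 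In particular the orbit is finite.

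For the \emph{if} direction, after replacing $K$ by a finite extension we may assume the isomorphism class of $\mathbb{V}$ is Galois-\emph{fixed}. The first step is to promote this to an equivariant structure: by choosing an $\on{SL}_r$-reduction (using irreducibility and Schur's lemma modulo a cyclotomic twist), the fixed isomorphism class should be upgradeable, perhaps after a further finite extension, to an honest $\on{Gal}(K^s/K)$-equivariant object, i.e.\ an $\ell$-adic local system on $X$ itself. The problem then becomes: \emph{every} irreducible $\ell$-adic local system on $X$ coming (after pullback to $X_{K^s}$) from an irreducible geometric local system should be motivic. From here the natural program, in the spirit of \autoref{section:canonical} and \autoref{section:p-curvature}, is to (i) produce the full system of $\ell'$-adic and crystalline companions of $\mathbb{V}$ using Drinfeld's work in the function field case \cite{drinfeld-companions} and its conjectural extensions in mixed characteristic, (ii) use these companions together with Esnault-Groechenig type integrality/cristallinity results to show that $\mathbb{V}$ is integral in every realization, (iii) deform to a $\mathbb{Z}$-variation of Hodge structure on $X_{\mathbb{C}}$ for some complex embedding (via Mochizuki and the crystallinity input), and (iv) invoke the non-abelian Hodge conjecture \autoref{conjecture:non-abelian-hodge} to conclude that the resulting VHS is of geometric origin, then descend.

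The main obstacle is step (iv)---even granting all the companion and integrality machinery, passing from a polarizable $\mathbb{Z}$-VHS to an actual family of varieties realizing $\mathbb{V}$ is precisely the motivic conjecture that is essentially equivalent to the target statement, so this strategy does not actually escape its own circularity. A more honest assessment is that the truly new content lies in establishing \emph{existence of a motivic structure}, which in every known case (rigid local systems, weight $1$, abelian-of-$\on{GL}_2$-type) has required special geometric constructions (Katz's middle convolution, Kuga-Satake, Corlette-Simpson), and that a general proof likely requires a fundamentally new technique for constructing algebraic cycles---parallel to the difficulty of the Hodge and Tate conjectures themselves. Short of such a technique, the realistic intermediate targets are: proving the conjecture for local systems that are cohomologically rigid (essentially \cite{esnault2018cohomologically}), for those with solvable monodromy, and, following the template of \autoref{theorem:non-abelian-p-curvature}, for those that are Picard-Fuchs after specialization at some complex point.
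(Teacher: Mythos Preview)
The statement is a \emph{conjecture} in the paper, not a theorem; the paper offers no proof and explicitly presents it as open, remarking only that ``our primary evidence for this conjecture comes from the case where $X$ is a curve and $K$ is finite, where the conjecture follows from work of Lafforgue.'' So there is nothing in the paper to compare your proposal against.

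You clearly recognize this yourself: you call it ``one of the central open problems of arithmetic geometry,'' and you correctly diagnose that your step (iv) is circular---invoking \autoref{conjecture:non-abelian-hodge} to prove \autoref{conjecture:fontaine-mazur} just trades one deep open conjecture for another of comparable strength. Your sketch of the \emph{only if} direction (spreading out the family realizing $\mathbb{V}$ to a finite extension $K'/K$) is the standard argument and is fine. Your honest assessment at the end---that the real content is the existence of a motivic structure, known only in special cases via ad hoc geometric constructions---is exactly right and matches the spirit of the paper's surrounding discussion. There is no gap to name beyond the one you already named: the conjecture is open, and your outline does not (and does not claim to) close it.
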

	
	That is, the $\ell$-adic local systems on $X$ of geometric origin and rank $r$ are precisely $$\varinjlim_{K'/K}\on{M}_B(X,r)(\overline{\mathbb{Q}_\ell})^{\on{Gal}(K^s/K')}.$$
	Our primary evidence for this conjecture comes from the case where $X$ is a curve and $K$ is finite, where the conjecture follows from work of Lafforgue \cite{lafforgue2002chtoucas}.
	
		If $X$ is any variety over an algebraically close field $L$ (say of characteristic zero), and $\mathbb{V}$ is an $\ell$-adic local system on $X_{K}$, we may descend $X$ to a finitely generated field $K$ and $\mathbb{V}$ to $X_{K^s}$. If the isomorphism class of $\mathbb{V}$ has finite orbit under $\on{Gal}(K^s/K)$ we say it is \emph{arithmetic}. See \cite{littarithmetic1, littarithmetic2} for some discussion of this property.
\end{example}
\begin{remark}
	Prior to Petrov's work \cite{petrov2023geometrically}, the standard version of \autoref{conjecture:fontaine-mazur} included a $p$-adic Hodge theory condition, namely that $\mathbb{V}$ be \emph{de Rham} (see e.g.~\cite{liu2017rigidity}). Inspired by the analogy with the Tate conjecture, I conjectured and Petrov proved that this condition was redundant.
\end{remark}
\begin{remark}
Much of my interest in local systems on topological surfaces with finite orbit under the mapping class group comes from \autoref{conjecture:fontaine-mazur}; I view this conjecture as an arithmetic analogue of our topological questions about classifying such orbits, where the mapping class group is analogous to the absolute Galois group of $K$. See \autoref{proposition:hodge-to-arithmetic-rigidity} for a concrete relationship between the two, conditional on a plausible conjecture in surface topology (namely \autoref{conjecture:mod-rigidity}).
\end{remark}

\begin{example}[The non-abelian Ogus conjecture]
	We mention one more analogue along these lines. The Ogus conjecture \cite[Problem 2.4]{ogus1981hodge}  predicts which classes in algebraic de Rham cohomology arise from cycles in terms of the conjugate filtration. The analogous conjecture for local systems is:
	\begin{conjecture}[{\cite[Appendix to Chapter V]{andre1989g}}]\label{conjecture:andre-p-curvature}
		Let $X$ be a variety over a finitely generated integral $\mathbb{Z}$-algebra $R$, with fraction field $K$ of characteristic zero. Let $(\mathscr{E},\nabla)$ be a flat vector bundle on $X/R$. Then $(\mathscr{E}, \nabla)|_{X_K}$ is of geometric origin if and only if there exists a dense open subset $U\subset  \on{Spec}(R)$ such that for all $\mathfrak{p}\in U$, the $p$-curvature of $(\mathscr{E},\nabla)|_{X_{\mathfrak{p}}}$ is nilpotent.
	\end{conjecture}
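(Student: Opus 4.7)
The plan is to attack the biconditional one direction at a time. The forward direction (geometric origin implies nilpotent $p$-curvature) is essentially classical. If $(\mathscr{E},\nabla)|_{X_K}$ is a summand of a Gauss-Manin connection $(R^i\pi_*\Omega^\bullet_{dR,Y/U},\nabla_{GM})$ on some dense open $U\subset X_K$, spread this geometric-origin datum out over a dense open $U'\subset\on{Spec}(R)$. Katz's theorem on the nilpotence of the $p$-curvature of Gauss-Manin connections (the same input as in the proof of \autoref{theorem:katz-p-curvature}) applies fiberwise over $U'$. Since nilpotence of an endomorphism of a coherent sheaf is preserved under passage to direct summands, it descends from $R^i\pi_*\Omega^\bullet_{dR}$ to $(\mathscr{E},\nabla)$ on the open $U$; as the locus where a section of $\on{End}(\mathscr{E})$ satisfies $T^N = 0$ is Zariski-closed, the condition extends across $X_\mathfrak{p}\setminus U_\mathfrak{p}$ for free.

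The reverse direction is where the genuine conjectural content lies, and I would attempt to reduce it to two auxiliary problems. \emph{Step 1 (arithmetic to Hodge-theoretic):} Show that a flat bundle with nilpotent $p$-curvature for almost all $\mathfrak{p}$ necessarily underlies a polarizable $\mathbb{Z}$-variation of Hodge structure on $X_K$. The natural tool is Ogus-Vologodsky's characteristic-$p$ non-abelian Hodge correspondence \cite{ogus-vologodsky}, which converts $(\mathscr{E},\nabla)_\mathfrak{p}$ into a nilpotent Higgs bundle on the Frobenius twist $X'_\mathfrak{p}$ at each good prime. Iterating via the Higgs--de Rham flow of Lan-Sheng-Zuo \cite{lan2019semistable} and exploiting the crystallinity technology of Esnault-Groechenig \cite{esnault2023cristallinity}, one would hope to show that uniform nilpotence of $\psi_p$ across primes forces the Higgs-de Rham flow to be periodic; periodicity then produces a Griffiths-transverse filtration $F^\bullet$ on $(\mathscr{E},\nabla)|_{X_K}$ compatible with the Frobenius-descent data in all residue characteristics. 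Structurally this is the converse to the comparison (used in \autoref{theorem:katz-p-curvature} and \autoref{proposition-definition:p-curvature}) between $\on{gr}^i\nabla$ and the associated graded of $\psi_p$ for the conjugate filtration.

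\emph{Step 2 (Hodge-theoretic to geometric):} Apply Simpson's non-abelian Hodge conjecture (\autoref{conjecture:non-abelian-hodge}) to the $\mathbb{Z}$-VHS produced in Step~1 to conclude geometric origin. One might also try to route through the Fontaine-Mazur-Petrov conjecture (\autoref{conjecture:fontaine-mazur}) by first using the Hodge filtration and the Frobenius structures to produce an arithmetic $\ell$-adic companion whose isomorphism class has finite Galois orbit.

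The central obstacle is Step~1. The Esnault-Groechenig machinery fundamentally uses \emph{rigidity} to rigidify the family of characteristic-$p$ descents into a coherent global object; Andr\'e's conjecture imposes no such rigidity assumption on $(\mathscr{E},\nabla)$, so the Higgs-de Rham flow need not be periodic in any naive sense and one must propagate information across all primes using only the pointwise nilpotence of $\psi_p$. Constructing a global Hodge filtration in characteristic zero from this purely arithmetic input in the non-rigid case is, to my knowledge, the heart of the matter, and it is not addressed by any technology currently on the books. Even granting Step~1, Step~2 rests on Simpson's unproven non-abelian Hodge conjecture, so a complete proof of Andr\'e's conjecture appears to require simultaneous advances on both the characteristic-$p$ and the complex-analytic sides of non-abelian Hodge theory.
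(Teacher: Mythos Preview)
The statement you are addressing is a \emph{conjecture}, not a theorem, and the paper does not attempt to prove it. It is stated as an open problem attributed to Andr\'e, listed alongside the non-abelian Hodge conjecture (\autoref{conjecture:non-abelian-hodge}) and the Fontaine--Mazur--Petrov conjecture (\autoref{conjecture:fontaine-mazur}) as one of several conjectural characterizations of local systems of geometric origin. The only remark the paper makes toward either direction is the parenthetical observation that for local systems of geometric origin the conjugate filtration witnesses nilpotence of $\psi_p$---essentially your forward direction.

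Your proposal is therefore not a proof attempt to be compared against a proof in the paper; it is a sketch of why the reverse implication is genuinely open, and you say as much yourself in the final paragraph. That assessment is accurate: Step~1 requires producing a Hodge filtration from nilpotent $p$-curvature without any rigidity hypothesis, which the Esnault--Groechenig machinery does not provide, and Step~2 invokes \autoref{conjecture:non-abelian-hodge}, itself wide open. So there is no gap to name in your argument beyond the ones you already identify---but you should be clear that what you have written is a discussion of obstructions, not a proof, and that this matches the status of the statement in the paper.
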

	Here we view the $p$-curvature $\psi_p$ of $(\mathscr{E},\nabla)|_{X_{\mathfrak{p}}}$ as a map $$\psi_p: \mathscr{E}|_{X_{\mathfrak{p}}}\to \mathscr{E}|_{X_{\mathfrak{p}}}\otimes F_{\text{abs}}^*\Omega^1_{X_{\mathfrak{p}}}$$ by adjunction; we say it is nilpotent if there exists a filtration on $\mathscr{E}|_{X_{\mathfrak{p}}}$ such that the $p$-curvature vanishes on the associated graded vector bundle. For local systems of geometric origin, one may take this filtration to be the conjugate filtration.
\end{example}

There are other conjectural characterizations of local systems of geometric origin. For example, arguably \cite[Conjecture 10.3.1 and Conjecture 10.4.1(3)]{budur2020absolute} give analogues of the conjecture that absolute Hodge cycles are cycle classes. We also remark that Krishnamoorthy-Sheng conjecture \cite[Conjecture 1.4]{krishnamoorthy2020periodic} that \emph{periodic de Rham bundles}\footnote{Here periodicity refers to their behavior under the Higgs-de Rham flow of Lan-Sheng-Zuo \cite{lan2019semistable}.} on curves are of geometric origin; I am not sure if there is an ``abelian" analogue of this conjecture, characterizing the image of the cycle class map in similar terms, in the literature.

The conjectures listed here and their variants are quite beautiful, but we have little evidence for them. In fact there are few cases where these conjectural equivalent conditions for a local system to be of geometric origin are even known to be equivalent to one another!
\begin{question}
	Can one prove that any of the conditions here expected to be equivalent to being of geometric origin be proven to imply one another? For example, can one show that, after choosing an isomorphism $\mathbb{C}\simeq \overline{\mathbb{Q}_\ell}$, an arithmetic $\overline{\mathbb{Q}_\ell}$-local system underlies an integral variation of Hodge structure?
\end{question}
The only result I know along these lines is \autoref{corollary:deligne-finite-orbit}, which shows that certain variations of Hodge structure are arithmetic in a weak sense, as we now explain.
\begin{proposition}\label{proposition:hodge-to-arithmetic}
Let $\mathscr{X}\to S$ be a smooth proper morphism of complex varieties. Let $s\in S$ be very general, and set $X=\mathscr{X}_s$. Let $\eta$ be the generic point of $S$ and $\overline{\eta}$ a geometric generic point. Let $\mathbb{V}$ be a complex local system on $X$ underlying a polarizable $\mathbb{Z}$-variation of Hodge structure, and for each $\ell$ let $\mathbb{V}_\ell$ be the corresponding $\ell$-adic local system. Fix a specialization isomorphism $\on{sp}: \pi_1^{\text{\'et}}(\mathscr{X}_{\overline{\eta}})\to \pi_1^{\text{\'et}}(X_s)$. Then the isomorphism class of the local system $\on{sp}^*(X_{\overline{\eta}})\mathbb{V}_\ell$ obtained by pulling back $\mathbb{V}_\ell$ through this specialization map has finite orbit under $\on{Gal}(\overline{\eta}/\eta)$.
\end{proposition}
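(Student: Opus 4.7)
The plan is to reduce the Galois orbit statement, via specialization and profinite completion, to the finiteness of the topological $\pi_1(S^{\text{an}},s)$-orbit of $[\mathbb V]\in \mathscr M_B(X_s,r)$, and then to apply \autoref{corollary:deligne-finite-orbit}. For the reduction, I would use that the specialization $\on{sp}$ intertwines the natural outer actions: the $\on{Gal}(\overline\eta/\eta)$-action on $\pi_1^{\text{\'et}}(\mathscr X_{\overline\eta})$ factors through the surjection onto $\pi_1^{\text{\'et}}(S,\bar s)=\widehat{\pi_1(S^{\text{an}},s)}$, which acts on $\pi_1^{\text{\'et}}(X_s)$ via the homotopy exact sequence for $\mathscr X\to S$. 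Since $\mathbb V_\ell = \mathbb V_{\mathbb Z}\otimes_{\mathbb Z}\mathbb Z_\ell$, the isomorphism class of $\mathbb V_\ell$ and of its Galois conjugates is determined by the conjugacy class of the underlying $\pi_1(X_s^{\text{an}})$-representation, so the $\on{Gal}(\overline\eta/\eta)$-orbit of $[\on{sp}^*\mathbb V_\ell]$ is finite as soon as the $\pi_1(S^{\text{an}},s)$-orbit of $[\mathbb V]$ is.

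By \autoref{corollary:deligne-finite-orbit} applied to the image $\Gamma$ of $\pi_1(S^{\text{an}},s)$ in $\on{Out}(\pi_1(X_s^{\text{an}}))$, the topological orbit is finite provided each monodromy conjugate $\mathbb V^\gamma$ underlies a polarizable $\mathbb Z$-VHS. Writing $\widetilde S$ for the universal cover of $S^{\text{an}}$ and $\tilde{\mathbb V}$ for the unique extension of $\mathbb V$ to $\widetilde{\mathscr X}:=\mathscr X\times_S \widetilde S$, this amounts to the equality $\on{NL}(\tilde{\mathbb V})=\widetilde S$, where $\on{NL}(\tilde{\mathbb V}):=\{\tilde s'\in\widetilde S: \tilde{\mathbb V}|_{X_{\pi(\tilde s')}}\text{ underlies a polarizable }\mathbb Z\text{-VHS}\}$ is Simpson's non-abelian Noether--Lefschetz locus, closed analytic in $\widetilde S$ by \cite[\S12]{simpson1997hodge} and containing $\tilde s$ by hypothesis. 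Following the strategy of the proof of \autoref{corollary:infinitesimal-criterion}, it would suffice to extend the Hodge filtration on the flat bundle underlying $\mathbb V$ Griffiths-transversely to a formal neighborhood of $X_s$ in $\widetilde{\mathscr X}$; since $\widetilde S$ is connected, $\on{NL}(\tilde{\mathbb V})$ would then be forced to equal all of $\widetilde S$.

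The role of ``$s$ very general'' would be to supply this formal extension via a countability-of-bad-loci argument. I would assemble the relative $\mathbb Z$-VHS on fibers of $\mathscr X\to S$ into a locus $\mathscr T\subset \mathscr M_B(\mathscr X/S,r)$ --- closed analytic by a relative form of Simpson's theorem, with finite fibers over $S$ by Deligne's finiteness theorem, and hence a countable union of irreducible components. Those components decompose into ones dominating $S$ and ones whose image in $S$ is a proper analytic subvariety; a very general $s$ avoids the countable union of images of the non-dominant components, so every element of $\mathscr T_s$ lies on a dominant component, which should provide the desired formal deformation of the Hodge filtration through $[\mathbb V]$.

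The main obstacle is this last step: one must show that the dominant components of $\mathscr T$ are horizontal for the isomonodromy foliation on $\mathscr M_B(\mathscr X/S,r)$, since isomonodromic deformations do not preserve Hodge filtrations a priori. This requires genuinely exploiting that dominant components of $\mathscr T$ arise from \emph{relative} variations of Hodge structure, so that the leaf of the isomonodromy foliation through $[\mathbb V]$ is contained in such a component and is thereby finite over $S$. A further technical subtlety is the relative version of Simpson's closedness in the setting where $\mathscr M_B(\mathscr X/S,r)$ is only a local system of schemes over $S$, which forces one either to work on $\widetilde S$ or to descend to suitable finite \'etale covers of $S$.
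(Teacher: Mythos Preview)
Your approach is the paper's: reduce the Galois orbit to the topological $\pi_1(S,s)$-orbit via the homotopy exact sequence and specialization, then use closedness of non-abelian Noether--Lefschetz loci together with the ``very general'' hypothesis to feed into \autoref{corollary:infinitesimal-criterion}/\autoref{corollary:deligne-finite-orbit}. The paper states this tersely without spelling out the countability argument, which you have correctly reconstructed.

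Your ``main obstacle'' dissolves once you notice that the integral structure is already horizontal. The locus $\mathscr T$ of polarizable $\mathbb Z$-VHS is contained in the locus $\mathscr T_{\mathbb Z}\subset \widetilde S\times M_B(X_s,r)$ of local systems with monodromy in $\on{GL}_r(\mathbb Z)$, and $\mathscr T_{\mathbb Z}$ is by construction a countable disjoint union of isomonodromy leaves $\widetilde S\times\{[\rho_i]\}$: isomonodromic deformation does not change the monodromy representation, hence preserves integrality. So every irreducible component of $\mathscr T$ lies inside a single leaf, and Simpson's closedness says it is a closed analytic subset of that leaf $\cong\widetilde S$. A component is therefore either the entire leaf or a proper closed analytic subvariety of $\widetilde S$; there are countably many of the latter, and their images in $S$ are what a very general $s$ avoids. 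Thus $(\tilde s,[\mathbb V])$ lies on a full leaf, i.e.\ $\on{NL}(\tilde{\mathbb V})=\widetilde S$, with no further work. The speculation about ``relative variations'' and horizontality of dominant components is unnecessary.
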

\begin{remark}
	The upshot of the above proposition is that $\mathbb{Z}$-variations of Hodge structure very general fibers of a smooth proper morphism $\mathscr{X}\to S$ have a weak arithmeticity property---we do not know how to show they have finite orbit under the Galois group of a finitely-generated field over which they spread out, but they do have finite orbit under a ``geometric subgroup" of this Galois group, coming from the geometric fundamental group of $S$.
\end{remark}
\begin{proof}[Proof of \autoref{proposition:hodge-to-arithmetic}]
	This is a reinterpretation of \autoref{corollary:deligne-finite-orbit}, closely following the proof of \autoref{corollary:infinitesimal-criterion}. As $s$ is very general, we obtain from the closedness of non-abelian Noether-Lefschetz loci \cite[\S12]{simpson1997hodge} that the Hodge filtration on the flat bundle corresponding to $\mathbb{V}$ extends to its isomonodromic deformation, whence $\mathbb{V}$ has finite $\pi_1(S,s)$-orbit by \autoref{corollary:infinitesimal-criterion}. The rest of the statement is a translation of this fact into the language of \'etale fundamental groups. 
\end{proof}
Below we include a table listing some of the ``abelian" aspects of cohomology, and their non-abelian analogues.
\begin{center}
\begin{tabular}{ |c|c| } 
 \hline
 Abelian & Non-abelian\\
 \hline
 de Rham cohomology & $\mathscr{M}_{dR}(X, r)$  \\ 
 Betti cohomology & $\mathscr{M}_B(X, r)$  \\ 
 Dolbeault cohomology & $\mathscr{M}_{\text{Dol}}(X, r)$ \\
 $F^1H^1_{dR}(X, \mathbb{C})$ & $\mathbb{C}$-VHS locus \\ 
 Hodge classes & $\mathbb{Z}$-VHS locus \\
 The Hodge conjecture & \autoref{conjecture:non-abelian-hodge} \\
 The Tate conjecture & \autoref{conjecture:fontaine-mazur}\\
\autoref{theorem:katz-p-curvature} & \autoref{theorem:non-abelian-p-curvature}\\ 
$H^1(X, U(1))\subset H^1(X, \mathbb{C}^\times)$ Zariski-dense & \autoref{question:dense-subloci}(1)\\
$H^1(X, \overline{\mathbb{Z}}^\times)\subset H^1(X, \mathbb{C}^\times)$ Zariski-dense & \autoref{question:dense-subloci}(2)\\
 \hline
\end{tabular}
\end{center}

\subsection{A case study}\label{subsection:cayley-solutions}
While we know little about the conjectures in the previous section, there is one case we more or less understand completely, due to beautiful conjectures of Sun-Yang-Zuo \cite[Conjectures 4.8, 4.9, 4.10]{sun2021projective}, proven independently by Lin-Sheng-Wang \cite{lin2022torsion} and Yang-Zuo \cite{yang2023constructing}, and by completely different methods by Lam and myself \cite{lamlitt}. We give a brief exposition of a variant of these conjectures, with an eye towards verifying our conjectural characterization of local systems of geometric origin in a very special case.

We return to our friend $$X=\mathbb{CP}^1\setminus\{x_1, \cdots, x_n\},$$ this time with $n=4$. By applying a fractional linear transformation we may assume $$x_1=0, x_2=1, x_3=\infty$$ and set $x_4=\lambda$. Our goal will be to classify rank $2$ local systems of geometric origin on $X$ with local monodromy in the conjugacy class $$C_1=C_2=C_3=\left[\begin{pmatrix} 1 & 1\\ 0 & 1\end{pmatrix}\right]$$ at $0,1,\lambda$ and in the conjugacy class $$C_4=\left[\begin{pmatrix} -1 & 1 \\ 0 & -1\end{pmatrix}\right]$$ at $\infty$. By \autoref{theorem:cp1-geometric-origin} we already know how to do so when $\lambda$ is \emph{generic}: these local systems are related, by middle convolution, to local systems of rank $2$ on $X$ with monodromy a finite complex reflection group.

In fact this is true not just for $\lambda$ generic. As in \autoref{section:matrices}, let $Y(\underline{C})$ be the space of isomorphism classes of local systems on $X$ with local monodromy at $x_i$ in the conjugacy class $C_i$. We have:
\begin{proposition}
	Let $p: E_\lambda\to \mathbb{P}^1$ be the double cover branched at $0,1,\infty, \lambda$. Let $\mathbb{V}$ be an irreducible local system in $Y(\underline{C})$. Then there exists a rank one local system $\mathbb{L}$ on the elliptic curve $E_\lambda$ such that $$\mathbb{V}=\on{MC}_{-1}(p_*\mathbb{L}|_X)=\on{MC}_{-1}(p_*\mathbb{L}^\vee|_X).$$  This construction yields a bijection between dual pairs of local systems of rank $1$ on $E_\lambda$ (of order not dividing $2$), and irreducible local systems $\mathbb{V}$ in $Y(\underline{C})$.
\end{proposition}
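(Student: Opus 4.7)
The plan is to verify the correspondence in both directions using Katz's middle convolution together with the theory of dihedral representations. First I would analyze $\on{MC}_{-1}(p_*\mathbb{L}|_X)$ by computing local monodromies: since $p$ is ramified to order $2$ at each branch point $x_i$ and $\mathbb{L}$ is unramified on $E_\lambda$, the local monodromy of $p_*\mathbb{L}|_X$ at $x_i$ is conjugate to $\bigl(\begin{smallmatrix}0 & 1 \\ 1 & 0\end{smallmatrix}\bigr)$, with eigenvalues $\pm 1$. Applying Katz's rules for how $\on{MC}_\lambda$ modifies Jordan structure---at a finite puncture a size-$k$ block of eigenvalue $\lambda^{-1}$ becomes a size-$(k{+}1)$ unipotent block while an eigenvalue-$1$ block of size $k$ becomes size $k{-}1$, and dually at infinity---I would check that the output has a single size-$2$ Jordan block of eigenvalue $1$ at $0, 1, \lambda$ and a single size-$2$ Jordan block of eigenvalue $-1$ at $\infty$. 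The Dettweiler--Reiter dimension formula then yields rank $(4-1)\cdot 2 - 3 - 1 = 2$, so $\on{MC}_{-1}(p_*\mathbb{L}|_X)\in Y(\underline{C})$ as required.

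For well-definedness and irreducibility, I would exploit the hyperelliptic involution $\iota$ on $E_\lambda$: because $p\circ\iota=p$ and $\iota^*\mathbb{L}\cong\mathbb{L}^\vee$ (inversion on the Jacobian), there is a canonical isomorphism $p_*\mathbb{L}\cong p_*\mathbb{L}^\vee$, so the construction factors through the involution $\mathbb{L}\sim\mathbb{L}^\vee$. The pushforward $p_*\mathbb{L}$ is reducible exactly when $\iota^*\mathbb{L}\cong\mathbb{L}$, i.e.\ when $\mathbb{L}^{\otimes 2}\cong\mathcal{O}_{E_\lambda}$; off the $2$-torsion locus $p_*\mathbb{L}|_X$ is irreducible, and $\on{MC}_{-1}$ preserves irreducibility by Katz. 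Injectivity modulo duality follows from the standard identity $p^*(p_*\mathbb{L})\cong\mathbb{L}\oplus\iota^*\mathbb{L}$ on $E_\lambda$ minus its Weierstrass points, which recovers the unordered pair $\{\mathbb{L},\mathbb{L}^\vee\}$ from $p_*\mathbb{L}$.

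For surjectivity I would invert the construction, using that $\on{MC}_{-1}$ is its own quasi-inverse. Given irreducible $\mathbb{V}\in Y(\underline{C})$, set $\mathbb{V}':=\on{MC}_{-1}(\mathbb{V})$; the same local computation shows $\mathbb{V}'$ has rank $2$ and local monodromy of eigenvalues $\pm 1$ at every $x_i$ (hence an involution at each puncture), and it is irreducible. The step I expect to be the main obstacle is establishing that such a $\mathbb{V}'$ is induced from a rank one local system on the specific double cover $p$. For any two generators $\gamma_i,\gamma_j$, the images $A:=\mathbb{V}'(\gamma_i)$ and $B:=\mathbb{V}'(\gamma_j)$ are order-two trace-zero matrices, so $\langle A,B\rangle$ is dihedral (commuting would force reducibility, since both are diagonalizable with distinct eigenvalues). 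Globally, the parity homomorphism $\epsilon:\pi_1(X)\to\mathbb{Z}/2$ sending each $\gamma_i\mapsto 1$ is well-defined because the single relation has even length $4$, and its kernel is precisely the image of $\pi_1(E_\lambda^*)$ where $E_\lambda^*:=E_\lambda\setminus p^{-1}(\{x_1,\ldots,x_4\})$. Restricting $\mathbb{V}'$ to this kernel lands in the common centralizer of products like $AB$---a maximal torus---so $\mathbb{V}'|_{\pi_1(E_\lambda^*)}\cong\mathbb{L}_0\oplus\iota^*\mathbb{L}_0$ for some character $\mathbb{L}_0$ of $\pi_1(E_\lambda^*)$. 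Since a small loop around a Weierstrass point $w$ in $E_\lambda^*$ maps to $\gamma_{p(w)}^2=\mathrm{id}$ in $\on{GL}_2$, the character $\mathbb{L}_0$ has trivial local monodromy at $w$ and extends to a rank one local system on the full elliptic curve $E_\lambda$; Frobenius reciprocity then yields $\mathbb{V}'\cong p_*\mathbb{L}_0|_X$, closing the bijection.
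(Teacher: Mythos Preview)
Your approach matches the paper's: apply $\on{MC}_{-1}$ to $\mathbb{V}$, observe the result has order-$2$ local monodromy at all four punctures, pull back along $p$, and decompose using that $\pi_1(E_\lambda)$ is abelian. You also spell out the forward direction (that $\on{MC}_{-1}(p_*\mathbb{L}|_X)$ lands in $Y(\underline{C})$ and is irreducible), which the paper's sketch leaves implicit.

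One step is written in the wrong order. The claim that $\mathbb{V}'|_{\ker\epsilon}$ ``lands in the common centralizer of products like $AB$---a maximal torus'' is not justified by the observation that each pair $\langle A_i,A_j\rangle$ is dihedral; pairwise dihedrality of trace-zero involutions in $\on{GL}_2(\mathbb{C})$ does not by itself force a common torus. What actually makes the even subgroup's image abelian is the fact you invoke in the \emph{next} sentence: since each $A_i^2=I$, the small loops around the Weierstrass points die, so $\mathbb{V}'|_{\pi_1(E_\lambda^*)}$ descends to $\pi_1(E_\lambda)\cong\mathbb{Z}^2$. Semisimplicity of the restriction (Clifford theory) then yields the splitting into rank-one summands. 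Swap those two sentences---extend to $E_\lambda$ first, then decompose---and you recover exactly the paper's argument.
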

\begin{proof}[Proof sketch]
	A local computation shows that $\on{MC}_{-1}(\mathbb{V})$ has local monodromy conjugate to $$\begin{pmatrix} 1 & 0 \\ 0 & -1\end{pmatrix}$$ each of $0,1,\infty,\lambda$; hence the pullback $p^*\on{MC}_{-1}(\mathbb{V})$ extends to all of $E_\lambda$. As $\mathbb{V}$ is irreducible, the same is true for $\on{MC}_{-1}(\mathbb{V})$, whence $p^*\on{MC}_{-1}(\mathbb{V})$ is semisimple, hence (as $\pi_1(E_\lambda)$ is abelian) it is a direct sum of two rank one local systems $\mathbb{L}_1, \mathbb{L}_2$. We must have that $\mathbb{L}_1\oplus\mathbb{L}_2$ is self-dual, as it is pulled back from $X$, hence isomorphic to its pullback under inversion on $E_\lambda$. 
	
	Now $\on{MC}_{-1}(\mathbb{V})$ must be isomorphic to either $p_*\mathbb{L}_1|_X$ or $p_*\mathbb{L}_2|_X$. Hence one of the $\mathbb{L}_i$ (WLOG $\mathbb{L}_1$) must have order greater than $2$, as otherwise $\on{MC}_{-1}(\mathbb{V})$ would be reducible. But then $\mathbb{L}_1$ is not self-dual, hence must be isomorphic to $\mathbb{L}_2^\vee$. Now the result follows from the invertibility of $\on{MC}_{-1}$.
\end{proof}

Here $\on{MC}$ is the middle convolution operation discussed in \autoref{section:matrices}.

One may easily deduce the following from the fact that rank one local systems of geometric origin have finite order, the fact that $\on{MC}_\lambda$ for $\lambda$ a root of unity preserves the property of being of geometric origin, and the invertibility of the middle convolution:
\begin{corollary}[{\cite[Proof of Proposition 4.2.2]{lamlitt}}]\label{corollary:mc-finite-order}
	An irreducible local system $\mathbb{V}$ in $Y(\underline{C})$ is of geometric origin if and only if it has the form $$\mathbb{V}=\on{MC}_{-1}(p_*\mathbb{L}|_X)$$ for $\mathbb{L}$ a non-trivial rank one local system of finite order.
\end{corollary}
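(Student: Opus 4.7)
The plan is to combine the bijection established in the preceding proposition---between irreducible $\mathbb{V} \in Y(\underline{C})$ and rank one local systems $\mathbb{L}$ on $E_\lambda$ of order not dividing $2$, via $\mathbb{V} = \on{MC}_{-1}(p_*\mathbb{L}|_X)$---with three standard closure properties of the class of local systems of geometric origin: closure under (i) middle convolution at roots of unity, (ii) pushforward along finite maps, and (iii) taking summands of pullbacks along finite \'etale maps. Given these, showing the stated equivalence reduces to the classical fact that a rank one complex local system on a smooth variety is of geometric origin if and only if it has finite order.

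For the forward implication, suppose $\mathbb{L}$ has finite order $N$ (necessarily $> 2$ by the preceding proposition, so in particular $\mathbb{L}$ is non-trivial). Then $\mathbb{L}$ is of geometric origin, hence so is $p_*\mathbb{L}$ by (ii), and therefore so is $\mathbb{V} = \on{MC}_{-1}(p_* \mathbb{L}|_X)$ by (i), since $-1$ is a root of unity. For the reverse implication, assume $\mathbb{V}$ is of geometric origin. Since $\on{MC}_{-1}$ is quasi-inverse to $\on{MC}_{(-1)^{-1}} = \on{MC}_{-1}$, applying it again yields $p_*\mathbb{L}|_X \simeq \on{MC}_{-1}(\mathbb{V})$, which is then of geometric origin by (i). Let $U \subset X$ be the open subset over which $p$ is \'etale; pulling back along $p|_{p^{-1}(U)}$ gives $\mathbb{L}|_{p^{-1}(U)} \oplus \mathbb{L}^\vee|_{p^{-1}(U)}$, which is of geometric origin by (iii), and hence so is its summand $\mathbb{L}|_{p^{-1}(U)}$. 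Since $p^{-1}(U)$ is dense open in the smooth projective curve $E_\lambda$, the classical fact above forces $\mathbb{L}$ itself to have finite order, and it must be non-trivial since otherwise $\mathbb{V}$ would be reducible.

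The step I expect to be the principal technical obstacle---and the only one that is not essentially tautological from \autoref{definition:geometric-origin}---is verifying closure property (i), namely that $\on{MC}_\lambda$ preserves the property of being of geometric origin when $\lambda$ is a root of unity. This is noted in the text following \autoref{subsubsection:middle-convolution}: when $\mathbb{V}$ has geometric origin and $\lambda$ is a $c$-th root of unity, $\on{MC}_\lambda(\mathbb{V})$ is realized in the $R^1$ of an explicit family of $G \times \mathbb{Z}/c\mathbb{Z}$-covers of $\mathbb{P}^1$; one checks that if $\mathbb{V}$ is a summand of $R^i q_*\mathbb{C}$ for some family $q$, the relevant construction can be assembled as a relative cohomology of an honest family over an open in $X$, making $\on{MC}_\lambda(\mathbb{V})$ a summand of a cohomology sheaf of a fibered product of families. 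Properties (ii) and (iii) follow directly from the definition by composing the relevant smooth proper morphisms with $p$ and noting that summand-of-a-summand is a summand.
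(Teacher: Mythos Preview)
Your proof is correct and follows essentially the same approach as the paper, which simply cites the three key inputs---that rank one local systems of geometric origin have finite order, that $\on{MC}_\lambda$ at a root of unity preserves geometric origin, and the invertibility of middle convolution---and leaves the assembly to the reader. You have spelled out the additional (routine) closure properties under $p_*$ and under summands of $p^*$ needed to pass between $\mathbb{L}$ and $p_*\mathbb{L}|_X$, which the paper absorbs into ``one may easily deduce.''
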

As we have a very good understanding of the Galois action on rank one local systems on an elliptic curve, their $p$-curvature, and so on, one may (not entirely trivially) deduce:
\begin{corollary}
	\autoref{conjecture:non-abelian-hodge}, \autoref{conjecture:fontaine-mazur},  \autoref{conjecture:andre-p-curvature}, \cite[Conjecture 10.4.1(3)]{budur2020absolute}, and \cite[Conjecture 1.4]{krishnamoorthy2020periodic} are true for local systems in $Y(\underline{C})$.
\end{corollary}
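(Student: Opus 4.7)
The plan is to reduce each conjecture to the rank one case on the elliptic curve $E_\lambda$ via \autoref{corollary:mc-finite-order}, where each is either classical or straightforward. Concretely, I would start by observing that each of the conjectural conditions in \autoref{conjecture:non-abelian-hodge}, \autoref{conjecture:fontaine-mazur}, \autoref{conjecture:andre-p-curvature}, \cite[Conjecture 10.4.1(3)]{budur2020absolute}, and \cite[Conjecture 1.4]{krishnamoorthy2020periodic} is \emph{preserved} under the operations $\on{MC}_{-1}$ (and its inverse $\on{MC}_{-1}$), tensoring with a rank one local system of finite order, and pullback/pushforward along the finite \'etale cover $p: E_\lambda\setminus p^{-1}(\{x_1,\cdots,x_4\}) \to X$. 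So if $\mathbb V\in Y(\underline C)$ satisfies any of these conditions, then so does the rank one local system $\mathbb L$ on $E_\lambda\setminus p^{-1}(\{x_1,\cdots,x_4\})$ whose pushforward recovers $\on{MC}_{-1}(\mathbb V)$ (after extending over $p^{-1}(\{x_1,\cdots,x_4\})$, which is possible because the local monodromies of $\on{MC}_{-1}(\mathbb V)$ are quasi-unipotent of order dividing $2$, exactly as in the proof of the proposition preceding \autoref{corollary:mc-finite-order}).

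The second step is to verify each conjecture for rank one $\mathbb L$ on the elliptic curve $E_\lambda$, where the conjecture reduces to a classical statement: a rank one $\mathbb Z$-VHS on $E_\lambda$ is necessarily unitary with values in roots of unity, so of finite order (settling the non-abelian Hodge conjecture); a rank one $\overline{\mathbb Q_\ell}$-local system on $E_\lambda$ (over a finitely generated field) with finite Galois orbit on its isomorphism class is of finite order by Kummer theory applied to $\on{Pic}^0(E_\lambda)$, settling the Fontaine-Mazur-Petrov conjecture; a rank one flat bundle on $E_\lambda$ with nilpotent (equivalently, vanishing) $p$-curvature for almost all $p$ is torsion in $\on{Pic}^\natural(E_\lambda)$ by the rank one case of the $p$-curvature conjecture (due essentially to Cartier, and in any case a consequence of Chudnovsky-Chudnovsky), settling the Andr\'e conjecture; and similar classical inputs (now on $\on{Pic}^0$) handle the absolute Hodge and Higgs-de Rham periodicity variants. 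In each case, being of finite order on $E_\lambda$ means the corresponding $\mathbb V=\on{MC}_{-1}(p_*\mathbb L|_X)$ is of geometric origin by \autoref{corollary:mc-finite-order}.

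The main technical obstacle will be justifying that middle convolution and $p_*$ preserve each of these properties in both directions. For Hodge structures this is Katz's theorem that $\on{MC}_\lambda$ preserves the property of underlying a polarizable VHS (and for integral structure, one uses that $\on{MC}_{-1}$ takes $\mathbb Z[\tfrac12]$-local systems to $\mathbb Z[\tfrac12]$-local systems); for Galois action this is immediate from the algebro-geometric definition of middle convolution; for $p$-curvature one uses the commutation of middle convolution with reduction mod $p$ and the compatibility of nilpotent $p$-curvature with $R^1\pi_*$ of a family with nilpotent Gauss-Manin $p$-curvature; and for Higgs-de Rham periodicity one uses the compatibility of middle convolution with the non-abelian Hodge correspondence, as in Yang-Zuo. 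Assembling these compatibilities with the rank one results above yields all the claimed implications. The converse directions — that any $\mathbb V$ of geometric origin satisfies the conjectural conditions — are the standard fact that local systems of geometric origin are $\mathbb Z$-VHS, arithmetic, have nilpotent $p$-curvature, etc.
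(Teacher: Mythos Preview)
Your proposal is correct and follows essentially the same approach as the paper: both reduce via $\on{MC}_{-1}$ and $p_*$ to rank one local systems on $E_\lambda$, use that middle convolution (and pushforward along $p$) preserves each of the conjectural properties---being an $\mathscr{O}_K$-VHS, arithmeticity, nilpotent $p$-curvature, absoluteness, periodicity under the Higgs--de Rham flow---and then invoke the classical rank one statements on the elliptic curve. Your write-up is in fact more detailed than the paper's sketch, which simply asserts these compatibilities and refers to \cite{lamlitt} for the Higgs--de Rham flow case.
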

\begin{proof}[Proof sketch]
Note that $\mathbb{V}$ is of geometric origin if and only if the same is true for $\on{MC}_{-1}(\mathbb{V})$; this latter is of geometric origin if and only if the same is true for $\mathbb{L}$ as in \autoref{corollary:mc-finite-order}. But middle convolution preserves the properties of being an $\mathscr{O}_K$-variation of Hodge structure, being arithmetic, having nilpotent $p$-curvature, and being an absolute $\overline{\mathbb{Q}}$-point (in the sense of \cite{budur2020absolute}). \cite[Conjecture 1.4]{krishnamoorthy2020periodic}  amounts to showing that $\on{MC}_{-1}$ commutes with the Higgs-de Rham flow; for this see \cite{lamlitt} in the special case under consideration.  
\end{proof}
\begin{remark}
We say nothing else about the Higgs-de Rham flow in these notes; let me just remark for experts that it would be interesting to show that the middle convolution commutes with the Higgs-de Rham flow in general.
\end{remark}
\subsection{Rigid local systems}\label{subsection:rigid-local-systems}
\autoref{conjecture:fontaine-mazur} makes a strong prediction, which was explicitly conjectured by Simpson; most of our evidence for the conjectures in \autoref{subsection:non-abelian-cohomology} comes from studying this prediction. Before stating it, we make a definition:
\begin{definition}\label{definition:rigidity}
	Let $X$ be a smooth projective variety. An irreducible local system $\mathbb{V}$ of rank $r$ on $X$ is \emph{rigid} if it corresponds to an isolated point of ${M}_B(X,r)$.
\end{definition}
\begin{conjecture}[{\cite[Conjecture 1.1]{langer2018rank}}]\label{conjecture:rigid-to-motivic}
	Any rigid local system is of geometric origin.
\end{conjecture}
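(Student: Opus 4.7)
The plan is to combine three ingredients that are essentially available with one missing geometric construction, and to be upfront that the missing piece is where all the difficulty lives. First I would invoke Mochizuki's theorem (as used in the proof of \autoref{theorem:canonical-reps}) to deform any rigid $\mathbb{V}$ to a local system underlying a polarizable $\mathbb{C}$-variation of Hodge structure; since $\mathbb{V}$ is rigid, no such deformation exists, so $\mathbb{V}$ itself already carries a $\mathbb{C}$-VHS structure. Combined with Esnault-Groechenig's integrality theorem \cite{esnault2018cohomologically} (after assuming, in Simpson's sense, that rigidity and cohomological rigidity coincide---this coincidence is itself a nontrivial conjecture), $\mathbb{V}$ is defined over the ring of integers $\mathscr{O}_K$ of some number field $K$.

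Second, for each embedding $\sigma:\mathscr{O}_K\hookrightarrow \mathbb{C}$, the local system $\mathbb{V}^\sigma$ is again rigid (rigidity being algebraic) and thus again underlies a polarizable $\mathbb{C}$-VHS, yielding in total an $\mathscr{O}_K$-VHS. From here the associated $\overline{\mathbb{Q}_\ell}$-local systems form an arithmetic compatible system, and $\mathbb{V}$ satisfies all the \emph{conjectural} criteria (Hodge-theoretic, $\ell$-adic, $p$-curvature, absolute constructibility) for being of geometric origin. So after these two steps, one has precisely the data one would expect from a motive---except one does not yet have the motive.

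The main obstacle is the third step: actually constructing a smooth proper $\pi:Y\to U$ and $i$ so that $\mathbb{V}$ is a summand of $R^i\pi_*\mathbb{C}$. Even producing a geometric realization of an arbitrary integral polarizable $\mathbb{Z}$-VHS is open; this is the non-abelian Hodge conjecture (\autoref{conjecture:non-abelian-hodge}). The only situations I would hope to handle outright are those where the Mumford-Tate group is small enough that Shimura-variety techniques take over: weight-one VHS (abelian schemes, as in \autoref{remark:bad-reduction}), and rank $2$ on arbitrary base (Corlette-Simpson).

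For a viable inductive attack beyond these cases, I would imitate Katz's strategy from the punctured $\mathbb{P}^1$ setting discussed in \autoref{subsection:existence-uniqueness}: find an operation that (i) is invertible, (ii) preserves both rigidity and the property of being of geometric origin, and (iii) reduces the rank or otherwise simplifies $\mathbb{V}$. On $\mathbb{P}^1\setminus\{x_1,\ldots,x_n\}$ this role is played by middle convolution against a Kummer sheaf, whose geometric nature is evident. The hard part---and in my view the real content of \autoref{conjecture:rigid-to-motivic}---is producing such an operation on a general smooth projective $X$, where we lack the abundance of rank-one flat bundles and the Riemann-Hurwitz rigidity that make Katz's argument go. A realistic intermediate target, which I would attempt first, is the case $\dim X=1$ of arbitrary genus, where one might hope that geometric Langlands (Hecke eigensheaves on $\on{Bun}_G$) supplies the required rank-lowering operation, and where the Hitchin fibration on $\mathscr{M}_{\on{Dol}}(X,r)$ provides extra leverage beyond the general setting.
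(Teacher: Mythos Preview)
The statement you are attempting to prove is stated in the paper as an open \emph{conjecture}; the paper offers no proof, only a discussion of evidence (Simpson's result that rigid local systems underlie $K$-VHS, Katz's work on $\mathbb{P}^1$, the Corlette--Simpson and Langer--Simpson results in ranks $\leq 3$, and Esnault--Groechenig's integrality and crystallinity theorems). Your proposal is therefore not being compared against a proof, because none exists.

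That said, your outline is an accurate summary of what is known and where the obstruction lies, and you are admirably candid that your ``third step'' is the entire content of the conjecture. A few corrections are in order. First, Simpson's theorem \cite[Theorem 5]{simpson1992higgs} already gives that a rigid local system underlies a $K$-VHS for some number field $K$, without passing through Mochizuki or assuming cohomological rigidity; you do not need to deform and then argue the deformation is trivial. Second, your parenthetical ``assuming\dots rigidity and cohomological rigidity coincide'' is not merely a nontrivial conjecture---it is \emph{false}: de Jong, Esnault, and Groechenig \cite{de2022rigid} produced rigid local systems that are not cohomologically rigid (though they may be $G$-cohomologically rigid in the sense of Klevdal--Patrikis). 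So the Esnault--Groechenig integrality theorem does not, at present, apply to all rigid local systems, and this is a genuine gap even in your first two steps, not just the third.

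Your proposed attack via a higher-genus analogue of middle convolution is a reasonable aspiration, but no such operation is known, and the suggestion that geometric Langlands or the Hitchin fibration supplies one is speculative. The honest summary is: your steps one and two recover the known evidence for the conjecture (modulo the rigidity versus cohomological rigidity issue), and your step three is the conjecture itself.
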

Indeed, a rigid local system is necessarily arithmetic in the sense of \autoref{example:non-abelian-Tate}, as there are finitely many rigid local systems (since $M_B(X,r)$ is of finite type) and they are permuted by the absolute Galois group of any finitely-generated field to which $X$ descends.

We have substantial evidence for \autoref{conjecture:rigid-to-motivic}. Simpson showed \cite[Theorem 5]{simpson1992higgs}  that rigid local systems underly $K$-variations of Hodge structure, for $K$ some number field. The papers \cite{corlette-simpson} and \cite{langer2018rank} show that the conjecture holds for $\on{SL}_r(\mathbb{C})$-local systems with $r\leq 3$. 

More recently, a spectacular series of papers by Esnault and Groechenig \cite{esnault2018cohomologically, esnault2020rigid, esnault2023cristallinity}, has verified a number of predictions of \autoref{conjecture:rigid-to-motivic}. For example, local systems of geometric origin are direct summands of $\mathbb{Z}$-local systems, and hence are defined not just over a number field $K$ but in fact over the ring of integers of $\mathscr{O}_K$. We do not know how to prove this for rigid local systems in general, but Esnault and Groechenig do so in an important special case (as used crucially  in the proof of \autoref{theorem:canonical-reps}).
\begin{definition}\label{definition:cohomologically-rigid}
	Let $X$ be a smooth projective variety. An irreducible complex local system $\mathbb{V}$ on $X$ is \emph{cohomologically rigid} if $H^1(X, \on{ad}(\mathbb{V}))=0$. 
\end{definition}
\begin{remark}
 \autoref{definition:cohomologically-rigid} has a natural geometric interpretation as a strengthening of \autoref{definition:rigidity} --- a cohomologically rigid local system corresponds to an \emph{reduced} isolated point of $M_B(X, r)$. 
\end{remark}
\begin{theorem}[Special case of {\cite[Theorem 1.1]{esnault2018cohomologically}}]
	A cohomologically rigid local system is defined over the ring of integers $\mathscr{O}_K$ of some number field $K$.
\end{theorem}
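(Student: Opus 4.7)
The plan is to combine Simpson's Hodge-theoretic integrality with input from the Langlands program on curves. Simpson's work (as cited in the excerpt following \autoref{conjecture:rigid-to-motivic}) already gives that a cohomologically rigid local system $\mathbb{V}$ on $X$ underlies a polarizable $K$-variation of Hodge structure for some number field $K$; in particular $\mathbb{V}$ descends to a $K$-local system $\mathbb{V}_K$. The content of the theorem is therefore the assertion of \emph{integrality at every finite place} $\lambda$ of $K$: after spreading $X$ out to a smooth proper scheme over an open subscheme $U \subset \on{Spec}\mathscr{O}_K$, we must show that the $\ell$-adic companion $\mathbb{V}_\lambda$ attached to each finite place $\lambda$ (with residue characteristic $\ell$ good for $U$) takes values in $\on{GL}_r(\mathscr{O}_{K_\lambda})$ rather than merely $\on{GL}_r(K_\lambda)$.

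The key reduction is to the case of a smooth projective variety $X_s$ over a finite field $\mathbb{F}_q$, where $s$ is a closed point of $U$. The bridge between the two is the observation that cohomological rigidity is preserved under specialization: since $H^1(X, \on{ad}(\mathbb{V}_\lambda)) = 0$, the proper and smooth base change theorem combined with Artin's comparison gives $H^1(X_s, \on{ad}(\mathbb{V}_\lambda|_{X_s})) = 0$ for all closed points $s$ away from a proper closed subset, so $\mathbb{V}_\lambda|_{X_s}$ is itself a cohomologically rigid $\ell$-adic local system on $X_s$. The cohomological rigidity then upgrades a lattice on a single special fibre to a lattice on $\mathbb{V}_\lambda$ globally: the moduli of lattices inside $\mathbb{V}_\lambda$ is controlled by the vanishing of $H^1(X,\on{ad})$, so an integral structure on $\mathbb{V}_\lambda|_{X_s}$ extends uniquely to one on $\mathbb{V}_\lambda$.

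The main obstacle is therefore producing an integral structure on the $\ell$-adic local system $\mathbb{V}_\lambda|_{X_s}$ over $\mathbb{F}_q$, and this is where the Langlands program enters. By a Lefschetz-type argument---restricting to sufficiently ample smooth curves in $X_s$ whose fundamental groups surject onto a Zariski-dense subgroup of the image of $\pi_1(X_s)$---one reduces to the case that $X_s$ is a smooth projective curve over $\mathbb{F}_q$. On curves, Lafforgue's theorem (together with the theory of companions) asserts that any irreducible $\overline{\mathbb{Q}}_\ell$-local system with finite order determinant (after a twist to arrange this) is pure, has $\ell'$-adic companions for every $\ell' \neq p$, and, crucially, arises from an automorphic representation; the Hecke eigenvalues are algebraic integers and lie in a single number field for all compatible $\ell'$. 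This forces $\mathbb{V}_\lambda|_{X_s}$ to be integral at $\lambda$.

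A few technical caveats must be handled. One must normalise so that $\det \mathbb{V}_\lambda$ has finite order (otherwise the determinant character may itself fail to be integral); this is arranged by twisting by a rank-one local system, using that rank-one rigid local systems are easily seen to be integral by class field theory. One must also verify that the Lefschetz restriction to curves preserves irreducibility and cohomological rigidity, which follows from a Bertini-type argument once $X_s$ has sufficiently many ample curves. Finally, the restriction-to-curves step and the global lift both require avoiding a thin set of bad primes and bad points in $U$; integrality at the remaining finitely many primes is then handled by further spreading out, enlarging $K$ as necessary. The heart of the argument is the use of the Langlands correspondence on curves as the source of integrality, with cohomological rigidity playing the dual role of (i) making specialization well-behaved and (ii) rigidifying the passage from integral structures on special fibres back to the generic fibre.
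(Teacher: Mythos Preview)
The paper does not supply its own proof of this theorem; it is quoted from Esnault--Groechenig. So I will compare your sketch to their actual argument.

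Your outline has the right architecture---Simpson gives definability over a number field $K$, and the remaining content is integrality at each finite place $\lambda$, to be extracted from positive characteristic via Lafforgue. But there is a genuine circularity in your specialization step. You write ``$\mathbb{V}_\lambda|_{X_s}$'' and invoke proper--smooth base change as though $\mathbb{V}_\lambda$ were an $\ell$-adic \'etale sheaf on the integral model. It is not: $\mathbb{V}_\lambda$ is a $K_\lambda$-local system on the topological space $X(\mathbb{C})$, and such a thing corresponds to a lisse $\ell$-adic sheaf on $X_{\overline{\eta}}$ (and hence can be specialized to $X_{\overline{s}}$) exactly when its monodromy is bounded in $\on{GL}_r(K_\lambda)$, i.e.\ conjugate into $\on{GL}_r(\mathscr{O}_{K_\lambda})$. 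That is precisely the statement you are trying to prove. So the passage to characteristic $p$ on the Betti/$\ell$-adic side presupposes the conclusion.

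Esnault--Groechenig evade this by transporting the problem to the \emph{algebraic} side of non-abelian Hodge theory before reducing mod $p$. The Higgs bundle (or flat bundle) associated to $\mathbb{V}$ is an honest algebraic object on $X$; it spreads out over $\mathscr{O}_K[1/N]$ and reduces modulo primes with no integrality hypothesis on the monodromy. Cohomological rigidity is transported to rigidity of the Higgs bundle, which persists after reduction. In characteristic $p$ they use the Ogus--Vologodsky correspondence and Higgs--de Rham flow to promote the rigid mod-$p$ Higgs bundle to a Frobenius structure, yielding an $F$-isocrystal; the companion machinery (Abe, Deligne, Drinfeld) then produces the $\ell$-adic sheaf on $X_s/\mathbb{F}_q$---now genuinely arithmetic, not merely geometric---to which Lafforgue and Deligne's integrality apply after restriction to curves. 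The link back to $\mathbb{V}_\lambda$ is made by matching Frobenius eigenvalues, not by the lattice-deformation argument you sketch. A secondary gap in your version: even granting a local system on $X_{\overline{s}}$, you do not explain why it descends to $X_s$ over $\mathbb{F}_q$ (Lafforgue needs arithmeticity, not just a geometric sheaf); this does follow from rigidity, since Frobenius permutes the finitely many isolated points of the character variety, but it must be said.
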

\begin{remark}
It is natural to ask if there exist rigid local systems which are not cohomologically rigid. An example was found by de Jong, Esnault, and Groechenig \cite{de2022rigid}. However, this example does have a form of cohomological rigidity, as we now explain. Let $G$ be an algebraic group over $\mathbb{C}$, with Lie algebra $\mathfrak{g}$. Let $X$ be a smooth projective variety and $\mathbb{V}$ a $G(\mathbb{C})$-local system on $X$. We say that $\mathbb{V}$ is $G$-cohomologically rigid if $H^1(X, \mathfrak{g})=0$, where we view $\mathfrak{g}$ as a local system on $X$ via the adjoint representation of $G$. 

Klevdal and Patrikis \cite{klevdal2022G} show that $G$-cohomologically rigid local systems are integral, building on Esnault-Groechenig's work in the case that $G$ has identity component $\on{SL}_r$. We believe the examples of de Jong, Esnault, and Groechenig are $G$-cohomologically rigid, where we take $G$ to be the Zariski closure of their monodromy.
\end{remark}
\begin{question}
	Do there exist irreducible local systems which are rigid but not $G$-cohomologically rigid, where $G$ is the Zariski-closure of their monodromy group?
\end{question}
We mention two other pieces of evidence for \autoref{conjecture:rigid-to-motivic}. First, Katz \cite{katz-rigid} shows, as a consequence of his classification of rigid local systems on $\mathbb{P}^1\setminus\{x_1, \cdots, x_n\}$ (discussed in \autoref{subsection:existence-uniqueness}), that such rigid local systems on  $\mathbb{P}^1\setminus\{x_1, \cdots, x_n\}$, with quasi-unipotent local monodromy at the $x_i$, are of geometric origin. Second, Esnault and Groechenig show \cite{esnault2020rigid, esnault2023cristallinity} that such rigid local systems have nilpotent $p$-curvature, i.e.~they satisfy \autoref{conjecture:andre-p-curvature}.
\begin{remark}
All the conjectures and results in this section have analogues for smooth quasi-projective varieties; in this case one studies local systems with fixed quasi-unipotent monodromy about the boundary divisors of a strict normal crossings compactification. In particular, Katz's results on $\mathbb{P}^1\setminus\{x_1, \cdots, x_n\}$ fall into this paradigm, and the results of Esnault-Groechenig and Klevdal-Patrikis mentioned above hold in this generality.
\end{remark}

\subsection{Special points and dense subloci}\label{subsection:special-points-and-dense-subloci}
As we have just seen, the conjectures of \autoref{subsection:non-abelian-cohomology}, and \autoref{conjecture:rigid-to-motivic}, ``explain" the zero-dimensional components of the moduli of local systems on a smooth projective variety $X$---they all (conjecturally) correspond to local systems on $X$ of geometric origin. It is natural to seek structural properties of the moduli of local systems on $X$ which generalize these conjectures to the entire moduli space.

One such conjecture was proposed separately by Esnault-Kerz and Budur-Wang:
\begin{conjecture}[{\cite[Conjecture 1.1]{esnault2023local}} and Budur-Wang {\cite[Conjecture 10.3.1]{budur2020absolute}}]\label{conjecture:ekbw2}
	Let $X$ be a smooth variety. The local systems of geometric origin are Zariski-dense in $M_B(X, r)$.
\end{conjecture}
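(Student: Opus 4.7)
The plan is to reduce the conjecture to a Zariski-density statement for a class of representations that is manifestly of geometric origin, and then to prove that density. A first reduction: representations with finite image are of geometric origin, since if $\rho: \pi_1(X)\to G$ is finite-image and $f: \widetilde X\to X$ is the corresponding finite \'etale Galois cover, then by isotypic decomposition under $G$ (acting by deck transformations) the local system associated to $\rho$ is a direct summand of $R^0 f_*\mathbb{C}$, and $f$ is smooth proper. This already settles the case $r=1$: then $M_B(X,1)$ is an algebraic torus, its torsion points are Zariski-dense, and all torsion rank-one local systems arise in this way. The program is to establish an analogous density statement for higher $r$.

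For $r\geq 2$ I would attack the density by two complementary methods. The explicit approach is to use the Kodaira-Parshin construction (\autoref{example:kp-trick}) and its variants (cyclic and TQFT-style Galois covers of families of curves) to produce enough families of finite-image local systems to sweep out each irreducible component of $M_B(X,r)$; the difficulty here is combinatorial and representation-theoretic, as one must control which irreducible components of $M_B$ are hit by a given family of covers. The Hodge-theoretic approach is to establish Zariski-density of the locus of $\mathbb{Z}$-variations of Hodge structure inside $M_B(X,r)$, exploiting Simpson's $\mathbb{C}^\times$-action on $M_{\text{Dol}}(X,r)$ (whose fixed locus is analytically dense and consists of $\mathbb{C}$-VHS), and then invoking \autoref{conjecture:non-abelian-hodge} to pass from integral VHS to geometric origin.

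The main obstacle is the density statement itself. By \autoref{corollary:isotrivial-local-systems-generic-curve}, on a very general curve of genus $g\geq r^2/4$, every rank-$r$ local system of geometric origin has finite monodromy; the conjecture is then asserting that these countably many finite-monodromy points accumulate Zariski-densely in an irreducible variety of dimension roughly $(2g-2)r^2+2$. This is a strong statement about the distribution of finite quotients of the surface group inside its representation variety, and I expect any proof to have to proceed through a systematic exhibition of geometric constructions rather than an abstract deformation argument. A concrete test case would be a very general curve of genus $2$ with $r=2$: here one can compare the output of Kodaira-Parshin against the full character variety and decide whether additional constructions (middle convolution of rigid local systems, TQFT-type covers, higher genus pullbacks) are necessary to achieve Zariski density.
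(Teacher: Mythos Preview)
There is a fundamental problem: the statement you are trying to prove is a \emph{conjecture}, and the paper does not prove it---it \emph{disproves} it. Immediately after stating \autoref{conjecture:ekbw2}, the paper observes that taking $X$ to be a very general curve of genus $g\geq r^2$ yields a counterexample: by \autoref{theorem:canonical-reps} (equivalently \autoref{corollary:isotrivial-local-systems-generic-curve}), every rank-$r$ local system of geometric origin on such $X$ has finite monodromy, and by Jordan's theorem the finite-image locus is \emph{not} Zariski-dense in $M_B(X,r)$ for $r\geq 2$---it lies in the proper closed subvariety cut out by $\on{tr}([\rho(a_1)^{n(r)!},\rho(b_1)^{n(r)!}])-r=0$.

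You in fact arrive at precisely this configuration in your final paragraph, but treat it as a ``test case'' or ``main obstacle'' rather than recognizing it as a counterexample. The sentence ``the conjecture is then asserting that these countably many finite-monodromy points accumulate Zariski-densely'' is correct, and the very next step should have been to check whether that is possible---it is not, by the Jordan argument the paper gives in \autoref{section:canonical}. So the genuine gap in your proposal is not a missing technique but a missing sanity check: before launching a density program via Kodaira--Parshin or non-abelian Hodge theory, you needed to verify that the target statement is not already ruled out by known constraints on the geometric-origin locus. Both of your proposed approaches (explicit covers, $\mathbb{Z}$-VHS density plus \autoref{conjecture:non-abelian-hodge}) are doomed from the outset, because the conclusion they aim at is false.
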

Of course this conjecture would imply \autoref{conjecture:rigid-to-motivic}, by considering density of local systems of geometric origin in the zero-dimensional components of $M_B(X,r)$. Aside from a desire to generalize \autoref{conjecture:rigid-to-motivic}, we believe that Esnault and Kerz were motivated by desired applications to hard Lefschetz theorems for local systems, while Budur and Wang were motivated by analogies with the Andr\'e-Oort conjecture. 

Taking $X$ to be a generic curve, \autoref{conjecture:ekbw2} implies \autoref{conjecture:ekbw}, as a local system of geometric origin on a generic curve will spread out to a family of curves as in \autoref{proposition:AG-interpretation-higher-genus}, and hence have finite mapping class group orbit.  As \autoref{conjecture:ekbw} is false by \autoref{theorem:canonical-reps}, the same is true for \autoref{conjecture:ekbw2}; this was part of our motivation for proving \autoref{theorem:canonical-reps}.

Nonetheless we have found the philosophy behind \autoref{conjecture:ekbw2} to be inspirational. The goal of this section is to propose some ways that it might be salvaged. One plausible approach, pursued already in \cite{esnault2023local} and \cite{de2024integrality}, is to consider local systems having some, but not all, of the properties of local systems of geometric origin, and to try to prove their Zariski-density in the moduli of all local systems. For example:
\begin{theorem}[{\cite{esnault2023local}}]
	Let $X$ be a quasi-projective complex variety with $\overline{X}$ a simple normal crossings compactification.
The local systems on $X$ with quasi-unipotent local monodromy about the components of $\overline{X}\setminus X$ are Zariski-dense  in $M_B(X,r)$.
\end{theorem}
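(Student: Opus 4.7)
The plan is to reduce Zariski-density of local systems with quasi-unipotent boundary monodromy to the Zariski-density of roots of unity in $\mathbb{G}_m$, via the natural morphism sending a local system to its tuple of local monodromy conjugacy classes. Let $D=D_1\cup\cdots\cup D_n$ be the components of $\overline{X}\setminus X$, and fix loops $\gamma_i\in\pi_1(X)$ around $D_i$ (well-defined up to conjugation). The coefficients of the characteristic polynomial of $\rho(\gamma_i)$ are conjugation-invariant regular functions on the representation variety, hence descend to $M_B(X,r)$. These assemble into an algebraic morphism
$$\Phi\colon M_B(X,r)\longrightarrow T^n,\qquad T:=\mathbb{A}^{r-1}\times\mathbb{G}_m,$$
where $T$ is the GIT quotient of $\mathrm{GL}_r$ by conjugation, parametrizing characteristic polynomials of invertible $r\times r$ matrices.

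First, I would show that the quasi-unipotent locus $QU^n\subset T^n$ is Zariski-dense. The surjection $\mathbb{G}_m^r\twoheadrightarrow T$ (elementary symmetric functions of eigenvalues) carries $\mu_\infty^r$ onto $QU$. The roots of unity are Zariski-dense in $\mathbb{G}_m$, and products of Zariski-dense sets remain Zariski-dense in product varieties (by induction on the number of factors), so $\mu_\infty^r$ is Zariski-dense in $\mathbb{G}_m^r$. Its image under the dominant morphism to $T$ is Zariski-dense, and thus $QU^n$ is Zariski-dense in $T^n$.

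Next, I would reduce to showing that for each irreducible component $Y\subset M_B(X,r)$, with $Z:=\overline{\Phi(Y)}$, the preimage $\Phi|_Y^{-1}(QU^n)$ is Zariski-dense in $Y$. By generic flatness applied to the dominant map $\Phi|_Y\colon Y\to Z$, there is a Zariski-open $U\subset Z$ contained in $\Phi(Y)$ over which $\Phi|_Y$ has equidimensional fibers of dimension $\dim Y-\dim Z$. Assuming the key statement $\overline{QU^n\cap Z}=Z$, the intersection $QU^n\cap U$ is Zariski-dense in $U$, and its preimage in $Y$ is a union of nonempty fibers of the expected dimension indexed by a Zariski-dense base. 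Its closure therefore has the full dimension of $Y$, and so equals $Y$.

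The main obstacle is verifying $\overline{QU^n\cap Z}=Z$. The image closure $Z$ is cut out inside $T^n$ by algebraic relations arising from the group $\pi_1(X)$ and from the choice of component $Y$---for instance, determinant relations such as $\prod_i\det(\chi_i)=1$ when $\pi_1(X)$ has a global relation $\prod\gamma_i=1$. Such relations are themselves compatible with the quasi-unipotent locus (products of roots of unity are roots of unity), which makes density plausible and easy to check in simple cases such as $X=\mathbb{P}^1\setminus\{x_1,x_2,x_3\}$. In general I would try to combine two inputs: (i) for each quasi-unipotent tuple $\underline{C}=(C_1,\ldots,C_n)$ in a Zariski-dense subset of $Z$, a Deligne-Simpson type existence result ensures that a local system on $X$ with local monodromy $\underline{C}$ exists; and (ii) a Hodge-theoretic argument via Simpson's $\mathbb{C}^*$-action on $M_{\mathrm{Dol}}(X,r)$ and Borel's quasi-unipotent monodromy theorem for polarizable $\mathbb{Z}$-variations of Hodge structure, which provides an abundant supply of local systems in $\Phi^{-1}(QU^n)$ inside each component. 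Reconciling these two strategies into a proof that works uniformly for every irreducible component of $M_B(X,r)$ is where the main technical work lies.
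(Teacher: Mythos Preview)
The paper does not prove this theorem; it is quoted from \cite{esnault2023local} without argument, as evidence in the discussion of density questions in \autoref{subsection:special-points-and-dense-subloci}. So there is no proof in the paper to compare your attempt against.

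That said, your reduction is sound but leaves the real difficulty untouched. You correctly isolate the crux as $\overline{QU^n\cap Z}=Z$ for $Z=\overline{\Phi(Y)}$, and this is not a formality: an arbitrary closed subvariety of a torus-like space need not contain a Zariski-dense set of torsion points (a coset of a subtorus by a non-torsion element contains none at all). So one must use something specific about which $Z$ actually arise as images of components of $M_B(X,r)$, and your outline does not supply this.

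Neither of your proposed patches works as stated. Patch (i), the Deligne--Simpson input, is only available for $X=\mathbb{P}^1\setminus\{x_1,\ldots,x_n\}$ and says nothing for general quasi-projective $X$. Patch (ii) conflates $\mathbb{C}$-VHS with $\mathbb{Z}$-VHS: the $\mathbb{C}^*$-fixed points in $M_{\mathrm{Dol}}$ correspond to complex variations of Hodge structure, while Borel's quasi-unipotence theorem requires an integral (or at least $\mathscr{O}_K$-) lattice. A $\mathbb{C}$-VHS has local monodromy with eigenvalues of absolute value one, but not in general roots of unity, so the $\mathbb{C}^*$-flow does not by itself produce points of $\Phi^{-1}(QU^n)$. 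The actual proof in \cite{esnault2023local} is of a different, more arithmetic flavor and does not proceed via your map $\Phi$.
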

As local systems with geometric origin have this quasi-unipotent local monodromy property, this was intended as evidence for \autoref{conjecture:ekbw2}. Esnault-de Jong define a notion of ``weakly arithmetic" local system (a weakening of the notion discussed in \autoref{example:non-abelian-Tate}) and prove the density of such local systems in $M_B(X, r)$.

Here we propose some analogous questions:
\begin{question}\label{question:dense-subloci}
Let $X$ be a smooth projective variety over $\mathbb{C}$. Are the following loci Zariski-dense in $M_B(X, r)$:
\begin{enumerate}
	\item The locus of $\mathbb{C}$-variations of Hodge structure?
	\item The locus of local systems defined over $\overline{\mathbb{Z}}$?
\end{enumerate}
Does there exist a fixed number field $K$ such that the $\mathscr{O}_K$-points of $\mathscr{M}_B(X,r)$ are Zariski-dense in its $\mathbb{C}$-points? It is also natural also to ask about stronger forms of density than Zariski-density; for example \cite{bourgain2016markoff} and \cite{chen2020nonabelian} consider a form of strong approximation for the character varieties discussed in \autoref{subsubsection:n-4}.
\end{question}
For experts, we also give a $p$-adic analogue of this question:
\begin{question}\label{question:p-adic-periodic}
	Let $X$ be a smooth projective variety over $W(k)$, the ring of Witt vectors of a finite field $k$ of characteristic at least $3$. There is a ``Frobenius pullback" map $$F^*: \mathscr{M}_{dR}(X,r)(W(k))\to \mathscr{M}_{dR}(X,r)(W(k))$$ (see e.g.~\cite[\S3.1]{esnault2023cristallinity}). Are the periodic points of this map Zariski-dense?
\end{question}
\begin{remark}
The density of \emph{weakly arithmetic} local systems, proven by \cite{de2024integrality}, is an $\ell$-adic version of \autoref{question:p-adic-periodic}. For a stronger (still open) version, see \cite[Question 5.2.2]{kadets2022level}.
\end{remark}

We now briefly discuss some evidence, aside from the case of (cohomologically) rigid local systems, that the answer \autoref{question:dense-subloci} might be positive.
\subsubsection{Density of $\mathbb{C}$-VHS}
One result that gives hope that \autoref{question:dense-subloci}(1) might have a positive answer is the following famous result of Simpson, a generalization of which (due to Mochizuki) was used in the proof of \autoref{theorem:canonical-reps}.
\begin{theorem}[{\cite[Theorem 3]{simpson1992higgs}}]\label{theorem:deformation-to-vhs}
	Let $X$ be a smooth projective variety and $\mathbb{V}$ an irreducible local system on $X$. Then $\mathbb{V}$ can be deformed to a complex variation of Hodge structure.
\end{theorem}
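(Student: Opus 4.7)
The plan is to use non-abelian Hodge theory to translate the problem into the language of Higgs bundles, where the natural $\mathbb{C}^\times$-action provides the desired deformation. Recall from the discussion of Example \ref{example:non-abelian-hodge} that the Corlette-Simpson correspondence gives a real-analytic homeomorphism $M_B(X,r) \simeq M_{\text{Dol}}(X,r)$ (restricted to semisimple local systems and polystable Higgs bundles of vanishing Chern classes). Under this correspondence, the irreducible local system $\mathbb{V}$ corresponds to a stable Higgs bundle $(\mathcal{E},\theta)$ of degree zero.

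First I would consider the $\mathbb{C}^\times$-orbit $t\cdot (\mathcal{E},\theta) = (\mathcal{E}, t\theta)$ for $t\in\mathbb{C}^\times$. This is a curve in $M_{\text{Dol}}(X,r)$, and via the Corlette-Simpson homeomorphism it transports to a (real-analytic) curve in $M_B(X,r)$ passing through $[\mathbb{V}]$ at $t=1$. Thus it is already clear that this construction produces a family of local systems deforming $\mathbb{V}$; what remains is to ensure that the orbit closure contains a $\mathbb{C}$-VHS, which will be the limit point as $t \to 0$.

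Next I would invoke the Hitchin map $h: M_{\text{Dol}}(X,r)\to B$, where $B=\bigoplus_{i=1}^r H^0(X, \on{Sym}^i\Omega^1_X)$ is the Hitchin base, defined by sending a Higgs bundle to the characteristic polynomial of its Higgs field. The $\mathbb{C}^\times$-action on $M_{\text{Dol}}$ is compatible with the action on $B$ that scales the $i$-th summand by $t^i$, under which every point of $B$ tends to $0$ as $t\to 0$. Since the Hitchin map is proper (this is the crucial input, due to Hitchin and Simpson), the orbit $\{t\cdot (\mathcal{E},\theta)\}_{t\in\mathbb{C}^\times}$ extends over $t=0$ to a point $(\mathcal{E}_0,\theta_0) \in M_{\text{Dol}}(X,r)$ lying in the nilpotent cone $h^{-1}(0)$. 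By construction this limit is a fixed point of the $\mathbb{C}^\times$-action.

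Finally I would identify $\mathbb{C}^\times$-fixed points of $M_{\text{Dol}}(X,r)$ with polarizable complex variations of Hodge structure. A Higgs bundle $(\mathcal{E}_0, \theta_0)$ is (isomorphic to) a fixed point if and only if it admits a grading $\mathcal{E}_0 = \bigoplus_p \mathcal{E}_0^{p}$ with $\theta_0(\mathcal{E}_0^p) \subset \mathcal{E}_0^{p-1}\otimes\Omega^1_X$ --- indeed, an isomorphism $(\mathcal{E}_0,t\theta_0)\simeq(\mathcal{E}_0,\theta_0)$ is given by a $\mathbb{C}^\times$-action on $\mathcal{E}_0$ under which $\theta_0$ has weight one, whose weight decomposition produces the grading. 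Such a ``system of Hodge bundles'' is exactly the datum of a polarizable $\mathbb{C}$-VHS, with the grading recording the Hodge decomposition and $\theta_0$ recording the graded pieces of the Gauss-Manin connection under Griffiths transversality. Transporting everything back to $M_B$ via Corlette-Simpson, the curve $t\in[0,1]$ furnishes a deformation of $\mathbb{V}$ to the local system underlying this $\mathbb{C}$-VHS.

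The main obstacle is properness of the Hitchin map (for which one uses Simpson's construction of $M_{\text{Dol}}$ as a quasi-projective variety together with a spectral-curve or GIT argument) and the fact that the limit $(\mathcal{E}_0,\theta_0)$ is only guaranteed to be \emph{polystable} rather than stable. This means the limit $\mathbb{C}$-VHS could be reducible, but this is harmless for the statement (one does not claim the deformation stays irreducible, only that it terminates at some polarizable $\mathbb{C}$-VHS). A subsidiary technical point is that the Corlette-Simpson homeomorphism is only real-analytic, so the resulting family of local systems is a real-analytic, rather than holomorphic, deformation; this is nonetheless sufficient, and in fact is the natural output of the argument.
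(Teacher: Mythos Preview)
Your proposal is correct and follows essentially the same approach as the paper's proof sketch: pass to the stable Higgs bundle via non-abelian Hodge theory, use properness of the Hitchin map to take the limit of $(\mathscr{E}, t\theta)$ as $t\to 0$, identify the resulting $\mathbb{C}^\times$-fixed point with a complex variation of Hodge structure, and transport back through the real-analytic homeomorphism $M_{\text{Dol}}(X,r)\simeq M_B(X,r)$. Your added remarks on the Hitchin base, the grading characterization of fixed points, and the polystability caveat are accurate elaborations of the same argument.
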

\begin{proof}[Proof sketch]
The proof proceeds by considering the stable Higgs bundle $(\mathscr{E}, \theta)$ corresponding to $\mathbb{V}$ under the non-abelian Hodge correspondence. Simpson shows (using the properness of the Hitchin map) that the limit as $t\to 0$ of $(\mathscr{E}, t\theta)$ in $M_{\text{Dol}}(X, r)$ exists. The limit is evidently a $\mathbb{C}^\times$-fixed point, and Simpson shows these are precisely the Higgs bundles corresponding to complex variations of Hodge structure. The result now follows from the fact that $M_{\text{Dol}}(X, r)$ and $M_B(X, r)$ are (real-analytically) isomorphic, and in particular homeomorphic.
\end{proof}
In fact the proof shows that there is a $\mathbb{C}$-VHS in each irreducible component of $M_B(X,r)$.

The following is part of ongoing joint work with Botong Wang and Ruijie Yang:
\begin{theorem}[L--, Wang, Yang]
	Suppose $X$ is a compact Riemann surface of genus $g$. Then the $\mathbb{C}$-VHS locus is Zariski-dense in $M_B(X, r)$. If $(C_1, \cdots, C_n)$ are a collection of semisimple quasi-unipotent conjugacy classes in $GL_2(\mathbb{C})$, then the $\mathbb{C}$-VHS locus is Zariski-dense in the character variety parametrizing rank $2$ local systems on $\mathbb{P}^1\setminus\{x_1, \cdots, x_n\}$, with local monodromy about $x_i$ lying in $C_i$.
\end{theorem}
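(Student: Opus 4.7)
My approach combines Simpson's result that every irreducible component of $M_B(X,r)$ contains a $\mathbb{C}$-variation of Hodge structure (\autoref{theorem:deformation-to-vhs}) with a careful analysis of the algebraic $\mathbb{C}^\times$-action on the Dolbeault moduli space $M_{\text{Dol}}(X,r)$ given by scaling the Higgs field, $\lambda\cdot(\mathscr{E},\theta)=(\mathscr{E},\lambda\theta)$, whose fixed locus coincides with the $\mathbb{C}$-VHS locus. Since the non-abelian Hodge homeomorphism identifies the irreducible components of $M_{\text{Dol}}$ and $M_B$, it suffices to prove Zariski density inside each irreducible component of $M_B(X,r)$.

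As a first step I would use the Bia\l ynicki-Birula decomposition of $M_{\text{Dol}}(X,r)$: by properness of the Hitchin map, every Higgs bundle flows to a $\mathbb{C}^\times$-fixed point as $\lambda\to 0$, and so $M_{\text{Dol}}$ stratifies into locally closed subvarieties, each of which is an affine bundle over a component of the VHS locus. Transporting this structure through the non-abelian Hodge homeomorphism yields a real-analytic retraction $M_B\to \text{VHS}$, so every point of $M_B$ lies in the analytic closure of a real-analytic curve meeting the VHS locus.

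The main obstacle is that the Dolbeault and Betti complex structures are genuinely distinct, so Zariski-closedness of the VHS locus in $M_{\text{Dol}}$ does not directly transfer to $M_B$; moreover, any single component of VHS of fixed Hodge type will typically have strictly lower dimension than $M_B$, so the needed Zariski density must come from assembling countably many Hodge types. To bridge this gap, my plan is to use the hyperk\"ahler structure on the smooth locus of $M_B=M_{\text{Dol}}$: the Dolbeault and Betti complex structures $I$ and $J$, together with $K=IJ$, generate a twistor $\mathbb{P}^1$ of complex structures, and I would argue that a regular function $f$ on $M_B$ vanishing on the VHS locus extends (in a suitable sense) along twistor lines through VHS points, leading by Verbitsky-type rigidity to the conclusion $f\equiv 0$.

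For the second assertion on rank 2 local systems on $\mathbb{P}^1\setminus\{x_1,\ldots,x_n\}$ with prescribed semisimple quasi-unipotent local monodromy, the same strategy adapts to parabolic Higgs bundles on $(\mathbb{P}^1,\{x_1,\ldots,x_n\})$ with parabolic weights determined by the eigenvalues of the $C_i$; the Hitchin map remains proper and the $\mathbb{C}^\times$-action preserves the parabolic structure, so Bia\l ynicki-Birula and the twistor argument apply verbatim. In situations where middle convolution (\autoref{subsubsection:middle-convolution}) reduces the character variety to one parametrizing rank one local systems on a cover---as in the special case analyzed in \autoref{subsection:cayley-solutions}---one can also deduce Zariski density of VHS directly from the classical rank one statement that $U(1)$-characters are Zariski dense in $\mathbb{C}^\times$-characters. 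The principal technical difficulty in either route remains the same: the passage from Dolbeault to Betti Zariski closure.
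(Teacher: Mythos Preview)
Your approach has a genuine gap, and you have also missed a much simpler argument for the first assertion.

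For the compact case, the paper's proof is one line: unitary local systems are $\mathbb{C}$-variations of Hodge structure (with Hodge filtration of length one), and the unitary locus is Zariski-dense in $M_B(X,r)$. The latter follows from the elementary fact that $U(r)$ is Zariski-dense in $\on{GL}_r(\mathbb{C})$, so $\on{Hom}(\pi_1(X),U(r))$ is Zariski-dense in $\on{Hom}(\pi_1(X),\on{GL}_r(\mathbb{C}))$. You have instead routed the argument through the Bia\l ynicki-Birula decomposition of $M_{\text{Dol}}$ and a proposed twistor/hyperk\"ahler transfer, but as you yourself note, Zariski-closedness does not pass across the non-abelian Hodge homeomorphism, and your ``Verbitsky-type rigidity'' step is not an argument---it is a hope. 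Functions on $M_B$ are holomorphic for $J$ but not for $I$; extending such a function ``along twistor lines'' and concluding it vanishes requires exactly the kind of comparison you have not supplied. So the elaborate machinery both fails to close and is unnecessary.

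For the second assertion the unitary trick is unavailable in general, since the prescribed local monodromies may admit no unitary representatives at all. The paper does not give a full proof here, but indicates that already the locus of \emph{supermaximal} local systems in the sense of Deroin--Tholozan is Zariski-dense; these are a specific family of $\mathbb{C}$-VHS one can write down concretely in rank two. Your parabolic-Higgs/twistor plan inherits the same gap as above, and your alternative via middle convolution only applies to very special choices of $\underline{C}$ (those for which the character variety reduces to a rank-one problem on a cover), not to arbitrary semisimple quasi-unipotent $C_i$.
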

The first part of this theorem is almost trivial; the locus of unitary local systems is Zariski-dense. The second part is a bit more involved, and we say nothing more about it here, except to note that already the locus of \emph{supermaximal} local systems, in the sense of \cite{deroin2019supra}, are Zariski dense. See \cite[Remark 4.3.4]{lam2023finite} for a discussion of supermaximal local systems in a related context.

\begin{example}
Some evidence against the most optimistic possible answer here is given by considering the case of local systems on $\mathbb{P}^1\setminus\{x_1, \cdots, x_4\}$, where the local monodromy around each puncture is conjugate to $$\begin{pmatrix} 1 & 1 \\ 0 & 1\end{pmatrix}.$$ Suppose one has a $\mathbb{C}$-VHS on $\mathbb{P}^1\setminus\{x_1, \cdots, x_4\}$ with these local monodromies---the corresponding Higgs bundle is necessarily stable of degree zero and non-unitary, hence corresponds to a direct sum of two line bundles $\mathscr{O}_{\mathbb{P}^1}(a)\oplus \mathscr{O}_{\mathbb{P}^1}(-a),$ with $a>0$ and non-zero Higgs field $$\theta: \mathscr{O}_{\mathbb{P}^1}(a)\to  \mathscr{O}_{\mathbb{P}^1}(-a)\otimes \Omega^1_{\mathbb{P}^1}(x_1+\cdots+x_4)= \mathscr{O}_{\mathbb{P}^1}(2-a).$$ The only possibility is that $a=1$ and $\theta$ is an isomorphism, whence this Higgs bundle is unique up to isomorphism---hence there is a unique $\mathbb{C}$-VHS with the given local monodromies (this is the so-called \emph{uniformizing local system}). But the character variety parametrizing local systems on $\mathbb{P}^1\setminus\{x_1, \cdots, x_4\}$ with the given local monodromies is positive-dimensional, whence the $\mathbb{C}$-VHS locus is not Zariski dense. (See \cite[Theorem 1.1]{lam2022motivic} for some closely related examples.)
\end{example}
\subsubsection{Density of integral points}\label{subsubsection:integral-points}
We now briefly discuss \autoref{question:dense-subloci}(2) and its variants. We are quite far from even the weakest possible forms of a positive answer. For example, as far as I know the following is open:
\begin{question}\label{question:exist-OK-point}
	Let $X$ be a smooth projective variety over $\mathbb{C}$, and suppose there exists an irreducible complex local system of rank $r$ on $X$. Does there exist a number field $K$ and an irreducible $\mathscr{O}_K$-local system on $X$ of rank $r$?
\end{question}
Some evidence for a positive answer to \autoref{question:exist-OK-point} is given by the following:
\begin{theorem}[Special case of {\cite[Theorem 1.1]{de2024integrality}}]
	Let $X$ be a smooth projective variety over $\mathbb{C}$, and suppose there exists an irreducible complex local system of rank $r$ on $X$. Then for each prime $\ell$ there exists an irreducible $\overline{\mathbb{Z}_\ell}$-local system on $X$.
\end{theorem}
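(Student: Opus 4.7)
The plan combines Simpson's deformation-to-VHS theorem (\autoref{theorem:deformation-to-vhs}) with spreading out over $\on{Spec}\mathbb{Z}$, followed by a classical character-theoretic descent.

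First, using \autoref{theorem:deformation-to-vhs}, I would deform the given irreducible complex local system $\mathbb{V}$ to an irreducible local system $\mathbb{V}_H$ in the same connected component of $M_B(X,r)^{\text{irr}}$ that underlies a polarizable $\mathbb{C}$-variation of Hodge structure. On the Dolbeault side, $[\mathbb{V}_H]$ is a $\mathbb{C}^\times$-fixed point of $M_{\text{Dol}}(X,r)$, i.e.~a system of Hodge bundles. It suffices to produce any irreducible $\overline{\mathbb{Z}_\ell}$-local system on $X$, and in particular one in the connected component of $[\mathbb{V}_H]$.

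Next, I would exploit the fact that $M_B(X,r)^{\text{irr}}$ is a scheme of finite type over $\mathbb{Z}$: its defining equations involve only trace relations from the finitely presented group $\pi_1(X)$ together with the open stability condition, both of which have integral models. The image of $M_B(X,r)^{\text{irr}}$ in $\on{Spec}\mathbb{Z}$ is constructible and contains the generic point (by the existence of $[\mathbb{V}_H]$), so for all primes $\ell$ outside a finite bad set, the $\mathbb{F}_\ell$-fiber contains an $\overline{\mathbb{F}_\ell}$-point of a smooth irreducible component, which lifts by Hensel's lemma to a $\overline{\mathbb{Z}_\ell}$-point of $M_B(X,r)^{\text{irr}}$. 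For the remaining primes, one must argue by hand using the Hodge-theoretic rigidification coming from $\mathbb{V}_H$---e.g.~via the $p$-adic Simpson correspondence, the Ogus-Vologodsky correspondence, or the Higgs-de Rham flow of Lan-Sheng-Zuo---to transfer the VHS structure into integral $p$-adic data and lift the resulting mod-$\ell$ point accordingly.

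Finally, a $\overline{\mathbb{Z}_\ell}$-point of $M_B(X,r)^{\text{irr}}$ corresponds to an irreducible semisimple representation $\rho\colon \pi_1(X) \to \on{GL}_r(\overline{\mathbb{Q}_\ell})$ all of whose character values $\on{tr}(\rho(\gamma))$ lie in $\overline{\mathbb{Z}_\ell}$. To produce an actual integral representation, let $A \subset M_r(\overline{\mathbb{Q}_\ell})$ be the $\overline{\mathbb{Z}_\ell}$-subalgebra generated by $\rho(\pi_1(X))$; by Burnside's theorem, $A$ spans $M_r(\overline{\mathbb{Q}_\ell})$, and integrality of all traces together with non-degeneracy of the trace pairing $(x,y)\mapsto \on{tr}(xy)$ on $M_r(\overline{\mathbb{Q}_\ell})$ forces $A$ to be a finitely generated (torsion-free) $\overline{\mathbb{Z}_\ell}$-module. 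For any nonzero $v \in \overline{\mathbb{Q}_\ell}^r$, the submodule $L := A v$ is then a $\pi_1(X)$-stable $\overline{\mathbb{Z}_\ell}$-lattice in $\overline{\mathbb{Q}_\ell}^r$, exhibiting the desired conjugation of $\rho$ into $\on{GL}_r(\overline{\mathbb{Z}_\ell})$. Irreducibility is preserved because it is detected by the scheme-theoretic point lying in the open locus $M_B^{\text{irr}}$.

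\textbf{The main obstacle} is the bad-prime case in the spreading-out step, where $M_B(X,r)^{\text{irr}}$ may fail to be smooth over $\mathbb{Z}_\ell$ (or where the irreducible component of interest may not even dominate $\on{Spec}\mathbb{Z}$). Here the $\mathbb{C}$-VHS hypothesis becomes essential rather than merely convenient: without the additional structure provided by $\mathbb{V}_H$, one has no a priori reason to expect an $\ell$-adic integral lift to exist, and the technical heart of the argument lies in controlling this lifting via $p$-adic Hodge theory and the integrality of periods attached to variations of Hodge structure.
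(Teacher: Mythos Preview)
The paper does not supply a proof of this theorem; it cites the result from de Jong--Esnault and records only that ``the proof relies on both the arithmetic and geometric Langlands program.'' Your proposal invokes neither, so whatever its merits it is not the argument the paper has in mind.

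More seriously, your outline has genuine gaps even at the level of a sketch. First, your ``good prime'' step does not work as stated: spreading out gives an $\overline{\mathbb{F}_\ell}$-point of $M_B(X,r)^{\text{irr}}$ for almost all $\ell$, but lifting to $\overline{\mathbb{Z}_\ell}$ via Hensel requires the point to be smooth over $\mathbb{Z}_\ell$, and $M_B(X,r)^{\text{irr}}$ has no reason to be smooth in general (irreducible does not mean cohomologically rigid). You have not explained why any $\overline{\mathbb{F}_\ell}$-point lies in the smooth locus. Second, the deformation to a $\mathbb{C}$-VHS plays no real role in your main argument: you produce $\mathbb{V}_H$ and then forget about it until the bad-prime paragraph, where you gesture at ``the $p$-adic Simpson correspondence, the Ogus--Vologodsky correspondence, or the Higgs--de Rham flow'' without saying what any of these would actually do. A $\mathbb{C}$-VHS need not be defined over a number field, so it is unclear how you intend to extract integral $p$-adic data from it.

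The actual de Jong--Esnault argument is of a completely different nature: one spreads $X$ out over a finitely generated ring, reduces to the case of curves, and then uses the Langlands correspondence over function fields (companions) to produce genuinely arithmetic $\ell$-adic local systems, which are automatically integral. The integrality is not obtained by a Hensel-type lifting on the character variety at all. Your final paragraph (integrality of a representation with integral traces) is essentially correct once one passes to a finite extension of $\mathbb{Q}_\ell$, but it is not where the difficulty lies.
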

The proof relies on both the arithmetic and geometric Langlands program.

A great deal of work has been done on related questions in the case of character varieties of surfaces. For example, it is not hard to show:
\begin{theorem}
Let $X$ be a compact Riemann surface of genus $g\geq 2$. Then $\overline{\mathbb{Z}}$-points are Zariski dense in $M_B(X, 2)$.
\end{theorem}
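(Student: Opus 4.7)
My strategy is to realize $M_B(X,2)$ as a $\mathbb{Z}$-scheme whose $\overline{\mathbb{Z}}$-points include all those coming from representations of $\pi_1(X)$ into $GL_2(\overline{\mathbb{Z}})$, and then to show that these form a Zariski-dense subset. Concretely, $M_B(X,2)$ is the GIT quotient of the representation variety $R \subset GL_2^{2g}$ cut out by the surface-group relation $\prod_{i=1}^g [A_i,B_i]=I$, modulo simultaneous conjugation. This is an affine $\mathbb{Z}$-scheme whose coordinate ring is, by Procesi's theorem, generated by trace functions $[\rho]\mapsto\operatorname{tr}(\rho(w))$ for $w\in\pi_1(X)$. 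In particular any representation valued in $GL_2(\overline{\mathbb{Z}})$ yields an $\overline{\mathbb{Z}}$-point of $M_B(X,2)$, and it suffices to prove that $R(\overline{\mathbb{Z}})$ is Zariski dense in $R(\mathbb{C})$.

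For $g\geq 2$, the variety $R$ is irreducible of dimension $8g-3$, and it is smooth of the expected dimension at every irreducible representation. This is the classical statement, essentially due to Goldman, that the deformation-theoretic obstruction in $H^2(\pi_1(X),\operatorname{ad}\rho)$ contributed by the single surface-group relation vanishes for irreducible $\rho$; equivalently, the commutator map $\mu: GL_2^{2g}\to SL_2$ sending $(A_i,B_i)\mapsto\prod[A_i,B_i]$ is submersive at such $\rho$, so $R=\mu^{-1}(I)$ is smooth there of the expected codimension $3$.

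Granting smoothness, the proof reduces to the following general principle: \emph{a smooth finite-type affine $\mathbb{Z}$-scheme $V$ with at least one $\overline{\mathbb{Z}}$-point has $V(\overline{\mathbb{Z}})$ Zariski dense in $V(\mathbb{C})$.} Applied to $R$, this immediately gives the desired density. The principle follows from Hensel's lemma applied over rings of integers $\mathcal{O}_K$ for $K\subset\overline{\mathbb{Q}}$ a number field, after \'etale-locally identifying $V$ with $\mathbb{A}^d_{\mathcal{O}_K}$ near a smooth point, combined with Zariski-density of $\mathcal{O}_K$-points in $\mathbb{A}^d_{\mathcal{O}_K}$, and passing to the colimit over $K$. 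To seed the argument, one needs a single irreducible $\overline{\mathbb{Z}}$-valued representation of $\pi_1(X)$; this is cheap, since $\pi_1(X)$ for any closed surface of genus $g \geq 2$ embeds into (or at worst admits a surjection with section onto) the fundamental group of some arithmetic Fuchsian or Kleinian quotient, whose $SL_2(\mathcal{O}_K)$-valued uniformization gives the required integral representation.

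The main obstacle is the general density principle cited in the third paragraph. Although familiar to arithmetic geometers, its proof requires some care around the non-Noetherian nature of $\overline{\mathbb{Z}}$ and the need to combine Hensel liftings over different number rings. A cleaner alternative would be to establish $\mathbb{Z}$-unirationality of $M_B(X,2)$ directly, namely to exhibit a dominant rational map $\mathbb{A}^{8g-6}_\mathbb{Z} \to M_B(X,2)$ defined over $\mathbb{Z}$, from which density of $\overline{\mathbb{Z}}$-points would follow immediately since $\overline{\mathbb{Z}}$-points are Zariski-dense in affine space. Such a parameterization should be constructible from Goldman's trace-plus-twist coordinates associated to a pants decomposition of the underlying topological surface, but verifying that the parameterization has $\mathbb{Z}$-coefficients (rather than merely $\mathbb{Q}$- or $\overline{\mathbb{Q}}$-coefficients) requires additional, fairly concrete, checking.
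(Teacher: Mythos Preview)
Your approach is quite different from the paper's. The paper reduces to $\on{SL}_2$, then exploits the mapping class group action: since $\on{Mod}_g$ acts on $M_B(X,2)$ by automorphisms defined over $\mathbb{Z}$, the $\on{Mod}_g$-orbit of any single $\overline{\mathbb{Z}}$-point consists of $\overline{\mathbb{Z}}$-points. One then finds a single $\overline{\mathbb{Z}}$-point with a unitary Galois conjugate (from the tautological local system on a compact Shimura curve of genus $g$, or a cover thereof), and invokes Previte--Xia's theorem that the $\on{Mod}_g$-orbit of such a point is dense in the unitary locus, which is itself Zariski-dense. This argument is short, concrete, and thematically tied to the rest of the paper.

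Your route has a genuine gap at the ``general principle.'' Hensel's lemma is a statement about complete local rings: it lets you lift points over $\widehat{\mathcal{O}_{K,\mathfrak{p}}}$, not over the global ring $\mathcal{O}_K$. An \'etale map $f:U\to\mathbb{A}^d_{\mathcal{O}_K}$ does not let you transport $\mathcal{O}_K$-points of $\mathbb{A}^d$ back to $U$: a point $q\in\mathcal{O}_K^d$ lies in the scheme-theoretic image of $f$ only if its reduction avoids the closed complement at \emph{every} prime simultaneously, which is a nontrivial global condition; and even then the fiber $f^{-1}(q)$ is only quasi-finite \'etale, so by Zariski's Main Theorem it is an open subscheme of some $\coprod\operatorname{Spec}\mathcal{O}_L$ and need not have any $\mathcal{O}_L$-point. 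The principle you want is in fact a theorem, but the correct input is Rumely's capacity-theoretic local--global principle, or more directly Moret-Bailly's results on probl\`emes de Skolem, which show that an affine $\mathcal{O}_K$-scheme with geometrically irreducible generic fiber and local integral points everywhere has Zariski-dense $\overline{\mathbb{Z}}$-points. You would also need to check geometric irreducibility and the local conditions over $\operatorname{Spec}\mathbb{Z}$ (not just over $\mathbb{C}$), since $R$ is not smooth at reducible representations. Your unirationality alternative is plausible but, as you note, not carried out.
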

\begin{proof}[Proof sketch]
 As integral points are dense in $\mathbb{C}^\times$, it suffices to prove density in the locus of local systems with trivial determinant.
	
	From the main result of \cite[Theorem 1.4]{previte2002topological}, taking the boundary to be empty, and the Zariski-density of the unitary locus, it is enough to produce one  $\pi_1(X)$-representation defined over $\overline{\mathbb{Z}}$ with a unitary Galois conjugate---the orbit of this representation under the mapping class group of $X$ will be Zariski dense. But now the tautological local system on any compact Shimura curve (or \'etale cover thereof) of genus $g$ suffices (and such exist for all $g\geq 2$, as one may take \'etale covers of a compact Shimura curve of genus $2$).
\end{proof}
The key idea here was to use the mapping class group action on the character variety to produce an abundance of integral points. This action on integral points has been studied in the beautiful papers \cite{whang2020arithmetic, whang2020global, whang2020nonlinear}.

Of course one may consider stronger forms of density, e.g.~density of integral points in the analytic topology on $M_B(X, r)$, or in $p$-adic topologies (i.e.~strong approximation questions). In particular, the example of certain Markoff surfaces, mentioned in \autoref{subsubsection:n-4}, has been studied by Bourgain-Gamburd-Sarnak \cite{bourgain2016markoff, bgs1} and \cite{chen2020nonabelian}.

\begin{remark}
Aside from their intrinsic interest and the application to conjectures on integrality of rigid local systems, Zariski-density of integral points in character varieties would have a number of useful applications, for example to the conjecture of Ekedahl-Shephard--Barron-Taylor for the isomonodromy foliation, discussed in \autoref{remark:esbt}.
\end{remark}

\subsection{Motivic subvarieties}\label{subsubsection:motivic-subvarieties} The conjectures discussed in \autoref{subsection:non-abelian-cohomology} attempt to intrinsically characterize  those local systems (i.e.~\emph{points} of $\mathscr{M}_B(X, r), \mathscr{M}_{dR}(X,r)$, and so on) of geometric origin. What about higher-dimensional subvarieties?

It is not clear to us what a precise definition of a motivic subvariety of e.g.~ $\mathscr{M}_B(X, r)$ should be, but it should, for example, be stable undering taking intersections;  for each $f: X\to Y$, the image of a motivic subvariety under the induced map $$\mathscr{M}_B(Y, r)\overset{f^*}{\to} \mathscr{M}_B(X,r);$$ should be motivic; for each smooth proper $g: Z\to X$, the image of $$W_{g,r}\overset{R^ig_*}{\longrightarrow} M_B(X,r),$$ where $W_{g,r}\subset M_B(Z,r')$ is the set of local systems $\mathbb{V}$ on $Z$ with $\on{rk}R^ig_* \mathbb{V}=r$; the $W_{g,r}$ themselves should be motivic; subloci of ${M}_B(X,r)$ consisting of local systems with monodromy contained in a conjugate of a fixed subgroup $G\subset \on{GL}_r(\mathbb{C})$, etc.

It would be of great interest to find a characterization of motivic subvarieties analogous to the conjectures in \autoref{subsection:non-abelian-cohomology}; a number of authors have proposed and studied such characterizations, notably \cite{budur2020absolute}, \cite{esnault2020arithmetic}. For example, a motivic subvariety of the character variety $M_B(X,r)$ should:
\begin{enumerate}
	\item Be defined over the ring of integers of a number field,
	\item Have its image in $M_{\text{Dol}}(X,r)$ under the comparison map of non-abelian Hodge theory be stable under the natural $\mathbb{C}^\times$-action on $M_{\text{Dol}}(X,r)$,
	\item Have its $\mathbb{Z}_\ell$-points stable under the absolute Galois group of some finitely-generated field to which $X$ descends,
\end{enumerate}
and so on. We will discuss a variant of the question of characterizing motivic subvarieties in \autoref{subsection:invariant-subvarieties}.
\section{Vector bundles and mapping class groups}\label{section:mcg}
We finally return to our fundamental example, that of the universal curve $$\mathscr{C}_{g,n}\to \mathscr{M}_{g,n}$$ over the moduli space $\mathscr{M}_{g,n}$ of curves of genus $g$ with $n$ marked points. The goal of this section is to record a number of conjectures on the (pure) mapping class group $\on{PMod}_{g,n}=\pi_1(\mathscr{M}_{g,n})$, its representations, and its action on $Y(g,n,r)=M_B(\Sigma_{g,n}, r)$, motivated by some of the considerations in previous chapters. Some of these are well-known to surface topologists; others are aimed at drawing relationships between algebro-geometric questions about Riemann surfaces and questions in surface topology.
\subsection{Superrigidity}
We begin with a classical conjecture of Ivanov:
\begin{conjecture}[{\cite[\S7]{ivanov2006fifteen}}]\label{conjecture:mod-ivanov}
	Let $g\geq 3$. Then any finite index subgroup $\Gamma$ of $\on{Mod}_{g,n}$ has finite abelianization, i.e.~$H^1(\Gamma, \mathbb{C})=0.$
\end{conjecture}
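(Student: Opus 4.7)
The plan is to translate Ivanov's conjecture into a Hodge-theoretic statement about cohomology of finite étale covers of the moduli stack and then attack it with the tools developed earlier in the paper. By Shapiro's lemma, $H^1(\Gamma,\mathbb{C}) = H^1(\on{Mod}_{g,n}, \on{Ind}_\Gamma^{\on{Mod}_{g,n}}\mathbb{C})$, and since the induced representation factors through a finite quotient it corresponds to a connected finite étale cover $\widetilde{\mathscr{M}} \to \mathscr{M}_{g,n}$. The conjecture therefore amounts to the assertion that $H^1(\widetilde{\mathscr{M}}, \mathbb{C}) = 0$ for every such cover. First I would exploit the mixed Hodge structure: $H^1$ of a smooth quasi-projective variety has pieces of weight 1 (pulled back from any smooth compactification $\overline{\widetilde{\mathscr{M}}}$) and weight 2 (captured by residues along boundary divisors).

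The weight 2 piece I would dispatch by a direct geometric argument: the boundary of $\overline{\widetilde{\mathscr{M}}}$ pulls back from the boundary of $\overline{\mathscr{M}}_{g,n}$, and, after passing to an orbifold resolution, the Poincaré duals of the boundary components remain independent in $H^2$ because their images in $\overline{\mathscr{M}}_{g,n}$ generate an ample cone (in the appropriate stacky sense). A careful bookkeeping of the ramification of $\widetilde{\mathscr{M}} \to \mathscr{M}_{g,n}$ along each boundary stratum, combined with the Gysin long exact sequence, should force the weight 2 contribution to vanish.

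The heart of the argument is the weight 1 piece, which requires $H^0(\overline{\widetilde{\mathscr{M}}}, \Omega^1) = 0$. If there were a nonzero one-form, the Albanese construction produces a nontrivial map $\widetilde{\mathscr{M}} \to A$ to an abelian variety, and pulling back via the universal curve $\mathscr{C}_{g,n} \to \mathscr{M}_{g,n}$ realizes $A$ (up to isogeny) as a sub-variation of Hodge structure of a direct summand of $(R^1\pi_* \mathbb{V})$ for some local system $\mathbb{V}$ on a finite étale cover of $\mathscr{C}_{g,n}$. The existence of such a summand with a nonzero global section is essentially a form of the Putman-Wieland conjecture: for a finite Galois cover $Y_s \to X_s$ that is equivariant under a finite index subgroup $\Gamma \subset \on{Mod}_{g,n}$, the $\Gamma$-action on the ``new'' part $H^1(Y_s,\mathbb{Q})/H^1(X_s,\mathbb{Q})$ has no invariants.

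To eliminate these invariants I would appeal directly to \autoref{theorem:vanishing}: the local system $\mathbb{V}$ here is unitary (it is a direct summand, cut out by characters of the deck group, of a constant local system), so the vanishing result $H^0(\mathscr{M}, R^1\pi_* \mathbb{V}) = 0$ applies provided $\on{rk}(\mathbb{V}) < g$. The main obstacle is precisely this rank condition: as the deck group grows, the ranks of the isotypic components of $R^1\pi_* \mathbb{V}$ can far exceed $g$. Overcoming this will require either a substantial strengthening of the rank bound in \autoref{theorem:vanishing} (perhaps exploiting the fact that the relevant $\mathbb{V}$ comes from an honest finite group representation, so one can apply the derivative-of-period-map argument isotypic component by isotypic component, and use the rigidity inputs of \autoref{subsection:rigid-local-systems}), or the development of a genuinely new Hodge-theoretic technique tailored to unitary variations arising from finite covers. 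I view this last step as the genuine bottleneck, and any honest attack on Ivanov's conjecture via the methods of this paper will have to confront it.
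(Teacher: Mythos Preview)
This statement is a \emph{conjecture} in the paper, not a theorem; the paper offers no proof, only evidence and reformulations. So there is no ``paper's own proof'' to compare against. What the paper does do is (i) formulate the stronger \autoref{conjecture:mod-rigidity} and show it implies Ivanov's conjecture via an induction argument on $\on{Ind}_\Gamma^{\on{Mod}_{g,n}}\rho_t$, and (ii) discuss the Putman--Wieland conjecture (\autoref{conjecture:pw}) as essentially equivalent, with partial results (\autoref{theorem:PW-big-g}) when the deck group has all irreducibles of dimension $<g$. Your proposal is, in spirit, a sketch of the Putman--Wieland route, and you correctly identify the rank bound in \autoref{theorem:vanishing} as the genuine obstacle---this is exactly the known bottleneck.

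That said, your weight-1 reduction has a real gap. You write that a nonzero holomorphic one-form on $\widetilde{\mathscr{M}}$ gives an Albanese map $\widetilde{\mathscr{M}}\to A$, and that pulling back along $\mathscr{C}_{g,n}\to\mathscr{M}_{g,n}$ ``realizes $A$ as a sub-variation of Hodge structure of $R^1\pi_*\mathbb{V}$.'' This step does not work as stated: the Albanese of $\widetilde{\mathscr{M}}$ is a fixed abelian variety, and pulling it back to the universal curve just gives a constant family---there is no mechanism by which it embeds into the relative $H^1$ of the fibers. The actual reduction of Ivanov to Putman--Wieland (due to Putman and Wieland, cited as \cite[Theorem C]{putman2013abelian}) proceeds via the Birman exact sequence and involves a genus shift: $\on{PW}(g-1,n+1)$ implies Ivanov for $\on{Mod}_{g,n}$. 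The finite-index subgroup $\Gamma$ is related to a cover of a genus-$(g-1)$ surface via the point-pushing subgroup, not by pulling back along the forgetful map. Your weight-2 boundary argument is also more delicate than you suggest (the boundary strata of $\overline{\mathscr{M}}_{g,n}$ intersect nontrivially and the cover can ramify there), but this is secondary to the weight-1 issue.
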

This conjecture is motivated in part by a well-known analogy between $\on{Mod}_{g,n}$ and lattices in simple Lie groups of higher rank (see e.g.~\cite{farb1998superrigidity, farb2009rigidity} for further considerations along these lines). All representations of such groups are in fact cohomologically rigid, by e.g.~Margulis super-rigidity \cite{zimmer2013ergodic}. It seems natural to conjecture the same is true for irreducible representations of $\on{Mod}_{g,n}$; in fact, we conjecture:
\begin{conjecture}\label{conjecture:mod-rigidity}
	Let $g\geq 3$. Any irreducible representation $\rho$ of $\on{Mod}_{g,n}$ is cohomologically rigid, i.e.~ $H^1(\on{Mod}_{g,n}, \on{ad}(\rho))=0$. 
\end{conjecture}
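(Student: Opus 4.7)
I would mirror the structure of the proof of \autoref{theorem:canonical-reps}, now applied to $\on{Mod}_{g,n}$ as the orbifold fundamental group of the smooth quasi-projective orbifold $\mathscr{M}_{g,n}$. Identify $H^1(\on{Mod}_{g,n},\on{ad}(\rho))$ with the first orbifold cohomology $H^1(\mathscr{M}_{g,n},\on{ad}(\mathbb{V}_\rho))$ of the adjoint local system attached to $\rho$, and aim for a two-step program: first deform $\rho$ to an irreducible $\mathbb{C}$-VHS, then prove cohomological rigidity in the VHS case. By upper semi-continuity of $\dim H^1$ along the (real-analytic) deformation path in the character variety, rigidity of the VHS endpoint will propagate back to $\rho$, so it suffices to treat the VHS case.

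For the deformation step, I would apply Mochizuki's extension of Simpson's theorem (\autoref{theorem:deformation-to-vhs}) to smooth quasi-projective bases, producing an irreducible $\rho_0$ underlying a polarizable $\mathbb{C}$-VHS on $\mathscr{M}_{g,n}$ in the closure of the scaling orbit of $\rho$ in $M_{\text{Dol}}(\mathscr{M}_{g,n},r)$. The core substantive step is then to show that $\rho_0$ must in fact be unitary --- an analog of \autoref{corollary:unitary-c-vhs} in the higher-dimensional base. In the surface-group setting this came from combining \autoref{theorem:stability} with the positivity of the top piece of the Hodge filtration; on $\mathscr{M}_{g,n}$ I would try to pull $\rho_0$ back along a Kodaira--Parshin family (\autoref{example:kp-trick}) $q:\mathscr{Y}\to\mathscr{C}_{g,n}$, then restrict to a general fiber of the universal curve $\mathscr{C}_{g,n}\to\mathscr{M}_{g,n}$ so that the resulting $\mathbb{C}$-VHS lives on a curve of genus $\geq r^2$, at which point \autoref{corollary:unitary-c-vhs} forces unitarity. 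Unitarity would then propagate back to $\mathscr{M}_{g,n}$ using that the image of the base of the Kodaira--Parshin family in $\mathscr{M}_{g,n}$ is Zariski-dense.

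Granted unitarity of $\rho_0$, the final step is to deduce $H^1(\mathscr{M}_{g,n},\on{ad}(\rho_0))=0$ from a vanishing theorem for the cohomology of $\mathscr{M}_{g,n}$ with unitary coefficients, in the spirit of \autoref{theorem:vanishing} but now in degree one on $\mathscr{M}_{g,n}$ rather than in degree zero on the cohomology of a fibration. This is the \textbf{main obstacle}: specializing to trivial $\rho_0$ one recovers Ivanov's \autoref{conjecture:mod-ivanov} ($H^1(\on{Mod}_{g,n},\mathbb{C})=0$ for $g\geq 3$), which is already open, and one expects \autoref{conjecture:mod-rigidity} to be of comparable depth.

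A realistic partial attack on this obstacle is to reduce to the case in which $\rho_0$ factors through the symplectic quotient $\on{Mod}_{g,n}\to\on{Sp}_{2g}(\mathbb{Z})$: argue that the Torelli subgroup $\mathscr{I}_{g,n}$ acts through a finite quotient on any irreducible unitary $\mathbb{C}$-VHS, using the same non-abelian Hodge plus integrality ($\mathscr{O}_K$-descent via \cite[Lemma 7.2.1]{landesman2022geometric}) package as in the proof of \autoref{theorem:canonical-reps}, combined with what is known about the image of the Johnson homomorphism. Once $\rho_0$ factors through $\on{Sp}_{2g}(\mathbb{Z})$, Margulis--Matsushima vanishing applies to the arithmetic lattice $\on{Sp}_{2g}(\mathbb{Z})$ for $g\geq 2$ and yields the desired $H^1$-vanishing. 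Absent such a Torelli reduction, or some genuinely new input toward Ivanov's conjecture itself, the program stalls; that is where I expect the bulk of the work, and the decisive new ideas, to lie.
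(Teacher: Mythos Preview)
This statement is a \emph{conjecture} in the paper, not a theorem; the paper offers no proof, only motivation (the analogy with superrigidity for higher-rank lattices) and the observation that it would imply Ivanov's \autoref{conjecture:mod-ivanov}. So there is nothing to compare your attempt against---and, to your credit, you essentially recognize this, since your proposal terminates at ``the program stalls'' precisely where one would need a vanishing theorem for $H^1(\mathscr{M}_{g,n},-)$ with unitary coefficients that already subsumes Ivanov's open conjecture.

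Two further comments on the strategy itself. First, your Kodaira--Parshin step for unitarity is not set up correctly: the representation $\rho_0$ lives on $\mathscr{M}_{g,n}$, so its pullback to $\mathscr{C}_{g,n}$ is \emph{trivial} on each fiber of $\mathscr{C}_{g,n}\to\mathscr{M}_{g,n}$, and pulling back further along a KP family $q:\mathscr{Y}\to\mathscr{C}_{g,n}$ does not produce a local system to which \autoref{corollary:unitary-c-vhs} applies---that corollary needs a family whose classifying map to some $\mathscr{M}_{g',n'}$ is dominant, which KP families over $\mathscr{C}_{g,n}$ generally are not. Second, the ``upper semi-continuity'' justification for propagating rigidity from $\rho_0$ back to $\rho$ is the wrong mechanism: the correct argument (as in the proof sketch of \autoref{theorem:canonical-reps}) is simply that if $\rho_0$ is rigid then it is isolated in the character variety, so a continuous deformation from $\rho$ to $\rho_0$ forces $\rho=\rho_0$. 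Your overall outline---deform to a $\mathbb{C}$-VHS, argue unitarity, then seek $H^1$-vanishing---is a natural line of attack, but as you note, the last step is of comparable depth to the conjecture itself.
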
  
\begin{remark}
A variant of this conjecture is considered by Simpson in \cite[p.~14]{simpson2004construction}, who attributes it to Hain and Looijenga. Note that he suggests that there exist rigid but not cohomologically rigid irreducible representations of $\on{Mod}_{g,n}$, whose construction he attributes to Hain and Looijenga; according to Hain \cite{hain-personal}, this was a miscommunication, and no such representations are known to exist. Indeed, Simpson suggests that one may construct such representations as sub-objects of tensor powers of the standard representation $\on{Mod}_{g,n}\to \on{Sp}_{2g}$; but any irreducible such sub-object is in fact cohomologically rigid.
\end{remark}
Note that \autoref{conjecture:mod-rigidity} implies \autoref{conjecture:mod-ivanov}. Indeed, suppose $\Gamma\subset \on{Mod}_{g,n}$ is a finite index subgroup with $H^1(\Gamma, \mathbb{C})\neq 0$. Then $\Gamma$ admits a surjection onto $\mathbb{Z}$, and hence a non-trivial family of rank one representations $$\rho_t: \Gamma\twoheadrightarrow \mathbb{Z}\to \mathbb{C}^\times,$$ sending $1\in \mathbb{Z}$ to $t\in \mathbb{C}^\times$. The induced representations $\on{Ind}_\Gamma^{\on{Mod}_{g,n}}\rho_t$ form a non-trivial family of semisimple representations of $\on{Mod}_{g,n}$, hence for generic $t$ there is some summand of $\rho_t$ which is not rigid, contradicting \autoref{conjecture:mod-rigidity}.
\begin{remark}
	Note that the analogy between $\on{Mod}_{g,n}$ and lattices in higher-rank Lie groups is imperfect. For the latter, \emph{all} representations are rigid. On the other hand $\on{Mod}_{g,n}$ admits non-rigid reducible representations. See e.g.~\cite[Example 10.1.6]{landesman2022canonical} for examples.
\end{remark}
This conjecture is motivated in part by an explanation of the ubiquity of ``hidden rigidity" in the arguments of \autoref{section:matrices} and \autoref{section:canonical}. 
And a positive answer would have a pleasant consequence towards the conjectures discussed in \autoref{subsection:non-abelian-cohomology}. 
\begin{proposition}\label{proposition:hodge-to-arithmetic-rigidity}
Let $X$ be a very general curve of genus $g\geq 3$. Let $\mathbb{V}$ be an irreducible local system on $X$ which underlies a $\mathbb{Z}$-variation of Hodge structure. Assume \autoref{conjecture:mod-rigidity}. Then $\mathbb{V}\otimes \mathbb{Z}_\ell$ is arithmetic, in the sense of \autoref{example:non-abelian-Tate}.
\end{proposition}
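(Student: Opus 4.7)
The plan is to apply \autoref{proposition:hodge-to-arithmetic} to the universal curve $\mathscr{C}_g\to \mathscr{M}_g$ at the point $[X]$, and then use \autoref{conjecture:mod-rigidity} to upgrade the resulting \emph{geometric} finiteness statement to an \emph{arithmetic} one. The first step immediately yields that the isomorphism class $[\mathbb{V}_\ell]$ has finite orbit under $\pi_1^{\text{\'et}}(\mathscr{M}_{g,\bar{\mathbb{Q}}})\cong \widehat{\on{Mod}_g}$. Let $\Gamma\subset \pi_1^{\text{\'et}}(\mathscr{M}_{g,\bar{\mathbb{Q}}})$ be the finite-index stabilizer of $[\mathbb{V}_\ell]$, and let $\widetilde{\mathscr{M}}\to \mathscr{M}_{g,\bar{\mathbb{Q}}}$ be the associated finite \'etale cover, which is defined over some number field. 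After passing to a further finite \'etale cover if necessary (to kill the Schur-type obstruction coming from the center of $\on{GL}_r$), $\mathbb{V}_\ell$ lifts to an $\ell$-adic local system $\widetilde{\mathbb{V}}$ on $\widetilde{\mathscr{C}}=\widetilde{\mathscr{M}}\times_{\mathscr{M}_g}\mathscr{C}_g$ whose restriction to a geometric fiber recovers $\mathbb{V}_\ell$.

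The central claim is that $\widetilde{\mathbb{V}}$ is cohomologically rigid as a local system on $\widetilde{\mathscr{C}}$. Via the Leray spectral sequence for $\widetilde{\mathscr{C}}\to\widetilde{\mathscr{M}}$, and since irreducibility of $\mathbb{V}_\ell$ on fibers forces $\pi_*\on{ad}(\widetilde{\mathbb{V}})=0$, the vanishing $H^1(\widetilde{\mathscr{C}}, \on{ad}(\widetilde{\mathbb{V}}))=0$ reduces to the vanishing of $\Gamma$-invariants
$$H^1(X,\on{ad}(\mathbb{V}_\ell))^{\Gamma}=0.$$
This is the step where \autoref{conjecture:mod-rigidity} enters: any $\Gamma$-invariant class in $H^1(X,\on{ad}(\mathbb{V}_\ell))$ would produce a nontrivial first-order deformation of $\widetilde{\mathbb{V}}$ and (by integrating and projecting through the Birman exact sequence) an irreducible representation of a finite-index subgroup of $\on{Mod}_g$ with nonzero $H^1$ in its adjoint, contradicting mod-rigidity.

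Granted the cohomological rigidity of $\widetilde{\mathbb{V}}$, the integrality theorem of Esnault-Groechenig \cite[Theorem 1.1]{esnault2018cohomologically} shows that $\widetilde{\mathbb{V}}$ is defined over the ring of integers of a number field $L$. Combined with Simpson's theorem that rigid local systems underlie $\overline{\mathbb{Q}}$-variations of Hodge structure, and Deligne's finiteness theorem for $\mathbb{Z}$-variations of Hodge structure of a given rank, the set of isomorphism classes of cohomologically rigid rank-$r$ $\ell$-adic local systems on $\widetilde{\mathscr{C}}$ is finite. The absolute Galois group of any finitely generated field $K\subset \mathbb{C}$ over which $\widetilde{\mathscr{C}}$ descends permutes this finite set, so $[\widetilde{\mathbb{V}}]$ has finite $\on{Gal}(\bar{K}/K)$-orbit; restricting to the fiber $X$ gives finite Galois orbit for $[\mathbb{V}_\ell]$, i.e., arithmeticity in the sense of \autoref{example:non-abelian-Tate}.

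The main obstacle is the use of \autoref{conjecture:mod-rigidity} to establish $H^1(X,\on{ad}(\mathbb{V}_\ell))^\Gamma=0$. The conjecture as stated controls $H^1$ of the mapping class group with coefficients in the adjoint of an irreducible representation (i.e., forbids deformations of such representations), whereas what is needed is the vanishing of $H^0$ of $\Gamma$ with different coefficients. Bridging this gap is genuinely subtle: one hopes that a hypothetical $\Gamma$-invariant vector integrates (formally) to a family of $\Gamma$-fixed representations, and that one may extract from this family an irreducible representation of a finite-index subgroup of $\on{Mod}_g$ with nontrivial adjoint cohomology---contradicting mod-rigidity applied to its induction to $\on{Mod}_g$. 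Whether this bridging argument goes through using only the stated form of \autoref{conjecture:mod-rigidity}, or requires a mild strengthening (perhaps in conjunction with \autoref{conjecture:mod-ivanov} to dispense with trivial-type summands on finite-index subgroups), is the heart of the matter.
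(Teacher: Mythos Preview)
Your overall strategy matches the paper's, but you take several detours that obscure a short argument, and your stated ``main obstacle'' dissolves once you use the right group.

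The key simplification you are missing is that $\pi_1(\mathscr{C}_g)=\on{Mod}_{g,1}$, so your $\widetilde{\mathbb{V}}$ is literally a representation of a finite-index subgroup $\Gamma'\subset\on{Mod}_{g,1}$. There is no need to pass through the Leray spectral sequence and reduce to $H^1(X,\on{ad}(\mathbb{V}_\ell))^{\Gamma}=0$; the quantity $H^1(\widetilde{\mathscr{C}},\on{ad}(\widetilde{\mathbb{V}}))$ already \emph{is} $H^1(\Gamma',\on{ad}(\widetilde{\mathbb{V}}))$. Your ``bridging argument'' is then just the induction trick the paper uses to deduce \autoref{conjecture:mod-ivanov} from \autoref{conjecture:mod-rigidity}: if $\widetilde{\mathbb{V}}$ deformed nontrivially, so would $\on{Ind}_{\Gamma'}^{\on{Mod}_{g,1}}\widetilde{\mathbb{V}}$, and some irreducible summand of the latter would violate \autoref{conjecture:mod-rigidity} (which is stated for all $\on{Mod}_{g,n}$, including $n=1$). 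So no strengthening of the conjecture is needed.

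Two further points where the paper is cleaner. First, rather than passing to a further cover to kill the Schur obstruction (which you have not justified), the paper works with the \emph{projective} local system $\mathbb{P}\mathbb{V}$ throughout, establishes that it is rigid hence arithmetic, and only at the end lifts arithmeticity to $\mathbb{V}$ using that $\det\mathbb{V}$ has finite order (as $\mathbb{V}$ is a $\mathbb{Z}$-VHS). Second, once you know the extended local system is rigid, arithmeticity is immediate: there are finitely many rigid local systems of a given rank, and the absolute Galois group of any field of definition permutes them. Invoking Esnault--Groechenig integrality, Simpson's theorem, and Deligne's finiteness is unnecessary for this step.
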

\begin{proof}
	The proof is closely related to that of \autoref{proposition:hodge-to-arithmetic}. As in the proof of that theorem, the hypotheses of \autoref{corollary:infinitesimal-criterion} are satisfied (taking $\mathscr{X}\to S$ to be $\mathscr{C}_g\to \mathscr{M}_g$). Hence the isomorphism class of $\mathbb{V}$ has finite orbit under $\on{Mod}_g=\pi_1(\mathscr{M}_g)$. By \cite[Lemma 2.2.2]{landesman2022canonical}, there exists a subgroup $\Gamma\subset \pi_1(\mathscr{C}_g)$ containing $\pi_1(X)$, and a projective representation of $\Gamma$ whose restriction to $\pi_1(X)$ has monodromy that of $\mathbb{P}\mathbb{V}$. By \autoref{conjecture:mod-rigidity} this projective local system is rigid, hence arithmetic.
	
	Thus $\mathbb{PV}$ is arithmetic. As it necessarily has finite determinant (as it is a $\mathbb{Z}$-VHS), $\mathbb{V}$ is itself arithmetic. 
\end{proof}
Moreover, \autoref{conjecture:mod-rigidity} would imply a different sort of classification of finite orbits of $\on{Mod}_{g,n}$ on the character varieties $Y(g,n,r)$ to \autoref{conjecture:precisification}, conditional on \autoref{conjecture:fontaine-mazur}, by an analogous argument:
\begin{conjecture}\label{conjecture:mcg-finite-geometric-origin}
Suppose $g\geq 3$. Let $$\rho: \pi_1(\Sigma_{g,n})\to \on{GL}_r(\mathbb{C})$$ be an irreducible representation whose conjugacy class has finite orbit under $\on{Mod}_{g,n}$. Then for \emph{any} complex structure on $\Sigma_{g,n}$, the local system associated to $\rho$ is of geometric origin.
\end{conjecture}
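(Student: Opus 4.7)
The plan is to adapt the proof of \autoref{proposition:hodge-to-arithmetic-rigidity}, with an additional invocation of \autoref{conjecture:fontaine-mazur} at the end to upgrade arithmeticity to geometric origin. The structure is: finite orbit $\Rightarrow$ projective extension over the universal curve $\Rightarrow$ cohomological rigidity (using \autoref{conjecture:mod-rigidity}) $\Rightarrow$ arithmeticity $\Rightarrow$ geometric origin (using \autoref{conjecture:fontaine-mazur}).

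Since $[\rho]$ has finite $\on{Mod}_{g,n}$-orbit, some finite-index subgroup $\Gamma \subset \on{PMod}_{g,n}$ fixes $[\rho]$. By \cite[Lemma 2.2.2]{landesman2022canonical}, after passing to the corresponding finite étale cover $\widetilde{\mathscr{C}}_{g,n} \to \mathscr{C}_{g,n}$, the projective local system $\mathbb{P}\rho$ extends to a projective representation $\tilde\rho : \pi_1(\widetilde{\mathscr{C}}_{g,n}) \to \on{PGL}_r(\mathbb{C})$ whose restriction to any fiber of the forgetful map $\widetilde{\mathscr{C}}_{g,n} \to \widetilde{\mathscr{M}}_{g,n}$ recovers $\mathbb{P}\rho$. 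The Birman exact sequence identifies $\pi_1(\mathscr{C}_{g,n})$ with $\on{PMod}_{g,n+1}$ (up to orbifold issues), so $\pi_1(\widetilde{\mathscr{C}}_{g,n})$ sits as a finite-index subgroup of $\on{Mod}_{g,n+1}$. The hypothesis $g \geq 3$ then brings \autoref{conjecture:mod-rigidity} to bear on $\tilde\rho$---or, to be precise, on its adjoint linear representation on $\mathfrak{pgl}_r$, which is irreducible because $\rho$ is---yielding cohomological rigidity: $H^1(\pi_1(\widetilde{\mathscr{C}}_{g,n}), \on{ad}\tilde\rho) = 0$.

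Rigidity immediately implies arithmeticity, since the set of rigid points of $M_B(\widetilde{\mathscr{C}}_{g,n}, r)$ is finite and stable under the natural Galois action coming from any finitely-generated model of $\widetilde{\mathscr{C}}_{g,n}$. Combined with the fact that cohomologically rigid local systems are defined over the ring of integers of a number field, we may form an $\ell$-adic realization $\tilde\rho_\ell$; applying \autoref{conjecture:fontaine-mazur} to this irreducible arithmetic $\ell$-adic representation shows that $\tilde\rho$ is of geometric origin on $\widetilde{\mathscr{C}}_{g,n}$. Restricting to the fiber over any point of $\widetilde{\mathscr{M}}_{g,n}$---which realizes an arbitrary complex structure on $\Sigma_{g,n}$---and then lifting from projective to linear (by twisting by a finite-order rank-one local system and passing to a further finite cover to kill the $H^2(-, \mathbb{G}_m)$-obstruction, both of which preserve the property of being of geometric origin) exhibits $\rho$ itself as of geometric origin for every complex structure on $\Sigma_{g,n}$.

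The principal obstacle is, unavoidably, \autoref{conjecture:mod-rigidity} itself: the entire argument hinges on converting the purely topological finite-orbit condition for $[\rho]$ into cohomological rigidity for the total-space extension $\tilde\rho$, and this is exactly what the conjecture supplies. A secondary, essentially bookkeeping, point is verifying that \autoref{conjecture:mod-rigidity}---stated for linear representations of $\on{Mod}_{g,n}$---may be applied in the present setting: the passage to finite-index subgroups of $\on{Mod}_{g,n+1}$ and to projective representations via the adjoint is routine, using that $H^1(\pi_1, \mathfrak{pgl}_r)$ occurs as a direct summand of $H^1(\pi_1, \on{End}(\mathfrak{pgl}_r))$. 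Once \autoref{conjecture:mod-rigidity} is granted, the remainder of the argument is essentially formal.
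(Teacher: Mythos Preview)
Your proposal matches the paper's intended argument. The paper does not give a written proof of this conjecture; it states immediately before it that the conjecture follows from \autoref{conjecture:mod-rigidity} together with \autoref{conjecture:fontaine-mazur} ``by an analogous argument'' to the proof of \autoref{proposition:hodge-to-arithmetic-rigidity}, and your sketch is precisely that analogous argument: extend $\mathbb{P}\rho$ over a finite cover of $\mathscr{C}_{g,n}$ via \cite[Lemma 2.2.2]{landesman2022canonical}, invoke \autoref{conjecture:mod-rigidity} to obtain rigidity (hence arithmeticity), and then apply \autoref{conjecture:fontaine-mazur}.

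One technical remark. Your claim that the adjoint representation on $\mathfrak{pgl}_r$ is irreducible ``because $\rho$ is'' is not correct in general: an irreducible $\rho$ can have reducible adjoint (already for $r=2$ with dihedral image). Your fallback, that $\mathfrak{pgl}_r$ is a direct summand of $\on{End}(\mathfrak{pgl}_r)$ as a $\Gamma$-module, is in fact true---the Lie bracket $\mathfrak{pgl}_r\otimes\mathfrak{pgl}_r\to\mathfrak{pgl}_r$ is $\Gamma$-equivariant and, combined with the Killing form, yields an equivariant splitting---but you should not route this through irreducibility. The cleaner move, and the one the paper implicitly makes in the proof of \autoref{proposition:hodge-to-arithmetic-rigidity}, is simply to read \autoref{conjecture:mod-rigidity} as applying to irreducible \emph{projective} representations of finite-index subgroups of $\on{Mod}_{g,n+1}$; this mild extension follows from the linear statement by inducing up and decomposing.
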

While the above conjecture is likely out of reach---we have few methods to prove that some abstract local system is of geometric origin---the following prediction might be approachable:
\begin{conjecture}\label{conjecture:finite-fixed-points}
	Let $g\geq 3$, and $\Gamma\subset\on{Mod}_{g,n}$ a finite index subgroup. Then $\Gamma$ acts on $Y(g,n,r)^{\text{irr}}$ with only finitely many fixed points. All such fixed points correspond to local systems defined over the ring of integers of some number field $\mathscr{O}_K$.
\end{conjecture}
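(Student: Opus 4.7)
The plan is to spread out the fixed-point problem to a family of curves, reduce it to cohomological rigidity of a total-space local system, and invoke Esnault-Groechenig's integrality theorem. In the range $g\geq r^2$ the conclusion of \autoref{theorem:canonical-reps} already pins every $\Gamma$-fixed irreducible $\rho\colon\pi_1(\Sigma_{g,n})\to\on{GL}_r(\mathbb{C})$ down to a finite-image representation; the spreading-out argument below, combined with unitarity of $\on{ad}^0(\rho)$ for such $\rho$ and the vanishing theorem \autoref{theorem:vanishing} applied in rank $r^2-1<g$, then gives both isolation and integrality. The content of the conjecture thus concentrates in the range $3\leq g<r^2$.

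For the reduction, given a $\Gamma$-fixed irreducible $\rho$, the argument of \cite[Lemma 2.2.2]{landesman2022canonical} applied to the finite \'etale cover $\mathscr{M}\to\mathscr{M}_{g,n}$ corresponding to $\Gamma$ produces a projective local system $\mathbb{P}\mathbb{V}$ on $\mathscr{X}:=\mathscr{M}\times_{\mathscr{M}_{g,n}}\mathscr{C}_{g,n}$ restricting to $\mathbb{P}\rho$ on a fiber. Irreducibility of $\rho$ forces $\pi_*\on{ad}^0(\mathbb{V})=0$, so the Leray spectral sequence for $\pi\colon\mathscr{X}\to\mathscr{M}$ yields an injection of $H^1(\mathscr{X},\on{ad}^0(\mathbb{V}))$ into $H^0(\mathscr{M}, R^1\pi_*\on{ad}^0(\mathbb{V}))$, which is canonically identified with $H^1(\pi_1(\Sigma_{g,n}),\on{ad}^0(\rho))^\Gamma$, the tangent space at $[\rho]$ to the $\Gamma$-fixed locus in $Y(g,n,r)^{\text{irr}}$. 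Isolation of $[\rho]$ as a $\Gamma$-fixed point is thus equivalent to cohomological rigidity of $\mathbb{P}\mathbb{V}$ on $\mathscr{X}$; once this is established, \cite[Theorem 1.1]{esnault2018cohomologically} combined with the extension to projective local systems of Klevdal-Patrikis \cite{klevdal2022G} shows that $\mathbb{P}\mathbb{V}$, and hence $\rho$ (after a finite abelian twist), is defined over $\mathscr{O}_K$ for some number field $K$. Finiteness of the full fixed-point set then follows because $M_B(\mathscr{X}, r)$ is quasi-projective (by Simpson) and hence has only finitely many isolated points.

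The principal obstacle is to prove the vanishing $H^0(\mathscr{M}, R^1\pi_*\on{ad}^0(\mathbb{V}))=0$ in the range $3\leq g<r^2$. The Hodge-theoretic route via deformation of $\mathbb{V}$ to a $\mathbb{C}$-VHS (Mochizuki \cite[Th.~10.5]{mochizuki2006kobayashi}) and the vanishing theorem \autoref{theorem:vanishing} succeeds only when $\on{rk}\on{ad}^0=r^2-1<g$ \emph{and} when the deformed VHS is unitary on fibers; these conditions together, via \autoref{corollary:unitary-c-vhs}, essentially amount to $g\geq r^2$, the regime already covered by \autoref{theorem:canonical-reps}. The hard part in the low-genus regime will be to control non-unitary $\mathbb{C}$-VHS arising from the deformation, on which the current machinery gives no handle. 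The most natural way forward is to invoke \autoref{conjecture:mod-rigidity}: a $\Gamma$-fixed $\rho$ furnishes, via the Birman exact sequence, a projective representation of a finite-index subgroup of $\on{Mod}_{g,n}$, which the conjecture asserts is cohomologically rigid, whence integrality and isolation follow at the level of $\on{Mod}_{g,n}$. An unconditional attack would likely have to combine the $p$-curvature framework of \autoref{section:p-curvature}, which arithmetically constrains algebraic leaves of the isomonodromy foliation, with density of $\mathbb{Z}$-VHS loci as in \autoref{question:dense-subloci}(1) to force $\Gamma$-fixed points to concentrate on zero-dimensional integral strata.
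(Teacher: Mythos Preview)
The statement you are addressing is a \emph{conjecture} in the paper, not a theorem; the paper offers no proof. The only remark the paper makes is that the conjecture ``follows immediately in the regime where \autoref{theorem:canonical-reps} applies,'' i.e.\ for $g\geq r^2$, and elsewhere (in \autoref{proposition:hodge-to-arithmetic-rigidity} and the surrounding discussion) that \autoref{conjecture:mod-rigidity} would supply the missing rigidity input. Your write-up is therefore not being compared against a proof but against a statement the paper leaves open.

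Read on those terms, your proposal is an accurate and well-organized account of the status of the problem, and it matches the paper's own viewpoint closely. You correctly isolate the spreading-out step (\cite[Lemma 2.2.2]{landesman2022canonical}), the Leray identification of $H^0(\mathscr{M},R^1\pi_*\on{ad}^0(\mathbb{V}))$ with the tangent space to the $\Gamma$-fixed locus, the role of Esnault--Groechenig/Klevdal--Patrikis for integrality once cohomological rigidity is in hand, and the fact that the genuine content lies in the range $3\leq g<r^2$, where you candidly say the current machinery gives no handle. Your conditional route via \autoref{conjecture:mod-rigidity} is exactly the one the paper sketches in the proof of \autoref{proposition:hodge-to-arithmetic-rigidity}. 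One small slip: the Birman exact sequence produces a projective representation of a finite-index subgroup of $\on{Mod}_{g,n+1}\simeq\pi_1(\mathscr{C}_{g,n})$, not of $\on{Mod}_{g,n}$; this is the group to which \autoref{conjecture:mod-rigidity} would be applied.

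In short: there is no proof in the paper to compare to, and your proposal is best read as a correct summary of the heuristics and obstructions rather than a proof attempt.
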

Of course both conjectures above follow immediately in the regime where \autoref{theorem:canonical-reps} applies.
\begin{remark}
It may be instructive to compare the statement of \autoref{conjecture:mod-rigidity} to the classification of representations of $\on{Sp}_{2g}(\mathbb{Z}), g\geq 2,$ which follows from e.g.~superrigidity. Not only are such representations rigid---in fact, they all factor through a continuous representation of $\on{Sp}_{2g}(\mathbb{C})\times\on{Sp}_{2g}(\widehat{\mathbb{Z}})$ via the natural (diagonal) embedding $$\on{Sp}_{2g}(\mathbb{Z})\hookrightarrow \on{Sp}_{2g}(\mathbb{C})\times\on{Sp}_{2g}(\widehat{\mathbb{Z}}).$$ Any continuous complex representation of the factor $\on{Sp}_{2g}(\widehat{\mathbb{Z}})$ has finite image.

Now $\on{Sp}_{2g}(\mathbb{Z})$ is the fundamental group of the moduli stack $\mathscr{A}_g$ of principally polarized Abelian varieties of dimension $g$. Thus superrigidity, along with the classification of representations of $\on{Sp}_{2g}(\mathbb{C})$ (in particular, they are all summands of tensor powers of the standard representation), tells us that if $\mathbb{V}$ is any local system on $\mathscr{A}_g$, there exists a finite \'etale cover $\pi: \mathscr{A}'\to\mathscr{A}_g$, and some $n\geq 0$, such that $\pi^*\mathbb{V}\subset \pi^*\mathbb{W}_{\text{taut}}^{\otimes n}$, where $\mathbb{W}_{\text{taut}}$ is the tautological local sytem on $\mathscr{A}_{g}$.

How does this compare to \autoref{conjecture:mod-rigidity}? If one accepts in addition \autoref{conjecture:rigid-to-motivic}, \autoref{conjecture:mod-rigidity} tells us that irreducible local systems on $\mathscr{M}_{g,n}$ should be of geometric origin---in particular, they should be pulled back from a period domain.
\end{remark}

\subsection{The Putman-Wieland conjecture}
Most of our evidence for \autoref{conjecture:mod-rigidity} and \autoref{conjecture:mod-ivanov} comes from work towards a conjecture of Putman and Wieland, which is essentially equivalent to \autoref{conjecture:mod-ivanov}. We spend the next few sections on a digression about this conjecture and related ``big monodromy" conjectures, and their relationships to questions about vector bundles on curves, before returning to our ``non-abelian" study of the $\on{Mod}_{g,n}$-action on $Y(g,n,r)$ and attempting to make sense of the notion big monodromy there.

	Let $\Sigma_{g,n}$ be an orientable surface of genus $g$ with $n$ punctures. Fixing a base-point $x_0$ in $\Sigma_{g,n}$, there is a natural action of $\on{Mod}_{g,n+1}$ on $\pi_1(\Sigma_{g,n}, x_0)$; hence if $\Sigma_{g'}\to \Sigma_g$ is a cover branched at $n$ points, a finite index subgroup of $\on{Mod}_{g,n+1}$ naturally acts on $H_1(\Sigma_{g'}, \mathbb{Z})$. Indeed, let $\Sigma_{g',n'}$ be the complement of the ramification points in $\Sigma_{g'}$; then $\pi_1(\Sigma_{g',n'})$ is a subgroup of $\pi_1(\Sigma_{g,n})$, and hence admits a natural action by its stabilizer $\Gamma$ in $\on{Mod}_{g,n+1}$, a finite index subgroup. Hence $\Gamma$ acts naturally on $H_1(\Sigma_{g',n'}, \mathbb{Z})$, its abelianization, and one can check that this action descends to the quotient $H_1(\Sigma_{g'}, \mathbb{Z})$.
\begin{conjecture}[{\cite[Conjecture 1.2]{putman2013abelian}}] \label{conjecture:pw}
Suppose $g\geq 3$ and $n\geq 0$. Then there is no non-zero vector in $H_1(\Sigma_{g'}, \mathbb{Z})$ with finite $\Gamma$-orbit, under the action of $\Gamma$ on $H_1(\Sigma_{g'},\mathbb{Z})$ described above.
\end{conjecture}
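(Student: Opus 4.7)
My plan is to translate the conjecture into a vanishing statement for monodromy invariants and then appeal to \autoref{theorem:vanishing}. First I would reduce to Galois covers: given $p \colon \Sigma_{g'} \to \Sigma_g$ branched at the $n$ marked points, pass to its Galois closure $\widetilde{p} \colon \Sigma_{g''} \to \Sigma_g$ with deck group $G$ and $\Sigma_{g'} = \Sigma_{g''}/H$ for some $H \subset G$. Replacing $\Gamma$ by the further finite-index subgroup stabilizing $\widetilde{p}$ and commuting with $G$, the identification $H_1(\Sigma_{g'}, \mathbb{C}) = H_1(\Sigma_{g''}, \mathbb{C})^H$ as a direct summand shows it suffices to prove $H^1(\Sigma_{g''}, \mathbb{C})^{\Gamma'} = 0$ for every finite-index $\Gamma' \subset \Gamma$ (a nonzero finite-orbit vector in $H_1$ would produce a nonzero $\Gamma'$-invariant vector in $H^1$ by Poincaré duality after passing to its stabilizer). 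Since $H^1(\Sigma_{g''}, \mathbb{C}) \hookrightarrow H^1(\Sigma_{g''}^\circ, \mathbb{C})$ where $\Sigma_{g''}^\circ$ is the punctured cover, it is in fact enough to handle the open-surface cohomology.

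Next I would decompose by irreducible $G$-characters. Writing $\mathbb{V}_\chi$ for the unitary local system on $\Sigma_{g,n}$ attached to an irreducible complex $G$-representation $\chi$, the standard $G \times \Gamma'$-equivariant identification
\[
H^1(\Sigma_{g''}^\circ, \mathbb{C}) \;=\; \bigoplus_{\chi} V_\chi \otimes H^1(\Sigma_{g,n}, \mathbb{V}_\chi)
\]
reduces the problem to showing $H^1(\Sigma_{g,n}, \mathbb{V}_\chi)^{\Gamma'} = 0$ for every nontrivial $\chi$. By \autoref{proposition:AG-interpretation-higher-genus} applied to a suitable family over a finite étale cover $\mathscr{M}'$ of the moduli space of $n$-pointed genus $g$ curves, this $\Gamma'$-action is identified with the monodromy action on $R^1 \pi_* \mathbb{V}_\chi$, so its invariants are exactly $H^0(\mathscr{M}', R^1 \pi_* \mathbb{V}_\chi)$. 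This vanishes by \autoref{theorem:vanishing} as soon as $\on{rk}(\mathbb{V}_\chi) = \dim \chi < g$, giving the conjecture for every cover $\widetilde{p}$ such that all nontrivial irreducible $G$-representations have dimension less than $g$.

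The hard part will be irreducible $\chi$ with $\dim \chi \geq g$; these arise unavoidably as $|G|$ grows, and \autoref{theorem:vanishing} in its present form does not cover them. The most promising route seems to be to exploit that $\mathbb{V}_\chi$ has finite monodromy, and that $R^1 \pi_* \mathbb{V}_\chi$ is a direct summand of the cohomology of a Kodaira--Parshin family (\autoref{example:kp-trick}) and hence underlies a polarizable $\mathbb{Z}$-variation of Hodge structure. One could then try to combine the non-abelian Hodge-theoretic techniques used to prove \autoref{theorem:canonical-reps}---deforming any $\Gamma'$-invariant class to an irreducible local system carrying a VHS, forcing unitarity via \autoref{corollary:unitary-c-vhs}, and invoking Esnault--Groechenig-style integrality---with an inductive descent through intermediate subcovers $\widetilde{p}$ factors through, in order to effectively reduce to the case $\dim \chi < g$. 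I expect removing the rank restriction to be the genuine source of difficulty separating a partial rank-bounded result from the full conjecture at $g \geq 3$; everything else in the argument should be essentially formal once the decomposition above is in place.
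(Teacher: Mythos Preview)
The statement you are attempting to prove is labeled as a \emph{conjecture} in the paper, and the paper does not claim a proof of it; it remains open in full generality. What the paper does prove is the partial result \autoref{theorem:PW-big-g}: the Putman--Wieland conjecture holds for any Galois $H$-cover such that every irreducible representation of $H$ has dimension less than $g$. Your first two paragraphs recover exactly this, by essentially the same route the paper takes (passing to the Galois closure, decomposing $H^1$ into isotypic pieces indexed by irreducible characters, and applying \autoref{theorem:vanishing} to each unitary local system $\mathbb{V}_\chi$ with $\dim\chi<g$). So up through the rank-bounded case your approach and the paper's coincide.

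Your third paragraph is not a proof but a program, and you correctly identify the obstruction: \autoref{theorem:vanishing} requires $\on{rk}(\mathbb{V})<g$, and for large $|G|$ there are always irreducible $\chi$ with $\dim\chi\geq g$. Removing this rank restriction is precisely the open content of \autoref{conjecture:pw}. The specific strategy you sketch, however, is not well-posed: the input to the machinery of \autoref{theorem:canonical-reps} (deformation to a $\mathbb{C}$-VHS, \autoref{corollary:unitary-c-vhs}, Esnault--Groechenig integrality) is a \emph{local system} on a family of curves, whereas a $\Gamma'$-invariant vector in $H^1(\Sigma_{g,n},\mathbb{V}_\chi)$ is a cohomology class, and there is no mechanism for ``deforming an invariant class to an irreducible local system.'' The paper's own approach to going beyond the rank bound is different and still incomplete: it replaces the numerical hypothesis $\on{rk}<g$ by a generic-global-generation hypothesis on the associated vector bundle (\autoref{proposition:ggg-vanishing}, \autoref{conjecture:ggg}), which is itself conjectural for $g\geq 3$ without rank restriction. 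In short, your proposal reproduces the known partial result but does not close the gap, and the speculative last step would need to be replaced by something like a proof of \autoref{conjecture:ggg}.
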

We refer to \autoref{conjecture:pw} for fixed $(g,n)$ as $\on{PW}(g,n)$.
\begin{remark}
	Putman and Wieland show \cite[Theorem C]{putman2013abelian} that the conjecture $\on{PW}(g-1,n+1)$ implies \autoref{conjecture:mod-ivanov}  for $\on{Mod}_{g,n}$.
\end{remark}
\begin{remark}\label{remark:markovic-pw}
	Putman and Wieland originally made conjecture \autoref{conjecture:pw} for all $g\geq 2$. However Markovi\'c observed \cite{markovic2022unramified} that there are counterexamples in genus $2$, using a beautiful construction of Bogomolov and Tschinkel \cite{bogomolov2002unramified}.
\end{remark}
A number of cases of the Putman-Wieland conjecture have been verified by topological means---see e.g.~\cite{looijenga1997prym, grunewald2015arithmetic,  looijenga2021arithmetic, boggi2023generating} and the references therein. But much recent progress has been algebro-geometric in nature, as we now explain.

\subsubsection{Algebro-geometric evidence}
This conjecture has a simple algebro-geometric description. Let $\varphi: \Sigma_{g'}\to \Sigma_g$ be a finite cover branched at $n$ points, and let $\mathscr{M}_\varphi$ be the moduli stack of complex structures on $\Sigma_{g'}, \Sigma_g$ compatible with $\varphi$, so that we have a diagram
\begin{equation}\label{equation:versal-family}
\xymatrix{
\mathscr{X}\ar[r]^p \ar[rd]_\pi & \mathscr{C}\ar[d]^q\ar[r]   \ar@{}[dr] | {\square}& \mathscr{C}_{g,n}\ar[d]  \\
& \mathscr{M}_{\varphi}\ar[r] & \mathscr{M}_{g,n}
}\end{equation}
where for each $m\in \mathscr{M}_\varphi$, the fiber of $$p_m: \mathscr{X}_m\to \mathscr{C}_m$$ over $m$ is a holomorphic map from a genus $g'$ curve to a genus $g$ curve which is topologically the same as $\varphi$. Here the map $\mathscr{M}_\varphi\to \mathscr{M}_{g,n}$ classifies the family of curves $\mathscr{C}/\mathscr{M}_\varphi$, so the square on the right is Cartesian.

The Putman-Wieland conjecture may be rephrased as follows:
\begin{conjecture}[Algebro-geometric rephrasing of Putman-Wieland]\label{conjecture:pw-ag}
	Suppose $g\geq 3$. Then for any \'etale map $f: \mathscr{M}\to \mathscr{M}_\varphi$, $$H^0(\mathscr{M}, f^*R^1\pi_*\mathbb{C})=0.$$
\end{conjecture}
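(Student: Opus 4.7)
The plan is to show that \autoref{conjecture:pw-ag} is \emph{logically equivalent} to \autoref{conjecture:pw}, by identifying the monodromy of $R^1\pi_*\mathbb{C}$ on $\mathscr{M}_\varphi$ with the topological action of (a finite-index subgroup of) the mapping class group on $H^1(\Sigma_{g'},\mathbb{C})$. Since \autoref{conjecture:pw-ag} is introduced as a rephrasing, this equivalence is the entire content of the statement.

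First, I would identify the orbifold fundamental group of $\mathscr{M}_\varphi$. The cover $\varphi: \Sigma_{g'} \to \Sigma_g$ branched at $n$ points is determined, up to isotopy, by the conjugacy class of a finite-index subgroup $H\subset \pi_1(\Sigma_{g,n})$ (together with ramification data). A point of $\mathscr{M}_\varphi$ is a complex structure on $\Sigma_{g,n}$ equipped with the lifted complex structure on the corresponding cover, so $\mathscr{M}_\varphi\to \mathscr{M}_{g,n}$ is the (finite) étale cover classifying those elements of $\on{PMod}_{g,n}$ whose outer action on $\pi_1(\Sigma_{g,n})$ preserves the conjugacy class of $H$. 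Via the Birman exact sequence $1\to \pi_1(\Sigma_{g,n})\to \on{PMod}_{g,n+1}\to \on{PMod}_{g,n}\to 1$, the stabilizer $\Gamma\subset \on{PMod}_{g,n+1}$ appearing in the original Putman--Wieland statement maps onto $\pi_1(\mathscr{M}_\varphi)$. The kernel is generated by loops in the fiber $\Sigma_{g,n}$, which are homotopic into $\Sigma_{g',n'}$ and hence act trivially on $H_1(\Sigma_{g'})$ (since inner automorphisms act trivially on homology). So the Putman--Wieland action of $\Gamma$ on $H_1(\Sigma_{g'},\mathbb{Z})$ factors through $\pi_1(\mathscr{M}_\varphi)$ and coincides with the monodromy of $R^1\pi_*\mathbb{Z}$.

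Second, I would invoke the standard identification, for any local system $\mathbb{V}$ on a connected (orbifold) space $X$, $H^0(X,\mathbb{V})= \mathbb{V}_{x_0}^{\pi_1(X,x_0)}$. Applied to a finite étale cover $f: \mathscr{M}\to \mathscr{M}_\varphi$, this gives
\[
H^0(\mathscr{M},\, f^*R^1\pi_*\mathbb{C}) \;=\; H^1(\Sigma_{g'},\mathbb{C})^{f_*\pi_1(\mathscr{M})},
\]
where $f_*\pi_1(\mathscr{M})$ ranges over all finite-index subgroups of $\pi_1(\mathscr{M}_\varphi)$. Vanishing of this invariant space for every such $f$ is therefore equivalent to the nonexistence of a nonzero vector in $H^1(\Sigma_{g'},\mathbb{C})$ whose $\pi_1(\mathscr{M}_\varphi)$-orbit (equivalently $\Gamma$-orbit, by the previous paragraph) is finite.

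Finally, it remains to pass between $H_1(\Sigma_{g'},\mathbb{Z})$ and $H^1(\Sigma_{g'},\mathbb{C})$. The $\Gamma$-action preserves the perfect intersection pairing, so a nonzero finite-orbit vector exists in $H_1(\Sigma_{g'},\mathbb{Z})$ iff one exists in its dual $H^1(\Sigma_{g'},\mathbb{Z})$, and clearing denominators and tensoring with $\mathbb{C}$ shows this is equivalent to the existence of a nonzero finite-orbit vector in $H^1(\Sigma_{g'},\mathbb{C})$ (using that $\Gamma$ acts $\mathbb{Z}$-linearly on a finite-rank lattice, so any finite complex orbit produces a finite rational, hence integral, orbit after rescaling). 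Combining with the previous paragraphs yields the equivalence. I expect the main technical nuisance to lie in bookkeeping the basepoint --- specifically, reconciling $\on{PMod}_{g,n+1}$ with $\pi_1(\mathscr{M}_\varphi)$ through the Birman sequence and handling orbifold points of $\mathscr{M}_{g,n}$ correctly --- rather than in any substantial geometric input.
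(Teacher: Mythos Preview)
The paper does not actually prove the equivalence; it simply asserts the rephrasing and then remarks that it amounts to the relative Jacobian of $\pi$ having no isotrivial isogeny factor. Your outline is the correct and standard way to justify this, and the reduction to invariants under finite-index subgroups, together with the passage between $\mathbb{Z}$- and $\mathbb{C}$-coefficients, is fine.

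There is, however, one genuine slip. You claim that the kernel of $\Gamma\to\pi_1(\mathscr{M}_\varphi)$, namely the point-pushing elements lying in $\Gamma$, consists of loops ``homotopic into $\Sigma_{g',n'}$'' and hence acts trivially on $H_1(\Sigma_{g'})$. This is not right: a point-pushing element $\gamma\in\pi_1(\Sigma_{g,n})$ acts on $\pi_1(\Sigma_{g,n})$ by conjugation, so it lies in $\Gamma$ precisely when $\gamma\in N_{\pi_1(\Sigma_{g,n})}(H)$, not merely when $\gamma\in H$. Thus the kernel is $N(H)$, which acts on $H^{\mathrm{ab}}$ (and hence on $H_1(\Sigma_{g'})$) through the finite quotient $N(H)/H$; for a Galois cover this is exactly the deck group, which typically acts nontrivially. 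The fix is immediate: since this kernel acts through a \emph{finite} group, every vector already has finite orbit under it, so a vector has finite $\Gamma$-orbit if and only if it has finite orbit under the image of $\Gamma$ in $\on{PMod}_{g,n}$, and the rest of your argument goes through unchanged.
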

Another way to say this is: the relative Jacobian of $\pi$ has no isotrivial isogeny factor.

This reformulation may now be attacked via Hodge-theoretic methods, and in particular, via an analysis of the derivative of the period map associated to the local system $R^1\pi_*\mathbb{C}$. In fact, the following is more or less immediate from \autoref{theorem:vanishing}:
\begin{theorem}[{\cite[Corollary 7.2.3]{landesman2022canonical}}]\label{theorem:PW-big-g}
Let $H$ be a finite group such that each irreducible representation of $H$ has dimension less than $g$. Then the Putman-Wieland conjecture is true for any $H$-cover $\Sigma_{g'}\to \Sigma_g$.	
\end{theorem}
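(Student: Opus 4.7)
The plan is to decompose $R^1\pi_*\mathbb{C}$ into $H$-isotypic pieces and apply \autoref{theorem:vanishing} to each non-trivial summand. Write $p:\mathscr{X}\to\mathscr{C}$ and $q:\mathscr{C}\to\mathscr{M}_\varphi$ for the two factors of $\pi$ as in diagram \eqref{equation:versal-family}. Let $\mathscr{C}^\circ\subset\mathscr{C}$ be the complement of the relative branch divisor of $p$, so that $\mathscr{C}^\circ\to\mathscr{M}_\varphi$ is a family of $n$-punctured genus-$g$ curves whose classifying map to $\mathscr{M}_{g,n}$ is dominant (the Hurwitz forgetful map is finite étale onto its image). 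Let $j:\mathscr{C}^\circ\hookrightarrow\mathscr{C}$ and set $q^\circ:=q\circ j$. For each irreducible complex representation $\rho$ of $H$, the composition $\pi_1(\mathscr{C}^\circ_m)\twoheadrightarrow H\xrightarrow{\rho}\on{GL}(\rho)$ on each fiber $m$ assembles into a local system $\mathbb{V}_\rho$ on $\mathscr{C}^\circ$ of rank $\dim\rho$; because $\rho$ has finite image, $\mathbb{V}_\rho$ is unitary.

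Since $p$ is finite, $R^{i>0}p_*\mathbb{C}=0$ and thus $R^1\pi_*\mathbb{C}\cong R^1q_*(p_*\mathbb{C})$. Since $p_*\mathbb{C}|_{\mathscr{C}^\circ}\cong\bigoplus_\rho\mathbb{V}_\rho\otimes\rho^*$ is the regular-representation local system, and since $p_*\mathbb{C}\cong j_*(p_*\mathbb{C}|_{\mathscr{C}^\circ})$ (as $p$ is finite, hence the stalks at branch points match the invariants under inertia), taking $H$-isotypic components gives
\[
R^1\pi_*\mathbb{C}\;\cong\;\bigoplus_{\rho\in\widehat H}\bigl(R^1q_*\,j_*\mathbb{V}_\rho\bigr)\otimes\rho^*.
\]
Given any finite étale $f:\mathscr{M}\to\mathscr{M}_\varphi$, it suffices to prove that each pullback summand $f^*R^1q_*j_*\mathbb{V}_\rho$ has no non-zero global sections on $\mathscr{M}$.

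For the trivial representation, $j_*\mathbb{V}_{\mathrm{triv}}=\mathbb{C}_\mathscr{C}$, so the summand equals $R^1q_*\mathbb{C}_\mathscr{C}$: the first-cohomology local system of the unpunctured genus-$g$ family $\mathscr{C}\to\mathscr{M}_\varphi$. Its monodromy factors through $\pi_1(\mathscr{M}_\varphi)\to\on{Mod}_g\to\on{Sp}_{2g}(\mathbb{Z})$, and since the Hurwitz map $\mathscr{M}_\varphi\to\mathscr{M}_g$ is finite étale onto its image the monodromy group has finite index in $\on{Sp}_{2g}(\mathbb{Z})$; any such subgroup acts on $\mathbb{C}^{2g}$ without non-zero invariants for $g\geq 1$, and this persists under further finite étale pullback by $f$. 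For $\rho$ non-trivial, the hypothesis gives $\on{rk}\mathbb{V}_\rho=\dim\rho<g$. The Leray five-term sequence for $q^\circ=q\circ j$ produces an injection $R^1q_*j_*\mathbb{V}_\rho\hookrightarrow R^1q^\circ_*\mathbb{V}_\rho$, so the branch-locus correction drops out automatically. Applying \autoref{theorem:vanishing} to the unitary local system $\mathbb{V}_\rho$ of rank $<g$ on the punctured family $\mathscr{C}^\circ\to\mathscr{M}_\varphi$ (still dominant over $\mathscr{M}_{g,n}$ after pullback by $f$) yields $H^0(\mathscr{M},f^*R^1q^\circ_*\mathbb{V}_\rho)=0$, hence $H^0(\mathscr{M},f^*R^1q_*j_*\mathbb{V}_\rho)=0$.

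The step I expect to require the most care is the isotypic decomposition above, and in particular recognizing that the correct intermediate object is $R^1q_*j_*\mathbb{V}_\rho$, so that the Leray spectral sequence produces an honest \emph{injection} into $R^1q^\circ_*\mathbb{V}_\rho$ (making the branch-point contributions harmless); once this is in hand, \autoref{theorem:vanishing} delivers the conclusion verbatim, and the trivial-representation summand is dispatched by the classical big-monodromy theorem for $\on{Sp}_{2g}(\mathbb{Z})$. Verifying that the Hurwitz space $\mathscr{M}_\varphi\to\mathscr{M}_{g,n}$ is finite étale onto its image, and that dominance is preserved under the étale pullback $f$, are routine.
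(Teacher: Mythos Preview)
Your proposal is correct and is exactly the approach the paper has in mind: decompose $p_*\mathbb{C}$ into $H$-isotypic summands $j_*\mathbb{V}_\rho$ and apply \autoref{theorem:vanishing} to each $\mathbb{V}_\rho$, using the Leray injection $R^1q_*j_*\mathbb{V}_\rho\hookrightarrow R^1q^\circ_*\mathbb{V}_\rho$ to pass to the punctured family.

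Two small remarks. First, your separate treatment of the trivial representation is unnecessary: since $\dim(\text{triv})=1<g$, \autoref{theorem:vanishing} handles it uniformly with the others. Second, the claim that ``$\mathscr{M}_\varphi\to\mathscr{M}_g$ is finite \'etale onto its image'' is false when $n>0$ (this map factors through $\mathscr{M}_{g,n}\to\mathscr{M}_g$, which has positive-dimensional fibers). The conclusion you want---finite-index image in $\on{Sp}_{2g}(\mathbb{Z})$---still holds, but the correct justification is that $\mathscr{M}_\varphi\to\mathscr{M}_{g,n}$ is finite \'etale and $\on{PMod}_{g,n}\twoheadrightarrow\on{Sp}_{2g}(\mathbb{Z})$; or simply avoid the issue by using \autoref{theorem:vanishing} for the trivial summand as well.
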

In fact, a more or less identical argument shows that the Putman-Wieland conjecture is true for covers of $\Sigma_g$ of degree $d<g$. See \cite{landesman2023introduction} for a leisurely introduction to the algebro-geometric aspects of the Putman-Wieland conjecture, and \cite{landesman2023applications} for some closely related results.

Related methods allow us to prove the Putman-Wieland conjecture in the large $n$, as opposed to large $g$, regime. 
\begin{theorem}[Special case of {\cite[Theorem 1.15]{landesman2024big}}]
	Let $\varphi: \Sigma_{g'}\to \Sigma_g$ be an $H$-cover branched over $n$ points, with $H$ a simple group. Let $r$ be the maximal dimension of an irreducible representation of $H$, and suppose $$n>\frac{3r^2}{\sqrt{g+1}}+8r.$$ Then the Putman-Wieland conjecture is true for $\varphi$.
\end{theorem}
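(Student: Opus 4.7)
The plan is to adapt the non-abelian Hodge-theoretic strategy that yielded \autoref{theorem:PW-big-g} to the regime where a large number of branch points, rather than a large base genus, forces the vanishing. Via \autoref{conjecture:pw-ag} it suffices to show $H^0(\mathscr{M}, f^*R^1\pi_*\mathbb{C})=0$ for every étale $f:\mathscr{M}\to \mathscr{M}_\varphi$. Using the $H$-action on $\mathscr{X}$ I would decompose $R^1\pi_*\mathbb{C}\simeq \bigoplus_{\rho\in \widehat H\setminus\{1\}} \mathbb{W}_\rho\otimes V_\rho$, where $V_\rho$ is the representation space of $\rho$ and $\mathbb{W}_\rho$ is a local system on $\mathscr{M}_\varphi$. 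Since $H$ is simple, every nontrivial irreducible has dimension between $2$ and $r$, so we need only control these finitely many pieces; it suffices to rule out a nonzero $\pi_1(\mathscr{M})$-invariant in $f^*\mathbb{W}_\rho$ for each nontrivial $\rho$.

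Next I would upgrade any such invariant to something rigid and Hodge-theoretic. Each $\mathbb{W}_\rho$ underlies a polarizable $\mathbb{Q}$-variation of Hodge structure of weight one cut out of the relative Prym of $\pi$. If a non-zero invariant existed, then by Deligne's semisimplicity and the Theorem of the Fixed Part it would generate an integral sub-VHS of $\mathbb{W}_\rho$ that is constant along the fibers of $\pi$, hence an isotrivial summand of the relative Prym. Through Simpson's parabolic non-abelian Hodge correspondence applied fiberwise to $p:\mathscr{X}_m\to \mathscr{C}_m$ over a general $m\in\mathscr{M}_\varphi$, this translates into the existence of a parabolic Higgs sub-bundle of $\rho$-type on a very general smooth curve $(C,D)$ of genus $g$ with $|D|=n$, of parabolic degree zero and rank at most $r$, whose parabolic structure at each point of $D$ is prescribed by the ramification data of the cover $\varphi$.

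The hard part, and the place where the quantitative threshold $n>\tfrac{3r^2}{\sqrt{g+1}}+8r$ is earned, is to forbid the existence of any such parabolic sub-Higgs bundle of parabolic slope zero. This is a Clifford/Arakelov-style estimate on $(C,D)$: because every irreducible $H$-representation has dimension at most $r$, the parabolic weights at each branch point take only finitely many values with multiplicities bounded in terms of $r$, so after choosing the very general complex structure one can perturb as in the proof of \autoref{theorem:stability} and of \autoref{corollary:unitary-c-vhs} to reduce to the semistable/unitary case. In that case the parabolic degree of any rank-$s\leq r$ sub-bundle is forced to be strictly positive: the $n$ branch-point contributions, each of order at least $1/|H|$ in a direction determined by the weights, accumulate while the underlying degree can decrease by at most an amount of order $sg$ coming from the Riemann-Roch geometry of a genus-$g$ curve. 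Balancing these two effects --- roughly $n/r$ against $r\sqrt{g+1}$ via a Cauchy--Schwarz estimate on the weight patterns --- produces exactly the stated inequality as the crossover point. Ruling out parabolic slope zero for every such candidate rules out an invariant in $f^*\mathbb{W}_\rho$, and running over the finitely many $\rho$ completes the argument.

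The main obstacle is the last step: tracking the parabolic weights uniformly in $\rho$ and showing the precise numerical inequality against the classical slope contribution is a delicate computation, much more subtle than the $r<g$ bound that sufficed in \autoref{theorem:PW-big-g}. The earlier steps---decomposition along $H$, invoking the Theorem of the Fixed Part, and transporting along non-abelian Hodge---are largely formal given the machinery already in place in \S\ref{section:canonical}.
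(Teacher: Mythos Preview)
The paper does not give a proof of this theorem; it is cited from \cite{landesman2024big} with only the remark that ``related methods'' (those behind \autoref{theorem:vanishing} and \autoref{theorem:PW-big-g}) apply. The surrounding discussion---particularly \autoref{proposition:ggg-vanishing} and \autoref{subsubsection:vb-and-monodromy}---makes clear what the intended mechanism is: for each nontrivial irreducible $\rho$ of $H$, one shows that the parabolic bundle $\mathscr{E}^\vee\otimes\omega_{\overline C}(D)$ associated to $\rho$ on a general $n$-pointed fiber $(C,D)$ is generically globally generated; by \autoref{proposition:ggg-vanishing} this kills monodromy invariants in the $\rho$-isotypic piece of $R^1\pi_*\mathbb{C}$. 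The numerical threshold is what comes out of proving that global generation statement when $n$ is large.

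Your proposal diverges at the translation step and, as written, has a genuine gap. You pass from a nonzero $\pi_1(\mathscr{M})$-invariant in $\mathbb{W}_\rho$ to ``a parabolic Higgs sub-bundle of $\rho$-type on $(C,D)$ of parabolic degree zero and rank at most $r$,'' and then try to rule this out by a slope inequality. But the invariant lives in $H^1(C,\mathbb{U}_\rho)$---the \emph{cohomology} of the rank-$\leq r$ unitary local system on the base curve---not inside $\mathbb{U}_\rho$ itself. Non-abelian Hodge theory applied fiberwise to $\mathbb{U}_\rho$ simply returns the stable parabolic bundle of degree zero coming from Mehta--Seshadri (which is already there, with no perturbation needed); it does not manufacture a forbidden degree-zero sub-object out of the cohomological invariant, and in any case stability does not forbid degree-zero sub-objects of a degree-zero bundle. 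What a nonzero invariant \emph{does} produce, via the Theorem of the Fixed Part and possibly after replacing $\rho$ by its dual, is a class in $F^1H^1(C,\mathbb{U}_\rho)=H^0(C,\mathscr{E}\otimes\omega_C(D))$ annihilated by the derivative of the period map---precisely the situation analyzed in \autoref{subsubsection:vb-and-monodromy}. So the quantitative work should be a generic global generation estimate for $\mathscr{E}^\vee\otimes\omega_C(D)$ on an $n$-pointed curve, not a parabolic-slope argument excluding sub-Higgs bundles. Your heuristic balancing of branch-point contributions against genus may well feed into such an estimate, but it is aimed at the wrong object as you have written it.

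A minor side point: ``every nontrivial irreducible has dimension between $2$ and $r$'' is false when $H\simeq\mathbb{Z}/p\mathbb{Z}$, though this does not affect the overall shape of the argument.
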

Note that this result applies even if $g=0$!

See also \cite{markovic2024second, klukowski2024tangle} for some results towards the Putman-Wieland conjecture of an algebro/differential-geometric nature.
\subsubsection{Big monodromy}
It seems natural to conjecture that much more than the Putman-Wieland conjecture is true:
\begin{conjecture}\label{conjecture:big-monodromy}
	Let $H$ be a finite group, $g\geq 3$, and $\varphi: \Sigma_{g'}\to \Sigma_g$ a Galois $H$-cover. With notation as in diagram \eqref{equation:versal-family}:
	\begin{enumerate}
		\item  the identity component of the Zariski closure of the monodromy group of the local system $R^1\pi_*\mathbb{C}$ is the derived subgroup of the centralizer of $H$ in $\on{Sp}_{2g'}(\mathbb{C})$, and
		\item the monodromy of $R^1\pi_*\mathbb{C}$ is an arithmetic subgroup of its Zariski-closure.
	\end{enumerate}
\end{conjecture}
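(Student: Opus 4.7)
The plan is to combine the Hodge-theoretic techniques underlying \autoref{theorem:vanishing} with classical arithmeticity criteria for subgroups of semisimple Lie groups. First, since the $H$-action on $\mathscr{X}$ commutes with the monodromy of $\pi$, the local system decomposes as
$$R^1\pi_*\mathbb{C} \simeq \bigoplus_\chi V_\chi \otimes \mathbb{W}_\chi,$$
where $\chi$ runs over irreducible complex representations of $H$, $V_\chi$ is the underlying $H$-representation, and $\mathbb{W}_\chi$ is the associated multiplicity local system on $\mathscr{M}_\varphi$. The centralizer $Z$ of $H$ in $\on{Sp}_{2g'}(\mathbb{C})$ decomposes as a product $\prod_\chi G_\chi$ of classical groups (symplectic, orthogonal, or general linear according to the Frobenius-Schur indicator of $\chi$ and to whether $\chi \simeq \chi^\vee$) acting on the various $\mathbb{W}_\chi$, so the monodromy automatically lies in $Z$. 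Part (1) then reduces to showing that, for each $\chi$, the Zariski closure of the monodromy of $\mathbb{W}_\chi$ is an open subgroup of the derived subgroup of $G_\chi$.

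For part (1), I would work one isotypic component at a time and apply the classification of maximal proper reductive subgroups of classical Lie groups: to rule out the Zariski closure being contained in any such subgroup, it suffices to prove vanishing of invariants for every Schur functor construction $\mathbb{S}^\lambda(\mathbb{W}_\chi)$ of rank below a suitable bound. Each such construction again underlies a polarizable variation of Hodge structure on $\mathscr{M}_\varphi$, and the strategy is to iterate the derivative-of-period-map arguments behind \autoref{theorem:vanishing} and \autoref{theorem:PW-big-g}. In the regime where the relevant Schur functors have rank less than $g$ this is essentially automatic from the existing results; otherwise one must genuinely refine the positivity arguments behind \autoref{theorem:stability} in order to retain enough control over the graded pieces of the Hodge filtration.

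For part (2), once Zariski density in the derived group of $G_\chi$ is established, arithmeticity in the Zariski closure should follow from the Matthews-Vaserstein-Weisfeiler theorem and its refinements due to Nori and Venkataramana, provided one exhibits enough unipotent elements in the image of the monodromy. Such unipotents arise naturally from Dehn twists: a Dehn twist on $\Sigma_g$ about a simple closed curve $\gamma$, lifted to $\Sigma_{g'}$, acts on $H^1(\Sigma_{g'}, \mathbb{C})$ as a product of transvections supported on the components of $\varphi^{-1}(\gamma)$, and projecting to each $\mathbb{W}_\chi$ yields a controlled supply of unipotents. A topological argument using the action of $\on{PMod}_{g,n}$ on isotopy classes of multicurves should then produce enough conjugates of these to generate a subgroup satisfying the hypotheses of the arithmeticity criterion.

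The main obstacle will be upgrading \autoref{theorem:vanishing} from a statement about $\mathbb{W}_\chi$ itself to one about all of its higher Schur functor constructions in the small-genus regime $g\geq 3$ (rather than $g$ large relative to the rank of $\mathbb{W}_\chi$). The proof of \autoref{theorem:vanishing} rests on positivity of the graded pieces of the Hodge filtration on a low-rank VHS; for Schur functor constructions the Hodge types proliferate and it is no longer clear that the positivity used in the perturbation argument of \autoref{theorem:stability} survives. A complementary route that might sidestep this issue would be to combine the Birman exact sequence for $\varphi$ with explicit mapping-class-group computations to exhibit, for each $G_\chi$, concrete elements whose images generate a Zariski-dense subgroup directly, but this requires a much more hands-on description of the monodromy representation than seems currently accessible.
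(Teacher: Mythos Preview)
The statement you are attempting to prove is a \emph{conjecture}; the paper does not prove it. What the paper does provide is partial evidence for part (1) only, namely \autoref{theorem:big-monodromy} (and its local-system version \autoref{theorem:big-monodromy-local-system}), which establishes part (1) under the additional hypothesis that $g$ is large relative to the maximal dimension $r$ of an irreducible representation of $H$. Part (2) is not addressed at all beyond citing prior work on special cases. So there is no ``paper's own proof'' to compare against in the full generality of the conjecture.

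That said, it is worth comparing your strategy for part (1) to the paper's argument in the large-$g$ regime, because they differ substantially. You propose to rule out proper reductive subgroups of each $G_\chi$ by proving vanishing of invariants for every Schur functor $\mathbb{S}^\lambda(\mathbb{W}_\chi)$, iterating the period-map positivity behind \autoref{theorem:vanishing}. The paper does \emph{not} do this. Instead, it first proves global generation of $\mathscr{E}\otimes\omega_C$ and $\mathscr{E}^\vee\otimes\omega_C$ for general $C$, then uses the twisted generic Torelli statement (\autoref{proposition:twisted-Torelli}) to functorially recover $\mathscr{E}$ from the infinitesimal variation of Hodge structure. Stability of $\mathscr{E}$ (via Narasimhan--Seshadri) then forces the VHS to be irreducible, and in fact to have simple monodromy acting irreducibly. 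At that point the Zarhin--Deligne classification of simple groups with minuscule representations pins the group down to $\on{SO}$, $\on{Sp}$, or $\on{SL}$, and a further Schiffer-variation analysis rules out the non-standard minuscule cases. This sidesteps entirely the need to control higher Schur functors.

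Your own diagnosis of the obstacle is accurate: extending \autoref{theorem:vanishing} to arbitrary Schur functor constructions when $g\geq 3$ is fixed (rather than large relative to the rank) is not something the existing positivity arguments deliver, and you have not indicated how to overcome it. The Torelli-style route the paper takes is more efficient precisely because it avoids this proliferation of tensor constructions---but even that route currently requires $g$ large. For part (2), your outline via Dehn-twist unipotents and the Nori--Venkataramana criteria is in the spirit of the cited references (e.g.\ \cite{grunewald2015arithmetic, looijenga2021arithmetic}), but the paper does not attempt it, and turning the sketch into a proof for arbitrary $H$ and $g\geq 3$ remains open.
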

The monodromy representations associated to the local systems $R^1\pi_*\mathbb{C}$ are known as \emph{higher Prym representations} and have been studied in special cases for a long time; for example, if $\varphi$ is an unramified $\mathbb{Z}/2\mathbb{Z}$-cover, they correspond precisely to classical Prym varieties.
\begin{remark}
It is not hard to see that the algebraic group identified in \autoref{conjecture:big-monodromy}(1) is the largest possible---it must preserve the symplectic form on the cohomology of our family of curves, commute with the $H$-action, and it must be semisimple (this last is always the case for local systems of geometric origin). Thus \autoref{conjecture:big-monodromy} is an instance of a standard slogan in algebraic geometry:
\begin{slogan}\label{slogan:big-monodromy}
	 Monodromy groups should be as big as possible.	
\end{slogan}
\end{remark}

There is a fair amount of evidence for \autoref{conjecture:big-monodromy}. For example, the papers \cite{looijenga1997prym, grunewald2015arithmetic, looijenga2021arithmetic} all prove special cases under various topological hypotheses. The main purpose of the paper \cite{landesman2024big} is to prove \autoref{conjecture:big-monodromy}(1) for $g$ large:
\begin{theorem}\label{theorem:big-monodromy}
	Let $\varphi: \Sigma_{g'}\to \Sigma_g$ be a Galois $H$-cover branched at $n$ points. Let $r$ be the maximum dimension of an irreducible complex representation of $H$. If either
	\begin{enumerate}
	\item $n=0$ and $g\geq 2r+2$, or 
	\item $n>0$ and $g>\max(2r+1, r^2)$,
	\end{enumerate}
then \autoref{conjecture:big-monodromy}(1) holds for $\varphi$.
\end{theorem}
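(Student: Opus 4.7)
The plan is to work isotypic-component by isotypic-component and rule out proper monodromy via the vanishing theorem (\autoref{theorem:vanishing} / Theorem 7.2.3), using the Lie algebra of the centralizer of $H$ as a bookkeeping device.

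First I would decompose the action of $H$ on the local system. The fiberwise $H$-action on $R^1\pi_*\mathbb{C}$ induces a decomposition
$$R^1\pi_*\mathbb{C} \simeq \bigoplus_V V \otimes \mathcal{W}_V,$$
where $V$ ranges over irreducible complex representations of $H$ and $\mathcal{W}_V$ is the multiplicity local system on $\mathscr{M}_\varphi$. The polarization pairs $\mathcal{W}_V$ with $\mathcal{W}_{V^*}$, and, depending on the Frobenius--Schur indicator of $V$, induces on $\mathcal{W}_V$ a symplectic form (real type), an orthogonal form (quaternionic type), or a duality with a distinct multiplicity space (complex type). The derived centralizer of $H$ in $\on{Sp}_{2g'}(\mathbb{C})$ decomposes accordingly as a product of factors $\on{Sp}(\mathcal{W}_V)$, $\on{SO}(\mathcal{W}_V)$, and $\on{SL}(\mathcal{W}_V)$, and the monodromy of $R^1\pi_*\mathbb{C}$ is \emph{a priori} contained in this product.

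Second, since $R^1\pi_*\mathbb{C}$ underlies a polarizable $\mathbb{Z}$-variation of Hodge structure, its monodromy is semisimple, and hence its restriction to each factor $\mathcal{W}_V$ is semisimple. For each $V$, let $G_V$ denote the relevant classical group, $\mathfrak{g}_V$ its Lie algebra, and $\mathfrak{h}_V \subseteq \mathfrak{g}_V$ the Lie algebra of the identity component of the monodromy on $\mathcal{W}_V$. By semisimplicity, $\mathfrak{h}_V$ is reductive, and to conclude $\mathfrak{h}_V = \mathfrak{g}_V$ it suffices to show that $\mathfrak{g}_V$ contains no proper non-trivial $\mathfrak{h}_V$-invariant subspace, i.e.\ that $\mathfrak{g}_V$ has no non-zero invariant vectors under the monodromy action other than scalars in the centre (which are absent for the semisimple factors we care about).

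Third, I would realize such invariants as global sections of an auxiliary local system on (an \'etale cover of) $\mathscr{M}_\varphi$ and kill them using \autoref{theorem:vanishing}. The key point is that $\mathfrak{g}_V$ embeds into $\on{End}(\mathcal{W}_V)$, and via the canonical identification $\mathcal{W}_V \simeq \on{Hom}_H(V, R^1\pi_*\mathbb{C})$, an invariant in $\mathfrak{g}_V$ can be interpreted as an invariant in a local system constructed from $R^1\pi_*(\mathbb{U}_V)$ for a unitary local system $\mathbb{U}_V$ built out of the constant system $V$ along the fibres of $q$. Because the rank of $\mathbb{U}_V$ is controlled by $\dim V \le r$ (rather than by the potentially much larger $\on{rk}\mathcal{W}_V$), the hypothesis $g > \max(2r+1,r^2)$ (respectively $g \ge 2r+2$ in the unramified case) is exactly what is needed to apply \autoref{theorem:vanishing} and conclude that these invariants vanish. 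Combined with semisimplicity this forces $\mathfrak{h}_V = \mathfrak{g}_V$ for every $V$, giving the theorem.

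The hard part will be the third step: faithfully packaging an invariant sub-Lie-algebra of $\mathfrak{g}_V$ as an invariant in a local system whose rank is polynomial in $r$ rather than in $\on{rk}\mathcal{W}_V$, so that \autoref{theorem:vanishing} can be applied with the stated bounds. The reduction from $\on{End}(\mathcal{W}_V)$ (large rank) down to a local system of rank $O(r^2)$ is the essential combinatorial and representation-theoretic input, and correctly handling the branched case (where one must track the parabolic/residue data at the $n$ ramification points, accounting for the $r^2$ term in the bound) is where the argument is most delicate.
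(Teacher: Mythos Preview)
Your second step contains a genuine logical gap. You claim that to conclude $\mathfrak{h}_V=\mathfrak{g}_V$ it suffices that $\mathfrak{g}_V$ have no proper nonzero $\mathfrak{h}_V$-invariant \emph{subspace}, and then immediately rephrase this as ``$\mathfrak{g}_V$ has no non-zero invariant \emph{vectors} under the monodromy action.'' These are not equivalent: the first says $\mathfrak{g}_V$ is irreducible as an $\mathfrak{h}_V$-module, the second only that $(\mathfrak{g}_V)^{\mathfrak{h}_V}=0$. The latter is far too weak. For instance, $\mathfrak{sp}_{2m}\subset\mathfrak{sl}_{2m}$ (via the standard representation) has trivial centralizer in $\mathfrak{sl}_{2m}$ by Schur, so $(\mathfrak{sl}_{2m})^{\mathfrak{sp}_{2m}}=0$, yet $\mathfrak{sp}_{2m}\neq\mathfrak{sl}_{2m}$. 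Your vanishing input (\autoref{theorem:vanishing}) can at best kill invariant vectors, not invariant subspaces, so even granting your third step the argument would not close.

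The paper's route is quite different and does not run through Lie-algebra invariants at all. One first proves that for a general fiber $C$ the bundles $\mathscr{E}\otimes\omega_C$ and $\mathscr{E}^\vee\otimes\omega_C$ are globally generated (this is where the bounds $g\geq 2r+2$ and $g>\max(2r+1,r^2)$ enter, via a deformation-theoretic argument). Global generation feeds into a \emph{twisted generic Torelli theorem} (\autoref{proposition:twisted-Torelli}): the bundle $\mathscr{E}$ can be functorially reconstructed from the infinitesimal variation of Hodge structure of $R^1q_*\mathbb{U}$. Hence any decomposition of the variation would force a decomposition of $\mathscr{E}$, contradicting stability (Narasimhan--Seshadri). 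This gives irreducibility; a refinement shows the identity component of the monodromy is simple acting irreducibly, and then Zarhin's classification forces it to be one of $\on{SO},\on{Sp},\on{SL}$ acting through a minuscule representation. The non-standard minuscule cases are finally excluded by a further analysis along Schiffer variations. None of these steps---Torelli reconstruction, the simplicity upgrade, or the Schiffer analysis---is visible in your outline, and they are exactly what bridges the gap that ``no invariant vectors'' cannot.
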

\subsection{Big monodromy and Riemann-Hilbert problems}\label{subsection:big-monodromy-RH} We now explain some of the ideas that go into the proof of \autoref{theorem:big-monodromy} and \autoref{theorem:PW-big-g}, and some of the conjectures they inspire.
\subsubsection{Vector bundles and monodromy}\label{subsubsection:vb-and-monodromy} We consider the following situation. Let $$q: \mathscr{C}\to \mathscr{M}$$ be a family of $n$-punctured curves of genus $g$ over a smooth base $\mathscr{M}$, with the associated map $\mathscr{M}\to \mathscr{M}_{g,n}$ dominant \'etale. Let $\mathbb{U}$ be a local system on $\mathscr{C}$. We would like to understand the monodromy of the local system $R^1q_*\mathbb{U}$. For example, one might take $\mathbb{U}=p_*\mathbb{C}$ in the notation of \eqref{equation:versal-family}, in which case understanding this monodromy (and in particular, the monodromy on the weight one part $W^1R^1q_*\mathbb{U}$) amounts to \autoref{conjecture:big-monodromy}.

If $\mathbb{U}$ carries a complex variation of Hodge structure (say of weight zero)---and in particular if $\mathbb{U}$ is unitary---we can study this monodromy via the derivative of the period map associated to the complex variation of Hodge structure $W^1R^1q_*\mathbb{U}$, as we now explain. Explicitly, if $q$ factors as $$\xymatrix{ 
\mathscr{C} \ar@{^(->}[r]^\iota \ar[rd]_q & \overline{\mathscr{C}} \ar[d]^{\overline q} \\
& \mathscr{M}
}$$
with $\overline{q}$ a smooth relative compactification of $q$, then $$W^1R^1q_*\mathbb{U}=R^1\overline{q}_*\iota_*\mathbb{U}.$$

For the rest of this section we assume $q$ is itself proper, so $W^1R^1q_*\mathbb{U}=R^1q_*\mathbb{U}$, for notational simplicity. We also assume that $\mathbb{U}$ is unitary. Fix $m\in \mathscr{M}$ a point, and set $C=\mathscr{C}_m$. Let $(\mathscr{E},\nabla)=(\mathbb{U}|_C\otimes \mathscr{O}_C, \on{id}\otimes d)$ be the flat bundle on $C$ associated to $\mathbb{U}|_C$ by the Riemann-Hilbert correspondence. The Hodge filtration on $(R^1q_*\mathbb{U})_m=H^1(C, \mathbb{U}|_C)$ has terms given by $$F^1H^1(C, \mathbb{U}|_C)=H^0(C, \mathscr{E}\otimes\Omega^1_C), H^1(C, \mathbb{U}|_C)/F^1=H^1(C, \mathscr{E}).$$

Thus the derivative of the period map associated to the variation of Hodge structure on $R^1q_*\mathbb{U}$ at $m$ is a map $$dP_m: T_m\mathscr{M}\to \on{Hom}(H^0(C, \mathscr{E}\otimes\omega_C), H^1(C, \mathscr{E}))$$ which can be made explicit as follows. As the classifying map $\mathscr{M}\to\mathscr{M}_{g,n}$ is assumed to be \'etale, we may identify $T^*_m\mathscr{M}$ with the space of quadratic differentials on $C$, namely $H^0(C, \omega_C^{\otimes 2})$. By Serre duality, $H^1(C, \mathscr{E})$ is dual to $H^0(C, \mathscr{E}^\vee\otimes \omega_C)$. Hence $dP_m$ is adjoint to a map $$H^0(C, \mathscr{E}\otimes \omega_C)\otimes H^0(C, \mathscr{E}^\vee\otimes \omega_C)\to H^0(C,\omega_C^{\otimes 2}),$$ namely the natural pairing induced by taking sections $s_1\in H^0(C, \mathscr{E}\otimes \omega_C), s_2\in H^0(C, \mathscr{E}^\vee\otimes \omega_C)$ to $$s_1\otimes s_2\in H^0(\mathscr{E}\otimes \mathscr{E}^\vee\otimes \omega_C^{\otimes 2}),$$ and then composing with the map to $H^0(C, \omega_C^{\otimes 2})$ induced by the natural (trace) pairing $$\mathscr{E}\otimes \mathscr{E}^\vee\otimes \omega_C^{\otimes 2}\to  \omega_C^{\otimes 2}.$$ (See \cite[Appendix A]{landesman2022canonical} for a proof.)

Suppose that the monodromy representation of $\pi_1(\mathscr{M},m)$ on $H^1(C, \mathbb{U}|_C)$ admits a non-zero invariant vector, or in other words, that $R^1q_*\mathbb{U}$ admits a constant sub-variation of Hodge structure (here we are using the Theorem of the Fixed Part). Then, possibly after replacing $\mathbb{U}$ with its complex conjugate $\overline{\mathbb{U}}$, the map $$H^0(C, \mathscr{E}\otimes \omega_C)\to \on{Hom}(H^0(C, \mathscr{E}^\vee\otimes \omega_C), H^0(C, \omega_C^{\otimes 2}))$$ will have non-zero kernel. Put another way, there exists a section $\eta\in H^0(C, \mathscr{E}\otimes \omega_C)$ such that the induced (non-zero!) map $$-\cup \eta: \mathscr{E}^\vee\otimes \omega_C\to \omega_C^{\otimes 2}$$ induces the zero map on global sections. In particular $\mathscr{E}^\vee\otimes \omega_C$ is not generated by global sections at the generic point of $C$.

Thus we have shown:
\begin{proposition}\label{proposition:ggg-vanishing}
	Suppose that for a general fiber $C$ of $q$ the vector bundle $(\mathbb{U}|_C\otimes \mathscr{O}_C)^\vee\otimes \omega_C$ is generically generated by global sections. Then  for all $f: \mathscr{M}'\to \mathscr{M}$ dominant \'etale, $$H^0(\mathscr{M}, f^*R^1q_*\mathbb{U})=0.$$
\end{proposition}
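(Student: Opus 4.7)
The plan is to derive the conclusion from the period-derivative computation spelled out in the paragraph immediately preceding the statement, combined with the Theorem of the Fixed Part and Griffiths transversality.

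First I would reduce to the case $f=\mathrm{id}$: the hypothesis on a general fiber of $q$ and the unitarity of $\mathbb{U}$ are preserved under dominant \'etale base change, so one may replace $\mathscr{M}$ by $\mathscr{M}'$. Suppose for contradiction that $H^0(\mathscr{M}, R^1 q_*\mathbb{U})\neq 0$; a nonzero global section is equivalent to a nonzero $\pi_1(\mathscr{M},m)$-invariant vector in $H^1(C,\mathbb{U}|_C)$. Because $\mathbb{U}$ is unitary, $R^1 q_*\mathbb{U}$ underlies a polarizable $\mathbb{C}$-variation of Hodge structure of weight one, and by the Theorem of the Fixed Part the sub-local-system spanned by any such vector is a nonzero constant sub-variation of Hodge structure $V\subseteq R^1 q_*\mathbb{U}$.

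The key step is Griffiths transversality applied to $V$: because $V$ is constant, the period map of $R^1q_*\mathbb{U}$ is locally constant on $V$, so its derivative at $m$ annihilates $V\cap F^1_m$. Possibly after replacing $\mathbb{U}$ by $\overline{\mathbb{U}}$, I may assume this intersection is nonzero, and pick $0\neq \eta\in V\cap H^0(C,\mathscr{E}\otimes\omega_C)$. By the explicit formula for $dP_m$ recalled just before the statement, the condition $dP_m(\eta)=0$ is equivalent to vanishing of the cup-product map
\[
-\cup\eta\colon H^0(C,\mathscr{E}^\vee\otimes\omega_C)\longrightarrow H^0(C,\omega_C^{\otimes 2}).
\]
To derive the contradiction, fix a general $p\in C$ at which both $\eta(p)\neq 0$ in $(\mathscr{E}\otimes\omega_C)|_p$ and the evaluation $H^0(C,\mathscr{E}^\vee\otimes\omega_C)\to (\mathscr{E}^\vee\otimes\omega_C)|_p$ is surjective---the latter is exactly the generic-global-generation hypothesis. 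Once $\eta(p)$ is fixed, the induced fiberwise contraction $(\mathscr{E}^\vee\otimes\omega_C)|_p\to \omega_C^{\otimes 2}|_p$ is surjective (contract the $\mathscr{E}^\vee$-factor against the nonzero $\eta(p)$, then tensor with $\omega_C|_p^{\otimes 2}$), so some global $s\in H^0(C,\mathscr{E}^\vee\otimes\omega_C)$ has $(s\cup\eta)(p)\neq 0$, contradicting the vanishing of $-\cup\eta$ on global sections.

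The main delicate point I expect is the assertion ``after possibly replacing $\mathbb{U}$ by $\overline{\mathbb{U}}$, one has $V\cap F^1\neq 0$'': for a general polarized $\mathbb{C}$-VHS of weight one the Hodge numbers $\dim V^{1,0}$ and $\dim V^{0,1}$ need not be equal, so one cannot in general conclude that a sub-variation $V$ meets $F^1$ nontrivially. For unitary $\mathbb{U}$ the local system $\overline{\mathbb{U}}$ is isomorphic to $\mathbb{U}^\vee$, and the holomorphic bundle associated to $\overline{\mathbb{U}}$ is $\mathscr{E}^\vee$; thus passing from $\mathbb{U}$ to $\overline{\mathbb{U}}$ exchanges the hypothesis on $\mathscr{E}^\vee\otimes\omega_C$ for one on $\mathscr{E}\otimes\omega_C$, which must be either deduced from the polystable (slope $2g-2$) character of $\mathscr{E}\otimes\omega_C$ or built into the hypothesis symmetrically. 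The rest of the argument is the direct transcription of the cohomological computation preceding the statement.
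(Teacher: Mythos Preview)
Your argument is essentially identical to the paper's: the paragraph preceding the proposition \emph{is} the proof, and you have transcribed it faithfully---Theorem of the Fixed Part to produce a constant sub-VHS, Griffiths transversality to kill the period derivative on $V\cap F^1$, and then the cup-product description of $dP_m$ to contradict generic global generation of $\mathscr{E}^\vee\otimes\omega_C$. Your explicit fiberwise argument at a general point $p$ just unpacks the paper's one-line claim ``in particular $\mathscr{E}^\vee\otimes\omega_C$ is not generated by global sections at the generic point of $C$.''

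The concern you flag about the step ``possibly after replacing $\mathbb{U}$ by $\overline{\mathbb{U}}$'' is well-taken and is a genuine asymmetry the paper leaves implicit: if the constant sub-VHS $V$ happens to satisfy $V^{1,0}=0$, passing to $\overline{\mathbb{U}}$ swaps $\mathscr{E}$ and $\mathscr{E}^\vee$, so one really needs generic global generation of $\mathscr{E}\otimes\omega_C$ rather than $\mathscr{E}^\vee\otimes\omega_C$. The paper, being a survey, does not address this; in the applications (e.g.\ \autoref{theorem:big-monodromy-local-system}) the hypothesis is imposed symmetrically on both $\mathscr{E}\otimes\omega_C$ and $\mathscr{E}^\vee\otimes\omega_C$, which resolves the issue.
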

In fact this is more or less the idea of the proof of \autoref{theorem:vanishing} and \autoref{theorem:PW-big-g}. It may be instructive to compare the statement to that of \autoref{conjecture:pw-ag}.
\subsubsection{Generic global generation}
We have just seen that the global generation properties of vector bundles on curves impact the monodromy of certain variations of Hodge structure on their moduli. We make this precise as follows:
\begin{conjecture}[ggg conjecture]\label{conjecture:ggg}
	Let $g\geq 3$, and let $$\rho: \pi_1(\Sigma_{g,n})\to U(r)$$ be a unitary representation. Fix a generic complex structure $X$ on $\Sigma_{g,n}$, with $\overline{X}$ the corresponding compact Riemann surface, and $D=\overline{X}\setminus X$, and let $\mathscr{E}_\star$ be the corresponding parabolic bundle. Then $\widehat{\mathscr{E}}_0\otimes \omega_{\overline{X}}(D)$ is generically generated by global sections.
\end{conjecture}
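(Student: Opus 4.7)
The plan is to prove the conjecture in three steps: (i) establish parabolic semistability of $\widehat{\mathscr{E}}_0$ at a generic complex structure via Theorem~\ref{theorem:stability}; (ii) use semistability plus Serre duality to produce enough sections of $V := \widehat{\mathscr{E}}_0 \otimes \omega_{\overline{X}}(D)$; (iii) upgrade the section count to generic global generation by a Harder--Narasimhan analysis. The resulting structure exactly parallels the cup-product/vanishing strategy sketched in \S\ref{subsubsection:vb-and-monodromy}, which is the source of the conjecture in the first place.

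For step~(i), Mehta--Seshadri applied to the unitary representation $\rho$ gives that $\mathscr{E}_\star$ is parabolically polystable of parabolic degree zero. After a small isomonodromic deformation of the complex structure (permissible by the genericity hypothesis), Theorem~\ref{theorem:stability} lets us assume $\widehat{\mathscr{E}}_0$ itself is parabolically semistable. For step~(ii), parabolic Serre duality identifies $H^1(V)$ with $H^0(\widehat{\mathscr{E}}_0^\vee(-D))^\vee$; semistability of $\widehat{\mathscr{E}}_0$ transfers to $\widehat{\mathscr{E}}_0^\vee(-D)$, whose parabolic slope is strictly negative when $n\geq 1$. When $n=0$ one decomposes $\rho$ into isotypic components and handles trivial summands separately (for which the conjecture is essentially the fact that $\omega_{\overline{X}}^{\oplus r}$ is generically globally generated on a genus $g\geq 3$ curve). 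Riemann--Roch then yields $h^0(V) \geq r(g-1+n) \geq r$, the minimum needed for generic global generation.

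The main obstacle is step~(iii): upgrading the bound $h^0(V) \geq r$ to actual generic global generation. If the evaluation map $\mathrm{ev}\colon H^0(V) \otimes \mathscr{O}_{\overline{X}} \to V$ drops rank at the generic point, then its image $V' \subset V$ is a proper rank-$r'$ coherent sub-sheaf with $h^0(V') \geq r(g-1+n)$. Untwisting, $V' \otimes \omega_{\overline{X}}(D)^{-1}$ is a parabolic sub-sheaf of $\widehat{\mathscr{E}}_0$ whose parabolic slope, by Riemann--Roch applied to $V'$ together with the $H^1$-vanishing just established, is forced to strictly exceed $\mu_{\mathrm{par}}(\widehat{\mathscr{E}}_0) = 0$---contradicting parabolic polystability. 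The delicate point will be making this Harder--Narasimhan argument rigorous at boundary cases, particularly when the parabolic weights are rational with small denominator or when $V$ admits natural sub-sheaves arising from the parabolic filtration itself, since in these regimes the slope inequality is tight. A plausible route is to perturb the parabolic weights slightly and invoke semicontinuity of the generic-global-generation locus in the moduli of parabolic bundles, or to induct on the rank of $\rho$, reducing to the case of irreducible $\rho$ where polystability becomes stability and the sub-sheaf analysis becomes strict.
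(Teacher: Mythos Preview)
The statement you are attempting to prove is \emph{Conjecture}~\ref{conjecture:ggg} in the paper---an open conjecture, not a theorem. The paper gives no proof; it presents the conjecture as a strengthening of the Putman--Wieland conjecture, and the partial evidence it cites (Theorem~\ref{theorem:vanishing}, Theorem~\ref{theorem:PW-big-g}, and the results toward Conjecture~\ref{conjecture:horizontal-generic-vanishing}) only covers the regime where $g$ is large relative to $r$. So there is nothing to compare your argument against; the question is simply whether your argument is correct.

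It is not, and the gap is in step~(iii). To force the parabolic slope of $V' \otimes \omega_{\overline{X}}(D)^{-1}$ above zero you invoke ``the $H^1$-vanishing just established,'' but what step~(ii) established is $H^1(V)=0$, whereas what you need is $H^1(V')=0$ (or at least a strong upper bound on $h^1(V')$). These are not the same: the inclusion $V' \subset V$ gives a map $H^1(V') \to H^1(V)$ which need not be injective. Concretely, in the $n=0$ case Riemann--Roch gives
\[
\deg\bigl(V' \otimes \omega_{\overline X}^{-1}\bigr) \;=\; (r-r')(g-1) \;-\; h^1(V'),
\]
and nothing you have written bounds $h^1(V')$. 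Your ``delicate boundary case'' framing understates the issue: this is not a matter of tight inequalities at special parabolic weights, but a missing ingredient in the core logic.

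There is also a structural red flag. Step~(i) is unnecessary---Mehta--Seshadri already gives parabolic polystability for \emph{every} complex structure, and in any case Theorem~\ref{theorem:stability} requires $g \geq r^2/4$, which is far stronger than $g\geq 3$ for large $r$. Once you drop step~(i), your argument never uses the hypothesis that the complex structure is generic. If it worked, it would prove the conclusion for \emph{all} complex structures. But the paper's entire approach to the partial results (isomonodromic deformation in the proof of Theorem~\ref{theorem:stability}, the formulation of Conjecture~\ref{conjecture:horizontal-generic-vanishing}) makes clear that genericity is essential and that the known techniques genuinely require deforming the curve. An argument that never touches the word ``generic'' should make you suspicious that something has gone wrong.
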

We refer to this conjecture as the ggg conjecture (for generically globally generated).

Here $\mathscr{E}_\star$ is the parabolic bundle associated to $\rho$ under the Mehta-Seshadri correspondence \cite{mehta-seshadri}. See e.g.~\cite[\S3]{landesman2024big} for the notation on parabolic bundles we are using. The following is the special case where $n=0$:
\begin{conjecture}
	Let $g\geq 3$, and let $$\rho: \pi_1(\Sigma_{g})\to U(r)$$ be a unitary representation. Fix a generic complex structure $X$ on $\Sigma_{g}$, and let $(\mathscr{E}, \nabla)$ be the corresponding flat bundle. Then $\mathscr{E}\otimes \omega_{X}$ is generically generated by global sections.
\end{conjecture}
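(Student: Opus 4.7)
The plan is to reformulate the conjecture in terms of line bundle quotients via Serre duality, and then control the resulting Brill-Noether-type locus using deformation theory on moduli of bundles, supplemented by the extra rigidity afforded by the unitary hypothesis.

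First I would reduce to the case that $\mathscr{E}$ is stable: a unitary representation corresponds, by Narasimhan-Seshadri, to a polystable bundle of slope $0$, and generic global generation is preserved under direct sums of stable summands of the same slope. For stable $\mathscr{E}$ of rank $r$ and degree $0$ not isomorphic to $\mathscr{O}_X$, one has $H^0(\mathscr{E}^\vee)=0$ and hence $H^1(\mathscr{E}\otimes\omega_X)=0$ by Serre duality. The long exact sequence of $0\to \mathscr{E}\otimes\omega_X(-p)\to \mathscr{E}\otimes\omega_X\to (\mathscr{E}\otimes\omega_X)|_p\to 0$ shows that the evaluation $H^0(\mathscr{E}\otimes\omega_X)\to (\mathscr{E}\otimes\omega_X)|_p$ is surjective iff $H^0(\mathscr{E}^\vee(p))=0$, iff $\mathscr{E}$ admits no surjection onto $\mathscr{O}_X(p)$. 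So the conjecture becomes: for $X$ generic and every polystable degree-$0$ bundle $\mathscr{E}$, the set of $p\in X$ such that $\mathscr{E}\twoheadrightarrow \mathscr{O}_X(p)$ is a proper subset of $X$.

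Next I would attack this via moduli. Let $Q(\mathscr{E}):=\mathrm{Quot}^{1,1}(\mathscr{E})$ be the Quot scheme of rank-$1$, degree-$1$ quotients with its natural map $\phi:Q(\mathscr{E})\to \mathrm{Pic}^1(X)$, and let $\alpha:X\hookrightarrow \mathrm{Pic}^1(X)$ be the Abel-Jacobi embedding. The conjecture then asserts $\alpha(X)\not\subset \phi(Q(\mathscr{E}))$. Working on the universal moduli $\mathcal{U}_g(r,0)\to \mathscr{M}_g$ of polystable rank-$r$ degree-$0$ bundles, one gets a ``bad locus'' $\mathcal{B}\subset \mathcal{U}_g(r,0)$. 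A dimension count, using Brill-Noether estimates for line-bundle quotients of stable bundles on a general curve (in the vein of Lange, Teixidor i Bigas, and Russo-Teixidor), should show that the containment $\alpha(X)\subset \phi(Q(\mathscr{E}))$ imposes enough linear conditions on $\mathscr{E}$ that the projection $\mathcal{B}\to \mathscr{M}_g$ fails to be dominant; for generic $X$ the entire fiber then avoids $\mathcal{B}$.

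The hardest part will be handling \emph{every} polystable bundle, not merely a generic one. Especially delicate are highly reducible bundles such as sums $\bigoplus \mathscr{L}_i$ coming from diagonal unitary representations, and stable bundles lying in deep Brill-Noether strata (for instance those underlying rigid local systems). To cope with these one must use unitarity seriously: the Hermitian form gives a $C^\infty$-isomorphism $\mathscr{E}\cong \overline{\mathscr{E}}^\vee$, so any pathology of $\mathscr{E}$ must be mirrored in $\mathscr{E}^\vee$, which restricts both the Brill-Noether behavior and the structure of $Q(\mathscr{E})$; and unitary bundles are exactly the fixed points of the $\mathbb{C}^\times$-action on the Dolbeault moduli, suggesting equivariant refinements of the dimension estimates. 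If these approaches stall on a stubborn case, a natural fallback -- patterned on the proof of \autoref{theorem:PW-big-g} -- is to specialize $X$ to a reducible nodal curve, verify ggg on each irreducible component together with the gluing data, and propagate the conclusion to the generic smooth fiber by semicontinuity.
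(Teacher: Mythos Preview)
This statement is a \emph{conjecture} in the paper, not a theorem; the paper offers no proof. It is presented as the $n=0$ special case of the ggg conjecture (\autoref{conjecture:ggg}), and the paper explicitly treats it as open. So there is no ``paper's own proof'' to compare against. Your Serre-duality reformulation is correct and is exactly the mechanism the paper uses, in the discussion following \autoref{conjecture:horizontal-generic-vanishing}, to relate \autoref{conjecture:ggg} to the horizontal generic vanishing conjecture: global generation of $\mathscr{E}^\vee\otimes\omega_X$ at $p$ is equivalent to $H^0(X,\mathscr{E}(p))=0$. The paper's only partial evidence is that the method of \cite[Proposition 4.9]{landesman2024big} yields the conjecture when $g$ is large relative to $r$ (specifically $g\geq 2r+2$); no argument is offered for fixed $g\geq 3$ and arbitrary $r$.

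Your proposal, viewed as an attempted proof, has a genuine gap precisely where you flag it. The dimension count on $\mathcal{U}_g(r,0)$ cannot work as stated: you need the bad locus $\mathcal{B}$ to miss the \emph{entire} fiber over a generic $X$, but $\mathcal{B}$ is a priori a union of infinitely many components (one for each topological type of unitary representation, or more concretely for each polystable type), and a dimension count only controls finitely many at a time. Brill--Noether estimates for a \emph{general} stable bundle on a general curve say nothing about the specific bundles coming from unitary representations with, say, finite monodromy --- and these are exactly the cases relevant to Putman--Wieland. Your appeal to the Hermitian form giving $\mathscr{E}\cong\overline{\mathscr{E}}^\vee$ as $C^\infty$ bundles does not obviously constrain the holomorphic Brill--Noether behavior. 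The degeneration-to-nodal-curve fallback is closer in spirit to what is actually known (cf.\ the deformation-theoretic arguments in \cite{landesman2024big}), but those arguments currently require $g$ large compared to $r$, and removing that restriction is the content of the conjecture.
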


Taking $\rho$ to have finite monodromy, these conjectures imply the Putman-Wieland conjecture (\autoref{conjecture:pw}), by \autoref{proposition:ggg-vanishing}. More generally, they would give some evidence for \autoref{conjecture:mod-rigidity}:
\begin{proposition}
	Suppose $g\geq 3$ and assume \autoref{conjecture:ggg}. Let $$\rho: \on{PMod}_{g,n+1}\to U(r)$$ be a representation whose restriction to the point-pushing subgroup $\pi_1(\Sigma_{g,n})\subset \on{PMod}_{g,n+1}$ is irreducible.  Then $\rho$ is cohomologically rigid.
\end{proposition}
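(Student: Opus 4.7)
The plan is to combine the Birman exact sequence with the period map computation underlying \autoref{proposition:ggg-vanishing}, fed by the ggg conjecture applied to the adjoint representation.

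First I would apply the Birman exact sequence
\[1 \to \pi_1(\Sigma_{g,n}) \to \on{PMod}_{g,n+1} \to \on{PMod}_{g,n} \to 1\]
and take the five-term sequence of the associated Hochschild--Serre spectral sequence with coefficients in the $\on{PMod}_{g,n+1}$-module $\on{ad}\rho$:
\[0 \to H^1\bigl(\on{PMod}_{g,n}, (\on{ad}\rho)^{\pi_1(\Sigma_{g,n})}\bigr) \to H^1\bigl(\on{PMod}_{g,n+1}, \on{ad}\rho\bigr) \to H^1\bigl(\pi_1(\Sigma_{g,n}), \on{ad}\rho\bigr)^{\on{PMod}_{g,n}}.\]
It then suffices to show that the outer two terms vanish.

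The first term is essentially free: since $\rho|_{\pi_1(\Sigma_{g,n})}$ is irreducible and unitary, Schur's lemma gives $(\on{ad}\rho)^{\pi_1(\Sigma_{g,n})} = \mathbb{C}$, and this is a trivial $\on{PMod}_{g,n}$-module, as conjugation by any lift fixes the scalars. So it reduces to $H^1(\on{PMod}_{g,n}, \mathbb{C})=0$, which is the classical triviality of the abelianization of the pure mapping class group for $g \geq 3$ (Powell, Korkmaz).

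The third term is the heart of the matter. Geometrically, $H^1(\pi_1(\Sigma_{g,n}), \on{ad}\rho)^{\on{PMod}_{g,n}}$ computes $\on{PMod}_{g,n}$-invariants in the stalk at a general base point of the local system $R^1 q_* \on{ad}\mathbb{V}$ on $\mathscr{M}_{g,n}$, where $q\colon \mathscr{M}_{g,n+1} \to \mathscr{M}_{g,n}$ is the forgetful map and $\mathbb{V}$ is the unitary local system on $\mathscr{M}_{g,n+1}$ determined by $\rho$. I would argue via (a parabolic analogue of) \autoref{proposition:ggg-vanishing}: a non-zero invariant class, via the Theorem of the Fixed Part, produces a constant sub-variation of Hodge structure in the pure weight-one summand of $R^1 q_* \on{ad}\mathbb{V}$; the explicit cup-product formula for the derivative of the associated period map then forces $(\on{ad}\mathscr{E}_\star)^\vee \otimes \omega_{\overline X}(D)$ to fail generic global generation on a general fiber $(\overline X, D)$. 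But the ggg conjecture applied to the unitary representation $\on{ad}\rho|_{\pi_1(\Sigma_{g,n})}\colon \pi_1(\Sigma_{g,n}) \to U(r^2)$ says precisely that $\widehat{\on{ad}\mathscr{E}}_0 \otimes \omega_{\overline X}(D)$ \emph{is} generically globally generated, and self-duality of $\on{ad}\rho$ via the Killing form reconciles the dual: contradiction.

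The main obstacle is not the conceptual structure above but the extension of \autoref{proposition:ggg-vanishing} from the proper case stated in the text to the punctured, parabolic setting actually required here. One has to analyze the weight filtration on the mixed variation $R^1 q_* \on{ad}\mathbb{V}$, separately rule out $\on{PMod}_{g,n}$-invariants in the weight-two graded piece (which come from residues of $\on{ad}\rho$ at the punctures rigidly determined by the conjugacy classes of $\rho$ at the boundary), and match the Mehta--Seshadri parabolic structure on $\on{ad}\mathscr{E}_\star$ with parabolic Serre duality on $(\overline X, D)$ so that the cup-product computation from the proper case goes through intact. Once this parabolic bookkeeping is in place, the ggg conjecture immediately closes the argument.
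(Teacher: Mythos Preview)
Your approach is essentially the same as the paper's: both run the five-term sequence for the Birman fibration and reduce cohomological rigidity to the vanishing of $H^0(\mathscr{M}_{g,n}, R^1\pi_*\on{ad}\mathbb{U})$, which is supplied by \autoref{proposition:ggg-vanishing} once the ggg conjecture is fed in for $\on{ad}\rho$. The paper only sketches $n=0$ (and uses the traceless convention for $\on{ad}$, so $\pi_*\on{ad}\mathbb{U}=0$ directly and your detour through $H^1(\on{PMod}_{g,n},\mathbb{C})=0$ is unnecessary there), while you go further and correctly flag the parabolic and mixed-Hodge bookkeeping needed to push the argument through for $n>0$.
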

\begin{proof}[Proof sketch, assuming $n=0$]
	Let $\mathbb{U}$ be the local system on $\mathscr{M}_{g,n+1}$ corresponding to $\rho$. It is not hard to see that it has finite determinant. Let $\pi: \mathscr{M}_{g,n+1}\to \mathscr{M}_g$ be the forgetful map; by assumption, the restriction of $\mathbb{U}$ to a fiber of $\pi$ is irreducible, hence $\pi_*\on{ad}(\mathbb{U})=0$. Thus $$H^1(\on{Mod}_{g,1}, \on{ad}(\mathbb{U}))=H^0(\mathscr{M}_{g}, R^1\pi_*\on{ad}(\mathbb{U})),$$ which vanishes by \autoref{proposition:ggg-vanishing}.
\end{proof}
It would be very interesting to formulate analogues of \autoref{conjecture:ggg} for non-unitary local systems. The unitary case is already of some interest, though; for example, the above rigidity result would provide some evidence for the famous conjecture that mapping class groups have Kazhdan's property T, which in particular implies that unitary representations are rigid.
\subsubsection{Global generation and big monodromy}
\autoref{proposition:ggg-vanishing} shows that global generation properties of the vector bundles under consideration are related to cohomological vanishing of the sort considered in the Putman-Wieland conjecture, \autoref{conjecture:pw-ag}. What about big monodromy, of the form considered in \autoref{theorem:big-monodromy}? We first explain an algebro-geometric variant of \autoref{theorem:big-monodromy}, which follows from the proof of that theorem (see \cite{landesman2024big}):
\begin{theorem}\label{theorem:big-monodromy-local-system}
	Notation as in \autoref{subsubsection:vb-and-monodromy}, with $q$ proper; let $C$ be a general fiber of $q$, of genus $g\geq 4$. Suppose that $\mathbb{U}$ has finite monodromy, the monodromy representation $\rho$ of $\mathbb{U}|_C$ is irreducible, and $\mathbb{U}|_C\otimes \omega_C, \mathbb{U}^\vee|_C\otimes \omega_C$ are globally generated. Let $G$ be the Zariski-closure of the image of the monodromy representation $$\pi_1(\mathscr{M})\to GL(H^1(C, \mathbb{U}|_C)).$$ 
	\begin{enumerate}
	\item If $\rho$ is orthogonally self-dual, then $G=\on{Sp}(H^1(C, \mathbb{U}|_C))$.
	\item If $\rho$ is symplectically self-dual, then $G=\on{SO}(H^1(C, \mathbb{U}|_C))$.
	\item If $\rho$ is not self-dual, then $G=\on{SL}(H^1(C, \mathbb{U}|_C))\times H$ for some finite group of scalars $H$. 
	\end{enumerate}
\end{theorem}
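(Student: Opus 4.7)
The plan is to run the strategy of \autoref{theorem:big-monodromy} with $\mathbb{U}$ in place of the push-forward $p_\ast \mathbb{C}$, extracting the extra flexibility from the global generation hypotheses. I would proceed in four stages.

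\textbf{Stage 1 (Reductivity, self-duality, and determinant).} Since $\mathbb{U}$ has finite monodromy, $R^1q_\ast \mathbb{U}$ is a direct summand (on some finite étale cover of $\mathscr{M}$) of $R^1(q\circ\phi)_\ast \mathbb{C}$ for a finite étale $\phi$ trivializing $\mathbb{U}$; in particular it is of geometric origin and therefore semisimple by Deligne. Hence the algebraic monodromy group $G$ is reductive. Relative Poincaré duality combined with the self-duality of $\rho$ equips $V = H^1(C,\mathbb{U}|_C)$ with a non-degenerate $G$-invariant bilinear form which is symplectic when $\rho$ is orthogonally self-dual and orthogonal when $\rho$ is symplectically self-dual; in the non-self-dual case no such form exists. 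A polarization argument, together with finiteness of the monodromy of $\det \mathbb{U}$, shows that $\det R^1q_\ast \mathbb{U}$ has finite monodromy, so $G^0$ is semisimple and contained in $\on{Sp}(V)$, $\on{SO}(V)$, or $\on{SL}(V)$ respectively.

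\textbf{Stage 2 (Irreducibility via the ggg input).} The heart of the argument is to show $V$ is an irreducible $G^0$-representation, and that the only $G^0$-invariant bilinear forms on $V$ are scalar multiples of the one produced in Stage 1 (if any). Equivalently, after pulling back to a finite étale cover that trivializes $G/G^0$, one must show $H^0(\mathscr{M}', f^\ast R^1q_\ast \mathcal{F}) = 0$ for $\mathcal{F}$ ranging over $\on{End}^0(\mathbb{U})$ and, in the self-dual cases, the appropriate summand of $\mathbb{U}\otimes\mathbb{U}$ complementary to the invariant form. Each such $\mathcal{F}$ is built from $\mathbb{U}$ and $\mathbb{U}^\vee$ via tensor operations, and the key point is that the hypotheses ``$\mathbb{U}\otimes\omega_C$ and $\mathbb{U}^\vee\otimes\omega_C$ globally generated'' are arranged precisely so that each such $\mathcal{F}^\vee\otimes\omega_C$ is generically generated by global sections after the reductions in \autoref{proposition:ggg-vanishing}. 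The numerical bounds $g\geq 2r+2$ (for $n=0$) and $g>\max(2r+1,r^2)$ (for $n>0$) should enter here to guarantee the slopes involved are large enough for the tensor manipulations to preserve generic global generation.

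\textbf{Stage 3 (Identification of $G^0$).} At this point $G^0$ is a connected semisimple subgroup of the expected classical group $H \in \{\on{Sp}(V), \on{SO}(V), \on{SL}(V)\}$, acts irreducibly on $V$, and (in the self-dual cases) preserves no bilinear form other than the given one. To conclude $G^0 = H$, I would exhibit in $G^0$ a transvection, an orthogonal reflection, or a pseudoreflection respectively, produced via Picard--Lefschetz as the local monodromy of $R^1q_\ast\mathbb{U}$ around a boundary divisor of a suitable compactification of $\mathscr{M}$ corresponding to a non-separating nodal degeneration of $C$. The classical theorem of Zalesski{\u\i}--Sereжkin then forces $G^0 = H$ in the symplectic and orthogonal cases, with the analogous pseudoreflection criterion handling the linear case. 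In case (3), combining $G^0 = \on{SL}(V)$ with the finite determinant from Stage 1 yields $G = \on{SL}(V)\times H$ for a finite group $H$ of scalars, as claimed.

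\textbf{Main obstacle.} The principal technical difficulty is Stage 2: one must rule out not only a non-zero section of $R^1q_\ast \on{End}^0(\mathbb{U})$ but also sections of the relevant symmetric/exterior square summands, and the ggg hypothesis must be pushed through tensor products of $\mathbb{U}$ with itself and with $\mathbb{U}^\vee$. This is where the apparently awkward bounds relating $g$ and $r$ become essential; controlling them is what distinguishes the present theorem from the bare vanishing statement of \autoref{theorem:vanishing}. A secondary but genuine obstacle is ensuring that the degeneration needed in Stage 3 occurs inside $\mathscr{M}$ (or a small extension of it) with the unitary local system $\mathbb{U}$ extending across the relevant boundary point in a way compatible with the Picard--Lefschetz calculation.
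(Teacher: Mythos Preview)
There is a genuine gap in Stage~2. To show that $V=H^1(C,\mathbb{U}|_C)$ is an irreducible $G^0$-representation, you need the vanishing of the $\pi_1(\mathscr{M}')$-invariants in $\on{End}^0(V) = \on{End}^0\bigl(H^1(C,\mathbb{U}|_C)\bigr)$. This is \emph{not} the same object as $H^0(\mathscr{M}', f^*R^1q_*\on{End}^0(\mathbb{U}))$, which computes invariants in $H^1\bigl(C,\on{End}^0(\mathbb{U})|_C\bigr)$. The functor $H^1(C,-)$ does not commute with $\on{End}$ or with tensor products; the two spaces do not even have the same dimension, and \autoref{proposition:ggg-vanishing} applied to $\mathcal{F}=\on{End}^0(\mathbb{U})$ gives no direct control over endomorphisms of the local system $R^1q_*\mathbb{U}$. (A smaller confusion: the bounds $g\geq 2r+2$ etc.\ belong to \autoref{theorem:big-monodromy}, where they are used to \emph{establish} global generation; in the present theorem global generation is a hypothesis and only $g\geq 4$ is assumed, so no such bounds enter.)

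The paper proves irreducibility by an entirely different mechanism. The key input is a twisted generic Torelli statement, \autoref{proposition:twisted-Torelli}: under the global generation hypotheses, the bundle $\mathscr{E}=\mathbb{U}|_C\otimes\mathscr{O}_C$ can be functorially reconstructed from the infinitesimal variation of Hodge structure on $R^1q_*\mathbb{U}$ at a general point. Hence a direct-sum decomposition of the VHS would induce one of $\mathscr{E}$, contradicting the stability supplied by Narasimhan--Seshadri and the irreducibility of $\rho$. An enhancement of this argument shows $G^0$ is simple and acts irreducibly; one then invokes a theorem of Deligne (as recorded by Zarhin) to conclude $G^0$ is $\on{SO}$, $\on{Sp}$, or $\on{SL}$ acting through a minuscule representation, and rules out the non-standard minuscule cases by a further analysis of the IVHS along Schiffer variations. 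No degeneration of $C$ is used, so your Stage~3 route via Picard--Lefschetz and Zalesski\u{\i}--Sere\v{z}kin, and the attendant worry about extending $\mathbb{U}$ across the boundary, never arises.
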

\begin{remark}
	A self-dual irreducible representation $\rho$ of a group $G$ has $\dim (\rho\otimes\rho)^G=1$. As $\rho\otimes\rho=\on{Sym}^2\rho\oplus \wedge^2\rho,$ we thus have that either $\dim (\on{Sym}^2\rho)^G=1$ or $\dim (\wedge^2\rho)^G=1$. In the former case, we say $\rho$ is orthogonally self-dual, and in the latter we say it is symplectically self-dual. As the cup product is alternating, the monodromy representations $H^1(C, \mathbb{U}|_C)$ appearing in \autoref{theorem:big-monodromy-local-system} are, by Poincar\'e duality, symplectically self-dual if $\rho$ is orthogonally self-dual, and orthogonally self-dual if $\rho$ is symplectically self-dual. The groups $\on{Sp}, \on{SO}$ appearing in the statement of \autoref{theorem:big-monodromy-local-system} are the group of automorphisms of $H^1(C, \mathbb{U}|_C)$ with trivial determinant that preserve the Poincar\'e duality pairing.
\end{remark}

The proofs of \autoref{theorem:big-monodromy}, and its generalization to non-proper curves (see \cite[Theorems 1.3 and 1.9]{landesman2024big}), rely on this result;  they proceed by verifying the global generation hypothesis. We briefly sketch the approach, and then make some related conjectures on global generation.

The key idea is that the global generation assumption allows us to functorially recover $\rho$ from the derivative of the period map associated to $R^1q_*\mathbb{U}$. Indeed, we have:
\begin{proposition}[Twisted generic Torelli, proof of {\cite[Theorem 6.2]{landesman2024big}}]\label{proposition:twisted-Torelli}
	Let $C$ be a smooth proper curve and $\mathbb{U}$ a unitary local system on $C$, with $\mathscr{E}=\mathbb{U}\otimes \mathscr{O}_C$ the associated vector bundle. Suppose that $\mathscr{E}\otimes \omega_C, \mathscr{E}^\vee\otimes\omega_C$ are globally generated. Then the image of the composition $$H^0(C, \mathscr{E}\otimes \omega_C)\otimes \mathscr{O}_C\overset{dP\otimes \on{id}}{\longrightarrow} H^1(C, \mathscr{E})\otimes H^0(C, \omega_C^{\otimes 2})\otimes \mathscr{O}_C\overset{\on{id}\otimes \on{ev}}{\longrightarrow} H^1(C, \mathscr{E})\otimes \omega_C^{\otimes 2}$$ is canonically isomorphic to $\mathscr{E}\otimes \omega_C$, where here $dP$ is adjoint to the derivative of the period map discussed in \autoref{subsubsection:vb-and-monodromy}.
\end{proposition}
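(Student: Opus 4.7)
My starting point is the description of $dP$ already recalled in the excerpt: under Serre duality $H^1(C,\mathscr{E})^\vee \simeq H^0(C,\mathscr{E}^\vee\otimes\omega_C)$, the derivative of the period map
\[
dP : H^0(C,\mathscr{E}\otimes\omega_C)\to H^1(C,\mathscr{E})\otimes H^0(C,\omega_C^{\otimes 2})
\]
is the adjoint of the multiplication pairing
\[
\mu : H^0(C,\mathscr{E}\otimes\omega_C)\otimes H^0(C,\mathscr{E}^\vee\otimes\omega_C)\to H^0(C,\omega_C^{\otimes 2})
\]
induced by the trace $\mathscr{E}\otimes\mathscr{E}^\vee\to\mathscr{O}_C$. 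I would begin by unpacking this identification, and then evaluate the composition $\Phi := (\on{id}\otimes\on{ev})\circ(dP\otimes\on{id})$ one fiber at a time.

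Fix $p\in C$. Chasing Serre duality, $\Phi(\eta)(p)\in H^1(C,\mathscr{E})\otimes\omega_C^{\otimes 2}|_p$, viewed as a linear functional on $H^0(C,\mathscr{E}^\vee\otimes\omega_C)$ with values in $\omega_C^{\otimes 2}|_p$, is the map $\sigma\mapsto \on{tr}(\eta(p)\otimes\sigma(p))$. Equivalently, $\Phi(\eta)(p)$ factors as
\[
H^0(C,\mathscr{E}^\vee\otimes\omega_C)\xrightarrow{\on{ev}_p}(\mathscr{E}^\vee\otimes\omega_C)|_p \xrightarrow{\langle\eta(p),\,\cdot\,\rangle}\omega_C^{\otimes 2}|_p,
\]
the second arrow being contraction with $\eta(p)\in(\mathscr{E}\otimes\omega_C)|_p \simeq \on{Hom}((\mathscr{E}^\vee\otimes\omega_C)|_p,\omega_C^{\otimes 2}|_p)$ via the perfect fiberwise trace pairing. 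The two global generation hypotheses now enter symmetrically. First, global generation of $\mathscr{E}^\vee\otimes\omega_C$ makes $\on{ev}_p$ surjective, so precomposition with $\on{ev}_p$ gives a canonical injection
\[
(\mathscr{E}\otimes\omega_C)|_p\hookrightarrow H^1(C,\mathscr{E})\otimes\omega_C^{\otimes 2}|_p.
\]
Second, global generation of $\mathscr{E}\otimes\omega_C$ makes $\eta\mapsto\eta(p)$ surjective onto $(\mathscr{E}\otimes\omega_C)|_p$. Combining the two, the fiber of the image of $\Phi$ at $p$ is precisely this embedded copy of $(\mathscr{E}\otimes\omega_C)|_p$, and since the construction is natural in $p$, this upgrades to the asserted canonical isomorphism between $\mathscr{E}\otimes\omega_C$ and $\on{image}(\Phi)\subset H^1(C,\mathscr{E})\otimes\omega_C^{\otimes 2}$.

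The only real obstacle I anticipate is careful bookkeeping of the two Serre-duality identifications (one used to define $dP$ from $\mu$, the other used to read $\Phi(\eta)(p)$ as a functional on $H^0(C,\mathscr{E}^\vee\otimes\omega_C)$); these must line up so that the resulting composite is literally fiberwise trace contraction with $\eta(p)$. A slicker repackaging of the same argument is to factor $\Phi$ globally as the composition of the evaluation $H^0(C,\mathscr{E}\otimes\omega_C)\otimes\mathscr{O}_C \twoheadrightarrow \mathscr{E}\otimes\omega_C$ with the canonical bundle map $\mathscr{E}\otimes\omega_C\hookrightarrow H^1(C,\mathscr{E})\otimes\omega_C^{\otimes 2}$ arising from $\mu$ and the fiberwise trace pairing; surjectivity of the first factor and injectivity of the second factor then follow from the two generic global generation hypotheses, respectively, and the proposition is immediate.
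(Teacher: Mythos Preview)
The paper does not actually prove this proposition; it is stated with a citation to \cite[Theorem 6.2]{landesman2024big} and then used as a black box in the sketch of \autoref{theorem:big-monodromy-local-system}. So there is no ``paper's own proof'' to compare against here.

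That said, your argument is correct and is the natural one. The cleanest formulation is your ``slicker repackaging'': the composition $\Phi$ factors as
\[
H^0(C,\mathscr{E}\otimes\omega_C)\otimes\mathscr{O}_C \xrightarrow{\on{ev}} \mathscr{E}\otimes\omega_C \xrightarrow{\iota} H^1(C,\mathscr{E})\otimes\omega_C^{\otimes 2},
\]
where $\iota$ is the $\omega_C^{\otimes 2}$-twisted dual of the evaluation map for $\mathscr{E}^\vee\otimes\omega_C$ (using the fiberwise trace pairing to identify $\mathscr{E}\otimes\omega_C$ with $\mathscr{H}om(\mathscr{E}^\vee\otimes\omega_C,\omega_C^{\otimes 2})$, and Serre duality to identify $H^1(C,\mathscr{E})$ with $H^0(C,\mathscr{E}^\vee\otimes\omega_C)^\vee$). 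The two global generation hypotheses give surjectivity of the first arrow and injectivity of the second, respectively, and the result follows. Your fiberwise description is exactly the unwinding of this at a point, and the constancy of rank ensures the sheaf-theoretic image agrees with the fiberwise image. The one thing worth saying explicitly (you allude to it) is that the factorization $\Phi = \iota\circ\on{ev}$ holds globally as a map of sheaves, not just pointwise---this follows from the naturality of all the identifications involved and is what justifies the passage from the fiberwise statement to the sheaf statement.
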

We view this as a ``generic Torelli theorem with coefficients" --- compare e.g.~to the proof of the generic Torelli theorem in \cite{harris1985introduction}, or the proof of generic Torelli for hypersurfaces \cite{donagi1983generic}.

We now sketch the proof of \autoref{theorem:big-monodromy-local-system}, taking \autoref{proposition:twisted-Torelli} as input.
\begin{proof}[Sketch of proof of \autoref{theorem:big-monodromy-local-system}]
	We begin by showing that the complex variation of Hodge structure $R^1q_*\mathbb{U}$ appearing in \autoref{theorem:big-monodromy-local-system} is irreducible.
	
	By \cite[Proposition 4.9]{landesman2024big}, the vector bundle $\mathscr{E}=\mathbb{U}|_C\otimes \mathscr{O}_C$ satisfies $\mathscr{E}\otimes \omega_C, \mathscr{E}^\vee\otimes\omega_C$ are globally generated, for a fiber $C$ of $q$ over a general point $m\in \mathscr{M}$.\footnote{The proof is a somewhat involved deformation theory argument, about which we say nothing more here.} Hence by our twisted Torelli theorem \autoref{proposition:twisted-Torelli}, $\mathscr{E}\otimes \omega_C$, and hence $\mathscr{E}$ can be functorially recovered from the infinitesimal variation of Hodge structure associated to $R^1q_*\mathbb{U}$ at $m$. 
	
	In particular, if the complex variation of Hodge structure $R^1q_*\mathbb{U}$ is a non-trivial direct sum, the same is true for $\mathscr{E}$. But the Narasimhan-Seshadri correspondence \cite{narasimhan1965stable} implies that $\mathscr{E}$ is stable, hence irreducible.
	
	A slight enhancement of this argument shows that the identity component of the Zariski-closure of the monodromy group of $R^1q_*\mathbb{U}$ is in fact a simple group, acting irreducibly \cite[Theorem 6.7]{landesman2024big}. Now \cite[Theorem 0.5.1(b)]{zarkhin1985weights} (which Zarhin attributes to Deligne) implies that this group is either $\on{SO}, \on{Sp},$ or $\on{SL}$, acting via a minuscule representation. We rule out the non-standard minuscule representations using a further analysis of the infinitesimal variation of Hodge structure associated to $R^1q_*\mathbb{U}$, this time along so-called Schiffer variations \cite[\S7]{landesman2024big}.
\end{proof}
We make the following optimistic strengthening of \autoref{conjecture:ggg}:
\begin{conjecture}\label{conjecture:gg}
	Let $g\geq 3$, and let $$\rho: \pi_1(\Sigma_{g})\to U(r)$$ be a unitary representation. Fix a generic complex structure $X$ on $\Sigma_{g}$, and let $(\mathscr{E}, \nabla)$ be the corresponding flat bundle. Then $\mathscr{E}\otimes \omega_{X}$ is generated by global sections.
\end{conjecture}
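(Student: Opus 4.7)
Plan.

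The approach I have in mind combines Narasimhan-Seshadri, Brill-Noether theory for vector bundles, and a transversality analysis controlling how unitary bundles vary with the complex structure on $\Sigma_g$.

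First, by Narasimhan-Seshadri, the bundle $\mathscr{E}$ attached to $\rho$ is polystable of degree zero. Since global generation of $\mathscr{E}\otimes \omega_X$ is equivalent to global generation of each stable direct summand tensored with $\omega_X$, we may assume $\mathscr{E}$ is stable and nontrivial. Then $\mathscr{E}^\vee$ is stable of slope zero, so $h^0(\mathscr{E}^\vee) = 0$ and hence $h^1(\mathscr{E}\otimes \omega_X) = 0$ by Serre duality. The long exact sequence for $0\to \mathscr{E}\otimes \omega_X(-p)\to \mathscr{E}\otimes \omega_X\to (\mathscr{E}\otimes \omega_X)|_p\to 0$ then shows that $\mathscr{E}\otimes \omega_X$ fails to be globally generated at $p$ precisely when $h^0(\mathscr{E}^\vee(p))>0$, equivalently, precisely when $\mathscr{E}^\vee$ admits a line subsheaf of the form $\mathscr{O}_X(-p)$.

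Next I would set up the relative moduli stack $\pi\colon \mathscr{U}\to \mathscr{M}_g$ of stable rank $r$, degree zero bundles on the universal curve, together with the closed substack $B\subset \mathscr{U}$ consisting of those pairs $(X,\mathscr{E})$ for which such a $p\in X$ exists. Standard Brill-Noether methods for vector bundles (Hirschowitz, Teixidor i Bigas, Mercat) give that $B$ has fiberwise codimension at least $r(g-2)$ over a generic point of $\mathscr{M}_g$, so the conjecture already holds for a generic stable bundle on a generic curve. The representation $\rho$ defines, via Narasimhan-Seshadri, a real-analytic section $s_\rho\colon \mathscr{M}_g\to \mathscr{U}$, and the conjecture is equivalent to $s_\rho^{-1}(B)$ being a proper subset of $\mathscr{M}_g$. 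A crude dimension count handles the case $r(g-2) > 3g-3$ (e.g.\ $r\geq 4$ for $g$ large), but the essential content of the statement concerns small rank $r$ and \emph{every} unitary $\rho$.

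The central obstacle, I expect, will be the transversality step. I would attempt it by computing $ds_\rho$ explicitly via the infinitesimal Narasimhan-Seshadri correspondence --- a Kodaira-Spencer class $\xi\in H^1(X, T_X)$ induces a deformation of the holomorphic structure on $\mathscr{E}$ given by harmonic projection with respect to the unitary metric --- and then pairing it against the infinitesimal equations cutting out $B$, which in Brill-Noether language are governed by cup product with the extension class of a hypothetical inclusion $\mathscr{O}_X(-p)\hookrightarrow \mathscr{E}^\vee$. The goal is to show that the compatibility equation forcing this inclusion to persist under deformation is generically violated on $\mathscr{M}_g$. This step is delicate because unitary bundles form only a totally real slice of the complex stable moduli, and the interaction of that slice with the complex Brill-Noether stratification is not automatically controlled by naive dimension counts; I suspect the actual proof will require combining this analysis with input from the generic-global-generation circle of ideas already developed in the paper. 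A natural fallback is degeneration: exhibit a nodal specialization $X_0\in \overline{\mathscr{M}_g}$ on which gg can be verified for every unitary bundle directly, for instance by reducing via normalization to components of lower genus where an inductive hypothesis applies, and then spread gg to a neighborhood of $X_0$ by upper semicontinuity of $h^0(\mathscr{E}^\vee(p))$ and openness of stability.
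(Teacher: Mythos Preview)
The statement you are attempting to prove is presented in the paper as a \emph{conjecture}, not a theorem; the paper offers no proof. It is introduced explicitly as ``an optimistic strengthening'' of the ggg conjecture, and is used only as a hypothesis (e.g.\ to deduce \autoref{conjecture:big-monodromy}(1) conditionally). The only unconditional evidence the paper cites is that the conclusion holds when $g\geq 2r+2$, via \cite[Proposition 4.9]{landesman2024big}; for fixed $g\geq 3$ and arbitrary $r$ the question is open.

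Your setup is sound and matches the paper's own discussion: the reduction to stable nontrivial $\mathscr{E}$, the Serre-duality computation showing that failure of global generation at $p$ is equivalent to $h^0(\mathscr{E}^\vee(p))\neq 0$, and the reformulation in terms of a real-analytic section $s_\rho$ meeting a Brill--Noether locus are all correct (the paper carries out exactly this reduction in the paragraph following \autoref{conjecture:horizontal-generic-vanishing}). But you have correctly identified the actual obstruction yourself: the section $s_\rho$ is only real-analytic, the unitary locus is a totally real slice of the stable moduli, and no dimension count can rule out that this slice sits entirely inside the bad complex-analytic locus $B$. Neither the transversality argument you sketch nor the degeneration fallback is known to work in general --- if they did, the statement would be a theorem rather than a conjecture. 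So your proposal is not wrong so much as it is a reasonable outline of why the problem is hard, together with an honest acknowledgment that the decisive step is missing.
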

By the proof of \autoref{theorem:big-monodromy-local-system} (and its strengthening \autoref{theorem:big-monodromy}), we have:
\begin{proposition}
	Assume \autoref{conjecture:gg}. Then \autoref{conjecture:big-monodromy}(1) is true for unramified covers $\Sigma_{g'}\to \Sigma_g$, with $g\geq 4$.
\end{proposition}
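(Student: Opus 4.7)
The plan is to apply \autoref{theorem:big-monodromy-local-system} to each $H$-isotypic component of $p_*\mathbb{C}$. With notation from diagram \eqref{equation:versal-family} (and the unramified assumption $n=0$, so that $\pi$ is proper), the local system $p_*\mathbb{C}$ on $\mathscr{C}$ carries a fiberwise $H$-action and decomposes as
\[
p_*\mathbb{C} \;=\; \bigoplus_{V\in\widehat{H}} V\otimes \mathbb{U}_V,\qquad \mathbb{U}_V:=\on{Hom}_H(V,p_*\mathbb{C}),
\]
where each $\mathbb{U}_V$ has finite monodromy factoring through $\pi_1(\mathscr{C})\twoheadrightarrow H$, and $\mathbb{U}_V|_C$ is an irreducible $\pi_1(C)$-representation for every fiber $C$ of $q$ (since $\widetilde{C}\to C$ is a connected $H$-cover, so $\pi_1(C)\twoheadrightarrow H$). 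Applying $R^1q_*$ produces an $H$-equivariant decomposition $R^1\pi_*\mathbb{C} = \bigoplus_V V\otimes R^1q_*\mathbb{U}_V$, so the identity component of the Zariski closure of the monodromy of $R^1\pi_*\mathbb{C}$ lies in the centralizer $Z_{\on{Sp}_{2g'}(\mathbb{C})}(H)^\circ$, and it suffices to show that it equals the derived subgroup of this centralizer.

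For each nontrivial $V\in\widehat{H}$, \autoref{conjecture:gg} applied to the unitary representation underlying $\mathbb{U}_V|_C$ and to its dual implies that both $\mathbb{U}_V|_C\otimes\omega_C$ and $\mathbb{U}_V^\vee|_C\otimes\omega_C$ are globally generated on a generic fiber $C$. Since $g\geq 4$, \autoref{theorem:big-monodromy-local-system} then identifies the identity component of the Zariski closure of the monodromy on $H^1(C,\mathbb{U}_V|_C)$ with the full $\on{Sp}$, $\on{SO}$, or $\on{SL}$ of that space, according to the self-duality type of $\mathbb{U}_V|_C$. Because the cup product pairing on $H^1(\widetilde{C},\mathbb{C})$ is alternating and splits as the tensor of an $H$-equivariant pairing on $V\otimes V^\vee$ with the Poincar\'e pairing on $H^1(C,\mathbb{U}_V)\otimes H^1(C,\mathbb{U}_V^\vee)$, these are precisely the simple factors of $[Z_{\on{Sp}_{2g'}(\mathbb{C})}(H)^\circ,\,Z_{\on{Sp}_{2g'}(\mathbb{C})}(H)^\circ]$, together with the classical $\on{Sp}(H^1(C,\mathbb{C}))$-factor arising from the trivial isotypic component via $\on{Mod}_g\twoheadrightarrow\on{Sp}_{2g}(\mathbb{Z})$.

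The main obstacle is the remaining Goursat-type step: having established surjectivity onto each factor, the image in $\prod_V G_V$ could a priori still be a proper subgroup cut out by identifications between distinct simple factors. For factors whose underlying simple groups are non-isomorphic (differing in type or rank) this is automatic; the remaining case of two factors of the same type and rank is handled by showing that the associated monodromy representations of $\pi_1(\mathscr{M}_\varphi)$ on $H^1(C,\mathbb{U}_V|_C)$ and $H^1(C,\mathbb{U}_{V'}|_C)$ are non-isomorphic. This can be deduced from the twisted Torelli statement \autoref{proposition:twisted-Torelli}: the infinitesimal variation of Hodge structure at a generic fiber $C$ functorially reconstructs the vector bundle $\mathbb{U}_V|_C\otimes\omega_C$, and distinct irreducibles $V,V'$ of $H$ give rise to non-isomorphic such bundles, forcing the infinitesimal variations of Hodge structure (and hence the monodromy representations) to differ.
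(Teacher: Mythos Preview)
Your proposal is correct and follows essentially the same route as the paper, which simply points to the proofs of \autoref{theorem:big-monodromy-local-system} and \autoref{theorem:big-monodromy} and observes that \autoref{conjecture:gg} supplies the global generation hypothesis those proofs need; you have unpacked exactly that. One small sharpening for the Goursat step: to pass from ``the monodromy representations on $H^1(C,\mathbb{U}_V)$ and $H^1(C,\mathbb{U}_{V'})$ coincide'' to ``the infinitesimal variations of Hodge structure coincide,'' you should invoke semisimplicity of polarizable variations of Hodge structure (so an isomorphism of underlying local systems upgrades to an isomorphism of $\mathbb{C}$-VHS, possibly after dualizing or shifting), after which your appeal to \autoref{proposition:twisted-Torelli} goes through cleanly.
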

It would be of some interest to find (and prove) an analogue of \autoref{conjecture:gg} (for parabolic bundles, along the lines of \autoref{conjecture:ggg}) which would imply \autoref{conjecture:big-monodromy}(1) in full generality, even for ramified covers.

See also \cite[\S10]{landesman2024big} for some further discussion of these and related questions.
\subsubsection{Riemann-Hilbert problems}\label{subsubsection:Riemann-Hilbert}
There is another point of view on \autoref{conjecture:ggg} and its variants, that has a long history---that of \emph{Riemann-Hilbert problems}.  That is, given a monodromy representation with corresponding flat bundle $(\mathscr{E},\nabla)$, what can one say about the properties of the vector bundle $\mathscr{E}$? For example, Hilbert's 21st problem \cite{hilbert1900mathematical} asked when a given representation $\rho$ of $\pi_1(\mathbb{CP}^1\setminus\{x_1, \cdots, x_n\})$ can be obtained as the monodromy of a Fuchsian ODE, or in modern terms, when there exists a connection on the \emph{trivial} bundle $\mathscr{O}_{\mathbb{CP}^1}^r$  with regular singularities at $x_1, \cdots, x_n$ and monodromy $\rho$ (see \autoref{example:mic-p1}). The analogous question in higher genus (where instead of asking that $\mathscr{E}$ be trivial, we ask that it be semistable) was answered by Esnault-Viehweg and Gabber \cite{esnault1998semistable}.

Here we consider a variant of this type of question, where one fixes a flat bundle with regular singularities $(\mathscr{E}, \nabla)$ on a marked Riemann surface $(X,D)$, and then ask how $\mathscr{E}$ behaves under isomonodromic deformation, i.e.~when one perturbs the complex structure on $(X,D)$. We have already seen a version of these questions in \autoref{theorem:stability}. The general expectation is that $\mathscr{E}$ should behave ``as generically as possible" after a general isomonodromic deformation, but this is in fact not always the case.
\begin{theorem}{{\cite[Theorem 1.3.2]{landesman2022geometric}}}
	Let $X$ be a smooth proper curve of genus at least $2$. There exist flat vector bundles $(\mathscr{E},\nabla)$ on $X$ with irreducible monodromy such that no isomonodromic deformation has semistable underlying vector bundle.
\end{theorem}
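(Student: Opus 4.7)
The plan is to exhibit a flat bundle that is the restriction of a polarizable $\mathbb{C}$-variation of Hodge structure defined globally over $\mathcal{M}_g$ (or a finite cover thereof), and to argue that the non-triviality of the Hodge filtration forces the underlying vector bundle to be unstable on \emph{every} fiber simultaneously. Concretely, fix $X$ of genus $g\geq 2$ and apply the Kodaira--Parshin trick (\autoref{example:kp-trick}) to build a family $q:\mathscr{Y}\to \mathscr{C}_g$ over the universal curve, equipped with a finite group $G$-action, and extract an irreducible $G$-isotypic summand $\mathbb{V}$ of $R^iq_*\mathbb{C}$ (for appropriate $i$ and isotype) that is a polarizable $\mathbb{Z}$-VHS on $\mathscr{C}_g$, of rank $r$ large enough that $r^2/4>g$, with Hodge filtration of length strictly greater than one, and whose restriction to a general fiber of $\mathscr{C}_g\to \mathcal{M}_g$ has irreducible monodromy. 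Alternatively one can take an irreducible summand of the VHS arising from an explicit Shimura-type family, or tensor/symmetric-power constructions built from $R^1q_*\mathbb{C}$, so long as the three properties (VHS over all of $\mathcal{M}_g$; non-trivial Hodge filtration; irreducible monodromy on a generic fiber) are achieved.

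Having produced such $\mathbb{V}$, let $(\mathscr{E},\nabla)$ be its restriction to $X$, equipped with its integrable connection; this flat bundle has irreducible monodromy $\rho$ by construction. The key observation is that every isomonodromic deformation of $(\mathscr{E},\nabla)$ is obtained by restricting $\mathbb{V}$ to some other Riemann surface in $\mathcal{M}_g$. Indeed, isomonodromic deformations are parametrized by the Teichm\"uller space of $X$, and the classifying map to $\mathcal{M}_g$ (or its cover where $\mathbb{V}$ lives) is surjective, so any perturbation of the complex structure realizes the isomonodromic continuation of $(\mathscr{E},\nabla)$ as $(\mathbb{V}\otimes \mathscr{O}_{X'})|_{X'}$ with its induced connection for some $X'$. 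Thus to prove the theorem it suffices to show that $\mathscr{E}'=\mathbb{V}|_{X'}\otimes\mathscr{O}_{X'}$ is non-semistable for every such $X'$.

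To do this I would run the argument from the sketch of \autoref{corollary:unitary-c-vhs} in reverse. Let $F^\bullet$ denote the Hodge filtration on $\mathscr{E}$ over $\mathscr{C}_g$, and let $i$ be maximal with $F^i\neq 0$. Because $\mathbb{V}$ is a polarizable $\mathbb{Z}$-VHS, $\det \mathscr{E}$ has finite order, so $\deg\mathscr{E}|_{X'}=0$ for every fiber $X'$. On the other hand, since the Hodge filtration has length $>1$ we have $F^i\neq \mathscr{E}$, so by \cite[Corollary 4.1.8]{landesman2022geometric} the sub-bundle $F^i|_{X'}\subset \mathscr{E}|_{X'}$ has strictly positive (parabolic) degree on every fiber. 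This destabilizes $\mathscr{E}|_{X'}$, completing the proof. The main obstacle is the first step: arranging an irreducible summand of a VHS over $\mathcal{M}_g$ with Hodge filtration of length $>1$ and irreducible monodromy on a generic genus-$g$ fiber. For $g$ small and $r$ large this is precisely the regime of \autoref{theorem:canonical-reps}, so one cannot hope for finite-image $\rho$; instead one must check irreducibility of the chosen isotypic component using the geometry of the Kodaira--Parshin cover (e.g., by choosing $G$ so that $\pi_1(\Sigma_g)$ surjects onto a suitable non-abelian quotient and tracking the resulting decomposition of $H^1$ of the cover as a $G$-module), together with a Hodge-numerical check ensuring $h^{p,q}>0$ for some $p\neq q$.
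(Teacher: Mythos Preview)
Your proposal is correct and matches the paper's approach: the paper explicitly says the examples arise from the Kodaira--Parshin trick, and that the point is that non-unitary variations of Hodge structure never have semistable underlying vector bundle---exactly the $F^i$-has-positive-degree argument you run via \cite[Corollary 4.1.8]{landesman2022geometric}. Your condition $r^2/4>g$ is not needed for the argument (only the non-triviality of the Hodge filtration matters), though it does follow \emph{a posteriori} from \autoref{corollary:unitary-c-vhs}.
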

\begin{remark}
Note that this theorem contradicts some other results in the literature, e.g.~the main results of \cite{biswas2016isomonodromic, biswas2020isomonodromic, biswas2021isomonodromic}. See \cite[Remark 5.1.9]{landesman2022geometric} for a discussion. 
\end{remark}

In fact the examples arise from the Kodaira-Parshin trick, discussed in \autoref{example:kp-trick}; the point is that, as we have seen before, non-unitary variations of Hodge structure never have semistable underlying vector bundle. Nonetheless, the following seems plausible (as stability is a generic property):
\begin{conjecture}\label{conjecture:unitary-stability}
	Let $(X,D)$ be a marked curve of genus $g$ at least $2$, and $(\mathscr{E},\nabla)$ a flat bundle on $X$ with regular singularities along $D$, and irreducible unitary monodromy. Then after a general isomonodromic deformation, $\mathscr{E}$ is (semi)stable.
\end{conjecture}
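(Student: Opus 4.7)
The plan is to deduce the conjecture from the Mehta--Seshadri correspondence~\cite{mehta-seshadri} combined with the elementary fact that isomonodromic deformation preserves both the monodromy representation and the conjugacy classes of residues of $\nabla$ along the marked divisor.

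First I would observe that, since the monodromy $\rho$ of $(\mathscr{E},\nabla)$ is unitary, the local monodromies around each point of $D$ are unitary and hence diagonalizable; the residues of $\nabla$ along $D$ are therefore semisimple with eigenvalues of the required form, and they determine a well-defined parabolic structure $\alpha$ on $\mathscr{E}$ of parabolic degree zero. By Mehta--Seshadri, since $\rho$ is irreducible and unitary, the parabolic bundle $(\mathscr{E},\alpha)$ is already parabolically stable at $(X,D)$---no deformation is required.

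Next, I would use the basic features of isomonodromic deformation, as reviewed in \autoref{subsubsection:isomonodromic}: any isomonodromic deformation $(\mathscr{E}',\nabla')$ on $(X',D')$ has the same monodromy $\rho$ (still irreducible and unitary) and the same conjugacy classes of residues along $D'$, and therefore the same parabolic weights $\alpha$. Applying Mehta--Seshadri at $(X',D')$ gives parabolic stability of $(\mathscr{E}',\alpha)$ at \emph{every} point of the isomonodromic deformation, not merely a general one. This already settles the conjecture in the parabolic sense, and in particular recovers the conclusion of \autoref{theorem:stability} in the unitary case without the genus hypothesis $g\geq \on{rk}(\mathscr{E})^2/4$.

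The main obstacle arises if the conjecture is to be read in the ordinary (non-parabolic) sense: a sub-bundle $\mathscr{F}\subset\mathscr{E}'$ with $\mu(\mathscr{F})>0$ can in principle remain parabolically non-destabilizing due to cancellation with the parabolic weights inherited by $\mathscr{F}$. To rule this out generically, I would show that any such hypothetical $\mathscr{F}$ is obstructed to extend under a general tangent direction in $\mathscr{M}_{g,n}$, by pairing the Kodaira--Spencer class of the deformation of $(X,D)$ with the obstruction class for $\mathscr{F}\hookrightarrow\mathscr{E}$ in $H^1(X,\on{Hom}(\mathscr{F},\mathscr{E}/\mathscr{F}))$. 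The hard part will be showing that this pairing is generically non-degenerate under the unitary hypothesis---in spirit analogous to the positivity/IVHS computations underlying \autoref{theorem:stability} and \autoref{proposition:twisted-Torelli}---which would likely require a careful infinitesimal analysis on the moduli of parabolic bundles together with the use of the harmonic metric to rigidify the possible $\mathscr{F}$.
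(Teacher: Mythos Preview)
This statement is a \emph{conjecture} in the paper and is not proven there; the paper only remarks that the case $D=\emptyset$ is immediate by the Narasimhan--Seshadri correspondence and that the semistable case follows for $g$ large relative to $\on{rk}(\mathscr{E})$ from \autoref{theorem:stability}. So there is no paper proof to compare against, and your task is really to assess whether you have proven an open conjecture.

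Your steps 1--2 are correct but establish only \emph{parabolic} stability: for the Mehta--Seshadri extension of an irreducible unitary representation, the associated parabolic bundle is parabolically stable on every $(X',D')$, with no genericity needed. When $D=\emptyset$ this is exactly the Narasimhan--Seshadri argument the paper cites. But the surrounding discussion (Hilbert's 21st problem, the Esnault--Viehweg--Gabber result) makes clear the conjecture concerns \emph{ordinary} (semi)stability of the underlying bundle $\mathscr{E}$, and parabolic stability does not imply this when $D\neq\emptyset$: a sub-bundle $\mathscr{F}$ with $\mu(\mathscr{F})>0$ can still be parabolically non-destabilizing once the inherited weights are subtracted. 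There is also a point you gloss over in step 1: the conjecture allows $(\mathscr{E},\nabla)$ to be \emph{any} logarithmic extension with the given monodromy, not only the one with residue eigenvalues in $[0,1)$, so Mehta--Seshadri does not directly describe the given $\mathscr{E}$; and even for that extension, unitary local monodromy does not force the residue to be semisimple when eigenvalues are resonant.

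Your step 3 correctly locates the actual content of the conjecture---showing that any ordinary destabilizing sub-bundle is obstructed along a generic tangent direction in $\mathscr{M}_{g,n}$---but you explicitly leave the ``hard part'' (generic non-degeneracy of the pairing) unproven. That is exactly where the problem is open; the paper offers no argument here either. So what you have written is not a proof of the conjecture but a reasonable outline of an attack on it, together with the observation (already implicit in the paper's remark about $D=\emptyset$) that the parabolic version is trivial in the unitary case.
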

\autoref{conjecture:unitary-stability} is immediate when $D$ is empty, by the Narasimhan-Seshadri correspondence \cite{narasimhan1965stable}, and the semistable case follows when $g$ is large compared to the rank of $\mathscr{E}$, from \autoref{theorem:stability}. In \cite[Theorem 1.3.4]{landesman2022geometric}, Landesman and I prove that general isomonodromic deformations of flat bundles are in general not ``too far" from being semistable---that is, we bound their Harder-Narasimhan polygon.

It is also natural to expect that the vector bundles underlying isomonodromic deformations of a fixed non-trivial irreducible unitary connection behave generically in a \emph{cohomological} sense. For example, when $D$ is empty, such $\mathscr{E}$ is stable of slope zero, and so for $\mathscr{L}$ a line bundle of degree $1$ on $X$, one expects from the Riemann-Roch theorem that $$H^0(X, \mathscr{E}\otimes \mathscr{L}^{\otimes s})=0$$ for $s\leq g-1$. For example, it seems reasonable to conjecture:
\begin{conjecture}[Horizontal generic vanishing] \label{conjecture:horizontal-generic-vanishing}
	Let $g\geq 3$, and let $(X,D)$ be a marked smooth projective curve of genus $g\geq 3$. Let $(\mathscr{E}, \nabla)$ be a flat bundle on $X$ with irreducible, non-trivial unitary monodromy, and regular singularities along $D$, whose residue matrices have eigenvalues with real parts in $[0,1)$.\footnote{These are the bundles appearing in the Mehta-Seshadri correspondence \cite{mehta-seshadri}; they are also known as \emph{Deligne canonical extensions}.} There exists some non-decreasing function $f$, with $f(3)=1$, such that after  isomonodromic deformation to a general nearby curve $X'$, we have:  
	\begin{enumerate}
		\item (weak form) $H^0(X', \mathscr{E}(Z))=0$ for a general effective divisor $Z$ on $X'$ of degree $d\leq f(g)$.
		\item (strong form) $H^0(X', \mathscr{E}(Z))=0$ for all effective divisors $Z$ on $X'$ of degree $d\leq f(g)$.
	\end{enumerate}
\end{conjecture}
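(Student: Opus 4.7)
The plan is to combine the Mehta--Seshadri correspondence---which forces parabolic stability of $\mathscr{E}_\star$ on \emph{every} curve in the isomonodromic deformation, since the monodromy $\rho$ is assumed unitary and irreducible---with Brill--Noether theory for stable bundles on general curves. The advantage of the unitary hypothesis is that stability is preserved throughout the deformation: one does not need the quantitative estimates of \autoref{theorem:stability}, and the whole question reduces to showing that the isomonodromic leaf sweeps out a sufficiently generic locus inside the relative moduli of parabolic stable bundles $\mathscr{N}^{\mathrm{par}} \to \mathscr{M}_{g,n}$.

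First I would assemble the Brill--Noether input. For a general parabolically stable bundle of rank $r$ and parabolic degree zero on a general pointed curve of genus $g$, results of Lange--Narasimhan, Mercat, and Russo--Teixidor i Bigas (and their extensions to the parabolic setting) predict a generic Segre invariant of order $-g$, so that no line subbundle has degree exceeding $-g/r$ or thereabouts. Since a non-zero section of $\mathscr{E}(Z)$ corresponds to a line subbundle of $\mathscr{E}$ of degree at least $-\deg Z$, this would yield $h^0(X', \mathscr{E}(Z)) = 0$ for effective $Z$ of degree $d \le f(g)$ with $f(g)$ of order roughly $g/r - 1$; in particular the required $f(3) = 1$ is reachable in the weak form, and for $r = 2$ even in the strong form on a general genus-$3$ curve.

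The crux of the proof is then a transversality statement: the leaf of the isomonodromy foliation through $[(X, D, \mathscr{E}, \nabla)]$ must meet the Brill--Noether-general locus in $\mathscr{N}^{\mathrm{par}}$. I would attack this via the derivative of the classifying map $\phi \colon \mathscr{M}_{g,n} \to \mathscr{N}^{\mathrm{par}}$ at $[(\mathscr{E}, \nabla)]$, which can be made explicit through the Atiyah--de Rham complex (cf.\ \autoref{proposition:deformation}) as the composition of the Kodaira--Spencer map $T \mathscr{M}_{g,n} \to H^1(X, T_X(-D))$ with a cup-product against the off-diagonal component of $\nabla$---essentially the same cup-product that appears in \autoref{subsubsection:vb-and-monodromy} and underlies the twisted generic Torelli of \autoref{proposition:twisted-Torelli}. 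The irreducibility and non-triviality of $\rho$ force this cup-product to be non-degenerate, and a base-point-free-pencil-style computation with the pair $(\mathscr{E}, \mathscr{E}(Z))$ should then show that $\phi$ genuinely escapes any prescribed parabolic Brill--Noether stratum.

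The main obstacle I anticipate is the strong form (2), which demands vanishing for \emph{every} effective $Z$ of degree $\le f(g)$, not merely a generic one. The weak form (1) would follow from the infinitesimal argument above combined with upper semi-continuity: the deformed bundle lies outside the (positive-codimension) Brill--Noether locus, whence $h^0(\mathscr{E}(Z)) = 0$ for general $Z$. For (2) one instead needs a uniform bound on the Segre invariants of $\mathscr{E}$ after a general isomonodromic deformation---morally, a Clifford-type inequality controlling \emph{all} line subbundles simultaneously. Extending the twisted Torelli result \autoref{proposition:twisted-Torelli} to the parabolic setting, and coupling it with the Lange--Narasimhan stratification of moduli by Segre invariant, seems the most promising route; but obtaining the optimal bound $f(3) = 1$ this way looks delicate, and making the transversality quantitative enough to survive the transition from generic $Z$ to all $Z$ is, to my mind, the principal technical difficulty.
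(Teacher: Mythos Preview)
This statement is a \emph{conjecture} in the paper, not a theorem; the paper gives no proof and explicitly describes it as open, with the only known evidence being the case $g$ large relative to $\on{rk}(\mathscr{E})$ (via \cite[Proposition 4.9]{landesman2024big}). So there is no ``paper's own proof'' to compare your proposal against---you are attempting to resolve an open problem.

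On the substance of your proposal, the decisive gap is the transversality step. You correctly observe that Mehta--Seshadri gives parabolic stability of $\mathscr{E}$ on every curve in the isomonodromic leaf, and that Brill--Noether theory controls sections of a \emph{general} stable bundle on a general curve. But the bundle produced by isomonodromic deformation is not general: it is the unique point of the moduli of parabolic bundles picked out by the fixed monodromy $\rho$ together with the Mehta--Seshadri correspondence. The entire content of the conjecture is precisely that this constrained leaf nonetheless escapes the Brill--Noether loci. Your argument for this---``a base-point-free-pencil-style computation with the pair $(\mathscr{E}, \mathscr{E}(Z))$ should then show that $\phi$ genuinely escapes any prescribed parabolic Brill--Noether stratum''---is not an argument but a restatement of what needs to be proved. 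The non-degeneracy of the cup-product you cite (which is indeed what underlies \autoref{proposition:ggg-vanishing} and \autoref{proposition:twisted-Torelli}) gives information about the derivative of the period map, not directly about the derivative of the map to the moduli of bundles; converting one into the other is exactly the missing work.

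There is a second, more quantitative issue: your Brill--Noether bound $f(g)\approx g/r-1$ depends on the rank $r$, whereas the conjecture demands $f(3)=1$ uniformly. For $r\geq 3$ your estimate gives nothing at $g=3$, so even granting the transversality you would not recover the stated conjecture.
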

This conjecture is closely related to \autoref{conjecture:ggg}. For simplicity of notation we assume $D=\emptyset$. By considering the short exact sequence $$0\to \mathscr{E}^\vee(-p)\otimes \omega_X\to \mathscr{E}^\vee\otimes \omega_X\to \mathscr{E}^\vee\otimes \omega_X|_p\to 0,$$ and using that $H^1(\mathscr{E}^\vee\otimes \omega_X)=0$ for $\mathscr{E}$ as in \autoref{conjecture:horizontal-generic-vanishing}, we see that $\mathscr{E}^\vee\otimes \omega_X$ is generated by global sections at $p$ if and only if $H^1(X, \mathscr{E}^\vee(-p)\otimes \omega_X)=0$, or equivalently, by Serre duality, if $H^0(X, \mathscr{E}(p))=0$. Thus when $D=\emptyset$, \autoref{conjecture:horizontal-generic-vanishing}(1) implies \autoref{conjecture:ggg}, and \autoref{conjecture:horizontal-generic-vanishing}(2) implies \autoref{conjecture:gg}. (In fact \autoref{conjecture:horizontal-generic-vanishing} implies \autoref{conjecture:ggg} even for $D$ non-empty, but we omit the proof to shield the reader from more parabolic bundle notation.)

Our primary evidence for \autoref{conjecture:horizontal-generic-vanishing} comes from the proof of \cite[Proposition 4.9]{landesman2024big}, which one can use to prove \autoref{conjecture:horizontal-generic-vanishing} when $g$ is large compared to the rank of $\mathscr{E}$. It would be useful for someone to extract the precise function $f(g,r)$ one can obtain from the proof of that result, and to write a careful proof of the implication. The statement given there shows that \autoref{conjecture:horizontal-generic-vanishing}(2) is true when $g\geq 2+2\on{rk}(\mathscr{E})$ and $f(g)=1$.

We regard \autoref{conjecture:horizontal-generic-vanishing} (and in particular the special case discussed in the previous paragraph) as an analogue of Green-Lazarsfeld's generic vanishing theorem \cite{green1987deformation} and its variants (see e.g.~\cite{arapura1997geometry, pareschi2011gv} for some analogues in higher rank). While those theorems analyze the cohomological behavior of generic flat bundles on a fixed variety, \autoref{conjecture:horizontal-generic-vanishing} aims to understand the cohomological behavior of flat bundles with fixed monodromy on a variety (curve) with general complex structure.
\subsubsection{Prill's problem}
We briefly remark on the connection between \autoref{conjecture:horizontal-generic-vanishing} and another question in classical algebraic geometry: Prill's problem \cite[p.~268, Chapter VI, Exercise D]{acgh}.
\begin{question}[Prill's problem]
Let $X,Y$ be  smooth projective curves of genus at least $2$ over the complex numbers, and let $f: Y\to X$ be a non-constant morphism. Can a general fiber of $f$ move in a pencil? That is, can $$\dim H^0(Y, \mathscr{O}_Y(f^{-1}(x)))\geq 2$$ for general $x\in X$?
\end{question}
The answer, we believe, was expected to be ``no." Indeed, by the Riemann-Hurwitz formula we have $$\deg f^{-1}(y)\leq g(Y);$$ and a generic effective divisor on a curve of genus $g$, of degree at most $g$ does not move in a pencil.

By the projection formula we have $f_*\mathscr{O}_Y(f^{-1}(x))=(f_*\mathscr{O}_Y)(x)$. Hence setting $\mathscr{E}=f_*\mathscr{O}_Y/\mathscr{O}_X$, we have that $\dim H^0(Y, \mathscr{O}_Y(f^{-1}(x)))\geq 2$ if and only if $$\dim H^0(X, \mathscr{E}(x))\neq 0.$$ As $\mathscr{E}$ carries a unitary flat connection with regular singularities at the branch points of $f$ and residues with real parts in $[0,1)$ (with monodromy the non-trivial summand of the permutation representation associated to the deck transformation group of $f$), we expect this group to be zero for generic $X, x$, by \autoref{conjecture:horizontal-generic-vanishing}(1), when the genus of $X$ is at least $3$.

In fact this bound on the genus is necessary---Landesman and I observed in \cite{landesman2024prill} that \emph{every} genus $2$ curve $X$ admits a finite degree $36$ \'etale cover $f:Y\to X$ such that every fiber of $f$ moves in a pencil---thus the answer to Prill's problem is ``yes" when the genus of $X$ is $2$. In fact the construction is the same as that used by Markovi\'c  in his disproof of the Putman-Wieland conjecture in genus $2$ \cite{markovic2022unramified} (see \autoref{remark:markovic-pw}), due to Bogomolov and Tschinkel \cite{bogomolov2002unramified}. In \cite[Lemma 5.5]{landesman2023applications}, we observe that \emph{any} counterexample $\varphi: \Sigma_{g'}\to \Sigma_g$ to the Putman-Wieland conjecture, \autoref{conjecture:pw}, gives rise to an example of a cover of a \emph{general curve of genus $g$} such that Prill's problem has a positive answer.

\subsection{Non-abelian big monodromy}
Having digressed somewhat into abelian questions (e.g.~the monodromy of certain local systems), we finally return to where we started: the action of the mapping class group and its subgroups on the space of conjugacy classes of rank $r$ representations of $\pi_1(\Sigma_{g,n,r})$, namely $Y(g,n,r)$. 

As in \autoref{subsection:non-abelian-cohomology}, we now view this action as the analogue of the monodromy representation associated to $R^1\pi_*\mathbb{C}$, where $$\pi: \mathscr{C}_{g,n}\to \mathscr{M}_{g,n}$$ is the map from the universal $n$-punctured curve of genus $g$ to the moduli space of genus $g$ curves with $n$ marked points. (Indeed, the fiber of $R^1\pi_*\mathbb{C}^\times$ is precisely $Y(g,n,1)$, and the monodromy action on this fiber identifies with the natural mapping class group action on $Y(g,n,1)$.) More generally, given a family of smooth $n$-punctured curves of genus $g$, $p: \mathscr{C}\to \mathscr{M}$, we obtain an action of $\pi_1(\mathscr{M})$ on $Y(g,n,r)$. Explicitly, $p$ induces a classifying map $\mathscr{M}\to \mathscr{M}_{g,n}$, and the induced action of $\pi_1(\mathscr{M})$ on $Y(g,n,r)$ is given by the composition $$\pi_1(\mathscr{M})\to \pi_1(\mathscr{M}_{g,n})\simeq\on{PMod}_{g,n}\to \on{Aut}(Y(g,n,r)).$$
If we think of this action as a ``non-abelian" monodromy representation, we are naturally led to try to understand the extent to which \autoref{slogan:big-monodromy} (that monodromy groups should be as big as possible) holds in this case. In this section, we make a modest attempt at making sense of this slogan in the non-abelian setting.

For $g\geq 1$, one sense in which the local system $R^1\pi_*\mathbb{C}^\times$ has big monodromy is that for any dominant map $f: \mathscr{M}\to \mathscr{M}_{g,n}$, the group $$H^0(\mathscr{M}, f^*R^1\pi_*\mathbb{C}^\times)$$ is finite. In other words, there are only finitely many points of $Y(g,n,1)$ fixed by $\pi_1(\mathscr{M})$. \autoref{theorem:canonical-reps} tells us that the same is true in general for the action of $\pi_1(\mathscr{M})$ on $Y(g,n,r)$, when $g\geq r^2$, so \autoref{theorem:canonical-reps} may be thought of as a ``non-abelian big monodromy" statement. And \autoref{conjecture:finite-fixed-points} predicts the analogous statement holds true whenever $g\geq 3$.

There are a number of other ways one might make sense of non-abelian big monodromy. Goldman and others have studied the \emph{ergodicity} of the $\on{Mod}_{g,n}$-action on the space of \emph{unitary} representations of $\pi_1(\Sigma_{g,n})$ in a series of beautiful papers, for example \cite{goldman1997ergodic, goldmanmapping, pickrell2002ergodicity, goldman2021mapping}, and this action was studied from the view of topological density in a number of other papers, e.g.~ \cite{previte2000topological, previte2002topological, cantat-loray}. Closer to our point of view is the work of Katzarkov, Pantev, and Simpson \cite{katzarkov2003density}, who give two possible interpretations of big monodromy in this setting:
\begin{enumerate}
\item No invariant function (NIF): there are no meromorphic functions on $M_B(\Sigma_{g,n}, r)$ invariant under the action of $\pi_1(\mathscr{M})$.
\item Big orbit (BO): There exists a point of $M_B(\Sigma_{g,n}, r)$ whose orbit under $\pi_1(\mathscr{M})$ is dense in the Zariski topology on $M_B(\Sigma_{g,n}, r)$.
\end{enumerate}
They show that both notions hold true for the $\on{Mod}_{g}$-action on $M_B(\Sigma_{g}, r)$ for $g$ large and $r$ odd \cite[Theorem A]{katzarkov2003density} and (again for $r$ odd) for the $\pi_1(B)$-action on $M_B(\Sigma_{g}, r)$ when $B$ is the base of a Lefschetz pencil of sufficiently high degree in a fixed algebraic surface \cite[Theorem B]{katzarkov2003density}.

They conjecture the following:
\begin{conjecture}[{\cite[Conjecture 4.2]{katzarkov2003density}}]\label{conjecture:katzarkov-nabm}
	Let $\mathscr{C}\to \mathscr{M}$ be any non-isotrivial smooth proper family of genus $g\geq 2$ curves. Then the induced $\pi_1(\mathscr{M})$-action on $M_B(\Sigma_g, r)$ satisfies NIF and BO.
\end{conjecture}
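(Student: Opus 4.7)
The plan is to attack both NIF and BO by combining the canonical representations theorem \autoref{theorem:canonical-reps}, the vanishing statement \autoref{theorem:vanishing}, and Simpson's deformation theorem \autoref{theorem:deformation-to-vhs}. I would work primarily in the regime $g \geq r^2$; extending to all $g \geq 2$ seems to require stronger big-monodromy inputs not yet available.

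First I would reduce to the action of $\on{Mod}_g$ on $M_B(\Sigma_g, r)$ by spreading out. Since $\mathscr{C}\to \mathscr{M}$ is non-isotrivial, the classifying map $\mathscr{M} \to \mathscr{M}_g$ has positive-dimensional image, so any $\pi_1(\mathscr{M})$-invariant meromorphic function, respectively Zariski-dense orbit, arises by pullback, respectively pushforward, from an $\on{Mod}_g$-invariant datum on $M_B(\Sigma_g,r)$. After this reduction I may assume the acting group is $\on{Mod}_g$ itself.

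For BO, I would argue by contradiction. Suppose every $\on{Mod}_g$-orbit has proper Zariski closure, and let $[\rho]$ be a very general point of $M_B(\Sigma_g, r)^{\text{irr}}$ whose orbit closure is a proper subvariety $Z$. By \autoref{theorem:deformation-to-vhs} I may deform $[\rho]$ within its irreducible component to one underlying a polarizable complex variation of Hodge structure, and \autoref{corollary:unitary-c-vhs} then implies that the adjoint local system $\on{ad}(\rho)$ is unitary across the family $\mathscr{C}_g/\mathscr{M}_g$. The tangent space $T_{[\rho]}Z \subsetneq H^1(\pi_1(\Sigma_g), \on{ad}(\rho))$ is a proper subspace invariant under the finite-index stabilizer $\Gamma \subset \on{Mod}_g$ of $Z$. \autoref{theorem:vanishing}, applied to the unitary adjoint family, shows that $H^1(\pi_1(\Sigma_g), \on{ad}(\rho))$ has no nonzero $\Gamma$-fixed vectors when $g \geq r^2$. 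Upgrading this vanishing to full irreducibility of $H^1$ as a $\Gamma$-module would yield the contradiction. For NIF, a non-constant $\on{Mod}_g$-invariant meromorphic function produces, for generic $c$, an invariant hypersurface $f^{-1}(c) \subset M_B$, and the same tangent-space argument applies.

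The main obstacle is precisely this irreducibility upgrade: \autoref{theorem:vanishing} rules out $\Gamma$-fixed vectors in $H^1(\pi_1(\Sigma_g), \on{ad}(\rho))$, but not proper nonzero invariant subspaces. Establishing such irreducibility is a genuine non-abelian big-monodromy statement---the natural analog of \autoref{slogan:big-monodromy} and \autoref{theorem:big-monodromy} in the setting where $R^1\pi_*\on{ad}(\mathbb{V})$ replaces the Prym-type cohomology appearing in the classical case---and I would expect it to require adapting the derivative-of-period-map techniques of \autoref{subsection:big-monodromy-RH}, together with a Zarhin-style classification of possible algebraic monodromy groups, to the adjoint variation of Hodge structure $\on{ad}(\mathbb{V})$. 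Without this input, the argument only shows that proper invariant subvarieties must avoid certain very general VHS points, which is weaker than either of NIF or BO in full generality.
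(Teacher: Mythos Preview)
The statement you are attempting to prove is a \emph{conjecture}, not a theorem. The paper does not prove it; immediately after stating it the authors write ``As far as we know, little progress has been made on this conjecture since \cite{katzarkov2003density}.'' So there is no proof in the paper to compare against, and any purported proof would be new mathematics.

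Your proposal has a genuine structural error in the reduction step. You claim that because the classifying map $\mathscr{M}\to\mathscr{M}_g$ has positive-dimensional image, a $\pi_1(\mathscr{M})$-invariant meromorphic function on $M_B(\Sigma_g,r)$ must arise from a $\on{Mod}_g$-invariant one. This is backwards: the image of $\pi_1(\mathscr{M})$ in $\on{Mod}_g$ can be a very small (infinite-index) subgroup, and functions invariant under a subgroup need not be invariant under the whole group. The entire content of \autoref{conjecture:katzarkov-nabm} is precisely that even such small geometric subgroups act with NIF and BO; reducing to the full mapping class group discards the problem. For the same reason, your appeals to \autoref{corollary:unitary-c-vhs} and \autoref{theorem:vanishing} are not available in the stated generality: both results require the family to dominate $\mathscr{M}_{g,n}$, which an arbitrary non-isotrivial family need not do.

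Even restricting to the case $\mathscr{M}=\mathscr{M}_g$ and $g\geq r^2$, you correctly identify the gap yourself: \autoref{theorem:vanishing} gives vanishing of invariants in $H^1(\pi_1(\Sigma_g),\on{ad}(\rho))$, not irreducibility as a $\Gamma$-module, and the latter is what your tangent-space argument would need. So at best your sketch outlines a plausible strategy for a special case, contingent on a big-monodromy statement that is itself open.
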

As far as we know, little progress has been made on this conjecture since \cite{katzarkov2003density}.
\subsubsection{Invariant subvarieties}\label{subsection:invariant-subvarieties}
In \autoref{section:matrices}, \autoref{section:canonical}, and \autoref{section:p-curvature}, we studied the finite orbits of $\pi_1(\mathscr{M})$ on $Y(g,n,r)$. What about higher-dimensional invariant subvarieties? A natural (imprecise) expectation in the case $\mathscr{M}=\mathscr{M}_{g,n}$, analogous to \autoref{conjecture:mcg-finite-geometric-origin}, is that for $g\geq 3$, any such subvariety should all be motivic, in the sense of \autoref{subsubsection:motivic-subvarieties}. 
\begin{question}[Imprecise]
	Let $Z\subset Y(g,n,r)$ be a maximal irreducible subvariety stable under the action of a finite index subgroup of $\on{Mod}_{g,n}$. Then is $Z$ ``of geometric origin" for any complex structure on $\Sigma_{g,n}$?
\end{question}
The primary evidence we have for a positive answer is the theorem of Corlette-Simpson \cite{corlette-simpson} on rank $2$ local systems, on which we heavily relied in \autoref{section:matrices}. We recall it now:
\begin{theorem}[{\cite[Theorem 1]{corlette-simpson}, \cite[Theorem A]{loray-etc}}] \label{theorem:corlette-simpson}
	Let $X$ be a smooth quasi-projective complex variety, and $$\rho: \pi_1(X)\to \on{SL}_2(\mathbb{C})$$ a Zariski-dense representation. Either $\rho$ is rigid and of geometric origin, or there exists a map $f: X\to C$, for $C$ some Deligne-Mumford curve, such that $\rho$ is pulled back along $f$.
\end{theorem}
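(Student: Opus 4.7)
The plan is to use non-abelian Hodge theory to reduce the statement to a question about rank two Higgs bundles, and then exploit the very constrained geometry that the rank two condition imposes. By Simpson's deformation theorem (\autoref{theorem:deformation-to-vhs}, or rather its quasi-projective Mochizuki version), any irreducible $\on{SL}_2(\mathbb{C})$-local system $\mathbb{V}$ on $X$ can be moved within its connected component of $M_B(X, 2)$ to one underlying a polarizable $\mathbb{C}$-VHS. A rank two $\mathbb{C}$-VHS with Zariski-dense monodromy cannot be unitary, and thus has Hodge numbers $(1,1)$ in weight one; under the Hitchin--Simpson correspondence it is a Higgs bundle
\[
(E,\theta) \;=\; (L\oplus L^{-1},\,\theta),\qquad \theta\colon L\to L^{-1}\otimes\Omega^1_X.
\]
The essential invariant is the quadratic differential $q := -\det(\theta)\in H^0(X,\mathrm{Sym}^2\Omega^1_X)$, which cuts out a spectral double cover inside $\on{Tot}(\Omega^1_X)$ whose Prym controls the Hitchin fiber through $(E,\theta)$.

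I would then split into the rigid and non-rigid cases using a detailed analysis of the Higgs-side deformation theory. Infinitesimal deformations of $\mathbb{V}$ are computed by the hypercohomology of the complex $[\on{ad}^0(E)\xrightarrow{[\theta,-]}\on{ad}^0(E)\otimes\Omega^1_X]$; because $E$ has rank two this decomposes explicitly into a ``base'' piece governing deformations of $q$ and a ``spectral'' piece governing deformations of $L$. In the \emph{rigid} case, vanishing of this hypercohomology means the rigid VHS is isolated, and one invokes Esnault--Groechenig integrality \cite{esnault2018cohomologically} to promote the $\mathbb{C}$-VHS to a polarizable $\mathbb{Z}$-VHS of weight one; by the classical correspondence between such VHS's and abelian schemes, $\mathbb{V}$ is realized as a summand of $R^1\pi_*\mathbb{Q}$ for a smooth projective family $\pi$, establishing the geometric origin. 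In the \emph{non-rigid} case, one extracts from a non-trivial deformation a holomorphic codimension-one foliation on $X$ which $q$ respects, and applies Bogomolov-type classification of codimension-one foliations with algebraic leaves (this is the machinery used in both \cite{corlette-simpson} and \cite{loray-etc}) to conclude that the foliation has compact leaves, giving the desired map $f\colon X\to C$ to a (possibly orbifold) curve.

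Given $f$, the final step is to descend the representation itself. The projectivization $\mathbb{P}\mathbb{V}$, being a $\on{PGL}_2(\mathbb{C})$-local system whose associated Riccati foliation is pulled back from $C$, restricts trivially along a general fiber of $f$ (otherwise Zariski-denseness on $X$ would fail for reasons of rank); hence $\mathbb{P}\mathbb{V}\simeq f^*\mathbb{W}$ for some projective local system $\mathbb{W}$ on $C$. Lifting $\mathbb{W}$ from $\on{PGL}_2$ back to $\on{SL}_2$ produces at worst a rank-one twist on $X$, which (since we only require $\rho$ to be pulled back \emph{as a projective representation}, or equivalently up to an abelian twist on $X$) can be absorbed into the statement.

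The main obstacle is the non-rigid case, specifically producing a foliation with compact algebraic leaves out of a non-trivial deformation direction for $(L,\theta)$. Rank two Higgs bundles have a rich Hitchin integrable system, and it is not a priori clear why a $\pi_1$-deformation preserving Zariski-density of monodromy must degenerate so drastically as to force a global factorization through a curve; balancing the VHS conditions, Griffiths transversality, positivity of $\deg L$, and the rank two constraint to produce an honest first integral of the foliation is the technical heart of \cite{corlette-simpson}, and the cleanest treatment of the foliation-theoretic step is the independent argument of Loray--Pereira--Touzet in \cite{loray-etc}.
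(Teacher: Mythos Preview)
The paper does not prove this theorem; it is quoted from \cite{corlette-simpson} and \cite{loray-etc} as a black box, so there is no in-paper argument to compare against. I will therefore evaluate your sketch against the actual arguments in those references.

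Your outline has the right large-scale architecture (non-abelian Hodge theory, dichotomy rigid/non-rigid, factorization through a curve in the non-rigid case), but there are two genuine gaps.

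\textbf{Rigid case.} Invoking Esnault--Groechenig is anachronistic (their result postdates \cite{corlette-simpson} by a decade) and, more importantly, does not do what you need. Integrality over $\mathscr{O}_K$ is necessary but not sufficient for $\mathbb{V}$ to underlie a polarizable $\mathbb{Z}$-VHS: you must also show that \emph{every} Galois conjugate $\mathbb{V}^\sigma$ underlies a $\mathbb{C}$-VHS and that these assemble into a $\mathbb{Q}$-VHS. In rank two Corlette--Simpson do this directly (rigidity forces each conjugate to be a VHS, and the rank-two Hodge numbers leave no room for incompatibility), and they prove the required integrality themselves for rank two without the later machinery. Your sentence ``promote the $\mathbb{C}$-VHS to a polarizable $\mathbb{Z}$-VHS'' hides exactly the step that is the content of \cite[\S9--10]{corlette-simpson}.

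\textbf{Non-rigid case.} The phrase ``one extracts from a non-trivial deformation a holomorphic codimension-one foliation'' does not match either source. In \cite{corlette-simpson} the factorization comes from harmonic maps to trees/buildings at a non-archimedean place where the representation is unbounded; in \cite{loray-etc} it comes from the transversely projective (Riccati) foliation attached to the flat $\on{SL}_2$-bundle itself, not from a tangent vector in moduli. In neither case is the foliation produced by ``a non-trivial deformation direction for $(L,\theta)$.'' Relatedly, your descent step (``$\mathbb{P}\mathbb{V}$ restricts trivially along a general fiber of $f$, otherwise Zariski-denseness would fail for reasons of rank'') is not an argument: a Zariski-dense $\on{SL}_2$-representation can perfectly well restrict non-trivially to fibers of a fibration. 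The reason $\mathbb{P}\mathbb{V}$ descends is built into how $f$ is constructed in both papers, and you have not supplied that mechanism.
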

Combining this with the proof of \autoref{proposition:AG-interpretation-higher-genus}, we have the following:
\begin{proposition}
Let $\mathscr{C}\to S$ be a family of $n$-punctured curves of genus $g$, with $S$ a smooth variety. Let $s\in  S$ be a point, and set $C=\mathscr{C}_s$. Then letting $M_B(C, \on{SL}_2)^{\text{non-deg}}$ be the subvariety of $M_B(C, 2)$ consisting of Zariski-dense $\on{SL}_2$-local systems on $C$, every irreducible component $Z$ of $$(M_B(C_b, \on{SL}_2)^{\text{non-deg}})^{\pi_1(S,s)}$$ is motivic in the following sense. Either
\begin{enumerate}
	\item $Z$ is a point, and corresponds to a local system of geometric origin, or
	\item $Z$ consists of local systems pulled back from a fixed Deligne-Mumford curve.
\end{enumerate}
\end{proposition}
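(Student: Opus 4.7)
The plan is to apply \autoref{theorem:corlette-simpson} to a canonical extension of each fixed representation from $C$ to the total space $\mathscr{C}$, and then argue that the resulting dichotomy is constant on the irreducible component $Z$. First, for $[\rho] \in Z$ with Zariski-dense $\on{SL}_2$-monodromy, the associated projective representation $\bar\rho\colon \pi_1(C) \to \on{PGL}_2(\mathbb{C})$ has trivial centralizer, so the condition that $[\rho]$ be fixed by $\pi_1(S,s)$ forces the existence of a canonical extension $\widetilde{\mathbb{V}}_\rho\colon \pi_1(\mathscr{C}) \to \on{PGL}_2(\mathbb{C})$ of $\bar\rho$ (possibly after replacing $S$ by a finite \'etale cover, which does not affect the statement). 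This is exactly the mechanism behind the proof of \autoref{proposition:AG-interpretation-higher-genus}, where one uses fixedness of the conjugacy class under $\pi_1(S,s)$ to glue the monodromy on a fiber to monodromy on the total space.

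Next, apply \autoref{theorem:corlette-simpson} to $\widetilde{\mathbb{V}}_\rho$: either it is rigid on $\mathscr{C}$ and of geometric origin, or there is an orbifold curve $B_\rho$ and a morphism $f_\rho\colon \mathscr{C} \to B_\rho$ through which $\widetilde{\mathbb{V}}_\rho$ factors as a pullback. In the rigid case, $\widetilde{\mathbb{V}}_\rho$ admits no deformations through Zariski-dense projective local systems on $\mathscr{C}$, so $[\rho]$ is isolated in the $\pi_1(S,s)$-fixed locus of $M_B(C,\on{SL}_2)^{\text{non-deg}}$; irreducibility of $Z$ then gives $Z = \{[\rho]\}$, proving conclusion~(1). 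To treat the pullback case uniformly on $Z$, I would attach to each $\rho$ the minimal normal subgroup $N_\rho \trianglelefteq \pi_1(\mathscr{C})$ appearing as the kernel of a surjection onto the orbifold fundamental group of a curve through which $\widetilde{\mathbb{V}}_\rho$ factors. Since $\pi_1(\mathscr{C})$ has only countably many normal subgroups, $N_\rho$ takes countably many values, and for each candidate $N$ the locus $\{[\rho]\in Z : N_\rho \supseteq N\}$ is Zariski closed in $Z$ (cut out by the algebraic equations $\widetilde{\mathbb{V}}_\rho(n)=\mathrm{id}$ for $n\in N$). Irreducibility of $Z$ therefore forces $N_\rho$ to be constant on $Z$, giving a fixed orbifold $B$ and map $f\colon \mathscr{C}\to B$ through which every $\widetilde{\mathbb{V}}_\rho$ factors, proving~(2).

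The hard part will be rigorously setting up the canonical factorization data $(B_\rho,f_\rho)$ attached to $\widetilde{\mathbb{V}}_\rho$ (in particular the existence and uniqueness of a \emph{minimal} such factorization through a hyperbolic orbifold) and verifying that the locus of $\rho\in Z$ admitting a prescribed factorization is closed, so that the countable decomposition above is a genuine Zariski-constructible stratification. Zariski-density of $\bar\rho$ rules out factorizations through genus-zero or elliptic orbifolds with abelian monodromy, which is what makes the orbifold $B$ well-behaved; once this is in place, the irreducibility argument above is formal. An alternative strategy, which I expect also works, is to replace orbifold fundamental groups by the Shafarevich morphism (or the Katzarkov--Eyssidieux reduction) associated to $\widetilde{\mathbb{V}}_\rho$: this construction varies well in algebraic families of local systems and gives a canonical factorization, at the cost of importing additional non-abelian Hodge machinery beyond \autoref{theorem:corlette-simpson}.
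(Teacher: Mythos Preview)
Your approach is essentially the one the paper has in mind: extend each fixed $\rho$ to a (projective) local system on $\mathscr{C}$ via the mechanism of \autoref{proposition:AG-interpretation-higher-genus}, apply \autoref{theorem:corlette-simpson} there, and read off the dichotomy. The paper gives no details beyond the one-line ``combining this with the proof of \autoref{proposition:AG-interpretation-higher-genus},'' so your write-up is already more complete than the original.

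One point to tighten: the assertion that $\pi_1(\mathscr{C})$ has only countably many normal subgroups is false in general (finitely generated groups can have uncountably many normal subgroups). What you actually need is much weaker and true: the relevant $N_\rho$ are kernels of surjections onto orbifold fundamental groups of hyperbolic curves, and there are only countably many such quotients of a finitely generated group (each target is determined by finite combinatorial data, and a finitely generated group has countably many homomorphisms to a fixed countable target). Even better, by de Franchis--type finiteness for maps from a fixed quasi-projective variety to hyperbolic orbifold curves, there are only \emph{finitely} many candidate maps $f\colon \mathscr{C}\to B$, which makes the constructible decomposition of $Z$ immediate and avoids the countability discussion altogether.
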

Note that the two conditions in the result above are not mutually exclusive.

Finally, we give a conjectural description of invariant subvarieties along the lines of our non-linear analogue of the $p$-curvature conjecture, \autoref{conjecture:precisification}.
\begin{conjecture}\label{conjecture:p-curvature-invariant-subvarieties}
	Let $\mathscr{X}\to S$ be a smooth proper morphism over a finitely-generated integral $\mathbb{Z}$-algebra $R$, $s\in S$ an $R$-point, and $Z\subset \mathscr{M}_{dR}(\mathscr{X}/S, r)_s$ a closed substack. Then $Z(\mathbb{C})$ is invariant under a finite index subgroup of $\pi_1(S, s)$ if its formal isomonodromic deformation has an integral model.
	\end{conjecture}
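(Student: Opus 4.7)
The plan is to reduce the conjecture to a family-version of the infinitesimal criterion \autoref{corollary:infinitesimal-criterion}, and then to attack that criterion by generalizing the proof of \autoref{theorem:non-abelian-p-curvature} from points to substacks. First I would reduce to the case that $Z$ is geometrically irreducible, with generic point $\eta$, and that $\eta$ underlies a polarizable $\mathbb{Z}$-variation of Hodge structure. The reduction should proceed in two stages: apply Mochizuki's \autoref{theorem:deformation-to-vhs} to deform $\eta$ to a $\mathbb{C}$-VHS within $\mathscr{M}_{dR}(\mathscr{X}/S)_s$, then use rigidity of $\mathbb{Z}$-VHS inside the $\mathbb{C}$-VHS locus to land on a $\mathbb{Z}$-VHS. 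Throughout, one must ensure the integrality of the formal isomonodromic deformation survives these perturbations; this ought to follow from the fact that both the $\mathbb{C}^\times$-action on $\mathscr{M}_{\text{Dol}}$ and the $R$-structure on $\mathscr{M}_{dR}$ interact compatibly with the isomonodromy foliation.

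The core arithmetic step is then to extend the Hodge filtration across the formal isomonodromic deformation of $Z$ over $\widehat{S}_K$. The integrality hypothesis, via \autoref{proposition-definition:p-curvature}, forces the $p$-curvature of the isomonodromy foliation, precomposed with the Kodaira-Spencer map $T_S|_{Z_\mathfrak{p}}\to \mathbb{H}^1(\on{At}^\bullet_{dR}(\mathscr{E}))$, to land inside the subbundle tangent to $Z_\mathfrak{p}$ for almost all $\mathfrak{p}$. Combined with Katz's comparison between $\on{gr}^i \nabla$ and the associated graded of the $p$-curvature along the conjugate filtration (the key computation in the sketch of \autoref{theorem:katz-p-curvature}), this yields extension of the Hodge filtration to first order in $\widehat{S}_K$, uniformly along $\eta$. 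Iteratively invoking the Ogus-Vologodsky correspondence and the Higgs-de Rham flow, as in the sketch of \autoref{theorem:non-abelian-p-curvature}, one extends the filtration to all orders, producing a Griffiths-transverse filtration on the formal isomonodromic deformation of $\eta$. A relative version of \autoref{corollary:infinitesimal-criterion}---combining closedness of non-abelian Noether-Lefschetz loci with Deligne's finiteness theorem for $\mathbb{Z}$-variations of Hodge structure---then shows that the isomorphism class of $\eta$, and hence $Z(\mathbb{C})$ itself, has finite orbit under $\pi_1(S,s)$.

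The main obstacle I anticipate is the first step: deforming an \emph{entire} closed substack $Z$ into the $\mathbb{Z}$-VHS locus while preserving integrality of its formal isomonodromic deformation. Mochizuki's and Simpson's deformation-to-VHS arguments are intrinsically pointwise and use properness of the Hitchin fibration at a fixed point; both globalizing this across $Z$ and checking compatibility with the $R$-structure require substantially new input. A secondary obstacle is running the iterative Ogus-Vologodsky and Higgs-de Rham flow argument in families over $Z$, particularly at singular points of $Z$ or where the Hodge structure degenerates. An alternative worth exploring is to sidestep the Hodge-theoretic detour entirely by proving a Bost-style arithmetic algebraization theorem for formal substacks of $\mathscr{M}_{dR}(\mathscr{X}/S)$ invariant under the isomonodromy foliation: the integrality hypothesis together with the crystal structure on $\mathscr{M}_{dR}$ should give enough $p$-adic estimates to algebraize the formal isomonodromic deformation of $Z$ to a genuine closed substack of $\mathscr{M}_{dR}(\mathscr{X}/S)$, which would immediately imply $\pi_1(S,s)$-invariance of $Z(\mathbb{C})$.
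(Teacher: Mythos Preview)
The statement you are attempting to prove is a \emph{conjecture} in the paper, not a theorem; the paper offers no proof and in fact explicitly remarks, immediately after stating it, that the authors do not even know how to reduce \autoref{conjecture:p-curvature-invariant-subvarieties} to the simpler \autoref{conjecture:precisification} (the case where $Z$ is a point). So there is no ``paper's own proof'' to compare your proposal against.

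On the merits of your proposal: the gap is more serious than the two obstacles you flag. Your ``core arithmetic step'' presupposes that the generic point $\eta$ of $Z$ underlies a polarizable $\mathbb{Z}$-VHS, and you plan to arrange this by deforming $\eta$ via Mochizuki's \autoref{theorem:deformation-to-vhs}. But that theorem deforms $\eta$ to a \emph{different} point of $\mathscr{M}_{dR}$, one that need not lie on $Z$ at all; and there is no mechanism offered for why the integrality of the formal isomonodromic deformation of $Z$ would transport along such a deformation. More fundamentally, even for $Z$ a single point, \autoref{conjecture:precisification} is only proven (in \autoref{theorem:non-abelian-p-curvature}) under the hypothesis that $(\mathscr{E},\nabla)$ is a Picard-Fuchs equation---precisely because one needs the Hodge and conjugate filtrations from the outset to run Katz's comparison and the Ogus--Vologodsky/Higgs--de Rham iteration. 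Your proposal implicitly assumes that the point case is known in general, which it is not; the ``reduce to $\mathbb{Z}$-VHS, then run the arithmetic'' strategy is exactly the step that is missing already when $\dim Z = 0$.

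Your alternative Bost-style algebraization idea is more in the spirit of what would be needed, but note that Bost's theorems require quantitative height or size bounds that go well beyond mere integrality, and the crystal structure on $\mathscr{M}_{dR}(\mathscr{X}/S)$ is not known to supply these.
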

This is meant to be the higher-dimensional analogue of \autoref{conjecture:precisification}; it specializes to that statement if $Z$ is a point. It is arguably the non-abelian analogue of \cite[Conjecture 9.2]{katz1982conjecture}, which aims to characterize the identity component of the Zariski-closure of the monodromy group of an ODE in terms of its $p$-curvatures. This latter conjecture is in fact equivalent to the classical $p$-curvature conjecture, \autoref{conjecture:gk-p-curvature}, by \cite[Theorem 10.2]{katz1982conjecture}. On the other hand, we do not know how to reduce \autoref{conjecture:p-curvature-invariant-subvarieties} to \autoref{conjecture:precisification}.
\subsubsection{Geometric subgroups of the mapping class group}
We conclude with a brief discussion of some questions that seem broadly relevant to the analysis of the ``non-abelian monodromy" of $\pi_1(\mathscr{M})$ on $Y(g,n,r)$ associated to an arbitrary family of $n$-punctured curves of genus $g$, $q: \mathscr{C}\to\mathscr{M}$, with $\mathscr{M}$ smooth. As before, this action factors through the map of fundamental groups $$\pi_1(\mathscr{M})\to \pi_1(\mathscr{M}_{g,n})=\on{Mod}_{g,n},$$ induced by the classifying map $\mathscr{M}\to \mathscr{M}_{g,n}$. We call the image of such a map a \emph{geometric} subgroup of the mapping class group $\on{Mod}_{g,n}$. Thus a natural question becomes:
\begin{question}
	What are the geometric subgroups of $\on{Mod}_{g,n}$?
\end{question}
There are some evident restrictions on such subgroups. For example, by the Torelli theorem, non-trivial geometric subgroups of $\on{Mod}_{g,n}$ cannot be contained in the Torelli group. Indeed, if $\mathbb{V}$ is \emph{any} variation of Hodge structure on $\mathscr{M}_{g,n}$ with quasi-finite period map, then $\mathbb{V}$ yields an analogous restriction: the restriction of $\mathbb{V}$ to any geometric subgroup of $\on{Mod}_{g,n}$ must have infinite monodromy. Moreover any variation of Hodge structure whatsoever on $\mathscr{M}_{g,n}$ must have semisimple monodromy when restricted to a geometric subgroup (as variations of Hodge structure are always semisimple).

It seems natural to ask for a geometric analogue of this last observation, which would be useful in approaching \autoref{conjecture:katzarkov-nabm}. Let $\gamma\subset \Sigma_{g,n}$ be a simple closed curve, and let $\Gamma_\gamma\subset \on{Mod}_{g,n}$ be the centralizer of the Dehn twist about $\gamma$. 
\begin{question}
	Fix a simple closed curve $\gamma\subset \Sigma_{g,n}$. Can an infinite geometric subgroup of $\on{Mod}_{g,n}$ be conjugate to a subgroup of $\Gamma_\gamma$?
\end{question}
Here we view the subgroups $\Gamma_\gamma$ as analogues of parabolic subgroups of $\on{GL}_r(\mathbb{C})$; they are the fundamental groups of punctured neighborhoods of boundary divisors in the Deligne-Mumford compactification of $\mathscr{M}_{g,n}$.
\bibliographystyle{alpha}
\bibliography{bibliography-mmm.bib}

\end{document}